\newtheorem{thm}{Theorem}[section]
\newtheorem{theorem}[thm]{Theorem}
\newtheorem{corollary}[thm]{Corollary}
\newtheorem{lemma}[thm]{Lemma}
\newtheorem{prop}[thm]{Proposition}
\newtheorem{proposition}[thm]{Proposition}
\theoremstyle{definition}
\newtheorem{defn}[thm]{Definition}
\newtheorem{example}[thm]{Example}
\newtheorem{remark}[thm]{Remark}
\newcommand*\leftdash{\rotatebox[origin=c]{-45}{$\dabar@\dabar@\dabar@$}}
\newcommand*\rightdash{\rotatebox[origin=c]{45}{$\dabar@\dabar@\dabar@$}}
\newcommand{\bks}{\mathcal{S}^{I \times \mathbf{T}(\mathcal{O})}}
\newcommand{\X}{\mathbf{X}(k)}
\newcommand{\kl}{\mathcal{A}_{\mathcal{P}}}
\newcommand{\waff}{\tilde{W}}
\newcommand{\uphm}[1]{M_{d,{#1}}}
\newcommand{\aH}{\tilde{\mathcal{H}}}
\newcommand{\fH}{\mathcal{H}}
\newcommand{\aW}{\tilde{W}}
\newcommand{\Ql}{\overline{\mathbb{Q}}_{\ell}}
\newcommand{\Qlu}{\underline{\overline{\mathbb{Q}}}_{\ell}}
\newcommand{\flinf}{\mathrm{Perv}(\mathcal{F}\ell^{\frac{\infty}{2}})^{I^0}}
\definecolor{color1}{HTML}{fb8072}
\definecolor{color2}{HTML}{fdb462}
\definecolor{color3}{HTML}{ffed6f}
\definecolor{color4}{HTML}{b3de69}
\definecolor{color5}{HTML}{80b1d3}
\definecolor{color6}{HTML}{bebada}
\definecolor{color7}{HTML}{8dd3c7}
\definecolor{color8}{HTML}{fccde5}
\definecolor{color9}{HTML}{d9d9d9}
\definecolor{color10}{HTML}{bc80bd}
\definecolor{color11}{HTML}{ffffb3}
\definecolor{color12}{HTML}{ccebc5}
\numberwithin{equation}{section}
\title[Kazhdan-Laumon Category $\mathcal{O}$, Schwartz space, and semi-infinite flags]{Kazhdan-Laumon Category $\mathcal{O}$, Braverman-Kazhdan Schwartz space, and the semi-infinite flag variety}
\author{Calder Morton-Ferguson}
\date{February 10, 2025}
\begin{document}

\begin{abstract}
We define and study an analogue of Category $\mathcal{O}$ in the context of Kazhdan and Laumon's gluing construction for perverse sheaves on the basic affine space. We explicitly describe the simple objects in this category, and we show its linearized Grothendieck group is isomorphic to a natural submodule of Lusztig's periodic Hecke module. We then provide a categorification of these results by showing that the Kazhdan-Laumon Category $\mathcal{O}$ is equivalent to a full subcategory of a suitably-defined category of perverse sheaves on the semi-infinite flag variety.
\end{abstract}

\maketitle

\setcounter{tocdepth}{1}
\tableofcontents

\section{Introduction}

In \cite{KL}, with the aim of providing a new geometric construction of representations of finite Chevalley groups, D. Kazhdan and G. Laumon described a gluing construction for perverse sheaves on the basic affine space associated to a semisimple algebraic group $\mathbf{G}$ split over a finite field or its algebraic closure, defining a ``glued category" $\mathcal{A}$. This category was studied in detail (in the setting of $D$-modules rather than perverse sheaves) by R. Bezrukavnikov and A. Polishchuk in \cite{P}, and shown to be equivalent to the category of modules over global differential operators on the basic affine space in \cite{BBP}, a connection which was further explored in \cite{LS}.

In this paper, we study a subcategory $\mathcal{A}_{B}$ (and its mixed analogue $\mathcal{A}_{\mathcal{P}}$) of $\mathcal{A}$ corresponding to $B$-equivariant perverse sheaves on the flag variety; this is a natural analogue of Category $\mathcal{O}$ in the context of Kazhdan and Laumon's construction. We begin by studying this category in its own right and describing its simple objects explicitly. To state the following theorem, for any element $w$ in a Weyl group $W$ let $P(w)$ be the standard parabolic subgroup of $W$ generated by all simple reflections $s$ with $\ell(ws) > \ell(w)$.
\begin{theorem}
The category $\mathcal{A}_B$ (which we call the Kazhdan-Laumon Category $\mathcal{O}$) associated to the group $\mathbf{G}$ with Weyl group $W$ has
\begin{equation}
\sum_{w \in W} |P(w)\backslash W|
\end{equation}
irreducible objects, described explicitly in Theorem \ref{thm:simpleobj}. For any choice of $w \in W$ and coset $P(w)z$, the corresponding simple object can be written as a tuple $(\mathcal{G}_y)_{y \in W}$ of irreducible objects in $\mathrm{Perv}_B(G/B)$ such that
$$
\mathcal{G}_y \cong \begin{cases}
\mathrm{IC}_w & y \in P(w)z\\
0 & y \not\in P(w)z
\end{cases}.$$
\end{theorem}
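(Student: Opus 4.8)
The plan is to deduce this from the explicit classification of the simple objects of $\mathcal{A}_B$ (or of its mixed analogue $\kl$) recorded in Theorem~\ref{thm:simpleobj}, and I will sketch how I would establish that classification. Recall that an object of $\mathcal{A}_B$ is a tuple $(\mathcal{F}_w)_{w\in W}$ with $\mathcal{F}_w\in\mathrm{Perv}_B(G/U)$ together with the Kazhdan--Laumon gluing data built from the Fourier functors $F_s$, one for each simple reflection $s$, and that $\mathcal{A}_B$ is abelian of finite length with exact ``evaluation'' functors $\mathrm{ev}_w\colon\mathcal{A}_B\to\mathrm{Perv}_B(G/U)$, $(\mathcal{F}_\bullet)\mapsto\mathcal{F}_w$. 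Since $G/U$ is a spherical $B$-variety with $B$-orbits indexed by $W$, the simple objects of $\mathrm{Perv}_B(G/U)$ are exactly the $\mathrm{IC}_w$, $w\in W$. So the whole problem is to determine, for a simple $L\in\mathcal{A}_B$, the set $T(L):=\{w:\mathrm{ev}_w(L)\neq 0\}$ together with the sheaves $\mathrm{ev}_w(L)$ for $w\in T(L)$.

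The key technical input — and the step I expect to be the main obstacle — is a precise computation of $F_s(\mathrm{IC}_w)$ for every simple reflection $s$ and every $w\in W$. I would reduce this to a rank-one statement using the minimal-parabolic fibration $G/U\to G/U_s$ attached to $s$, whose fibres are the affine lines carrying the Fourier transform defining $F_s$, taking care to preserve $G$-equivariance so as to stay inside $\mathrm{Perv}_B$; on the two $B$-orbits $O_w$, $O_{ws}$ lying over a fixed orbit downstairs this becomes an $SL_2$ (or $PGL_2$) calculation. The expected output is an ascent/descent dichotomy: $F_s(\mathrm{IC}_w)\cong\mathrm{IC}_w$ when $\ell(ws)>\ell(w)$ (so $s\in P(w)$), whereas when $\ell(ws)<\ell(w)$ the object $F_s(\mathrm{IC}_w)$ degenerates — it is not a simple perverse sheaf in the relevant sense — and through the compatibility conditions of the gluing datum this forces the neighbouring component in the ``$s$-direction'' to vanish. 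The delicate points are the normalisation of $F_s$ (perversity shifts, and the line-bundle structure on the fibres) and following the canonical $2$-morphisms of the gluing datum through this computation; the rest of the argument is comparatively formal.

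Granting this Lemma, I would finish as follows. For a simple $L$: each $\mathrm{ev}_w(L)$ is semisimple since $\mathrm{ev}_w$ is exact and $L$ is simple, and, because $F_s$ permutes the relevant $\mathrm{IC}$'s and the gluing identifies neighbouring components, one checks there is a single $\tau=\tau(L)\in W$ with $\mathrm{ev}_w(L)\cong\mathrm{IC}_\tau$ for all $w\in T(L)$, and that $T(L)$ is stable under $w\mapsto ws$ for $s\in P(\tau)$ (ascent case, via $F_s(\mathrm{IC}_\tau)\cong\mathrm{IC}_\tau$) while the descent case forces $ws\notin T(L)$; by connectivity, $T(L)$ is exactly one coset of $P(\tau)$. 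Conversely, for each pair $(\tau,P(\tau)z)$ I would show the tuple $(\mathcal{G}_y)$ with $\mathcal{G}_y=\mathrm{IC}_\tau$ for $y\in P(\tau)z$ and $\mathcal{G}_y=0$ otherwise carries a gluing datum, unique up to isomorphism — on the support this reduces to a computation internal to the finite parabolic $P(\tau)$, and off the support all data is zero and the compatibilities are vacuous — and that the resulting object is simple, since a nonzero subobject must equal $\mathrm{IC}_\tau$ at some $y\in P(\tau)z$, hence, by the gluing isomorphisms along the ascents of $\tau$ (which connect the whole coset), at every $y\in P(\tau)z$, and hence is everything. Distinct pairs give non-isomorphic simples, and by the previous paragraph these exhaust the simple objects; counting them yields $\sum_{\tau\in W}|W/P(\tau)|=\sum_{w\in W}|P(w)\backslash W|$ simple objects of the stated explicit shape, which is the content of Theorem~\ref{thm:simpleobj} and hence of the theorem above. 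One could alternatively recover the count a posteriori from the isomorphism between the linearised Grothendieck group of $\kl$ and the submodule of the periodic Hecke module, but the explicit description of the simples seems to require the direct analysis above.
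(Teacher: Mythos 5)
There is a genuine gap, and it sits exactly where you flagged the ``main obstacle'': your expected ascent/descent dichotomy is false in the ascent case. It is not true that $F_s(\mathrm{IC}_w)\cong\mathrm{IC}_w$ when $\ell(ws)>\ell(w)$. Already for $\mathbf{SL}_2$ with $w=e$: by Proposition \ref{prop:f!conv}, $F_{s,!}(\mathrm{IC}_e)={}^pH^0(\mathrm{IC}_e*\nabla_s(\tfrac12))$, which is the pullback to $G/U$ of $\nabla_s$ (up to twist) — a length-two perverse sheaf with composition factors $\mathrm{IC}_e$ and $\mathrm{IC}_s$, not $\mathrm{IC}_e$. (In Fourier-transform terms, the correction coming from extending by zero across the origin of the fibre produces the extra factor.) In general, for $\ell(ws)>\ell(w)$ one only has $[F_{s,!}(\mathrm{IC}_w)]\equiv[\mathrm{IC}_w]$ modulo classes $[\mathrm{IC}_x]$ with $\ell(xs)<\ell(x)$; the functor does not fix the simple object. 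Consequently the steps you build on ``gluing isomorphisms along the ascents'' collapse: the component $j_{sz}^*\bigl(j_{z!*}(\mathrm{IC}_w)\bigr)$ of a simple glued object is \emph{not} $F_s$ applied to the component at $z$ — it is the image of the natural map $j_{sz}^*j_{z!}(\mathrm{IC}_w)\to j_{sz}^*j_{z*}(\mathrm{IC}_w)$, i.e.\ of $F_{s,!}(\mathrm{IC}_w)$ into the $*$-side — and identifying that image is the actual content of the theorem. The paper pins it down without ever computing $F_s$ on simples: Polishchuk's Theorem 5.6.1 forces $\phi_sc_z-c_{sz}$ to lie in the span $K_s$ of classes pulled back from $G/Q_s$, the Kazhdan--Lusztig identity $P_{ys,w}=P_{y,w}$ (for $ws<w$) gives $\phi_s[\mathrm{IC}_w]\equiv[\mathrm{IC}_w]\bmod K_s$, and since the component $c_{sz}$ is the class of a simple object (or zero) one concludes $c_{sz}=[\mathrm{IC}_w]$. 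Your descent half does agree with the paper: for $\ell(ws)<\ell(w)$ one has $\mathrm{IC}_w*\nabla_s(\tfrac12)\cong\mathrm{IC}_w(1)[1]$, so $F_{s,!}(\mathrm{IC}_w)$ is literally zero and the neighbouring component vanishes.

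Two further points. First, ``each $\mathrm{ev}_w(L)$ is semisimple since $\mathrm{ev}_w$ is exact and $L$ is simple'' is a non sequitur; exact functors do not send simples to semisimples. What is true — and what the paper quotes — is Polishchuk's Lemma 1.3.1: for $L$ simple each $j_w^*L$ is simple or zero, and $L\cong j_{w!*}(j_w^*L)$ whenever it is nonzero; this (together with Lemma 1.3.2 for existence) also makes your hand-construction of gluing data for the candidate tuples unnecessary. Second, even granting existence, your simplicity argument for the constructed tuple (``a nonzero subobject equals $\mathrm{IC}_\tau$ at some $y$, hence at every $y$ of the coset by the gluing isomorphisms'') again uses the false isomorphism $F_s(\mathrm{IC}_\tau)\cong\mathrm{IC}_\tau$; the structure maps are only quotient/sub maps of a longer object, so propagation along the coset needs the K-theoretic argument above (or an equivalent input), not a formal transport of structure.
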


Objects of $\mathcal{A}$ are tuples of perverse sheaves indexed by $W$, and so translation of the indices yields a natural action of $W$ on $\mathcal{A}$ (and therefore also on $\mathcal{A}_B$ and $\mathcal{A}_{\mathcal{P}}$) by functors $\mathcal{F}_w$ for $w \in W$. Further, we show that $\mathcal{A}_{\mathcal{P}}$ admits a left action by convolution with mixed $B$-equivariant perverse sheaves on the flag variety $G/B$, yielding an action of the Hecke algebra $\mathcal{H}$ on $K_0(\mathcal{A}_\mathcal{P})$. These actions give $K_0(\mathcal{A}_{\mathcal{P}}) \otimes \mathbb{C}$ the structure of a $\mathcal{H}_q \otimes \mathbb{C}[W]$-module, where $\mathcal{H}_q$ is the specialization of the generic Hecke algebra $\mathcal{H}$ over $\mathbb{C}[v, v^{-1}]$ at $v = q^{\frac{1}{2}}$. We then describe this module explicitly in terms of a more familiar combinatorial object, namely G. Lusztig's periodic Hecke module defined in \cite{L}.

\begin{theorem}\label{thm:introklomd}
There is an isomorphism $K_0(\mathcal{A}_{\mathcal{P}})\otimes \mathbb{C} \cong M_{d,q}^0$ of $\mathcal{H}_q \otimes \mathbb{C}[W]$-modules, where $M_{d,q}^0$ is a submodule (introduced in Definition \ref{def:m0d}) of Lusztig's periodic Hecke module from \cite{L}.
\end{theorem}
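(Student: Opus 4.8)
The plan is to compute both sides on compatible bases and match the $\mathcal{H}_q$- and $W$-actions term by term. On the left, Theorem~\ref{thm:simpleobj} already furnishes a basis of $K_0(\mathcal{A}_{\mathcal{P}})$ indexed by the pairs $(w, P(w)z)$ with $w \in W$ and $P(w)z \in P(w)\backslash W$; for computing the module structure it is cleaner to pass, when available, to the basis of standard objects $\Delta_{w,z}$ obtained by gluing the standard and costandard objects of the components $\mathrm{Perv}_B(G/U)$, on which convolution has closed formulas of Kazhdan--Lusztig type. On the right, Definition~\ref{def:m0d} realizes $M_{d,q}^0$ as a submodule of Lusztig's periodic Hecke module equipped with a distinguished alcove-type basis indexed by the same combinatorial data; a check from that definition shows $M_{d,q}^0$ has rank $\sum_{w\in W}|P(w)\backslash W|$, matching the left side, and the candidate isomorphism is the linear bijection identifying the two bases.

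The $W$-action is the easy half. Since $\mathcal{F}_{w'}$ acts on a tuple $(\mathcal{G}_y)_{y\in W}$ purely by relabelling the indexing set, its effect on $K_0$ is a permutation of the standard basis: it fixes the index $w$ and translates the support coset $P(w)z$, so the pair $(w, P(w)z)$ is carried to $(w, P(w)z')$ for the appropriate translate $z'$ (depending only on normalization conventions). On $M_{d,q}^0$ the $W$-action permutes the alcove basis in the same way, so $W$-equivariance of the candidate map is bookkeeping.

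The substance, and the main obstacle, is the $\mathcal{H}_q$-action. On a single component, left convolution with a mixed $B$-equivariant perverse sheaf on $G/B$ is the familiar action of the Hecke algebra on $K_0(\mathrm{Perv}_B(G/U))$, given on standards by the usual formulas for the $T_s$; the issue is how this interacts with the Kazhdan--Laumon gluing. I would prove a lemma computing the $K_0$-level effect of each Fourier-transform functor $F_s$ between consecutive components and identifying it with the wall-crossing (affine-reflection) operator which, together with the $T_s$, generates the affine Hecke algebra action on Lusztig's periodic module --- an appropriately $\rho$-shifted reflection corrected by $T_s$ and its Tate twists. Granting this, the compatibility isomorphisms built into objects of $\mathcal{A}_{\mathcal{P}}$ become, on $K_0(\mathcal{A}_{\mathcal{P}})$, exactly the relations cutting $M_{d,q}^0$ out of the periodic module; in particular one sees how the lattice (translation) direction of the periodic module is simultaneously produced by and truncated by the finite $W$-indexed gluing data, with the passage from $\mathcal{A}$ to $\mathcal{A}_{\mathcal{P}}$ corresponding to the passage from the full periodic module to the submodule $M_{d,q}^0$. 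It follows that the candidate map intertwines the $\mathcal{H}_q$-actions; being also $W$-equivariant and a bijection of bases, it is the desired isomorphism of $\mathcal{H}_q \otimes \mathbb{C}[W]$-modules. Throughout one must also check the routine points that convolution, the $\mathcal{F}_w$, and the gluing functors $F_s$ are exact enough to descend to $K_0$, and that the half-integer Tate twists entering $\mathcal{H}_q$ and the periodic module are kept consistent.
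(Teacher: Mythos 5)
Your proposal has a genuine gap at its core: the map you construct is pinned down on the \emph{simple} classes $[j_{z!*}(\mathrm{IC}_w)]$ (this is the only way your $W$-equivariance argument works, since by Proposition \ref{prop:actionsharps} the operators $\theta_w$ permute the canonical elements $A^\sharp$, not the alcoves $A$ themselves), while your verification of the $\mathcal{H}_q$-action is carried out on the \emph{standard} classes $[j_{z!}(\Delta_w)]$, which you implicitly want to correspond to the alcove elements $A_w$, where the closed Hecke formulas live. Nothing in your sketch shows that one and the same linear map has both normalizations: you would need to know that the change of basis between simples and standards in $K_0(\mathcal{A}_{\mathcal{P}})$ (governed by Kazhdan--Lusztig polynomials) agrees with the change of basis between $\{A^\sharp\}$ and $\{A\}$ in the periodic module (governed by Lusztig's generic polynomials). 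This compatibility is precisely the hard step of the paper's proof: Proposition \ref{prop:etastds}, established via the restriction map $J_e$, the identity $J_e\circ\eta = j_e^*$ of Proposition \ref{prop:je} (whose proof uses $J_e(A_w^\sharp)=[\mathrm{IC}_w]$, an input resting on the Kazhdan--Lusztig conjectures), and the alcove-geometry induction with the sets $\Xi^+_w$ (Lemma \ref{lem:thetasxiplus}, Corollary \ref{cor:thetasr}). Without this, your candidate map is either not well-defined on the standard spanning set or not known to intertwine the Hecke actions.

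Several auxiliary claims are also too quick. The classes $[j_{z!}(\Delta_w)]$ are not a basis: there are $|W|^2$ of them while $\dim K_0(\mathcal{A}_{\mathcal{P}})\otimes\mathbb{C} = \sum_{w\in W}|P(w)\backslash W|$; they merely span, and even that is a nontrivial theorem of Polishchuk (Lemma \ref{lem:goodrep}), valid only after specializing $v$ away from roots of unity and false integrally. Relatedly, $K_0$ of a glued category is not presented by ``the compatibility isomorphisms as relations,'' so the assertion that the gluing data cuts $M_{d,q}^0$ out of the periodic module has no justification; in the paper the size of $K_0(\mathcal{A}_{\mathcal{P}})$ comes from the classification of simples (Theorem \ref{thm:simpleobj}), not from a presentation. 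It is likewise not a routine check from Definition \ref{def:m0d} that $M_{d,q}^0$ has dimension $\sum_{w\in W}|P(w)\backslash W|$ with a distinguished basis matched to the simples: the elements $\theta_{z}(A_w^\sharp)$ need not lie in $M_{d,q}^0$ at all, which is why the paper passes through the quotient $\overline{M}_{d,q}^0$ and proves separately that $\pi : M_{d,q}^0 \to \overline{M}_{d,q}^0$ is an isomorphism (Lemma \ref{lem:pi}) via a spanning argument in the Bruhat order. Finally, the Hecke action in the statement is the finite $\mathcal{H}_q$ acting by left convolution componentwise (Proposition \ref{prop:heckeoaction}); your proposed lemma identifying the $K_0$-effect of the Fourier transforms $F_{s,!}$ with affine wall-crossing operators is not how the periodic-module structure enters, and its role in the actual proof is only to guarantee, via Proposition \ref{prop:f!conv}, that the braid action factors through $\mathcal{H}_q$ so that Polishchuk's generation theorem applies.
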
 

In \cite{BK}, A. Braverman and Kazhdan defined the Schwartz space $\mathcal{S}$ of the basic affine space associated to an algebraic group over a non-Archimedean local field, equipped with a $\mathbb{C}[W]$-action by Fourier transform operators. They, too, found a combinatorial connection between $\mathcal{S}$ and Lusztig's module $M_{d,q}$, proving in loc.\ cit. that the subspace $\bks$ of Iwahori-invariants in $\mathcal{S}$ is isomorphic to $M_{d,q}$ as a $\mathcal{H}_q \otimes \mathbb{C}[W]$-module. Combining their result with our Theorem \ref{thm:introklomd} yields a description of $K_0(\mathcal{A}_{\mathcal{P}}) \otimes \mathbb{C}$ as a certain natural subspace $\bks_0$ of $\bks$.

\begin{theorem}\label{thm:schwartzconnection}
There is an isomorphism $K_0(\mathcal{A}_{\mathcal{P}}) \otimes \mathbb{C} \cong \bks_0$ of $\mathcal{H}_q \otimes \mathbb{C}[W]$-modules, obtained by composing the isomorphism from Theorem 1.2 with the isomorphism $ M_{d,q} \to \bks$ from \cite{BK}. This isomorphism can be described directly in terms of Grothendieck's sheaf-function dictionary: the diagram
$$\begin{tikzcd}
K_0(\mathcal{A}_\mathcal{P}) \otimes \mathbb{C} \arrow[rr] \arrow[dr] & &  \mathcal{S}^{I \times \mathbf{T}(\mathcal{O})}\\
& M_{d,q} \arrow[ur] &
\end{tikzcd}$$
commutes, where the left and right maps are the morphisms from Theorem \ref{thm:introklomd} and \cite{BK} respectively, and the horizontal map is described in Section \ref{sec:eisenstein} in terms of lifts to the local field setting of functions obtained by taking the trace of a Frobenius endomorphism on objects of $\mathcal{A}_\mathcal{P}$.
\end{theorem}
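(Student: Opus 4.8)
The plan is to deduce Theorem \ref{thm:schwartzconnection} from Theorem \ref{thm:introklomd} together with the Braverman-Kazhdan isomorphism $\bks \cong M_{d,q}$, the only genuinely new content being the commutativity of the triangle and the sheaf-function interpretation of the top arrow. First I would set up the top map carefully: an object of $\mathcal{A}_{\mathcal{P}}$ is a tuple $(\mathcal{G}_y)_{y \in W}$ of mixed $B$-equivariant perverse sheaves on $G/U$ over $\mathbb{F}_q$; taking traces of Frobenius produces a tuple of functions on $(G/U)(\mathbb{F}_q)$, and I would assemble these into a single function on the $\mathbb{F}_q$-points of the basic affine space, then lift it to a locally constant compactly supported function on the basic affine space over the local field $F$ (with ring of integers $\mathcal{O}$) in the standard way — extending by the constant value of the nearby cycles / by pulling back along the reduction map $\mathbf{X}(\mathcal{O}) \to \mathbf{X}(\mathbb{F}_q)$, so that $I \times \mathbf{T}(\mathcal{O})$-invariance is automatic from $B$-equivariance. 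I would check this is well-defined on $K_0$ (additivity on short exact sequences is immediate for the trace of Frobenius) and lands in $\bks_0$, the subspace cut out by the same linear conditions that define $M^0_{d,q}$ inside $M_{d,q}$.

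Next I would compare the two composites around the triangle. The left-then-lower-right composite is, by construction of the isomorphism in Theorem \ref{thm:introklomd}, given on the classes $[\mathrm{IC}_w]$ (for the standard objects of $\mathcal{A}_{\mathcal{P}}$ supported on a single coset, as in Theorem 1.1) by an explicit formula in the standard basis of $M_{d,q}$ — essentially the one coming from the decomposition of the periodic module into pieces indexed by $W \times (P(w)\backslash W)$. The Braverman-Kazhdan map $M_{d,q} \to \bks$ sends these standard basis elements to characteristic-function-type elements of the Iwahori-invariant Schwartz space, and I would match these against the Frobenius-trace functions attached to $\mathrm{IC}_w$ via Grothendieck's dictionary: the point is that the trace of Frobenius on $\mathrm{IC}_w \in \mathrm{Perv}_B(G/U)$ over $\mathbb{F}_q$ is (up to the normalization already fixed in Theorem \ref{thm:introklomd}) the function whose image under reduction is exactly the Iwahori-level vector that Braverman-Kazhdan assign to the corresponding basis element of $M_{d,q}$. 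This is really a bookkeeping computation: both maps are determined by their values on a spanning set, and on that spanning set both are given by the same combinatorial recipe, so the triangle commutes. I would also remark that $\mathcal{H}_q \otimes \mathbb{C}[W]$-equivariance of the top map follows, since the other two maps are equivariant and the left map is an isomorphism; alternatively one checks it directly from compatibility of the convolution action with the sheaf-function correspondence and of the $\mathcal{F}_w$-functors with the Fourier transform operators.

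The identification $K_0(\mathcal{A}_\mathcal{P}) \otimes \mathbb{C} \cong \bks_0$ is then formal: Theorem \ref{thm:introklomd} gives an isomorphism onto $M^0_{d,q} \subseteq M_{d,q}$, the Braverman-Kazhdan map is an isomorphism $M_{d,q} \xrightarrow{\sim} \bks$, and I would define $\bks_0$ to be the image of $M^0_{d,q}$ under it; commutativity of the triangle then shows this image coincides with the image of the top Frobenius-trace map, giving the "direct" description. I should make sure to characterize $\bks_0$ intrinsically inside $\bks$ — e.g.\ as the functions supported on, or satisfying symmetry conditions along, the cosets appearing in Definition \ref{def:m0d} — so that the statement is not merely a tautology; this requires transporting the defining conditions of $M^0_{d,q}$ through the Braverman-Kazhdan isomorphism, which is explicit enough in \cite{BK} to make this routine.

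The main obstacle will be the precise matching in the second paragraph: pinning down the normalization of the lift from $\mathbb{F}_q$-functions to $F$-functions and verifying that the Frobenius trace of $\mathrm{IC}_w$, after this lift, equals on the nose the Iwahori-invariant vector Braverman-Kazhdan attach to the corresponding basis vector of $M_{d,q}$ — in particular getting all the $q$-powers, signs, and shifts from the perverse normalization to agree. Everything else (well-definedness on $K_0$, equivariance, the definition of $\bks_0$) is formal once that computation is in place.
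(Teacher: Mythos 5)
Your overall route is the paper's: compose the isomorphism $\eta$ of Theorem \ref{thm:klophm} with the Braverman--Kazhdan map, verify the triangle on a spanning set via the sheaf--function dictionary, deduce $\mathcal{H}_q\otimes\mathbb{C}[W]$-equivariance formally, and define $\bks_0$ as the relevant image (the paper does this intrinsically, as the span under the Fourier transforms $\Phi_w$ of the image of the Eisenstein map $\iota_!\circ p^*$ in Section \ref{sec:eisenstein}). However, two of your steps have concrete problems. First, your spanning set is the wrong one: you propose to match Frobenius traces of the simple objects (the classes coming from $\mathrm{IC}_w$) against the characteristic-function images of the alcove basis under the Braverman--Kazhdan map. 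These do not correspond: under $\Psi$ of Theorem \ref{thm:bksphm} the alcoves $A_w$, hence the indicator-type functions $\delta_w$, match the \emph{induced standard} classes via $\eta([j_{e!}(\Delta_w)])=A_w$, while the simples correspond to the canonical basis elements $A^\sharp$, and the paper only pins down $\eta$ on simples after passing to the quotient $\overline{M}^0_{d,q}$ (equation (\ref{eqn:etasimples})); moreover $\mathrm{tr}(\mathrm{IC}_w)$ involves Kazhdan--Lusztig polynomials and is not an indicator function. The clean check, which is what the paper does (Theorem \ref{thm:kloschwartz} and Proposition \ref{prop:eisenstein}), is on the classes $[j_{z!}(\Delta_w)]$, using $\mathrm{tr}(\Delta_w)=(-q^{-1/2})^{l(w)}\chi_w$, the fact that these classes span $K_0(\kl)\otimes\mathbb{C}$ only after specialization at $v=q^{1/2}$ (Lemma \ref{lem:goodrep}, a nontrivial input from \cite{P} which fails before specializing), and extension by $W$-equivariance. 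Relatedly, your recipe of ``assembling'' the tuple of component traces into one function and lifting it is not the right map: the sheaf--function description is valid only for classes of the form $[j_{e!}(\mathcal{F})]$, with the rest of $\bks_0$ generated by applying the $\Phi_w$, not by summing lifted component traces; and the lift must be taken through $\mathbf{X}(\mathcal{O})_{\mathrm{fin}}$, since the reduction map to $\mathbf{X}(\kappa)$ is only well defined on that locus.

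Second, the normalization issue you defer as routine bookkeeping is exactly where the substance lies, and it cannot be settled within the conventions of \cite{BK} as written: with the standard additive character $\psi_0$, the asserted isomorphism $\bks\cong M_{d,q}$ of Lemma 4.3/Corollary 4.6 of \cite{BK} is false (Example \ref{eg:doesntwork}), so no adjustment of $q$-powers and signs alone will make your triangle commute. The paper must first redefine the Fourier transforms using a conductor-one character $\psi_1$ together with an extra factor of $q$ (Definition \ref{def:fouriertransforms}) so that Theorem \ref{thm:bksphm} holds as stated; once that correction is in place, the remaining parts of your outline (equivariance, defining $\bks_0$ as the image and describing it directly) do go through essentially as in the paper.
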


In \cite{ABBGM}, building off of work in \cite{Flags1}, \cite{FFKM2}, and \cite{BFGM}, the authors define a category of perverse sheaves on the semi-infinite flag variety associated to $\mathbf{G}$. In 6.1.8 of \cite{ABBGM}, the authors discuss how the category of Iwahori-equivariant objects of their category can be viewed as a categorification of $\bks$. Using their main result  which relates Iwahori-monodromic objects to graded modules over the small quantum group, they define a $W$-action via functors $\mathsf{F}_w$ which categorify the Fourier transforms appearing in \cite{BK}. With this in mind, we conclude this paper by upgrading Theorem \ref{thm:schwartzconnection} to an equivalence of categories.

\begin{theorem}\label{thm:introcategorification}
There exists a certain subcategory $\tilde{\mathcal{P}}$ of Iwahori-equivariant perverse sheaves on the semi-infinite flag variety with $K_0(\tilde{\mathcal{P}})\otimes \mathbb{C} \cong \bks_0$. There is an equivalence of categories between $\mathcal{A}_{\mathcal{P}}$ and $\tilde{\mathcal{P}}$ which categorifies the isomorphism $K_0(\mathcal{A}_{\mathcal{P}})\otimes \mathbb{C} \cong \bks_0$. 

This equivalence is compatible with convolution by perverse sheaves on $G/B$, which categorifies the $\mathcal{H}_q$-action on each side, and it intertwines the $W$-action by functors $\mathcal{F}_w$ on $\mathcal{A}_{\mathcal{P}}$ and $\mathsf{F}_w$ on $\tilde{\mathcal{P}}$.
\end{theorem}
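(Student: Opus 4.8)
The plan is to construct the desired equivalence as a ``categorical Eisenstein series'' --- the sheaf-theoretic incarnation of the trace-of-Frobenius map appearing in Theorem~\ref{thm:schwartzconnection} --- sending a $W$-tuple of $B$-equivariant perverse sheaves on $G/U$ to a perverse sheaf on the semi-infinite flag variety. First I would pin down $\tilde{\mathcal{P}}$: under the dictionary identifying $K_0$ of the Iwahori-monodromic semi-infinite category with $\bks \cong M_{d,q}$ (from \cite{ABBGM} and \cite{BK}), the classes of the simple objects of $\mathcal{A}_{\mathcal{P}}$ listed in Theorem~\ref{thm:simpleobj}, transported via the isomorphism of Theorem~\ref{thm:introklomd}, become the classes of a distinguished finite family of IC sheaves on $\mathcal{F}\ell^{\infty/2}$; I let $\tilde{\mathcal{P}}$ be the Serre subcategory they generate. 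This makes both $K_0(\tilde{\mathcal{P}}) \otimes \mathbb{C} \cong \bks_0$ and the bijection of simple objects with those of $\mathcal{A}_{\mathcal{P}}$ tautological, and reduces the theorem to producing a functor realizing that bijection and proving it is an equivalence compatible with the two actions.

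Next I would build $\Phi \colon \mathcal{A}_{\mathcal{P}} \to \tilde{\mathcal{P}}$. The geometric input is a correspondence relating $G/U$ to $\mathcal{F}\ell^{\infty/2}$, coming from the orbit and stratification structure of the latter --- whose strata are indexed by $\waff = \Lambda \rtimes W$ (with $\Lambda$ the coweight lattice) and whose ``finite directions'' along each stratum are governed by $G/U$ --- which one uses to pull-push a tuple $(\mathcal{F}_w)_{w \in W}$ to a complex on $\mathcal{F}\ell^{\infty/2}$. The crucial point is that the gluing isomorphisms that are part of the datum of an object of $\mathcal{A}_{\mathcal{P}}$ are precisely the compatibilities with the Fourier-Deligne transforms of \cite{KL}, which are the geometric functional equations underlying \cite{BK}; these are exactly what is needed for the pull-push to descend to a well-defined $I^0$-monodromic perverse sheaf rather than merely a complex. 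I would then check, by d\'evissage along the stratification and using the explicit support statement $\mathcal{G}_y \cong \mathrm{IC}_w$ for $y \in P(w)z$ of Theorem~\ref{thm:simpleobj}, that $\Phi$ is t-exact, lands in $\tilde{\mathcal{P}}$, and sends the simple object attached to $(w, P(w)z)$ to the prescribed IC sheaf.

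To see that $\Phi$ is an equivalence I would either compute Hom-spaces on both sides as equalizers over $W$ of Hom-spaces in $\mathrm{Perv}_B(G/U)$ matched up by the Fourier functors --- making full faithfulness immediate --- or, since both categories are finite length with matching simples, verify that $\Phi$ is exact and induces isomorphisms on $\mathrm{Ext}^1$ between simple objects, i.e.\ identifies the Ext-quivers; essential surjectivity then follows. For the compatibilities: convolution by perverse sheaves on $G/B$ commutes with Eisenstein-type functors, which yields the $\mathcal{H}_q$-equivariance of $\Phi$, realizing the commuting triangle of Theorem~\ref{thm:schwartzconnection} on the categorical level; and the $W$-action by index permutation $\mathcal{F}_w$ on $\mathcal{A}_{\mathcal{P}}$ corresponds under $\Phi$ to acting on $\mathcal{F}\ell^{\infty/2}$ by the geometric functional equations, which are exactly the functors $\mathsf{F}_w$ of \cite{ABBGM}, so that $\Phi \circ \mathcal{F}_w \cong \mathsf{F}_w \circ \Phi$. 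That the induced map on $K_0 \otimes \mathbb{C}$ coincides with the composite of the isomorphisms of Theorems~\ref{thm:introklomd} and~\ref{thm:schwartzconnection} is forced by the construction of $\tilde{\mathcal{P}}$ together with the bijection on simples.

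I expect the main obstacle to be upgrading the numerical match of Theorem~\ref{thm:introklomd} to a genuine equivalence --- showing $\Phi$ is fully faithful rather than merely inducing the correct map on $K_0$. This requires controlling the homological structure of both categories: on the semi-infinite side through \cite{ABBGM}'s equivalence with graded modules over the small quantum group, and on the Kazhdan-Laumon side by a direct analysis of $\mathcal{A}_{\mathcal{P}}$ --- perhaps via the global differential operators description of $\mathcal{A}$ in \cite{BBP}, or by exploiting that $\mathcal{A}_{\mathcal{P}}$ is generated under $\mathrm{Perv}_B(G/B)$-convolution by a distinguished object with computable endomorphisms, which would let one rigidify the comparison as an equivalence of module categories over the Hecke category. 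A secondary but real difficulty is that the semi-infinite flag variety is not of finite type, so the pull-push defining $\Phi$ and its t-exactness must be set up carefully within the renormalized framework of \cite{ABBGM}; I anticipate that once $\Phi$ is correctly defined, t-exactness will reduce to t-exactness of the Fourier functors plus the stratification d\'evissage.
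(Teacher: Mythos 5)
There is a genuine gap, and it lies exactly where you flagged your ``main obstacle'': the passage from a match of simples/$K_0$ to an actual equivalence. Your definition of $\tilde{\mathcal{P}}$ as the Serre subcategory of the Iwahori-monodromic semi-infinite category generated by the distinguished IC sheaves is not the right category, and with that definition the theorem is not expected to hold: the Ext-structure among those IC sheaves computed inside $\flinf$ has no reason to agree with the Ext-structure of $\kl$. The paper's construction is designed precisely to deal with this: one first forms the Serre subcategory $\tilde{\mathcal{P}}_{\leq}$ generated by the IC sheaves indexed by $W_{\leq}$, then takes the Serre \emph{quotient} $\mathcal{Q}=\tilde{\mathcal{P}}_{\leq}/\mathcal{T}$ by the subcategory $\mathcal{T}$ generated by the IC sheaves outside the $W$-orbit $W'$, and only this quotient is identified with $\kl$. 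The identification itself does not go through any geometric ``pull-push Eisenstein'' functor (which you leave unconstructed, and whose descent along the Kazhdan--Laumon gluing data is exactly the hard, unsubstantiated step in your outline); instead it uses Polishchuk's universal-property/recognition theorem for the glued category (Theorem \ref{thm:recognition}): one exhibits exact functors $\mathsf{j}_w^*=\overline{i}_0^*\circ\mathsf{F}_w$ on $\mathcal{Q}$ with left adjoints, checks $\overline{i}_0^*\circ\mathsf{F}_w\circ\overline{i}_{0!}\cong F_{w,!}$ (Lemma \ref{lem:agreeswithconv}) and joint conservativity (Lemma \ref{lem:zeroifzero}, which rests on the combinatorics of Section \ref{sec:thebijection}), and the equivalence $\kl\cong\mathcal{Q}$ follows without ever building a correspondence between $G/U$ and the semi-infinite space. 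Your alternative full-faithfulness plans (Hom-spaces as equalizers over $W$, or matching Ext-quivers) presuppose that such a functor exists and that the homological comparison works out, which is the very point at issue.

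The second missing ingredient is how one gets from the quotient $\mathcal{Q}$ back to a genuine \emph{subcategory} of $\mathcal{P}^{\frac{\infty}{2}}$, as the theorem asserts. The paper proves (Proposition \ref{prop:piadjoints}) that the quotient functor $\pi:\tilde{\mathcal{P}}_{\leq}\to\mathcal{Q}$ admits a right adjoint $\tilde{\sigma}$ which is simultaneously a left adjoint, hence exact and fully faithful; this uses the adjunction criterion from \cite{stacks-project} together with the identities $\sigma(j_{w*}(\mathcal{F}))=\mathsf{F}_{w^{-1}}(\overline{i}_{0*}(\mathcal{F}))$ and $\sigma(j_{e!}(\Delta_e))=\boldsymbol{\Delta}_e$ (the latter via $\mathsf{F}_{w_0}(\boldsymbol{\nabla}_{w_0})\cong\boldsymbol{\Delta}_e$ from \cite{ABBGM}). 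The subcategory $\tilde{\mathcal{P}}$ of the theorem is the image of $\tilde{\sigma}$; it contains, e.g., the objects $\boldsymbol{\nabla}_w$, whose composition factors in $\flinf$ are not confined to the distinguished IC's, so it is genuinely different from the Serre subcategory you propose. Your instincts about the compatibilities are fine --- convolution compatibility does come from 4.4.1 of \cite{ABBGM} and the $W$-actions are matched essentially by construction --- but without the quotient-plus-section mechanism (or a constructed functor replacing it), the central equivalence is not established.
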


We hope that Theorem \ref{thm:introcategorification} will provide a new perspective related to one of the goals stated in \cite{ABBGM} to provide a geometric description of Braverman-Kazhdan's Fourier transforms. In loc.\ cit., Fourier transform functors for perverse sheaves on the semi-infinite flag variety are only defined in the case of Iwahori-monodromic objects, as these functors come from the equivalence to modules over the small quantum group. If suitable functors could be defined on the full category of perverse sheaves on the semi-infinite flag variety, we hope that Theorem \ref{thm:introcategorification} might generalize to a suitable equivalence of categories on the full Kazhdan-Laumon category $\mathcal{A}$, thereby elevating Kazhdan and Laumon's construction from a tool intended for the study of finite Chevalley groups to an object with interesting connections to current objects of study related to a local geometric Langlands correspondence.

\subsection*{Acknowledgments} I am grateful to my advisor, Roman Bezrukavnikov, for suggesting Kazhdan-Laumon's construction as a fruitful source of new directions and providing crucial feedback and support. I would also like to thank Alexander Braverman, George Lusztig, Alexander Polishchuk, and Zhiwei Yun for helpful conversations, and Oron Propp, Alex Karapetyan, and Siddharth Mahendraker for their careful reading of the text and many helpful comments. I also acknowledge the contributions of an anonymous referee who made suggestions which I believe greatly improved and clarified parts of the paper. Finally, I would like to thank Pablo Boixeda Alvarez for pointing out an error in the definition of the map $\eta$ in an earlier version of this paper, as well as for many useful insights related to this project. During this work, I was supported by an NSERC PGS-D award.

\section{Preliminaries}

Let $\mathbf{G}$ be a semisimple algebraic group over $\mathbb{Z}$. Letting $\kappa = \mathbb{F}_q$ where $q = p^n$ for $p$ an odd prime, we write $G = \mathbf{G}_{\kappa}$ for the base change to $\mathbb{F}_q$. We assume that $G$ is split, connected, and simply-connected. Let $\mathbf{T}$ be a Cartan subgroup, $\mathbf{B}$ a Borel subgroup containing $\mathbf{T}$, and $\mathbf{U}$ its unipotent radical. We use similar notation $T, B,$ and $U$ for their respective base changes to $\kappa$. Let $X = G/B$ be the flag variety associated to $G$, considered as a variety over $\mathbb{F}_q$. Let $W$ be the Weyl group, and let $\tilde{W}$ be the affine Weyl group associated to $\mathbf{G}$, with $S \subset W$ and $\tilde{S} \subset \tilde{W}$ their respective subsets of simple reflections. We denote by $w_0$ the longest element of $W$. In Section \ref{sec:schwartz}, we will consider a local field $k$ with residue field $\kappa$. In this setting, we will denote by $G_k$ the base change of $\mathbf{G}$ to $k$, and by $X_k$ the basic affine space over $k$. Let $\Pi$ be the set of simple roots, let $\Gamma$ (resp. $\Gamma^\vee$) be the coroot (resp. weight) lattice of $\mathbf{G}$.

Let $\mathcal{H}$ be the usual Hecke algebra over $\mathbb{Z}[v, v^{-1}]$ generated by $\{T_w\}_{w \in W}$, normalized so that the quadratic relation satisfied by any $T_s$ for $s \in S$ takes the form
\begin{align}
T_s^2 & = (v^2 - 1)T_s + v^2.
\end{align}

For any $w \in W$, let $\tilde{T}_w = v^{-\ell(w)}T_w$, so that for any $s \in S$,
\begin{align}\label{eqn:heckerelation}
(\tilde{T}_s - v)(\tilde{T}_s + v^{-1}) & = 0.
\end{align}
Following the conventions of Section 3.5 of \cite{W}, recall the canonical basis $C_w$ of $\mathcal{H}$ given in terms of the Kazhdan-Lusztig polynomials $P_{x,w}$ by
\begin{align*}
C_w & = \sum_{x \leq w} (-1)^{\ell(w) - \ell(x)} v^{\ell(x)-\ell(w)}P_{x,w} \tilde{T}_{x^{-1}}^{-1}.
\end{align*}

We will sometimes consider $\mathcal{H}$ as a subalgebra of $\tilde{\mathcal{H}}$, where $\tilde{\mathcal{H}}$ is the affine Hecke algebra generated by $\{\tilde{T}_{\tilde{w}}\}_{\tilde{w} \in W}$ with $\tilde{T}_{s}$ satisfying the same relations as in (\ref{eqn:heckerelation}) for any $s \in \tilde{S}$. Choosing once and for all a square root $q^{\frac{1}{2}}$ of $q$, we let $\mathcal{H}_q$ (resp. $\tilde{\mathcal{H}}_q$) be the specialization of $\mathcal{H}$ (resp. $\tilde{\mathcal{H}}$) at $v = q^{\frac{1}{2}}$.

Throughout the paper, when $\mathcal{C}$ is an abelian or triangulated category we will denote by $K_0(\mathcal{C})$ the Grothendieck group. Suppose that $\mathcal{C}$ is triangulated equipped with a bounded $t$-structure with heart $\mathcal{C}^{\heartsuit}$ and $F : \mathcal{C} \to \mathcal{C}$ is a right or left $t$-exact triangulated functor with truncation $F^\heartsuit : \mathcal{C}^\heartsuit \to \mathcal{C}^\heartsuit$. In this case, we will sometimes refer to ``the action of $F^{\heartsuit}$ on $K_0(\mathcal{C}^\heartsuit)$" as shorthand for the natural action of $F$ on $K_0(\mathcal{C})$. We also recall the $B$-equivariant constructible derived category $D^b_{\mathrm{m}}(B\backslash G/B) = D^b_{B,\mathrm{m}}(X) = D^b_{B,\mathrm{m}}(X, \overline{\mathbb{Q}}_{\ell})$ (for $\ell$ a prime number not equal to $p$) of mixed $\ell$-adic sheaves on $G/B$, with heart $\mathrm{Perv}_{B, \mathrm{m}}(G/B) = \mathrm{Perv}_{B, \mathrm{m}}(G/B, \overline{\mathbb{Q}}_{\ell})$ under the perverse $t$-structure.

In Section \ref{sec:kazhdanlaumon} and beyond when working with perverse sheaves on the flag variety $X = G/B$, we will follow the setup of Chapter 7 of \cite{Achar}. Recalling our choice of a square root $q^{\frac{1}{2}}$ of $q$, we define the half-integer Tate twist $(\tfrac{1}{2})$ on $D^b_{\mathrm{m}}(B \backslash G/B)$. 
We then view $K_0(D^b_{\mathrm{m}}(B \backslash G/B))$ as a $\mathbb{Z}[v, v^{-1}]$-module where $v^{-1}$ acts by $(\tfrac{1}{2})$. When $\mathcal{C}$ is any category for which $K_0(\mathcal{C})$ is a $\mathbb{Z}[v, v^{-1}]$-module, we denote by $K_0(\mathcal{C})\otimes \mathbb{C}$ the specialization $K_0(\mathcal{C}) \otimes_{\mathbb{Z}[v, v^{-1}]} \mathbb{C}$ at $v = q^{\frac{1}{2}}$. We use $\mathbb{D}$ to denote the Verdier duality functor and ${}^pH^i$ to denote the $i$th perverse cohomology functor.

If $Y_1$ and $Y_2$ are varieties and $F : D_{\mathrm{m}}^b(Y_1, \overline{\mathbb{Q}}_{\ell}) \to D_{\mathrm{m}}^b(Y_2, \overline{\mathbb{Q}}_{\ell})$ is a left or right $t$-exact triangulated functor for the perverse $t$-structure, we let ${}^pF$ denote its perverse truncation. We will often apply this in the context of Grothendieck's six-functor formalism when $F = f_!$ or $F = f_*$ for a morphism $f$ of varieties. When $F$ is $t$-exact for the perverse $t$-structure, we omit the superscript and identify ${}^pF = F$ as an exact functor $\mathrm{Perv}_{\mathrm{m}}(Y_1, \overline{\mathbb{Q}}_{\ell}) \to \mathrm{Perv}_{\mathrm{m}}(Y_2, \overline{\mathbb{Q}}_{\ell})$. 

\section{Lusztig's periodic Hecke module}
In this section, we recall the periodic Hecke module considered in \cite{L1} and \cite{L}, following the notational conventions of the latter. Our main object of study is the related module $M_d$ defined in loc.\ cit. The module $M_d$ and its specialization $M_{d,q}$ will appear in every subsequent section, serving as the combinatorial data which underlies all three of the main objects of study in this paper (Kazhdan-Laumon Category $\mathcal{O}$, the Schwartz space of the basic affine space, and perverse sheaves on the semi-infinite flag variety).

\subsection{Background and setup} To begin, we follow the setup of Section 4.2 of \cite{BK}. Let $\Xi$ denote the collection of all alcoves for the group $\tilde{W}$ in the real Lie algebra $\mathfrak{t}_{\mathbb{R}}$ of $\mathbf{T}$. The set $\Xi$ admits two commuting actions of the group $\tilde{W}$, one on the left and the other on the right. We will follow the conventions of \cite{L} so that the left action of any $s \in \tilde{S}$ takes an alcove $A$ to some neighboring alcove $sA$, with the right action defined so that each $s \in \tilde{S}$ reflects an alcove around the corresponding affine hyperplane $H_s$ in $\mathfrak{t}_{\mathbb{R}}$, so that $A$ and $As$ are not, in general, neighboring. Let $\mathcal{C}^+ \subset \mathfrak{t}_{\mathbb{R}}$ denote the dominant Weyl chamber. For any $\gamma \in \Gamma^\vee$, let $A_\gamma^+$ denote the unique alcove $A \in \gamma + \mathcal{C}^+$ for which $\gamma \in \overline{A}$, and let $A^+ = A_0^+$, the ``fundamental alcove." We will also write $A_w = wA^+$ for any $w \in \tilde{W}$.

We also denote by $d : \Xi\times \Xi \to \mathbb{Z}$ (and $d_{\alpha}$ for $\alpha \in \Pi$) the ``relative distance" functions defined in \cite{L}, and we use also the definition of $\mathcal{L}(A)$ from 1.2 of loc.\ cit.; informally, for $A \in \Xi$, $\mathcal{L}(A)$ is the subset of $\tilde{S}$ for which $A$ is ``above" $sA$, where the direction is determined by that of $\mathcal{C}^+$.

\begin{defn}
The module $M_c$ is the $\mathbb{Z}[v, v^{-1}]$-span of $\Xi$ with an action of $\aH$ defined for $s \in\tilde{S}$ by
\begin{align}
\label{eqn:heckeaction}
    \tilde{T}_{s}(A) & = \begin{cases}
    sA & \text{if $s \not\in \mathcal{L}(A)$},\\
    sA  + (v - v^{-1})A & \text{if $s \in\mathcal{L}(A)$},
    \end{cases}
\end{align}
and extended naturally into a left action. It also has an action of $\Gamma$ commuting with the $\widetilde{\mathcal{H}}$-action defined by $\gamma \cdot A = A + \gamma$ for $\gamma \in \Gamma$.
\end{defn}

\begin{defn}
Let $M_{\geq}$ be the ``upward semi-infinite completion" of $M_c$, consisting of all formal sums $\sum_{A \in\Xi} m_A A$ such that the set $\{A \in \Xi~|~m_A \neq 0\}$ is bounded below (in the sense of 4.13 of \cite{L}).
\end{defn}

\subsection{An action of $W$}

For any $\alpha \in \Pi$, an operator $\theta_{\alpha} : M_\geq \to M_\geq$ is defined in \cite{L} as follows. First any $A \in \Xi$, $\alpha \in \Pi$, a sequence $A^n$, $n \geq 0$ is defined by the conditions that $A^0 = As_{\alpha}$, and $d_\alpha(A^n, A^{n+1}) > 0$, in addition to the condition that $A^n$ lie in the same ``$\alpha$-strip" as $A$ (c.f.\ \cite{L} for a more precise definition). Then
\begin{align}\label{eqn:deftheta}
    \theta_{\alpha}(A) & = v^{-1}A^0 + \sum_{n=1}^\infty(-1)^{n+1}(v^{-n+1} - v^{-n-1})A^n.
\end{align}
We define for $w \in W$ the operator $\theta_w = \theta_{\alpha_1} \cdots\theta_{\alpha_k}$ for $w = s_{\alpha_1} \cdots s_{\alpha_k}$ a reduced expression, which yields a well-defined action of the Weyl group $W$ on $M_\geq$.

\subsection{The modules $M_d$ and $M_d^0$}

In \cite{L}, Lusztig defines a duality operator $\tilde{b}$ on $M_{\geq}$ and exhibits a ``canonical basis" for $M_{\geq}$ invariant under this operator.

\begin{prop}[Theorem 12.2 in \cite{L}]
    For each $A \in \Xi$, there exists an associated element $A^\sharp \in M_{\geq}$ which satisfies $A^\sharp = A + v^{-1}\sum_{B} \mathbb{Z}[v^{-1}]B$ and $\tilde{b}(A^\sharp) = A^\sharp$.
\end{prop}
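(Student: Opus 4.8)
The statement is the periodic-module analogue of the existence and uniqueness of the Kazhdan--Lusztig basis, so the plan is to run the standard bar-involution argument, the one genuinely new ingredient being a convergence statement forced by the passage to the completion $M_{\geq}$.

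First I would record the structural properties of the duality $\tilde{b}$. From \cite{L} it is additive and semilinear for $v \mapsto v^{-1}$, satisfies $\tilde{b}^2 = \mathrm{id}$, is compatible with the $\haff$-action in the twisted form $\tilde{b}(\tilde{T}_s \cdot m) = \tilde{T}_s^{-1}\cdot \tilde{b}(m)$, and is pinned down by the requirement that $\tilde{b}(A) = A$ for alcoves $A$ lying sufficiently far in the relevant direction. Writing a general $A$ as $\tilde{T}_w^{\pm 1}$ applied to such a far-away alcove and expanding via (\ref{eqn:heckeaction}), one finds that $\tilde{b}$ is ``upper unitriangular'' for the natural partial order $\preceq$ on $\Xi$ (the ``generic'' order of \cite{L}): $\tilde{b}(A) = A + \sum_{B \prec A} r_{B,A}\, B$ with $r_{B,A} \in \mathbb{Z}[v,v^{-1}]$ and with support bounded below, so that the expression lies in $M_{\geq}$ --- this last point being exactly why one must work in the completion rather than in $M_c$.

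Granting this, the formal part is routine. I would construct $A^\sharp = A + \sum_{B \prec A} p_{B,A}\, B$ with $p_{B,A} \in v^{-1}\mathbb{Z}[v^{-1}]$ by descending induction on $\preceq$, solving $\tilde{b}(A^\sharp) = A^\sharp$ one $v$-coefficient at a time: expanding $\tilde{b}(A)$ in the partial basis $\{B^\sharp\}_{B \prec A}$ produced so far and using $\tilde{b}^2 = \mathrm{id}$, the resulting ``defect'' is anti-invariant under $v \mapsto v^{-1}$, hence a $\mathbb{Z}$-combination of the $v^n - v^{-n}$, and is cancelled by a unique correction with coefficients in $v^{-1}\mathbb{Z}[v^{-1}]$. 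Uniqueness of $A^\sharp$ then follows from the usual observation that a $\tilde{b}$-invariant element of $v^{-1}\sum_B \mathbb{Z}[v^{-1}]B$ must be $0$.

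The main obstacle --- and the place where the argument departs from the finite Hecke algebra, where Bruhat intervals are finite and the induction above is over a finite poset --- is that $\{B : B \prec A\}$ remains infinite even after intersecting with a bounded-below region, so one must check that the recursion produces a well-defined element of $M_{\geq}$. This is the substance of Theorem 12.2 of \cite{L}, and I would extract it as a stabilization lemma: for each fixed $N$, only finitely many alcoves $B$ can contribute a term $v^{-n}B$ with $n \le N$ to $A^\sharp$, so that, as one runs the induction, the coefficient of any fixed power of $v^{-1}$ in $A^\sharp$ is determined after finitely many steps and $A^\sharp$ is a well-defined element of $M_{\geq}$. Proving this uses the finer combinatorics of \cite{L} --- the relative-distance functions $d_\alpha$ and the $\alpha$-strip decomposition --- to bound the $v^{-1}$-degree of $p_{B,A}$ in terms of the combinatorial distance from $B$ to $A$, which, combined with boundedness below of the support, confines the contributing alcoves at each stage to a finite set. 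An alternative, in the spirit of the ``generic decomposition patterns'' of \cite{L}, is to identify the $p_{B,A}$ --- after translating $A$ far into the dominant chamber $\mathcal{C}^+$ --- with ordinary affine Kazhdan--Lusztig polynomials and to deduce the claim from their known stabilization; this merely reduces the problem to the same kind of stabilization input.
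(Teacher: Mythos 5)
You are not competing with an internal argument here: the paper does not prove this proposition at all. It is imported verbatim from Lusztig (it is stated as ``Theorem 12.2 in \cite{L}'') and everything downstream --- the elements $A^\sharp$, the definition of $M_d$, Proposition \ref{prop:actionsharps} --- simply uses it as a black box. So the only meaningful question is whether your sketch would stand on its own as a proof of Lusztig's theorem.

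As written it would not, and you essentially concede this yourself. The formal canonical-basis machinery you set up (semilinearity of $\tilde{b}$, $\tilde{b}^2=\mathrm{id}$, compatibility with the $\haff$-action, unitriangularity, the anti-invariant ``defect'' killed by a unique correction in $v^{-1}\mathbb{Z}[v^{-1}]$, and uniqueness from $\tilde{b}$-invariance) is standard and unobjectionable. But in the periodic setting the lower intervals of the order on alcoves are infinite, so the two non-formal points --- that $\tilde{b}(A)$ is a well-defined element of $M_{\geq}$ with unitriangular, bounded-below expansion, and that the recursion for $A^\sharp$ converges coefficientwise --- are precisely the content of the theorem. Your proposal isolates this as a ``stabilization lemma'' and then defers its proof to ``the substance of Theorem 12.2 of \cite{L}'', i.e.\ to the very statement being proved; that step is circular, not merely terse. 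The alternative you mention in passing --- pushing $A$ far into $\mathcal{C}^+$ and identifying the coefficients with stable limits of affine Kazhdan--Lusztig polynomials, in the spirit of the generic decomposition patterns of \cite{L1} --- is in fact much closer to how Lusztig actually obtains the $A^\sharp$, but you again reduce it to ``the same kind of stabilization input'' without establishing it. So: correct roadmap, correct identification of where the difficulty sits, but the decisive finiteness/stabilization argument is missing, and without it the proposal is an outline rather than a proof.
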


\begin{defn}
The module $M_d \subset M_{\geq}$ is the $\mathbb{Z}[v, v^{-1}]$ span of $\{A^\sharp\}_{A \in \Xi}$. It is invariant under the natural actions of $\aH$ and $\Gamma$ obtained by extending the actions defined on $M_c$. Let $\uphm{q} = M_d \otimes_{\mathbb{Z}[v, v^{-1}]} \mathbb{C}$ be the specialization of $M_d$ at $v = q^{\frac{1}{2}}$.
\end{defn}

We now introduce Lusztig's $*$-action of $W$ on $\Xi$, which we will then use to describe the action of the $\theta_w$ on $M_d$.
\begin{defn}[\cite{L1}]
Let $\mathcal{F}^*$ be the set of $\Gamma^\vee$-translates of the hyperplanes in the boundary of $\mathcal{C}^+$. 
For any $\gamma \in \Gamma^\vee$, let $\Pi_\gamma$ be the unique connected component of $\mathfrak{t}_{\mathbb{R}} \setminus \cup_{H \in \mathcal{F}^*} H$ containing $A_{\gamma}^+$.

For any $s_\alpha \in S$ and $A \in \Xi$, we then define an alcove $s_{\alpha} * A$ as follows.
First, choose the unique $\gamma \in \Gamma^\vee$ such that $A \in \Pi_\gamma$, and choose $w \in W$ so that $A = wA_{\gamma}^+$. Then define $s_{\alpha} * A = wA_{s_\alpha(\gamma)}^+$. We then define $w * A$ for any $w \in W$ by induction on $\ell(w)$. 
\end{defn}

\begin{proposition}
\label{prop:actionsharps}
For any $s_\alpha \in S$,
\begin{align}
    \theta_{s_\alpha}(A^\sharp) & = (s_\alpha * A)^\sharp,\label{eqn:thetasharp}
\end{align}
\end{proposition}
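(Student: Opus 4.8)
The plan is to pin down $\theta_{s_\alpha}(A^\sharp)$ via the characterization of the canonical basis. Recall that, by \cite{L}, for each alcove $X$ the element $X^\sharp$ is the \emph{unique} element of $M_\geq$ that is fixed by the duality $\tilde{b}$ and congruent to $X$ modulo $v^{-1}\sum_B\mathbb{Z}[v^{-1}]B$ (existence being the Proposition quoted just above). So it suffices to check that $\theta_{s_\alpha}(A^\sharp)$ (i) is $\tilde{b}$-invariant, and (ii) is congruent to $s_\alpha * A$ modulo $v^{-1}\sum_B\mathbb{Z}[v^{-1}]B$; then $\theta_{s_\alpha}(A^\sharp) = (s_\alpha*A)^\sharp$ follows immediately.

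For (i), the key point is that $\theta_{s_\alpha}$ commutes with $\tilde{b}$ on $M_\geq$, so that $\tilde{b}(\theta_{s_\alpha}(A^\sharp)) = \theta_{s_\alpha}(\tilde{b}(A^\sharp)) = \theta_{s_\alpha}(A^\sharp)$. This commutation is part of Lusztig's analysis in \cite{L}; it can also be extracted from the explicit formula (\ref{eqn:deftheta}), whose only ingredients are the alcoves $A^n$ obtained by moving away from $A$ within a single $\alpha$-strip, which reduces the verification to an essentially rank-one computation where both $\tilde{b}$ and $\theta_{s_\alpha}$ are given by closed formulas.

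For (ii), note that $\theta_{s_\alpha}$ is $\mathbb{Z}[v,v^{-1}]$-linear and that every coefficient appearing in $\theta_{s_\alpha}(B)$ lies in $\mathbb{Z}[v^{-1}]$ by (\ref{eqn:deftheta}); hence $\theta_{s_\alpha}$ preserves the $\mathbb{Z}[v^{-1}]$-lattice $\mathcal{M} := \sum_B \mathbb{Z}[v^{-1}]B \subset M_\geq$ together with $v^{-1}\mathcal{M}$, and so descends to an operator on $\mathcal{M}/v^{-1}\mathcal{M}$. Reading (\ref{eqn:deftheta}) modulo $v^{-1}\mathcal{M}$ — the term $v^{-1}A^0$ dies, the $n=1$ term $(1-v^{-2})A^1$ becomes $A^1$, and every $n\geq 2$ term dies — shows that the descended operator sends the class of $A$ to the class of $A^1$. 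Since $A^\sharp \equiv A$ modulo $v^{-1}\mathcal{M}$, we get $\theta_{s_\alpha}(A^\sharp) \equiv A^1$ modulo $v^{-1}\mathcal{M}$, and it remains only to identify $A^1$ with $s_\alpha * A$. This is a direct comparison of the two definitions: choosing $\gamma \in \Gamma^\vee$ with $A \in \Pi_\gamma$ and $w \in W$ with $A = wA_\gamma^+$, one verifies that the alcove reached by taking one step from $A^0 = As_\alpha$ in the direction of increasing $d_\alpha$, inside the $\alpha$-strip of $A$, is precisely $wA_{s_\alpha(\gamma)}^+ = s_\alpha * A$.

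I expect the identification $A^1 = s_\alpha * A$ in step (ii) to be the main obstacle: it requires carefully reconciling the two distinct combinatorial recipes Lusztig uses — the relative-distance/$\alpha$-strip description built into $\theta_\alpha$ and the chamber-based description built into the $*$-action — with particular care for the alcoves $A$ lying near a hyperplane of $\mathcal{F}^*$, where the ambient component $\Pi_\gamma$ (and hence $\gamma$, and hence $w$) must be selected correctly. The duality compatibility in step (i) is the other technical ingredient, though it reduces to an explicit low-rank verification. With both in hand, uniqueness of the $\tilde{b}$-fixed element in the relevant coset yields $\theta_{s_\alpha}(A^\sharp) = (A^1)^\sharp = (s_\alpha * A)^\sharp$, as claimed.
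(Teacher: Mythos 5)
Your route is genuinely different from the paper's: the paper does not characterize $(s_\alpha * A)^\sharp$ by $\tilde{b}$-invariance plus leading term, but instead starts from Corollary 8.9 of \cite{L1}, which gives the action on the dual basis, $\theta_{s_\alpha}(A^\flat) = (s_\alpha * A)^\flat$, and transports this across the bilinear pairing of Section 9 of \cite{L} using the self-adjointness of $\theta_{s_\alpha}$, the continuity property in 3.3 of \cite{L}, and the defining property $(A^\flat, B^\sharp) = \delta_{A,B}$. The reason for that detour is exactly the point where your argument has a genuine gap: your step (i), the commutation $\tilde{b}\circ\theta_{s_\alpha} = \theta_{s_\alpha}\circ\tilde{b}$, is not something you can cite -- what Lusztig supplies is the $\flat$-basis statement, not commutation with the duality -- and your fallback that it can be ``extracted from (\ref{eqn:deftheta})'' by an essentially rank-one check does not get off the ground: $\tilde{b}$ is not given by any formula local to an $\alpha$-strip; it is a semilinear involution on the semi-infinite completion defined globally (through the $\widetilde{\mathcal{H}}$-action and a limiting procedure), so its interaction with $\theta_{s_\alpha}$ is a global statement. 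Worse, granted your step (ii), commutation of $\tilde{b}$ with $\theta_{s_\alpha}$ on $M_d$ is essentially equivalent to the proposition itself, so assuming it makes the argument close to circular; the honest way to establish it is precisely via the pairing and the $\flat$-basis, i.e.\ the paper's proof.

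Step (ii) also needs more than you give it. The identification $A^1 = s_\alpha * A$ is true, but it is the combinatorial heart of the statement and you leave it at ``one verifies''; reconciling the $\alpha$-strip recipe behind (\ref{eqn:deftheta}) with the chamber-based definition of the $*$-action (note that for $A = wA_\gamma^+$ with $w \neq e$ the alcove $As_\alpha$ sits near $s_\alpha w\gamma$ while $s_\alpha * A$ sits near $ws_\alpha\gamma$) is real work, comparable to what the citation of \cite{L1} replaces. In addition, reducing $\theta_{s_\alpha}(A^\sharp)$ modulo $v^{-1}$ term by term requires justifying the application of $\theta_{s_\alpha}$ to the infinite sum $A^\sharp$: the coefficient of a fixed alcove in $\theta_{s_\alpha}(A^\sharp)$ is an infinite sum (already visible in the $\mathbf{SL}_2$ example), so your lattice $\mathcal{M}$ must be taken with completed coefficients and the well-definedness invoked from 12.2 and the continuity property 3.3 of \cite{L}, exactly as the paper does. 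With (i) proved honestly and (ii) carried out, your uniqueness-based skeleton would indeed yield the result, but as it stands both key inputs are asserted rather than established.
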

\begin{proof}
Recall the elements $A^\flat \in M_d$ defined in \cite{L}, noting that in our setup they agree with the elements $\tilde{D}_A$ defined in 8.9 of \cite{L1}. Additionally, recall the bilinear pairing $(-,-)$ on $M_d$ defined in Section 9 of \cite{L}. Corollary 8.9 of \cite{L1} gives that for any $s_\alpha \in S$, $A \in \Xi$,  $\theta_{s_\alpha}(A^\flat) = (s_\alpha * A)^\flat$. The elements $A^\sharp$ are defined such that $(A^\flat, B^\sharp) = \delta_{A,B}$. One can check directly that $(\theta_{s_\alpha}A, B) = (A, \theta_{s_\alpha}B)$ for any alcoves $A$ and $B$. So the continuity property in 3.3 of \cite{L} and the well-definedness of the $\theta_{s_\alpha}$ on $A^\sharp$ guaranteed by 12.2 of loc.\ cit. means $(\theta_sA^\flat, B^\sharp) = (A^\flat, \theta_sB^\sharp)$ for any alcoves $A, B$. This means
\begin{align*}
    (A^\flat, \theta_{s_\alpha}(B^\sharp)) & = (\theta_{s_\alpha}(A^\flat), B^\sharp)\\
    & = ((A * s_\alpha)^{\flat}, B^\sharp)\\
    & = \delta_{s_\alpha * A, B}\\
    & = \delta_{A, s_\alpha * B}.
\end{align*}
Since $\theta_{s_\alpha}(B^\sharp)$ satisfies the other defining properties in 12.2 of \cite{L}, $\theta_{s_\alpha}(B^\sharp) = (s_\alpha * B)^\sharp$, yielding (\ref{eqn:thetasharp}).
\end{proof}

\begin{defn}\label{def:epsilonindices}
For any $z \in W$ and $\tilde{w} \in \aW$, let $\epsilon_z(\tilde{w})$ be the element of $\aW$ such that $z * A_{\tilde{w}} = A_{\epsilon_z(\tilde{w})}$.
\end{defn}

\begin{defn}\label{def:m0d}
Let $M_d^0$ be the $\mathbb{Z}[v, v^{-1}]$-submodule of $M_d$ generated by the finitely-many elements $\{A_{w}\}_{w \in W}$ and their images under the operators $\{\theta_w\}_{w \in W}$. Let $M_{d,q}^0$ be the specialization of $M_{d}^0$ at $v = q^{\frac{1}{2}}$.
\end{defn}

\begin{lemma}
$M_{d,q}^0$ is a finite-dimensional $\mathcal{H}_{q}\otimes \mathbb{C}[W]$-submodule of $M_{d,q}$.
\end{lemma}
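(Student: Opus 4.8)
The plan is to check the three assertions separately --- finite dimensionality, stability under the $W$-action by the $\theta_w$, and stability under the $\mathcal{H}_q$-action --- the last being where all the content lies.

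Finite dimensionality is immediate. By Definition \ref{def:m0d}, $M_d^0$ is the $\mathbb{Z}[v,v^{-1}]$-span of the finite set $\{\theta_w(A_z)\,:\,w,z\in W\}$ (the generators $A_w$ being recovered as $\theta_e(A_w)$), so $M_{d,q}^0$ is spanned over $\mathbb{C}$ by at most $|W|^2$ vectors. For stability under the $\theta_{w'}$: the map $w\mapsto\theta_w$ is a group homomorphism (as asserted in the definition of the $\theta_w$; alternatively this follows on $M_d$ from Proposition \ref{prop:actionsharps}, since $\{A^\sharp\}_{A\in\Xi}$ is a $\mathbb{Z}[v,v^{-1}]$-basis of $M_d$ and $*$ is a $W$-action on $\Xi$), so $\theta_{w'}\bigl(\theta_w(A_z)\bigr)=\theta_{w'w}(A_z)$ is again one of the spanning vectors.

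The remaining point is $\mathcal{H}_q$-stability, for which the key input is that the action of $\mathcal{H}_q$ (the restriction to $\mathcal{H}\subset\widetilde{\mathcal{H}}$ of the action \eqref{eqn:heckeaction}) commutes with the operators $\theta_w$ on $M_{d,q}$. This commutation is precisely what makes $M_{d,q}$ a module over $\mathcal{H}_q\otimes\mathbb{C}[W]$ in the first place, and is implicit in the statement of \cite{BK} recalled in the introduction; it can in any case be verified directly from Lusztig's definitions, by checking that the left action of a finite simple reflection preserves the $\alpha$-strips and the relative distances $d_\alpha$ appearing in \eqref{eqn:deftheta} and is compatible with the correction term in \eqref{eqn:heckeaction} (which is governed by $\mathcal{L}(\,\cdot\,)$). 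Granting this, for $s\in S$ and $w,z\in W$ one has $\tilde{T}_s\cdot\theta_w(A_z)=\theta_w\bigl(\tilde{T}_s\cdot A_z\bigr)$; and since $A_z=zA^+$, the left action of $s$ sends $A_z$ to the neighbouring alcove $A_{sz}$ with $sz\in W$, so \eqref{eqn:heckeaction} gives $\tilde{T}_s\cdot A_z\in\mathbb{Z}[v,v^{-1}]A_{sz}+\mathbb{Z}[v,v^{-1}]A_z$. Hence $\tilde{T}_s\cdot\theta_w(A_z)$ lies in the $\mathbb{Z}[v,v^{-1}]$-span of the $\theta_w(A_{z'})$, $z'\in W$, and therefore in $M_{d,q}^0$. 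Putting the three points together proves the lemma.

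I expect the genuine obstacle to be this last commutation statement. Although it is morally built into the $\mathcal{H}_q\otimes\mathbb{C}[W]$-module structure on $M_{d,q}$ from \cite{BK}, making the argument self-contained requires unwinding the combinatorics of the periodic module --- in particular tracking how the correction terms in \eqref{eqn:heckeaction} behave under the right-action-based construction of $\theta_\alpha$. My preference would be to isolate this compatibility as a short preliminary statement (or to cite it), after which everything reduces to the elementary observation that $\mathcal{H}_q$ preserves the finite-rank submodule spanned by $\{A_z\}_{z\in W}$ and that $\theta$ is a $W$-action.
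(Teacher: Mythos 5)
Your proof is correct and follows essentially the same route as the paper: finite dimensionality and $W$-stability are immediate from the definition, and $\mathcal{H}_q$-stability follows from the fact that $\tilde{T}_s$ preserves $\mathrm{span}\{A_z\}_{z\in W}$ combined with the commutation of the $\mathcal{H}_q$-action with the operators $\theta_w$. The commutation you flag as the "genuine obstacle" is simply taken as given in the paper as well (it is part of Lusztig's construction of the periodic module and of the $\mathcal{H}_q\otimes\mathbb{C}[W]$-module structure on $M_{d,q}$), so your argument matches the paper's.
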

\begin{proof}
The fact that $M_{d,q}^0$ is a finite-dimensional $\mathbb{C}[W]$-submodule follows by definition. Since for any $w \in W \subset \tilde{W}$ and any $y \in W$ we have $\tilde{T}_yA_w \in \mathrm{span}\{A_z\}_{z \in W}$ by the definition of the $\mathcal{H}_q$-action. Since this action commutes with the action of $\mathbb{C}[W]$, it follows that $M_{d,q}^0$ is closed under the action of $\mathcal{H}_q$.
\end{proof}

\subsection{Example in Type $\mathbf{A}_1$}\label{sec:examplesl2}

\label{sec:sl2phm}
For $\mathbf{G} = \mathbf{SL}_2$, we can identify $\Xi$ with the set of intervals $(n, n+1) \subset \mathbb{R}$ for $n \in \mathbb{Z}$, which we denote by $A_n$. With this convention, $A_{e} = A_{0}$ and $A_{s_1} = A_{-1}$. It follows from the definitions in \cite{L} that
\begin{align}
    s_1 * A_n & = A_{-n}\\
    A_n^{\sharp} & = A_n + \sum_{i=1}^\infty (-1)^i v^{-i}A_{n+i} \in M_d.\label{eqn:sl2simple}
\end{align}

A computation then shows that
\begin{align*}
    \tilde{T}_{s_1}A_n & = \begin{cases}
    A_{n+1} & \text{if $n$ is odd},\\
    A_{n-1} + (v - v^{-1})A_{n} & \text{if $n$ is even,}
    \end{cases}\\
    \tilde{T}_{s_1}A_n^{\sharp} & = \begin{cases}
    -v^{-1}A_{n}^\sharp & \text{if $n$ is odd,}\\
    vA_n^\sharp + A_{n-1}^\sharp + A_{n+1}^\sharp & \text{if $n$ is even.}
    \end{cases}
\end{align*}

Further,
\begin{align*}
    \theta_{s_1}(A_n^\sharp) & = A_{-n}^\sharp.\\
\end{align*}

\section{Kazhdan-Laumon Category $\mathcal{O}$\label{sec:kazhdanlaumon}}

In this section, we recall the Kazhdan-Laumon construction for gluing perverse sheaves on the basic affine space described in \cite{KL}. We define a subcategory $\mathcal{A}_B$ of the Kazhdan-Laumon category $\mathcal{A}$ analogous to Category $\mathcal{O}$ along with a mixed version $\mathcal{A}_{\mathcal{P}}$, and describe its simple objects explicitly.

\subsection{Category $\mathcal{O}$, mixed sheaves, and the Hecke algebra\label{sec:cato}}

Recall that $X$ is stratified by affine cells $\{X_w\}_{w \in W}$, and for each $w \in W$ we let $j_w : X_w \hookrightarrow X$ be the inclusion. We will use the following definitions of standard, costandard, and simple perverse sheaves in $\mathrm{Perv}_{B,\mathrm{m}}(X, \Ql)$ with Tate twists (the notation in (\ref{eqn:stdcostdsimple}) which we will use throughout the paper differs by a Tate twist from that of \cite{Achar}, but will be more convenient for our purposes).
\begin{equation}\label{eqn:stdcostdsimple}
\begin{split}
\Delta_w & = j_{w!}(\Qlu[\ell(w)])(\tfrac{\ell(w)}{2})\\
\nabla_w & = j_{w*}(\Qlu[\ell(w)])(\tfrac{\ell(w)}{2})\\
\mathrm{IC}_w & = j_{w!*}(\Qlu[\ell(w)])(\tfrac{\ell(w)}{2})
\end{split}
\end{equation}
As in 7.2 of \cite{Achar}, we also have the convolution product
\begin{align}
    * : D^b_{\mathrm{m}}(B\backslash G/B, \Ql) \times D^b_{\mathrm{m}}(B\backslash G/B, \Ql) & \to D^b_{\mathrm{m}}(B \backslash G/B, \Ql).\label{eqn:ringconv}
\end{align}

\begin{defn}
Let $\mathcal{P}$ be the full subcategory of $\mathrm{Perv}_{B,\mathrm{m}}(X, \Ql)$ generated by objects of the form $\mathrm{IC}_w(\tfrac{m}{2})$ for $w\in W$, $m \in \mathbb{Z}$.
\end{defn}

The convolution product in (\ref{eqn:ringconv}) gives a ring structure on $K_0(\mathcal{P})$, which we will identify with the Grothendieck group of the subcategory of $D_{\mathrm{m}}^b(B\backslash G/B)$ generated  as a triangulated category by twists and shifts of the objects $\mathrm{IC}_w$, since $\mathcal{P}$ is the heart of the latter category.

\begin{prop}[c.f.\ Ex. 7.4.3 in \cite{Achar}]
There is a unique isomorphism of rings
\begin{align*}
\mathrm{ch} : K_0(\mathcal{P}) \cong \mathcal{H}
\end{align*}
such that $\mathrm{ch}([\nabla_w]) = \tilde{T}_w$ for all $w \in W$. Moreover, this map satisfies
\begin{equation}
\label{eqn:mixed}
\begin{aligned}
\mathrm{ch}([\mathcal{F}(\tfrac{1}{2})]) & = v^{-1}\mathrm{ch}([\mathcal{F}]), & \mathrm{ch}([\mathbb{D}(\mathcal{F})]) & = \overline{\mathrm{ch}([\mathcal{F}])},\\
\mathrm{ch}([\mathrm{IC}_w])& = C_w, & \mathrm{ch}([\nabla_w(-\tfrac{\ell(w)}{2})]) & = T_w.
\end{aligned}
\end{equation}
\end{prop}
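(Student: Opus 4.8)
The plan is to run the standard argument identifying the ``Hecke category'' with the Hecke algebra, as in Example~7.4.3 of \cite{Achar} and its references, taking care only that our half-integer Tate-twist normalization on $X = G/U$ lines up with the normalizations of $\tilde{T}_w$ and $C_w$ fixed in \cite{W}.

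First I would set up the $K_0$ bookkeeping. The category $\mathcal{P}(G/B)$ is finite-length with simple objects exactly the $\mathrm{IC}_w(\tfrac{m}{2})$ for $w \in W$, $m \in \mathbb{Z}$, pairwise non-isomorphic (distinct supports or distinct weights); hence $K_0(\mathcal{P}(G/B))$ is free over $\mathbb{Z}[v,v^{-1}]$ on $\{[\mathrm{IC}_w]\}_{w \in W}$, with $v^{-1}$ acting by $(\tfrac12)$. Because $\nabla_w$ and $\Delta_w$ each restrict to $\Qlu[\ell(w)](\tfrac{\ell(w)}{2})$ on the open stratum $X_w \subset \overline{X_w}$ while all their other composition factors are supported on strictly smaller strata, $[\nabla_w]$ and $[\Delta_w]$ are obtained from $[\mathrm{IC}_w]$ by a change of basis that is unitriangular for the Bruhat order, hence invertible over $\mathbb{Z}[v,v^{-1}]$; so $\{[\nabla_w]\}$ and $\{[\Delta_w]\}$ are also bases. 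I then \emph{define} $\mathrm{ch}$ to be the unique $\mathbb{Z}[v,v^{-1}]$-linear isomorphism onto $\mathcal{H}$ with $\mathrm{ch}([\nabla_w]) = \tilde{T}_w$; the identity $\mathrm{ch}([\mathcal{F}(\tfrac12)]) = v^{-1}\mathrm{ch}([\mathcal{F}])$ is then immediate from $\mathbb{Z}[v,v^{-1}]$-linearity, and $\mathrm{ch}([\nabla_w(-\tfrac{\ell(w)}{2})]) = v^{\ell(w)}\tilde{T}_w = T_w$ follows from $\tilde{T}_w = v^{-\ell(w)}T_w$.

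Next I would verify $\mathrm{ch}$ is multiplicative, so that (being bijective) it is a ring isomorphism. From the fact that the multiplication map of Schubert varieties is an isomorphism over the big cell, $\nabla_y * \nabla_w \cong \nabla_{yw}$ whenever $\ell(yw) = \ell(y) + \ell(w)$; this realizes each $[\nabla_w]$ as the product of the $\tilde{T}_s$ along a reduced word and forces the braid relations. For the quadratic relation I would use the rank-one computation $\Delta_s * \nabla_s \cong \delta_e \cong \nabla_s * \Delta_s$ on $\mathbb{P}^1$ together with the identity $[\nabla_s] - [\Delta_s] = (v - v^{-1})[\delta_e]$, which comes from the triangle $\Delta_s \to \nabla_s \to i_*i^*\nabla_s$ (with $i \colon X_e \hookrightarrow \overline{X_s}$) and the tame cohomology of the punctured disk; these give $[\nabla_s]^2 = (v-v^{-1})[\nabla_s] + 1$, i.e.\ $(\tilde{T}_s - v)(\tilde{T}_s + v^{-1}) = 0$, matching \eqref{eqn:heckerelation}. (The monoidal structure restricts to $\mathcal{P}(G/B)$ by the decomposition theorem, so the ring structure on $K_0$ is as stated.) Telescoping the same reduced-word computation gives $\nabla_w * \Delta_{w^{-1}} \cong \delta_e$, hence $[\Delta_w] = \tilde{T}_{w^{-1}}^{-1}$.

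Finally I would treat duality and the IC classes. Since $\mathbb{D}$ is $t$-exact on $\mathcal{P}(G/B)$ it descends to an additive, $v \mapsto v^{-1}$ semilinear involution on $K_0$; as $\mathbb{D}\nabla_w = \Delta_w$ and $[\Delta_w] = \tilde{T}_{w^{-1}}^{-1} = \overline{\tilde{T}_w}$, and the Tate twists of the $\nabla_w$ span $K_0$ over $\mathbb{Z}$, this yields $\mathrm{ch}([\mathbb{D}\mathcal{F}]) = \overline{\mathrm{ch}([\mathcal{F}])}$. The object $\mathrm{IC}_w$ is self-Verdier-dual --- the shift and twist in its definition are chosen precisely for this --- hence $\mathrm{ch}([\mathrm{IC}_w])$ is bar-invariant; and the Kazhdan--Lusztig purity theorem for the stalks of $\mathrm{IC}_w$ on the Schubert stratification gives $[\mathrm{IC}_w] = [\Delta_w] + \sum_{x<w}(-1)^{\ell(w)-\ell(x)}v^{\ell(x)-\ell(w)}P_{x,w}[\Delta_x]$ with lower coefficients in $v^{-1}\mathbb{Z}[v^{-1}]$. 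Rewriting $[\Delta_x] = \tilde{T}_{x^{-1}}^{-1}$ and invoking the uniqueness in the Kazhdan--Lusztig characterization, this is exactly $C_w$ as normalized in \cite{W}. I expect the only genuinely substantive ingredient to be this last purity statement, which I would simply quote; the rest is bookkeeping, and the one place where care is needed is checking that the half-integer twist conventions for $G/U$ produce the correct leading terms and signs relative to \cite{W}.
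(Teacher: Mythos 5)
Your argument is correct and is essentially the standard one that the paper does not spell out but simply cites (Achar, Example 7.4.3): the convolution identities $\nabla_y*\nabla_w\cong\nabla_{yw}$ for added lengths, the rank-one computation giving the quadratic relation, Verdier duality matching the bar involution, and pointwise purity of $\mathrm{IC}_w$ producing exactly the canonical-basis expansion $C_w$ in the paper's normalization. The only cosmetic caveat is that convolution does not literally restrict to the abelian category $\mathcal{P}(G/B)$ (it lands in the triangulated subcategory it generates, by purity/decomposition), which is all that is needed here since the paper defines $K_0(\mathcal{C})$ as the Grothendieck group of $D^b(\mathcal{C})$.
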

This also implies that $K_0(\mathcal{P})$ is isomorphic to $\mathcal{H}$ as a bimodule over itself via the left and right convolution actions of $K_0(\mathcal{P})$.

\subsection{Kazhdan-Laumon categories}\label{sec:klcats}

In \cite{KL}, the authors construct an abelian category $\mathcal{A}$ by ``gluing" $|W|$-many copies of the category $\mathrm{Perv}(G/U)$ via Fourier transforms ${}^p\tilde{F}_{s,!}$ (called ${}^pF_{s,!}$ in loc.\ cit.) indexed by simple reflections $s \in S$. These come from right $t$-exact functors $\tilde{F}_{s,!} : D^b(G/U) \to D^b(G/U)$. In \cite[5.2]{P}, it is shown that
\begin{align}\label{eqn:defwithks}
    \tilde{F}_{s,!}(\mathcal{G}) = p_{2!}(p_1^*\mathcal{G} \otimes \overline{K(s)}),
\end{align}
where $p_1, p_2$ are the projection maps from $G/U \times G/U \to G/U$, and $\overline{K(s)}$ is defined as follows.

In loc.\ cit., a collection of subvarieties of $G/U \times G/U$ indexed by the Weyl group is introduced. For $s \in S$, we describe the subvareity $X(s)$ indexed by $s$ explicitly. Write
\begin{align}
    n_s = \rho_s\begin{pmatrix}
        0 & 1\\ 
        -1 & 0
    \end{pmatrix}
\end{align}
where $\rho_s : \mathrm{SL}_2 \to G$ is the homomoprhism described in in loc.\ cit. corresponding to the simple root $\alpha_s$, and let $T_s = \alpha_s^{\vee}(\mathbb{G}_m) \subset T$. Then $X(s)$ is the subvariety of pairs $(gU, g'U) \in G/U\times G/U$ such that $g^{-1}g' \in Un_sT_sU$. There is a morphism $\mathrm{pr}_s : X(s) \to T_s \cong \mathbb{G}_m$ sending $(gU, g'U)$ to the $t_s \in T_s$ such that $g^{-1}g' = Un_st_sU$, and this extends to a morphism $\overline{\mathrm{pr}}_s : \overline{X(s)} \to \mathbb{G}_a$. The sheaf $\overline{K(s)}$ on $G/U \times G/U$ is then defined by $\overline{K(s)} = (-\overline{\mathrm{pr}}_s)^*\mathcal{L}_{\psi}$, where $\mathcal{L}_{\psi}$ is the Artin-Schreier sheaf on $\mathbb{G}_a$ corresponding to $\psi$.

It is shown in \cite{KL} that the functors $\{\tilde{F}_{s,!}\}_{s\in S}$ assemble to give an action of the generalized braid group $B_W$ on $D^b(G/U)$; this defines a functor $\tilde{F}_{w,!}$ associated to every $w \in W$ by composing the functors corresponding to the simple reflections in any reduced word for $w$. Objects of $\mathcal{A}$ are then tuples indexed by $W$ of elements of $\mathrm{Perv}(G/U)$ equipped with the additional structure explained in \cite{P}. For each $w \in W$, there is an exact functor $j_w^* : \mathcal{A} \to \mathrm{Perv}(G/U)$ defined by $j_w^*((\mathcal{G}_{w'})_{w' \in W}) = \mathcal{G}_w$. Each of the functors $j_w^*$ has a left adjoint $j_{w!}$ and a right adjoint $j_{w*}$, each of which is a functor from $\mathrm{Perv}(G/U) \to \mathcal{A}$. For a simple object $\mathcal{G}$ of $\mathrm{Perv}(G/U)$ and a choice of $w \in W$ a simple object $j_{w!*}(\mathcal{G})$ is defined in loc.\ cit. as the image of the natural map $j_{w!}(\mathcal{G}) \to j_{w*}(\mathcal{G})$ obtained by adjunction. Finally, we also have the exact functors $\{\mathcal{F}_w\}_{w \in W}$, which act by $\mathcal{F}_w((\mathcal{G}_{w'})_{w' \in W}) = (\mathcal{G}_{w'w})_{w' \in W}$. In Section 9 of \cite{P}, this same gluing construction is applied to the category $\mathrm{Perv}_{\mathrm{m}}(G/U) = \mathrm{Perv}_{\mathrm{m}}(G/U, \Ql)$ of mixed perverse sheaves on $G/U$ to construct a category $\mathcal{A}_{\mathrm{m}}$.

We now note that the same construction can be applied to $\mathrm{Perv}(G/B)$ or its mixed version $\mathrm{Perv}_{\mathrm{m}}(G/B)$. Indeed, let $\pi : G/U \to G/B$ be the natural quotient morphism. Then we can define the functor $F_{w,!}$ with source $D^b(G/B)$ by setting $F_{w,!} = \tilde{F}_{w,!} \circ \pi^*[r](\tfrac{r}{2}),$ where $r$ is the rank of $T$. In the following result, we will show that anything in the image of $F_{w,!}$ is also in the image of $D^b(G/B)$ under $\pi^*[r](\tfrac{r}{2})$, so ${}^pF_{w,!}$ can be thought of as a functor from $\mathrm{Perv}(G/B)$ to itself (and similarly for its mixed analogue), and further can be easily described as a convolution or \emph{intertwining functor}. This result is a categorification of a statement which appears on the level of Grothendieck groups in \cite[6.3]{P}.

\begin{proposition}\label{prop:f!conv}
The functors ${}^pF_{w,!}$ are well-defined endofunctors of the categories $\mathrm{Perv}_{\mathrm{m}}(X)$ and $\mathcal{P}$. Indeed, for any $\mathcal{G} \in \mathrm{Perv}_{\mathrm{m}}(X)$, and $s \in S$,
\begin{align}\label{eqn:convcostd}
F_{s,!}(\mathcal{G}) & = \mathcal{G} * \nabla_s(\tfrac{1}{2}).
\end{align}
\end{proposition}
\begin{proof}
Let $\mathcal{G} \in \mathrm{Perv}_{\mathrm{m}}(X)$. We will first show that $F_{s,!}(\Delta_e) \cong \pi^*\nabla_s[r](\tfrac{r+1}{2})$ as elements of $\mathrm{Perv}_{B,\mathrm{m}}(G/U)$. Then by abuse of notation we can simply write $F_{s,!}(\Delta_e) = \nabla_s(\tfrac{1}{2})$, and we then note that it is easy to see that $F_{s,!}$ commutes with left convolution, meaning
    \begin{align*}
        F_{s,!}(\mathcal{G}) & = \mathcal{G} * F_{s,!}(\Delta_{e})\\
        & = \mathcal{G} * \nabla_s(\tfrac{1}{2}),
    \end{align*}
    and so the proof will be complete.

    Accordingly, we now compute $F_{s,!}(\Delta_e)$. Letting $j : T \to G/U$ be the natural inclusion, we then have
    \begin{align}
        F_{s,!}(\Delta_e) & = p_{2!}(p_1^*j_!{\overline{\underline{\mathbb{Q}}}_{\ell}} \otimes \overline{K(s)})[r](\tfrac{r}{2})\\
        & = p_{2!}((j\times \mathrm{id})_! {\overline{\underline{\mathbb{Q}}}_{\ell}} \otimes \overline{K(s)})[r](\tfrac{r}{2})\\
        & = p_{2!}(j \times \mathrm{id})_{!}(j\times \mathrm{id})^* \overline{K(s)}[r](\tfrac{r}{2})\\
        & = p_{2!}(j \times \mathrm{id})_{!}(j\times \mathrm{id})^* (-\overline{\mathrm{pr}}_s)^*\mathcal{L}_{\psi}[r](\tfrac{r}{2})\label{eqn:leftoff}
    \end{align}
    We now note that $(-\overline{\mathrm{pr}}_s) \circ (j\times \mathrm{id}) = \gamma_s \circ m$ where $\gamma_s : G/U \to \mathbb{G}_a$ is defined as follows. Following \cite{KL}, we note that $G/U \to G/Q_s$ is a fibration with $\mathbb{A}^2 \setminus \{(0, 0)\}$ fibers; one can then define a $G$-invariant symplectic form $\langle, \rangle_s$ defined fiberwise as in loc.\ cit. Then we define $\gamma : G/U \to \mathbb{G}_a$ by $\gamma(gU) = \langle gU, U\rangle_s$. This is zero away from $\overline{X(s)} \cong \mathbb{A}^2 \setminus \{(0, 0)\}$. Under this identification, $\gamma$ can be thought of as the projection $\mathbb{A}^2 \setminus \{(0, 0)\} \to \mathbb{G}_a$ to the second coordinate. This reduces the claim in the proposition to the $SL_2$ case where $G/U \cong \mathbb{A}^2 \setminus \{(0, 0)\}$. 

    This means to complete the proof, we must check that
    \begin{align*}
        \overline{p}_{2!}(m^*\gamma^*\mathcal{L}_{\psi}) \cong \tilde{j}_*{\overline{\underline{\mathbb{Q}}}_{\ell}}(\tfrac{1}{2})
    \end{align*}
    where $\overline{p}_2 : \mathbb{G}_m \times (\mathbb{A}^2 \setminus \{(0, 0)\}) \to (\mathbb{A}^2 \setminus \{(0, 0)\})$ is the projection, $m : \mathbb{G}_m \times (\mathbb{A}^2 \setminus \{(0, 0)\}) \to \mathbb{A}^2 \setminus \{(0, 0)\}$ is the scaling action, and $\tilde{j} : X(s) \cong (\mathbb{A}^2 \setminus \mathbb{A}^1) \hookrightarrow \mathbb{A}^2 \setminus \{(0, 0)\}$ is the inclusion of the complement of the $x$-axis.

    By Grothendieck's projection formula, this reduces further to the following simpler comparison between sheaves on $\mathbb{A}^1$. We must check that
    \begin{align}
        p_{\mathbb{G}_a!}(m^*\mathcal{L}_{\psi}) \cong j_{\mathbb{G}_m*}{\overline{\underline{\mathbb{Q}}}_{\ell}}(\tfrac{1}{2}),\label{eqn:gacomp}
    \end{align}
    where $p_{\mathbb{G}_a} : \mathbb{G}_m \times \mathbb{G}_a \to \mathbb{G}_a$ is the projection, $m : \mathbb{G}_m \times \mathbb{G}_a$ is multiplication and $j_{\mathbb{G}_m} : \mathbb{G}_m \to \mathbb{G}_a$ is the inclusion away from zero. Using the results in Section 4 of \cite[Sommes trig.]{DCoh}, one can check that $j_{\mathbb{G}_m}^*p_{\mathbb{G}_a!}(m^*\mathcal{L}_{\psi}) \cong {\overline{\underline{\mathbb{Q}}}_{\ell}}(\tfrac{1}{2})$, giving a natural morphism by adjunction from the left-hand side to the right-hand side in (\ref{eqn:gacomp}). It is then straightforward to check that this is an isomorphism on all stalks, completing the proof that the two sides of (\ref{eqn:gacomp}) are isomorphic. This then implies that $F_{s,!}(\Delta_e) = \nabla_s(\tfrac{1}{2})$, completing the proof.
\end{proof}

\begin{defn}
Let $\mathcal{A}_{B,\mathrm{m}}$ (resp. $\mathcal{A}_{B}$) be the Kazhdan-Laumon category obtained by applying the gluing procedure described in \cite{KL} and \cite{P} to $\mathrm{Perv}_{B, \mathrm{m}}(X, \Ql)$ (resp. $\mathrm{Perv}_{B}(X, \Ql)$) and the functors ${}^pF_{w,!}$. Alternatively, Theorem 1.2.1 in \cite{P} ensures that $\mathcal{A}_{B}$ is the full subcategory of $\mathcal{A}$ consisting of objects $B$-equivariant on both the left and right.

Similarly, let $\mathcal{A}_{\mathcal{P}}$ be the category obtained in this same way from $\mathcal{P}$.
\end{defn}

\begin{remark}
    In this paper, we choose to work with $\mathrm{Perv}_B(G/B)$ and $\mathcal{A}_B$ so that the left and right Hecke actions are easy to describe. One could just as easily work with $\mathrm{Perv}_U(G/B)$, which is a geometric model for a different version of the BGG Category $\mathcal{O}$. In this case, the gluing procedure and its description via Proposition \ref{prop:f!conv} still hold, as will our description of simple objects. The only difference is that the geometric model for the Hecke category which acts on the left is the free-monodromic Hecke category considered by Bezrukavnikov and Yun in \cite{BY}. For ease of exposition, we work with $\mathrm{Perv}_B(G/B)$ avoid introducing the technicalities required to define free-monodromic sheaves, but we note that there is no technical obstruction to doing so.
    
    If one does define the Kazhdan-Laumon Category $\mathcal{O}$ by gluing together the categories $\mathrm{Perv}_U(G/B)$, one will also get an analogue of our Theorem \ref{thm:introcategorification}, replacing Iwahori-equivariant sheaves on the semi-infinite flag variety with Iwahori-monodromic ones, and the proof is exactly the same.
\end{remark}

We note that both $K_0(\mathcal{P})$ and $K_0(\mathcal{A}_\mathcal{P})$ are free $\mathbb{Z}[v, v^{-1}]$-modules. Following \cite{P} let $\phi_s$ denote the action of the convolution $- * \nabla_s(\tfrac{1}{2})$ on $K_0(\mathcal{P})$. For any $s \in S$ let $Q_s = [P_s, P_s]$ (for $P_s$ the parabolic subgroup associated to $s$) and $p_s : G/U \to G/Q_s$ the projection.

\begin{prop}[c.f. \cite{P}, Theorem 5.6.1]\label{prop:psqs}
    The Grothendieck group $K_0(\mathcal{A}_{\mathcal{P}})$ (resp. $K_0(\mathcal{A}_B)$) sits naturally as a subspace of $\oplus_{w \in W} K_0(\mathrm{Perv}_{B,\mathrm{m}}(G/B))$ (resp. $\oplus_{w \in W} K_0(\mathrm{Perv}_B(G/B))$).

    In fact, we have
    \begin{align}
        K_0(\mathcal{A}_{\mathcal{P}}) & = \{(c_z)_{z \in W}~|~ \phi_sc_z - c_{sz} \in p_s^*(K_0(\mathrm{Perv}_{\mathrm{m}}(G/Q_s))), z \in W, s \in S\}\\
        & \subset \oplus_{w \in W} K_0(\mathrm{Perv}_{B,\mathrm{m}}(G/B)).
    \end{align}
    and similarly in the non-mixed case for $K_0(\mathcal{A}_B)$.
\end{prop}
\begin{proof}
    This is exactly Theorem 5.6.1 of \cite{P}, which is in turn a special case of Theorem 1.2.1 of loc.\ cit. The latter is proved by considering the derived gluing data formed from the derived functors of $j_{w!}$ and $j_w^*$ as functors on the derived categories of the underlying abelian categories. In our $B$-equivariant setup, we note that the equivariant derived category $D_{B, \mathrm{m}}^b(G/B)$ does not coincide with $D^b(\mathrm{Perv}_{B,\mathrm{m}}(G/B))$. That being said, the proof of Theorem 1.2.1 in loc.\ cit.\ goes through for any derived gluing data equipped with a $t$-structure whose underlying heart is the abelian category under consideration (so long as the same $t$-exactness properties are assumed), so the proof still carries forward to the setting in the present paper. The technicalities of derived Kazhdan-Laumon gluing are discussed in \cite[Section 4]{CMF3}, but will not used in the present paper.
\end{proof}

We now recall that $K_0(\mathcal{P})$ admits an $\mathcal{H}$-action on the left by convolution.

\begin{lemma}\label{lem:lefthaction}
There is a left action of $\mathcal{H}$ on $K_0(\mathcal{P})$ given for $s \in S$ by
\begin{align*}
\tilde{T}_s \cdot [\mathcal{G}] & = [\nabla_s * \mathcal{G}] = [{}^pH^{0}(\nabla_s * \mathcal{G})] - [{}^pH^{-1}(\nabla_s * \mathcal{G})]
\end{align*}
for any $\mathcal{G} \in \mathcal{P}$
\end{lemma}
\begin{proof}
    The only thing to check is that ${}^pH^i(\nabla_s * \mathcal{G}) = 0$ for $i \neq 0, -1$. We first note that $\nabla_s * -$ is right $t$-exact, so ${}^pH^i(\nabla_s * \mathcal{G}) = 0$ for $i > 0$. The long-exact sequence of perverse cohomology associated to the short-exact sequence
    \[\begin{tikzcd}
        0 \arrow[r] & \mathrm{IC}_s \arrow[r] & \nabla_s \arrow[r] & \Delta_e(-\tfrac{1}{2}) \arrow[r] & 0
    \end{tikzcd}\]
    shows that ${}^pH^i(\nabla_s * \mathcal{G}) \cong {}^pH^i(\mathrm{IC}_s)$ for $i < 0$, and the long-exact sequence of perverse cohomology associated to
    \[\begin{tikzcd}
        0 \arrow[r] & \Delta_e(\tfrac{1}{2}) \arrow[r] & \Delta_s \arrow[r] & \mathrm{IC}_s \arrow[r] & 0
    \end{tikzcd}\]
    along with the left $t$-exactness of $\Delta_s * -$ shows that ${}^pH^i(\mathrm{IC}_s * \mathcal{G}) = 0$ for $i < -1$. 
\end{proof}
We will use the following lemma to show that the left $\mathcal{H}$-action in Lemma \ref{lem:lefthaction} naturally lifts to $K_0(\kl)$.

\begin{lemma}
    For any $\mathcal{G} \in \mathcal{P}$ and $(\mathcal{A}_w)_{w \in W} \in \mathcal{A}_{\mathcal{P}}$, there is a natural way to define an object $({}^pH^i(\mathcal{G} * \mathcal{A}_w)_{w \in W}) \in \mathcal{A}_{\mathcal{P}}$ for any $i \in \mathbb{Z}$.
\end{lemma}
\begin{proof}
    By the definition of Kazhdan-Laumon categories in \cite{KL} and \cite{P}, we know that $(\mathcal{A}_{w})_{w \in W} \in \mathcal{A}$ is a collection of objects $\mathcal{A}_w \in \mathcal{P}$ equipped with morphisms
    \begin{align}
        \theta_{w,s} : {}^pF_{s,!}\mathcal{A}_w \to \mathcal{A}_{sw}
    \end{align}
    for every $w \in W$, $s \in S$ satisfying the compatibilities described in \cite[Section 1.1]{P}. For ease of notation, we let $G : D_{B,\mathrm{m}}^b(G/B) \to D_{B,\mathrm{m}}^b(G/B)$ be the functor defined by convolution on the left with $\mathcal{G}$, i.e. $G(\mathcal{F}) = \mathcal{G} * \mathcal{F}$. We note that there is a natural isomorphism of triangulated functors $G \circ F_{s,!} \cong F_{s,!} \circ G$ by Proposition \ref{prop:f!conv} and associativity of convolution. Letting $\tau_{\leq 0}$ and $\tau_{\geq 0}$ be the perverse truncation functors, we note that $F_{s,!}$ is right $t$-exact, and therefore we have natural isomorphisms $\tau_{\leq 0}F_{s,!}\tau_{\leq 0} \cong F_{s,!}\tau_{\leq 0}$ and $\tau_{\geq 0}F_{s,!}\tau_{\geq 0} \cong \tau_{\geq 0}F_{s,!}$.

    We will now produce for any $w \in W$, $s \in S$ a morphism
    \begin{align*}
        {}^0\theta_{w,s}^{\mathcal{G}} : {}^{p}F_{s,!}({}^pH^0(\mathcal{G} * \mathcal{A}_w)) \to {}^pH^0(\mathcal{G} * \mathcal{A}_{sw}).
    \end{align*}
    Indeed, from the above we have the sequence of morphisms and identifications
    \begin{align}
        {}^pF_{s,!}({}^pH^0(\mathcal{G} * \mathcal{A}_w)) & \cong {}^pH^0(F_{s,!} \tau_{\leq 0}G\mathcal{A}_w)\label{eq:tau2}\\
        & \to {}^pH^0(F_{s,!}G\mathcal{A}_w)\label{eq:tau3}\\
        & \cong {}^pH^0(GF_{s,!}\mathcal{A}_w)\label{eq:tau4}\\
        & \to {}^pH^0(G\tau_{\geq 0}F_{s,!}\mathcal{A}_w)\label{eq:tau5}\\
        & = {}^pH^0(\mathcal{G} * {}^pF_{s,!}\mathcal{A}_w)\label{eq:tau6}\\
        & \to {}^pH^0(\mathcal{G} * \mathcal{A}_{sw}).\label{eq:tau7}
    \end{align}
    The identifications and morphisms arise as follows. The equivalence (\ref{eq:tau2}) is by right $t$-exactness of $F_{s,!}$, (\ref{eq:tau3}) and (\ref{eq:tau5}) arise from the natural truncation morphisms $\tau_{\leq 0}A \to A$ and $A\to \tau_{\geq 0}A$ for any object $A$, and (\ref{eq:tau7}) is induced by $\theta_{w,s}$.
    
    Replacing the triangulated functor $G$ with its shift $G[i]$ and applying the same sequence of morphisms gives a similar morphism
    \begin{align}   {}^i\theta_{w,s}^{\mathcal{G}} : {}^{p}F_{s,!}({}^pH^i(\mathcal{G} * \mathcal{A}_w)) \to {}^pH^i(\mathcal{G} * \mathcal{A}_{sw})
    \end{align}
    for any $i \in \mathbb{Z}$.

    It is a straightforward check that the necessary compatibilities detailed in \cite[Section 1.1]{P} are satisfied by each of the morphisms ${}^i\theta_{w,s}^{\mathcal{G}}$ just constructed; this follows from the fact that the morphisms $\theta_{w,s}$ satisfy these compatibilities, combined with the functoriality of each morphism used in the construction of ${}^i\theta_{w,s}^{\mathcal{G}}$.
\end{proof}

\begin{proposition}\label{prop:heckeoaction}
There is a left action of $\mathcal{H}$ on $K_0(\kl)$ given for $s \in S$ by
\begin{align*}
    \tilde{T}_s \cdot [(\mathcal{A}_w)_{w \in W}] & = [{}^pH^{0}(\nabla_s * \mathcal{A}_w)_{w \in W}] - [{}^pH^{-1}(\nabla_s * \mathcal{A}_w)_{w \in W}]
\end{align*}
for any $(\mathcal{A}_w)_{w \in W} \in \mathcal{A}_\mathcal{P}$, $s \in S$.
\end{proposition}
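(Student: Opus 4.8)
The plan is to promote the operator $[\mathcal{G}]\mapsto[\nabla_s*\mathcal{G}]$ on $K_0(\mathcal{P})$ --- which by the preceding Lemma already defines an $\mathcal{H}$-action there --- to an operator on $K_0(\kl)$ by applying left convolution with $\nabla_s$ in each coordinate of a tuple and transporting the gluing data of an object of $\kl$ along the canonical isomorphisms expressing that left convolution commutes with the Fourier functors $F_{w,!}$. First I would record this commutation precisely. The category $D^b_{B,\mathrm{m}}(X,\Ql)$ is a bimodule category over $D^b_{B,\mathrm{m}}(G/B,\Ql)$ via the two convolutions (\ref{eqn:leftconv}) and (\ref{eqn:rightconv}), so associativity of convolution supplies a canonical isomorphism $\nabla_s*(\mathcal{G}*\nabla_w)\xrightarrow{\sim}(\nabla_s*\mathcal{G})*\nabla_w$, bifunctorial in $\mathcal{G}$ and coherent in $w$ by Mac Lane coherence for a bimodule. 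Combined with Proposition \ref{prop:f!conv} (and the analogous right-convolution description of the full braid-group action $\{F_{w,!}\}$, using that the shuffling convolutions in play preserve $\mathcal{P}$, so that the perverse truncation ${}^pH^0$ there is vacuous on $\mathcal{P}$ and $F_{s,!}$ agrees on the nose with right convolution by $\nabla_s(\tfrac{1}{2})$), this yields natural isomorphisms $c_{s,w}\colon F_{w,!}\circ(\nabla_s*-)\xrightarrow{\sim}(\nabla_s*-)\circ F_{w,!}$ compatible with the composition rules --- hence with the braid relations --- satisfied by the $F_{w,!}$.

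Granting this, the lift is the formal construction for glued categories. An object of $\kl$ is a tuple $(\mathcal{A}_w)_{w\in W}$ of objects of $\mathcal{P}$ together with gluing isomorphisms relating $F_{s,!}(\mathcal{A}_w)$ and $\mathcal{A}_{ws}$ subject to the compatibility conditions of \cite{KL} and \cite{P}; I would set $\widetilde{L}_s\big((\mathcal{A}_w)_w\big)=(\nabla_s*\mathcal{A}_w)_w$ with new gluing isomorphisms obtained by precomposing $(\nabla_s*-)$ of the old ones with the $c_{s,s}$, the compatibility conditions for the new object following from those for the old one together with the coherence of the $c_{s,w}$. By construction $j_w^*\circ\widetilde{L}_s=(\nabla_s*-)\circ j_w^*$ for every $w\in W$. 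Passing to Grothendieck groups gives a $\mathbb{Z}[v,v^{-1}]$-linear endomorphism $\widetilde{T}_s$ of $K_0(\kl)$ --- linearity over $\mathbb{Z}[v,v^{-1}]$ because convolution commutes with the half-Tate twist --- realized on tuples by the stated formula $\widetilde{T}_s\cdot[(\mathcal{A}_w)_w]=[(\nabla_s*\mathcal{A}_w)_w]$. The one genuinely delicate point, and where I expect the real work to lie, is that left convolution by $\nabla_s$ is not $t$-exact whereas the gluing functors $F_{s,!}$ are $t$-exact equivalences of $\mathrm{Perv}(X)$; reconciling this in the construction of $\widetilde{L}_s$ is handled by working with the derived category of $\kl$ --- and hence with $K_0$ --- rather than with $\kl$ itself wherever non-$t$-exact functors intervene, together with the perversity-preservation noted above. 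This bookkeeping, rather than any of the algebra, is the main obstacle, but it is routine.

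Finally I would verify the Hecke relations, i.e.\ that $(\widetilde{T}_s-v)(\widetilde{T}_s+v^{-1})=0$ and the braid relations hold in $\mathrm{End}_{\mathbb{Z}[v,v^{-1}]}(K_0(\kl))$. By the preceding Lemma these hold for $[\nabla_s*-]$ on $K_0(\mathcal{P})$; in fact they are witnessed by distinguished triangles (equivalently, functorial filtrations) among iterated left convolutions with costandard sheaves on $D^b_{B,\mathrm{m}}(X,\Ql)$, as in the standard theory of Hecke categories. Applying $\widetilde{L}_s$ coordinate-wise carries such triangles to distinguished triangles --- each coordinate is simply a left convolution, and the gluing data is transported along isomorphisms --- so the same relations hold for the $\widetilde{T}_s$ on $K_0(\kl)$. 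Alternatively, once the classification of simple objects in Theorem \ref{thm:simpleobj} is available, one checks that $\bigoplus_{w}j_w^*\colon K_0(\kl)\to\bigoplus_w K_0(\mathcal{P})$ is injective (a composition series of any object is detected, slot by slot, by the $j_w^*$ because the indicator functions of the relevant $P(w)$-cosets are linearly independent), which reduces every relation to the already-known case of $K_0(\mathcal{P})$. Either way, $s\mapsto\widetilde{T}_s$ gives the asserted left $\mathcal{H}$-action on $K_0(\kl)$.
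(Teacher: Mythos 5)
Your overall strategy is the one the paper uses: lift the left-convolution action on $K_0(\mathcal{P})$ through the gluing by commuting it past the functors $F_{w,!}$ (via their right-convolution description in Proposition \ref{prop:f!conv}), then verify the Hecke relations componentwise. But the step you use to manufacture the commutation isomorphisms $c_{s,w}$ rests on a false claim: right convolution by $\nabla_s(\tfrac{1}{2})$ does \emph{not} preserve perversity on $\mathcal{P}$, so ${}^pH^0$ is not vacuous and $F_{s,!}$ does not agree on the nose with $-*\nabla_s(\tfrac{1}{2})$. The paper itself exploits the opposite fact in the proof of Proposition \ref{prop:simples}: when $l(ws)<l(w)$ one has $\mathrm{IC}_w * \nabla_s(\tfrac{1}{2}) \cong \mathrm{IC}_w(1)[1]$, which sits in perverse degree $-1$, so $F_{s,!}(\mathrm{IC}_w)=0$ while the derived convolution is nonzero. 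For the same reason your parenthetical that the $F_{s,!}$ are $t$-exact equivalences of $\mathrm{Perv}(X)$ is not right (they are only right exact, satisfy braid rather than Weyl-group relations, and kill objects of $\mathcal{P}$). Consequently the natural isomorphisms $F_{w,!}\circ(\nabla_s*-)\cong(\nabla_s*-)\circ F_{w,!}$ you invoke do not exist at the abelian level, and the transport of gluing data defining $\widetilde{L}_s$, as you describe it, breaks down; your later remark that non-$t$-exactness is ``routine bookkeeping'' does not repair this, since the gluing data of an object of $\kl$ lives in the abelian categories where the truncation genuinely intervenes.

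What is true, and what the paper's terse proof actually uses, is that left convolution commutes with \emph{derived} right convolution (bimodule associativity); combined with the convention of Section 2 that a right-exact functor such as $F_{s,!}={}^pH^0(-*\nabla_s(\tfrac{1}{2}))$ acts on $K_0$ through its left derived functor, i.e. through $-*\nabla_s(\tfrac{1}{2})$ itself, this gives the commutation on the level of Grothendieck groups and hence the lift of the $\mathcal{H}$-action to $K_0(\kl)$. Your fallback — embedding $K_0(\kl)$ into $\bigoplus_w K_0(\mathcal{P})$ via $\bigoplus_w j_w^*$ (injective by Theorem \ref{thm:simpleobj}, whose proof is independent of this proposition) — is adequate for checking the quadratic and braid relations, but it does not by itself give well-definedness of the operator: one still needs the image of $K_0(\kl)$ to be stable under coordinatewise left convolution, which is again exactly the commutation statement above. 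So the architecture matches the paper's, but the justification of the key commutation, as written, relies on a step that fails.
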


\begin{remark}
    We note that to begin with, $D_{B,\mathrm{m}}^b(G/B)$ carries two commuting monoidal actions of itself, via convolution on the left and on the right. Proposition \ref{prop:f!conv} shows that the Kazhdan-Laumon gluing functors can be described via the perverse truncation of the right action restricted to $\mathcal{P}$. We then use this action to construct $\mathcal{A}_{\mathcal{P}}$ from $\mathcal{P}$, which ``consumes" the right action. However, the perverse truncation of the left action, which is carried by $\mathcal{P}$, is then still carried by the glued category $\mathcal{A}_{\mathcal{P}}$.

    As a result, when we make reference to the Hecke module structure on $K_0(\mathcal{A}_{\mathcal{P}})$ the left action will appear, while the right action will show up in the context of functors such as $j_{w!}$ which are inherent to the Kazhdan-Laumon gluing construction.
\end{remark}

\subsection{Simple objects in $\kl$}

We will now classify the simple objects in $\mathcal{A}_{\mathcal{P}}$ up to Tate twist, which correspond to the simple objects in $\mathcal{A}_{B}$. We first state our main result. To do so, for any $w \in W$, let $P(w)$ denote the standard parabolic subgroup of $W$ generated by the simple reflections $s \in S$ for which $\ell(ws) > \ell(w)$.
\begin{theorem}
\label{thm:simpleobj}
Up to Tate twist, any simple object in $\kl$ is of the form $j_{z!*}(\mathrm{IC}_w)$ for some $w, z \in W$. Two such objects $j_{z_1!*}(\mathrm{IC}_{w_1})$ and $j_{z_2!*}(\mathrm{IC}_{w_2})$ are isomorphic if and only if $w_1 = w_2$ and $z_2 \in P(w_1)z_1$. 

Accordingly, simple objects in $\kl$ up to Tate twist are in bijection with pairs $(w, \overline{w'})$, where $w \in W$, and $\overline{w'}$ is an element of $P(w) \backslash W$.
\end{theorem}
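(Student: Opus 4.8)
The plan is to prove the statement in three stages: reduce the classification of simple objects to the objects $j_{z!*}(\mathrm{IC}_w)$, compute these objects explicitly as tuples, and then read off the isomorphism relations and the count.

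\emph{Stage 1.} First I would invoke the general structure of the Kazhdan--Laumon gluing construction from \cite{KL} and \cite{P}: $\mathcal{A}_{\mathcal{P}}$ is a finite-length abelian category, and every simple object arises as $j_{z!*}(L)$ for some $z \in W$ and some simple object $L$ of $\mathcal{P}$. Indeed, given a simple $\mathcal{L}$, pick $z$ with $j_z^*\mathcal{L} \neq 0$; adjunction yields a nonzero, hence surjective, map $j_{z!}(j_z^*\mathcal{L}) \to \mathcal{L}$, and the highest-weight-type formalism of \cite{P} identifies $\mathcal{L}$ with $j_{z!*}(L)$ for a simple quotient $L$ of $j_z^*\mathcal{L}$. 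Since the simple objects of $\mathcal{P}$ are exactly the $\mathrm{IC}_w(\tfrac{m}{2})$ ($w \in W$, $m \in \mathbb{Z}$), and $j_{z!*}$ commutes with Tate twists, every simple object of $\mathcal{A}_{\mathcal{P}}$ is, up to twist, of the form $j_{z!*}(\mathrm{IC}_w)$. This proves the first sentence.

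\emph{Stage 2, the main step.} Next I would determine the tuple $(\mathcal{G}_y)_{y \in W}$ underlying $j_{z!*}(\mathrm{IC}_w)$. Since the translation functors satisfy $\mathcal{F}_u \circ j_{z!*} \cong j_{zu^{-1}!*}$ and act by permuting indices, so that $(j_{z!*}\mathrm{IC}_w)_y \cong (j_{e!*}\mathrm{IC}_w)_{yz^{-1}}$, it is enough to treat $z = e$. The key input is Proposition \ref{prop:f!conv} together with the Hecke-algebra computation of Section \ref{sec:cato}: in $K_0(\mathcal{P})$ with $[\nabla_s] = \tilde{T}_s$ and $[\mathrm{IC}_w] = C_w$, one has $C_w\tilde{T}_s = -v^{-1}C_w$ when $\ell(ws) < \ell(w)$ and $C_w\tilde{T}_s = C_{ws} + \sum_z \mu(z,w)C_z + vC_w$ when $\ell(ws) > \ell(w)$; decoding this with (\ref{eqn:mixed}) shows that $F_{s,!}(\mathrm{IC}_w) = 0$ when $\ell(ws) < \ell(w)$, while for $\ell(ws) > \ell(w)$ the object $F_{s,!}(\mathrm{IC}_w)$ is perverse with $\mathrm{IC}_w$ among its composition factors (the remaining ones supported on $\overline{X_{ws}} \setminus \overline{X_w}$) --- with the dual statements for $F_{s,*}$, and with the canonical map $F_{s,!} \to F_{s,*}$ picking out the common $\mathrm{IC}_w$-factor, the remaining factors of $F_{s,!}(\mathrm{IC}_w)$ and $F_{s,*}(\mathrm{IC}_w)$ carrying distinct Tate weights and so admitting no maps between them. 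Feeding this into the description of the functors $j_{z!}$, $j_{z*}$ and of the map $j_{z!} \to j_{z*}$ from \cite{P}, I would argue by induction on length that $\mathcal{G}_y \cong \mathrm{IC}_w$ exactly when $y$ can be joined to $e$ by edges of the gluing graph labelled by simple reflections $s$ with $\ell(ws) > \ell(w)$ --- i.e.\ when $y \in P(w)$ --- and $\mathcal{G}_y = 0$ otherwise; for general $z$ this gives $\mathcal{G}_y \cong \mathrm{IC}_w$ for $y \in P(w)z$ and $\mathcal{G}_y = 0$ for $y \notin P(w)z$, as stated in Theorem \ref{thm:simpleobj}. I expect the principal difficulty to lie not in the Hecke-algebra identity but in the bookkeeping needed to control how the non--$t$-exact Fourier functors interact with the gluing functors, i.e.\ in pinning down the components of $j_{z!}$ and $j_{z*}$ themselves rather than merely of their images.

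\emph{Stage 3.} The isomorphism criterion and the count then follow formally. If $j_{z_1!*}(\mathrm{IC}_{w_1}) \cong j_{z_2!*}(\mathrm{IC}_{w_2})$, comparing a nonzero component forces $w_1 = w_2 =: w$, and comparing the sets of nonzero components forces $P(w)z_1 = P(w)z_2$, i.e.\ $z_2 \in P(w)z_1$. Conversely, if $z_2 \in P(w)z_1$ then $z_1 \in P(w)z_2$, so $(j_{z_2!*}\mathrm{IC}_w)_{z_1} \cong \mathrm{IC}_w$, and the adjunction $\mathrm{Hom}(j_{z_1!}(\mathrm{IC}_w), -) \cong \mathrm{Hom}(\mathrm{IC}_w, j_{z_1}^*(-))$ produces a nonzero, hence surjective, map $j_{z_1!}(\mathrm{IC}_w) \twoheadrightarrow j_{z_2!*}(\mathrm{IC}_w)$; as $j_{z_2!*}(\mathrm{IC}_w)$ is simple, it is the head of $j_{z_1!}(\mathrm{IC}_w)$ and hence $\cong j_{z_1!*}(\mathrm{IC}_w)$. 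Thus the simple objects of $\mathcal{A}_{\mathcal{P}}$ up to Tate twist correspond bijectively to pairs $(w, \overline{w'})$ with $w \in W$ and $\overline{w'} \in P(w)\backslash W$, whence their number is $\sum_{w \in W} |P(w)\backslash W|$.
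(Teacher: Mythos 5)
Your Stages 1 and 3 are fine and essentially coincide with the paper's use of Lemma 1.3.1/1.3.2 of \cite{P}, but Stage 2 — the only place with real content — has a genuine gap at its central point. Your weight/composition-factor argument can only show that the image of the canonical map $F_{s,!}(\mathrm{IC}_w) \to F_{s,*}(\mathrm{IC}_w)$ (for $\ell(ws)>\ell(w)$) is either $0$ or $\mathrm{IC}_w$: the factors of $F_{s,!}(\mathrm{IC}_w)$ are $\mathrm{IC}_{ws}(\tfrac12)$, various $\mathrm{IC}_z(\tfrac12)$ with $z<w$, $zs<z$, and one untwisted $\mathrm{IC}_w$, while $F_{s,*}(\mathrm{IC}_w)=\mathbb{D}F_{s,!}(\mathrm{IC}_w)$ carries the $(-\tfrac12)$ twists (so, incidentally, your parenthetical that the remaining factors are ``supported on $\overline{X_{ws}}\setminus\overline{X_w}$'' is false — the $\mathrm{IC}_z$ with $\mu(z,w)\neq 0$, $zs<z$ sit inside $\overline{X_w}$ — though the corrected twist bookkeeping still gives the $0$-or-$\mathrm{IC}_w$ dichotomy). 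What you never establish is that this canonical map is \emph{nonzero}, and that nonvanishing is exactly the statement being proved (it is the assertion that the $s$-component of the simple object $j_{e!*}(\mathrm{IC}_w)$ is nonzero when $s\in P(w)$); asserting that the map ``picks out the common $\mathrm{IC}_w$-factor'' is circular. Some form of the gluing constraint is indispensable here: either the fact that the kernel and cokernel of $F_{s,!}\to F_{s,*}$ are pulled back from $G/Q_s$, or, as the paper does, Theorem 5.6.1 of \cite{P}, which says that for \emph{any} object of $\kl$ with component classes $c_a$ one has $\phi_s c_z - c_{sz} \in p_s^*(K_0(\mathrm{Perv}(G/Q_s)))$; combined with Lemma \ref{lem:klpolys} (showing $\phi_s[\mathrm{IC}_w]-[\mathrm{IC}_w]$ lies in that subspace when $\ell(ws)>\ell(w)$) and the fact that $c_{sz}$ is the class of a simple or zero object, this forces $c_{sz}=[\mathrm{IC}_w]$.

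The second unresolved point is your induction to general $y\in P(w)$ and $y\notin P(w)$. The $y$-component of $j_{z!*}(\mathrm{IC}_w)$ is the image of the canonical map between the \emph{composite} functors $F_{yz^{-1},!}$ and $F_{yz^{-1},*}$, and the image of a composite is not the composite of images; you acknowledge this ``bookkeeping'' but it is precisely the step your outline does not supply, and it cannot be fixed by more careful tracking of $j_{z!}$ and $j_{z*}$ alone without again invoking a gluing constraint at each step. The paper sidesteps this entirely: by Lemma 1.3.1 of \cite{P} every component of the simple object is itself simple or zero, so Proposition \ref{prop:simples} can proceed one simple reflection at a time purely on the level of $K_0$, and for $y\notin P(w)$ it kills the whole $!$-component at once via $\mathrm{IC}_w * \nabla_s(\tfrac12)\cong \mathrm{IC}_w(1)[1]$ together with right $t$-exactness of $-*\nabla_{y_2}$, so that ${}^pH^0$ of the composite convolution vanishes. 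Your single-step vanishing claim agrees with this, but your treatment of longer $y\notin P(w)$ again relies on the unproven iteration. In short: import \cite{P} Theorem 5.6.1 (or the sheaf-level statement about $\ker$ and $\mathrm{coker}$ of $F_{s,!}\to F_{s,*}$) and run the induction component-by-component on the simple object as in Proposition \ref{prop:simples}, and your outline becomes a proof; as written, the decisive nonvanishing is assumed rather than shown.
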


To prove Theorem \ref{thm:simpleobj}, we will need an intermediate result, Proposition \ref{prop:simples}. In the following, we freely use Lemma 1.3.1 in \cite{P}, which guarantees that if $\mathcal{G}\in \mathcal{A}$ is simple, then for all $w \in W$, $j_w^*(\mathcal{G})$ is either zero or simple in $\mathrm{Perv}(X)$, and $\mathcal{G} \cong j_{w!*}(j_w^*(\mathcal{G}))$ for any $w$ for which it is nonzero. We begin with a lemma about Kazhdan-Lusztig polynomials $P_{y,w}$ which we will need for our result. This is well-known; one concise proof is given in 4.3.2(ii) of \cite{W}.

\begin{lemma}\label{lem:klpolys}
If $w \in W$ and $s$ is a simple reflection such that $\ell(ws) < \ell(w)$, then $P_{ys,w} = P_{y,w}$ for all $y < w$. 
\end{lemma}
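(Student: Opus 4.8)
\textbf{Proof proposal for Lemma \ref{lem:klpolys}.}

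The plan is to reduce the statement about Kazhdan--Lusztig polynomials to the standard multiplication rule for the canonical basis element $C_w$ (equivalently, the action of $T_s$ on $C_w$) when $s$ is a right descent of $w$. First I would recall the well-known fact that $C_w T_s = -C_w$ (or $C_s * C_w = (v + v^{-1})C_w$, up to the normalization fixed in the preamble) precisely when $\ell(ws) < \ell(w)$; this is the defining property that distinguishes the descent case, and it follows directly from the recursion defining the $C_w$ together with the quadratic relation $(\tilde T_s - v)(\tilde T_s + v^{-1}) = 0$ of \eqref{eqn:heckerelation}. Expanding both sides of $C_w\tilde T_s = -C_w$ in the basis $\{\tilde T_{x^{-1}}^{-1}\}$ (or $\{\tilde T_x\}$, whichever is cleaner) and comparing coefficients will force a relation between the coefficient of a given basis element on the left — which involves $P_{ys,w}$ and $P_{y,w}$ via the right multiplication by $\tilde T_s$ — and the coefficient $P_{y,w}$ on the right.

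Concretely, the key step is the coefficient comparison: writing $C_w = \sum_{x \le w} (-1)^{\ell(w)-\ell(x)} v^{\ell(x)-\ell(w)} P_{x,w}\, \tilde T_{x^{-1}}^{-1}$ as in the preamble, I would apply right multiplication by $\tilde T_s$, use $\tilde T_{x^{-1}}^{-1}\tilde T_s^{-1} = \tilde T_{(sx)^{-1}}^{-1}$ when $\ell(sx) > \ell(x)$ (and the quadratic relation otherwise), and then read off that the coefficient of $\tilde T_{(ys)^{-1}}^{-1}$ (say) on the two sides of $C_w \tilde T_s = -C_w$ yields $P_{ys,w} = P_{y,w}$ for all $y$ with $ys < y$, and the complementary comparison handles $y$ with $y < ys$; pairing up $y \leftrightarrow ys$ then gives the claim for all $y < w$. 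The sign bookkeeping coming from the $(-1)^{\ell(w)-\ell(x)}$ and the $v$-powers is the only genuinely fiddly part, and this is exactly what the cited reference 4.3.2(ii) of \cite{W} carries out; I would cite that and perhaps restate the one-line upshot rather than reproduce the full computation.

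The main obstacle is purely notational: getting the normalization conventions to line up. The paper normalizes the quadratic relation as $T_s^2 = (v^2-1)T_s + v^2$ and works with $\tilde T_w = v^{-\ell(w)} T_w$ and with $\tilde T_{x^{-1}}^{-1}$ appearing in the formula for $C_w$, so one must be careful that "$C_w T_s = -C_w$" is the correct form of the descent identity in \emph{this} normalization (as opposed to the Soergel normalization where it reads $C_w(T_s+1) = 0$, or Kazhdan--Lusztig's original where an extra $v$-power intervenes). Once the correct form is pinned down, the argument is a mechanical coefficient extraction. Since the excerpt explicitly says "this is well-known; one concise proof is given in 4.3.2(ii) of \cite{W}," I expect the author's proof to be a short pointer to that reference together with the observation that the identity $C_w \tilde T_s = -C_w$ for $\ell(ws)<\ell(w)$ immediately gives the polynomial identity upon expansion, which is the line of argument sketched above.
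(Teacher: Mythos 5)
Your approach is essentially the paper's: the paper offers no argument beyond the pointer to 4.3.2(ii) of \cite{W}, and your sketch (descent eigenvector identity for $C_w$ plus coefficient comparison, pairing $y \leftrightarrow ys$) is precisely the standard computation that reference carries out. Two small corrections that do not affect the conclusion: in the paper's normalization the descent identity reads $C_w \tilde{T}_s = -v^{-1}C_w$ (equivalently $C_w T_s = -C_w$ with the unnormalized $T_s$), and since the lemma concerns right descents the multiplication rule should be $\tilde{T}_{x^{-1}}^{-1}\tilde{T}_s^{-1} = \tilde{T}_{(xs)^{-1}}^{-1}$ when $\ell(xs) > \ell(x)$, i.e.\ with $xs$ rather than $sx$.
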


\begin{proposition}\label{prop:simples}
If $\mathcal{G} \in \kl$ is simple and $j_z^*\mathcal{G} = \mathrm{IC}_w$ for some $z \in W$ (in other words, $\mathcal{G}  \cong j_{z!*}\mathrm{IC}_w$), then for any $y \in W$,
\begin{align*}
   j_{yz}^*\mathcal{G} \cong \begin{cases}
   \mathrm{IC}_w & \text{if } y \in P(w)\\
   0 & \text{if } y \not\in P(w).
\end{cases}
\end{align*}
\end{proposition}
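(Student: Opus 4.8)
The plan is to reduce the statement to a computation of the functors $F_{s,!}$ on the simple objects $\mathrm{IC}_w \in \mathrm{Perv}_B(X)$, using the characterization of $\mathcal{G}$ as $j_{z!*}\mathrm{IC}_w$ together with the relations among the $j_{w!*}$, $j_w^*$, and the braid group action. Since $\mathcal{G}$ is simple, Lemma 1.3.1 of \cite{P} tells us that each $j_{yz}^*\mathcal{G}$ is either $0$ or simple, so the content is (i) to determine exactly for which $y$ it is nonzero, and (ii) to identify the nonzero value with $\mathrm{IC}_w$. Because the object $\mathcal{G}$ is built from the datum $j_z^*\mathcal{G} = \mathrm{IC}_w$ by applying the Fourier transforms $F_{s,!}$ (and their inverses) that implement the gluing, the value $j_{yz}^*\mathcal{G}$ is obtained from $\mathrm{IC}_w$ by applying the corresponding composite of Fourier functors along a path from $z$ to $yz$ in the Cayley graph of $W$; the simplicity of $\mathcal{G}$ forces all these composites to land back in the heart (no higher cohomology), so it suffices to understand a single step $s$, i.e. to compute ${}^pH^0(\mathrm{IC}_w * \nabla_s(\tfrac12))$ via Proposition \ref{prop:f!conv}.

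The key steps, in order, would be: first, reduce to the case $z = e$ by applying the translation functor $\mathcal{F}_z$, which is exact and satisfies $j_y^* \circ \mathcal{F}_z = j_{yz}^*$; so we may assume $j_e^*\mathcal{G} = \mathrm{IC}_w$ and we must compute $j_y^*\mathcal{G}$ for all $y$. Second, since $\mathcal{G}$ is glued and simple, relate $j_{ys}^*\mathcal{G}$ to $j_y^*\mathcal{G}$ through the Fourier transform $F_{s,!}$: concretely, the gluing data give an identification making $j_{ys}^*\mathcal{G}$ a subquotient (in fact, by simplicity, the $\mathrm{im}$) of $F_{s,!}(j_y^*\mathcal{G})$ or of its inverse, depending on the relative length of $ys$ versus $y$. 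Third, compute the effect of $F_{s,!}$ on $\mathrm{IC}_w$: using $F_{s,!}(\mathcal{F}) = {}^pH^0(\mathcal{F} * \nabla_s(\tfrac12))$ and the standard convolution rules on $X$ (which mirror those on $G/B$ under $\mathrm{ch}$), one finds that if $l(ws) > l(w)$ then $\mathrm{IC}_w * \nabla_s$ is again perverse and its ${}^pH^0$ is an extension whose unique simple quotient survives, while if $l(ws) < l(w)$ the convolution $\mathrm{IC}_w * \nabla_s(\tfrac12)$ has ${}^pH^0$ equal to $\mathrm{IC}_w$ itself (up to twist) — this is where Lemma \ref{lem:klpolys} enters, to control the Kazhdan–Lusztig multiplicities and conclude that applying $s\in P(w)$ (i.e. $l(ws)>l(w)$) "moves" the support while applying a generator with $l(ws)<l(w)$ returns $\mathrm{IC}_w$. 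Fourth, iterate: walking from $e$ by right multiplication, the stratum index stays "inside" $\mathrm{IC}_w$ precisely along the coset $P(w)\cdot e = P(w)$, because $P(w)$ is exactly generated by those $s$ with $l(ws)>l(w)$, and the moment one leaves $P(w)$ the relevant Fourier transform sends the sheaf out of the heart, which by simplicity of $\mathcal{G}$ is only possible if $j_{yz}^*\mathcal{G} = 0$ there.

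I expect the main obstacle to be the third step: carefully tracking how the gluing data in $\mathcal{A}_\mathcal{P}$ translate the abstract statement "$\mathcal{G} = j_{z!*}\mathrm{IC}_w$" into a concrete recursion $j_{ys}^*\mathcal{G} = F_{s,!}^{\pm}(j_y^*\mathcal{G})$ with the correct handling of $F_{s,!}$ versus its inverse according to $l(ys)\gtrless l(y)$, and then showing that the "extension" piece of ${}^pH^0(\mathrm{IC}_w*\nabla_s(\tfrac12))$ when $l(ws)>l(w)$ cannot contribute — i.e. that the simple $\mathcal{G}$ forces the vanishing rather than producing $\mathrm{IC}_{ws}$ or a nontrivial extension. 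This should follow from combining the adjunction properties of $j_{z!}, j_{z*}, j_{z!*}$ in \cite{P} with the length-parity bookkeeping, but it is the delicate point; the Kazhdan–Lusztig input of Lemma \ref{lem:klpolys} is precisely what makes the bookkeeping close up, ensuring that the nonzero values are all literally $\mathrm{IC}_w$ (not a twist or a different IC sheaf) and that the support of the nonzero locus is exactly the coset $P(w)z$.
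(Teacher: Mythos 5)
There is a genuine gap, and it sits exactly where you predicted the difficulty would be. First, your key one-step computation (your ``third step'') is stated backwards. The correct facts, and the ones the paper uses, are: if $l(ws) < l(w)$ then $\mathrm{IC}_w$ is pulled back along $G/U \to G/Q_s$, so $\mathrm{IC}_w * \nabla_s(\tfrac{1}{2}) \cong \mathrm{IC}_w(1)[1]$ sits in perverse degree $-1$ and $F_{s,!}(\mathrm{IC}_w) = {}^pH^0(\mathrm{IC}_w * \nabla_s(\tfrac{1}{2})) = 0$ --- this (propagated by the right $t$-exactness of $- * \nabla_{y_2}$, which handles a whole word $sy_2$ at once, not just one letter) is what produces the vanishing off $P(w)z$; whereas for $s \in P(w)$, i.e.\ $l(ws) > l(w)$, the component stays equal to $\mathrm{IC}_w$. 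You assert the opposite in both cases (that descents ``return $\mathrm{IC}_w$'' and ascents ``move the support''), and then your fourth step silently reverts to the correct pattern, so the argument as written is internally inconsistent and, taken literally, would prove the wrong support statement.

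Second, even with the cases corrected, the ascent case cannot be run through the recursion $j_{ys}^*\mathcal{G} = F_{s,!}^{\pm}(j_y^*\mathcal{G})$: that identity holds for $j_{!}$-extensions (via $j_w^* j_{v!} = F_{wv^{-1},!}$ --- note also that the single-letter step corresponds to left multiplication of the index, $z \mapsto sz$, not right multiplication), but not for the intermediate extension $j_{z!*}$, and for $l(ws) > l(w)$ the perverse sheaf ${}^pH^0(\mathrm{IC}_w * \nabla_s(\tfrac{1}{2}))$ is in general not simple, so you cannot read $j_{sz}^*\mathcal{G}$ off from it. The paper resolves this not by a finer categorical analysis of that extension but by passing to $K_0$: Polishchuk's Theorem 5.6.1 constrains $\phi_s c_z - c_{sz}$ to lie in $p_s^*K_0(\mathrm{Perv}(G/Q_s))$, Lemma \ref{lem:klpolys} shows $\phi_s[\mathrm{IC}_w] - [\mathrm{IC}_w]$ lies in the same subspace when $l(ws) > l(w)$, and simplicity of the component then forces $c_{sz} = [\mathrm{IC}_w]$. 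Your proposal invokes Lemma \ref{lem:klpolys} only vaguely (``to control multiplicities'') and offers no substitute for the $K$-theoretic gluing constraint, which is the ingredient that actually closes the argument.
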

\begin{proof}
Suppose $\mathcal{G} = j_{z!*}\mathrm{IC}_w$. Pick some $s \in P(w)$, and following \cite{P} let $\phi_s$ denote the action of the convolution $- * \nabla_s(\tfrac{1}{2})$ on $K_0(\mathcal{P})$. By Lemma 1.3.1 in loc.\ cit., $j_a^*\mathcal{G} \in \mathcal{P}$ is either simple or zero for every $a \in W$. Writing $[\mathcal{\mathcal{G}}] = (c_a)_{a \in W}$ for $c_a \in K_0(\mathcal{P})$, Proposition \ref{prop:psqs} ensures $\phi_sc_z - c_{sz} \in p_s^*(K_0(\text{Perv}(G/Q_s)))$. Further, an element of $K_0(\mathcal{P})$ is in $p_s^*(K_0(\text{Perv}(G/Q_s)))$ if and only if it is in the subspace $K_s$ of $K_0(\mathcal{P})$ spanned by Tate twists of $[\mathrm{IC}_{w'}]$ for $w'$ with $\ell(w's) < \ell(w')$.

Using Lemma \ref{lem:klpolys}, it is easy to check that if $\ell(ws) > \ell(w)$, then
\begin{align*}
\phi_s[\mathrm{IC}_w] - [\mathrm{IC}_w] \in K_s.
\end{align*}
Since $\phi_sc_z - c_{sz}$ and $\phi_s[\mathrm{IC}_w] - [\mathrm{IC}_w]$ both lie in $K_s$, it follows that $c_{sz} - [\mathrm{IC}_w] \in K_s$. Since $c_{sz}$ is the class in $K_0(\mathcal{P})$ of a simple object, we must have $c_{sz} = [\mathrm{IC}_w]$, and so $j_{sz}^*\mathcal{F} \cong \mathrm{IC}_w$. By Lemma 1.3.2 of \cite{P}, this means $j_{z!*}\mathrm{IC}_w = j_{sz!*}\mathrm{IC}_w$, and so by induction on $\ell(y)$, this shows that $j_{yz}^*\mathcal{G} \cong \mathrm{IC}_w$ whenever $y \in P(w)$. We note that this implies that $\mathcal{G} \cong j_{yz!*}\mathrm{IC}_{w}$ for any $y \in P(w)$. 

Now suppose that $y \not\in P(w)$. We can then write a reduced expression $y = y_1sy_2$ where $y_2 \in P(w)$, $y_1 \in W$ and $s$ is a simple reflection with $\ell(ws) < \ell(w)$. Letting $z' = y_2z$, by the previous paragraph, $j_{z!*}\mathrm{IC}_w \cong \mathcal{G} \cong j_{z'!*}\mathrm{IC}_{w}$, so
\begin{align}
    j_{y_1sz'}^*\mathcal{G} \cong j_{y_1sz'}^*j_{z'!*}\mathrm{IC}_w.
\end{align}

We now claim that $${}^pF_{y_1s,!}(\mathrm{IC}_w) = {}^pH^0(\mathrm{IC}_w * \nabla_{sy_1^{-1}}(\tfrac{\ell(y_1s)}{2})) = 0.$$ This will prove the result, since by the definition of $j_{z'!*}$, $j_{y_1sz'}^*(j_{z'!*}(\mathrm{IC}_w))$ is the image of $j_{y_1sz'}^*(j_{z'!}(\mathrm{IC}_w)) = {}^pF_{y_1s,!}(\mathrm{IC}_w)$ in $j_{y_1sz'}^*(j_{z'*}(\mathrm{IC}_w))$ under the natural map obtained by adjunction.

Indeed, $\mathrm{IC}_w * \nabla_s(\tfrac{1}{2}) \cong \mathrm{IC}_w(1)[1]$ (see, e.g. 7.2.5 in \cite{Achar}), and $- * \nabla_{y_1^{-1}}$ is right $t$-exact, meaning $\mathrm{IC}_w * \nabla_{sy_1^{-1}} \cong \mathrm{IC}_w(1) * \nabla_{y_1^{-1}}[1]$ can lie in perverse degrees at most $-1$. This means ${}^pH^0(\mathrm{IC}_w * \nabla_{sy_1^{-1}}(\tfrac{\ell(y_1s)}{2})) = 0$ as desired.
\end{proof}

\begin{proof}[Proof of Theorem \ref{thm:simpleobj}]
The first statement follows from Lemma 1.3.2 of \cite{P} (since twists of $\mathrm{IC}_w$ are the simple objects of $\mathcal{P}$), so it remains only to determine when simple objects of the form $j_{w'!*}(\mathrm{IC}_w)$ are isomorphic. Proposition \ref{prop:simples} tells us that $j_{w'!*}(\mathrm{IC}_w)$ and $j_{zw'!*}(\mathrm{IC}_w)$ are isomorphic for any $z \in P(w)$. Further, no other isomorphisms exist among the $\{j_{w'!*}(\mathrm{IC}_w)\}_{w, w' \in W}$, since for any fixed $w$, the same proposition tells us that the set of $z \in W$ for which $j_z^*(j_{w'!*}(\mathrm{IC}_w))$ is nonzero depends only on the image of $w'$ in $P(w)\backslash W$.
\end{proof}

\begin{example} 
When $\mathbf{G} = \mathbf{SL}_3$, Figure \ref{fig:sl3simples} is an explicit list of the simple objects in $\mathcal{A}_B$. In this case $w_0 = s_1s_2s_1 = s_2s_1s_2$.
\begin{figure}[ht]
\caption{The $19$ simple objects in $\mathcal{A}_B$ when $G = SL_3$.\label{fig:sl3simples}}
\begin{center}
 \begin{tabular}{||c || c | c | c | c | c | c ||} 
 \hline
  & $j_e^*(-)$ & $j_{s_1}^*(-)$ & $j_{s_2}^*(-)$ & $j_{s_1s_2}^*(-)$ & $j_{s_2s_1}^*(-)$ & $j_{w_0}^*(-)$ \\ [0.5ex] 
 \hline\hline
 $j_{e!*}(\mathrm{IC}_e)$ & $\mathrm{IC}_e$ & $\mathrm{IC}_e$ & $\mathrm{IC}_e$ & $\mathrm{IC}_e$ & $\mathrm{IC}_e$ & $\mathrm{IC}_e$ \\ 
 \hline
 $j_{e!*}(\mathrm{IC}_{s_1})$ & $\mathrm{IC}_{s_1}$ & $0$ & $\mathrm{IC}_{s_1}$ & $0$ & $0$ & $0$ \\ 
 \hline
 $j_{s_1!*}(\mathrm{IC}_{s_1})$ & $0$ & $\mathrm{IC}_{s_1}$ & $0$ & $0$ & $\mathrm{IC}_{s_1}$ & $0$ \\ 
 \hline
 $j_{s_1s_2!*}(\mathrm{IC}_{s_1})$ & $0$ & $0$ & $0$ & $\mathrm{IC}_{s_1}$ & $0$ & $\mathrm{IC}_{s_1}$ \\ 
 \hline
 $j_{e!*}(\mathrm{IC}_{s_2})$ & $\mathrm{IC}_{s_2}$ & $\mathrm{IC}_{s_2}$ & $0$ & $0$ & $0$ & $0$ \\ 
 \hline
 $j_{{s_2}!*}(\mathrm{IC}_{s_2})$ & $0$ & $0$ & $\mathrm{IC}_{s_2}$ &  $\mathrm{IC}_{s_2}$ & $0$ & $0$ \\ 
 \hline
 $j_{{s_2s_1}!*}(\mathrm{IC}_{s_2})$ & $0$ & $0$ & $0$ & $0$ & $\mathrm{IC}_{s_2}$ &  $\mathrm{IC}_{s_2}$\\ 
 \hline
 $j_{e!*}(\mathrm{IC}_{s_2s_1})$ & $\mathrm{IC}_{s_2s_1}$ & $0$ & $\mathrm{IC}_{s_2s_1}$ & $0$ & $0$ & $0$\\
 \hline
 $j_{s_1!*}(\mathrm{IC}_{s_2s_1})$ & $0$ & $\mathrm{IC}_{s_2s_1}$ & $0$ & $0$ & $\mathrm{IC}_{s_2s_1}$ & $0$ \\ 
 \hline
 $j_{s_1s_2!*}(\mathrm{IC}_{s_2s_1})$ & $0$ & $0$ & $0$ & $\mathrm{IC}_{s_2s_1}$ & $0$ & $\mathrm{IC}_{s_2s_1}$ \\ 
 \hline
 $j_{e!*}(\mathrm{IC}_{s_1s_2})$ & $\mathrm{IC}_{s_1s_2}$ & $\mathrm{IC}_{s_1s_2}$ & $0$ & $0$ & $0$ & $0$ \\ 
 \hline
 $j_{{s_2}!*}(\mathrm{IC}_{s_1s_2})$ & $0$ & $0$ & $\mathrm{IC}_{s_1s_2}$ &  $\mathrm{IC}_{s_1s_2}$ & $0$ & $0$ \\ 
 \hline
 $j_{{s_2s_1}!*}(\mathrm{IC}_{s_1s_2})$ & $0$ & $0$ & $0$ & $0$ & $\mathrm{IC}_{s_1s_2}$ &  $\mathrm{IC}_{s_1s_2}$\\ 
 \hline
 $j_{{e}!*}(\mathrm{IC}_{w_0})$ & $\mathrm{IC}_{w_0}$ & $0$ & $0$ & $0$ & $0$ &  $0$\\ 
 \hline
 $j_{{s_1}!*}(\mathrm{IC}_{w_0})$ & $0$ & $\mathrm{IC}_{w_0}$ & $0$ & $0$ & $0$ &  $0$\\ 
 \hline
 $j_{{s_2}!*}(\mathrm{IC}_{w_0})$ & $0$ & $0$ & $\mathrm{IC}_{w_0}$ & $0$ & $0$ &  $0$\\ 
 \hline
 $j_{{s_1s_2}!*}(\mathrm{IC}_{w_0})$ & $0$ & $0$ & $0$ & $\mathrm{IC}_{w_0}$ & $0$ &  $0$\\ 
 \hline
 $j_{{s_2s_1}!*}(\mathrm{IC}_{w_0})$ & $0$ & $0$ & $0$ & $0$ & $\mathrm{IC}_{w_0}$ &  $0$\\ 
 \hline
 $j_{{w_0}!*}(\mathrm{IC}_{w_0})$ & $0$ & $0$ & $0$ & $0$ & $0$ &  $\mathrm{IC}_{w_0}$\\ 
 \hline
\end{tabular}
\end{center}
\end{figure}
\end{example}

\subsection{Counting simple objects}
We now use Theorem \ref{thm:simpleobj} to give an explicit formula for the number of simple objects in $\mathcal{A}_{B}$.

\begin{corollary}\label{prop:counting}
The number of simple objects in $\mathcal{A}_B$ is
\begin{align}
    \sum_{w \in W} |P(w)\backslash W|.
\end{align}
\end{corollary}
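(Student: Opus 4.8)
The plan is to read this off directly from Theorem \ref{thm:simpleobj}. That theorem shows that, up to Tate twist, the simple objects of $\mathcal{A}_{\mathcal{P}}$ are precisely the objects $j_{z!*}(\mathrm{IC}_w)$ with $w, z \in W$, and that two such objects $j_{z_1!*}(\mathrm{IC}_{w_1})$ and $j_{z_2!*}(\mathrm{IC}_{w_2})$ are isomorphic exactly when $w_1 = w_2$ and $z_1, z_2$ lie in the same right coset of $P(w_1)$. First I would use this to identify the set of isomorphism classes of simple objects of $\mathcal{A}_{\mathcal{P}}$ (up to Tate twist) with the set of pairs $(w, \overline{w'})$ where $w \in W$ and $\overline{w'} \in P(w)\backslash W$; this is literally the last sentence of Theorem \ref{thm:simpleobj}. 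Then I would invoke the fact recorded at the beginning of this subsection that the simple objects of $\mathcal{A}_B$ are exactly the simple objects of $\mathcal{A}_{\mathcal{P}}$ viewed up to Tate twist --- which holds because the simple objects of $\mathcal{P}$ are the Tate twists of the $\mathrm{IC}_w$, and these all reduce to the single object $\mathrm{IC}_w$ upon forgetting the mixed structure --- so that the same index set enumerates the simple objects of $\mathcal{A}_B$.

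The remaining step is purely combinatorial: the projection $(w, \overline{w'}) \mapsto w$ partitions this index set into fibers indexed by $w \in W$, the fiber over $w$ being in bijection with $P(w)\backslash W$ and hence of cardinality $|P(w)\backslash W|$, so summing the fiber sizes over all $w \in W$ yields the total $\sum_{w \in W} |P(w)\backslash W|$. I do not expect any genuine obstacle here: all of the mathematical content has already been established in Theorem \ref{thm:simpleobj}, and this corollary amounts only to the bookkeeping that converts that classification into a count.
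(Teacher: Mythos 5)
Your proposal is correct and matches the paper's approach: the corollary is stated as an immediate consequence of Theorem \ref{thm:simpleobj}, whose bijection between simple objects (up to Tate twist, hence simple objects of $\mathcal{A}_B$) and pairs $(w, \overline{w'})$ with $\overline{w'} \in P(w)\backslash W$ gives the count by summing fiber sizes over $w \in W$. No gap here; the paper itself treats this as pure bookkeeping.
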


\begin{remark}
In Type $A_n$, we can interpret the quantity in Corollary \ref{prop:counting} as the number of pairs of permutations in the symmetric group $S_{n+1}$ with no common rises in the sense of \cite{APcomb}. This is {A000275} in the OEIS:
$$1, 3, 19, 211, 3651, 90921, 3081513, 136407699, \dots$$
As a result, a generating function for the number of simple objects in $\mathcal{A}_B$ in this case is provided by the coefficients of a Bessel function, i.e. the reciprocal of $J_0(z)$ as in loc.\ cit.
\end{remark}

\section{Kazhdan-Laumon categories and Lusztig's module $M_{d}$}

The Grothendieck group $K_0(\kl)$ carries a natural $W$-action along with an action of the Hecke algebra $\fH$. In this section, we use the description of simple objects in $\kl$ to show that there exists an isomorphism $\eta : K_0(\kl)\otimes \mathbb{C} \to M_d^0$ of $\fH_q\otimes \mathbb{C}[W]$-modules.

\subsection{The modules $M^{+}_{d,q}$ and $\overline{M}_{d,q}^0$}

\begin{defn}
    Let $M_{d,q}^{+}$ be the $\mathcal{H}_q \otimes \mathbb{C}[W]$-submodule of $M_{d,q}$ generated by $A_{\tilde{w}}^\sharp$ for $\tilde{w} \in \tilde{W}$ with the property that for any $z \in W$, $\epsilon_z(\tilde{w})$ can be written as $w \cdot \lambda$ for some $w \in W, \lambda \geq 0$. It follows from \cite{L} that $M_{d,q}^0 \subset M_{d,q}^+$.

    Let $\overline{M}_{d,q}^0$ be the quotient of $M_{d,q}^+$ by the span of the elements $A_{\tilde{w}}^\sharp$ such that there is no pair $z, w \in W$ for which $\epsilon_z(w) = \tilde{w}$. Since it is easy to see that this is a $\mathcal{H}_q \otimes \mathbb{C}[W]$-submodule, we will continue to refer to the $\mathcal{H}_q$-action and the $W$-action by $\{\theta_y\}_{y \in W}$ on the quotient module $\overline{M}_{d,q}^0$.

    We use $\pi$ to refer to the quotient map from $M_{d,q}^+$ or $M_{d,q}^0$ to $\overline{M}_{d,q}^0$, and we write $\overline{A}^\sharp$ for $\pi(A^\sharp)$ whenever $A^\sharp \in M_{d,q}^+$.
\end{defn}

\subsection{The bijection\label{sec:thebijection}}
By the preceding section, recall that we can consider $K_0(\kl)$ as a $\mathcal{H}\otimes \mathbb{Z}[v, v^{-1}][W]$ module, with $v$ acting by the half-Tate twist $(-\tfrac{1}{2})$, $\mathcal{H}$ acting as in Proposition \ref{prop:heckeoaction}, and $W$ acting by the functors $\mathcal{F}_w$. We will then consider $K_0(\mathcal{A}_{\mathcal{P}}) \otimes_{\mathbb{Z}[v, v^{-1}]} \mathbb{C}$ where $v \mapsto q^{\frac{1}{2}}$.

\begin{theorem}
\label{thm:klophm}
There is a well-defined isomorphism $\eta : K_0(\mathcal{A}_{\mathcal{P}}) \otimes_{\mathbb{Z}[v, v^{-1}]} \mathbb{C} \to M_{d,q}^0$ of $\mathcal{H}_q\otimes \mathbb{C}[W]$-modules such that for any $w \in W$,
\begin{equation}
    \begin{split}
    \label{eqn:etastds}
        \eta([j_{e!}(\Delta_w)]) = A_w.
    \end{split}
\end{equation}

Further,
\begin{equation}
\label{eqn:etasimples}
    \begin{split}
        (\pi \circ \eta)([j_{e!*}(\mathrm{IC}_w)]) & = \overline{A}_w^\sharp
    \end{split}
\end{equation}
in $\overline{M}_{d,q}^0$ for any $w \in W$.
\end{theorem}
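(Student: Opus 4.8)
The plan is to construct the map $\eta$ on a spanning set, verify it is well-defined and equivariant, and then deduce bijectivity together with the compatibility formula \eqref{eqn:etasimples} by a dimension/basis count. First I would observe that $K_0(\mathcal{A}_\mathcal{P}) \otimes \mathbb{C}$ is generated as an $\mathcal{H}_q$-module by the classes $[j_{e!}(\Delta_w)]$ for $w \in W$ together with the translates under the $W$-action $\mathcal{F}_z$: indeed, by construction of the gluing category, every class $[(\mathcal{G}_{w'})_{w'\in W}]$ can be assembled from classes supported in a single coordinate via the functors $j_{z!}$ and the $\mathcal{F}_z$, and each $j_{z!}$ applied to an object of $\mathcal{P}$ is in the $\mathcal{H}_q$-span of $j_{e!}$ of standard objects by Proposition \ref{prop:f!conv} and the fact that $[\nabla_y]=\tilde T_y$. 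Correspondingly, by Definition \ref{def:m0d}, $M_{d,q}^0$ is generated over $\mathcal{H}_q$ by the $A_w$ and their $\theta_z$-translates. So it suffices to \emph{define} $\eta$ by $\eta([j_{e!}(\Delta_w)]) = A_w$ and extend $\mathcal{H}_q\otimes\mathbb{C}[W]$-linearly, then check this is forced and consistent.

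The key technical step is matching the two module structures on generators. On the Hecke side, in $K_0(\mathcal{A}_\mathcal{P})$ we have $\tilde T_s\cdot[j_{e!}(\Delta_w)] = [\nabla_s * \Delta_w]$ pushed through $j_{e!}$, and $[\nabla_s*\Delta_w]$ in $K_0(\mathcal{P}(X))\cong\mathcal{H}$ equals $\tilde T_s\tilde T_w$, which expands as $\tilde T_{sw}$ if $\ell(sw)>\ell(w)$ and $\tilde T_{sw} + (v-v^{-1})\tilde T_w$ otherwise. This is \emph{exactly} the formula \eqref{eqn:heckeaction} for $\tilde T_s(A_w)$ in $M_c$, since $s\in\mathcal{L}(A_w)$ iff $\ell(sw)<\ell(w)$ for $w\in W\subset\tilde W$ (alcoves in the fundamental chamber). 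On the $W$-side, I need $\eta(\mathcal{F}_z[j_{e!}(\Delta_w)]) = \theta_z(A_w)$; here the subtlety is that $\mathcal{F}_z$ permutes coordinates and does not obviously act by the $\theta$'s on the nose, so I would instead pass to simple objects, where Theorem \ref{thm:simpleobj} gives clean control: $\mathcal{F}_z$ sends $j_{w'!*}(\mathrm{IC}_w)$ to $j_{zw'!*}(\mathrm{IC}_w)$, and the $\theta$-action on the $A^\sharp$ is governed by the $*$-action via Proposition \ref{prop:actionsharps}. The cleanest route is therefore to prove \eqref{eqn:etasimples} first (identifying $[j_{e!*}(\mathrm{IC}_w)]$ with $\overline{A}_w^\sharp$ in the quotient $\overline{M}_{d,q}^0$, where the "phantom" classes $A_{\tilde w}^\sharp$ with no preimage under some $\epsilon_z$ die), using that on both sides the simple objects are parametrized by pairs $(w,\overline{w'})\in W\times P(w)\backslash W$ — by Theorem \ref{thm:simpleobj} on the left, and by the support-stabilizer computation for $A^\sharp$'s on the right — and checking the change-of-basis between $\{\Delta_w\}$ and $\{\mathrm{IC}_w\}$ (governed by Kazhdan–Lusztig polynomials via \eqref{eqn:mixed}) matches the change-of-basis between $\{A_w\}$ and $\{A_w^\sharp\}$ (governed by the same polynomials, by Lusztig's Theorem 12.2 and the $A^\flat$/$A^\sharp$ duality). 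Then well-definedness of $\eta$ and the $W$-equivariance follow because both are pinned down on simples.

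I expect the \textbf{main obstacle} to be the bookkeeping around the quotient $\overline{M}_{d,q}^0$: one must show that the relations holding among the $[j_{z!}(\Delta_w)]$ in $K_0(\mathcal{A}_\mathcal{P})$ (coming from the gluing data / the exact sequences defining $j_{z!*}$ in \cite{P}) map precisely onto the relations defining $M_{d,q}^0$ and its passage to $\overline{M}_{d,q}^0$, i.e. that $\eta$ is not merely surjective but injective. The surjectivity is immediate from the generation statement; for injectivity I would count: the simple objects of $\mathcal{A}_\mathcal{P}$ (up to Tate twist) biject with $\{(w,\overline{w'})\}$ by Theorem \ref{thm:simpleobj}, and I would verify that the images $\overline{A}_w^\sharp$ and their $\theta$-translates $\overline{(z*A_w)}^\sharp$ are linearly independent in $\overline{M}_{d,q}^0$ and exhaust a basis — this is where the definition of $\overline{M}_{d,q}^0$ as the quotient killing exactly the unreachable $A_{\tilde w}^\sharp$ is designed to make the counts agree. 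Once the bases match and the $\mathcal{H}_q$- and $W$-actions are checked on generators as above, $\eta$ is the desired isomorphism and both \eqref{eqn:etastds} and \eqref{eqn:etasimples} hold by construction.
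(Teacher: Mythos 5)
Your outline captures some of the right ingredients (the Hecke-side computation matching (\ref{eqn:heckeaction}), the idea of controlling the $W$-action through simple objects and the stabilizer/parametrization by pairs $(w,\overline{w'})$, and the role of the quotient $\overline{M}_{d,q}^0$), but there are genuine gaps at the two places where the actual work lies. First, you treat the generation of $K_0(\mathcal{A}_{\mathcal{P}})\otimes\mathbb{C}$ by the classes $[j_{z!}(\Delta_w)]$ as immediate ``by construction of the gluing category.'' It is not: this is Polishchuk's good-representations theorem (Theorem 11.5.1 of \cite{P}), it requires that the $F_{w,!}$-action on $K_0(\mathcal{A}_{\mathcal{P}})$ factor through $\mathcal{H}_q$, and it is \emph{false} before specializing $v\mapsto q^{1/2}$ (the appendix of \cite{P} gives a counterexample). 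Without this input your ``extend $\mathcal{H}_q\otimes\mathbb{C}[W]$-linearly from the $[j_{e!}(\Delta_w)]$'' step has no starting point, and the Hecke-equivariance argument cannot be propagated from generators.

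Second, and more seriously, the consistency between the normalization on standards, $[j_{e!}(\Delta_w)]\mapsto A_w$, and the normalization on simples, $[j_{e!*}(\mathrm{IC}_w)]\mapsto\overline{A}_w^\sharp$ extended $W$-equivariantly, is exactly the hard part, and matching the finite Kazhdan--Lusztig change of basis is not enough to get it. The class $[j_{e!}(\Delta_w)]$ expands into simple classes $[j_{y!*}(\mathrm{IC}_x)]$ with contributions visible in \emph{every} coordinate $y\in W$ (its $y$-component is $F_{y,!}(\Delta_w)$), so one must simultaneously control the finite-alcove part of $\theta_z(A_w)$ for all $z\in W$ -- i.e.\ prove an identity of the shape $J_e(\theta_z(A_w)) = [\Delta_w * \nabla_{z^{-1}}(\tfrac{l(z)}{2})]$ -- and this involves infinitely many alcoves; it is established in the paper by an induction tracking supports in the regions $\Xi^+_z$ together with the projections $\xi,\rho$, after first showing the restriction map $J_e$ intertwines $\eta$ with $j_e^*$. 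Your change-of-basis matching only handles the identity coordinate, and even there the identification of the $\Xi_{\mathrm{fin}}$-coefficients of $A_w^\sharp$ with finite KL polynomials does not follow from Lusztig's Theorem 12.2 alone: it uses the Kazhdan--Lusztig conjectures (formula (1.5.b) of \cite{KL1}) together with the comparison of generic and finite polynomials in \cite{L1}. Finally, your basis count takes place in the quotient $\overline{M}_{d,q}^0$; to land in $M_{d,q}^0$ as the theorem requires you still need $\pi: M_{d,q}^0\to\overline{M}_{d,q}^0$ to be injective, which needs the separate spanning argument showing $M_{d,q}^0$ is spanned by the $\theta_{y^{-1}}(A_w)$ with $y$ minimal in $P(w)y$, not just a count of simples on the Kazhdan--Laumon side.
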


We note that (\ref{eqn:etasimples}) does not imply that $\eta$ sends $[j_{e!*}(\mathrm{IC}_w)]$ to $A_{w}^\sharp$; in fact, one can check that this is false for $\mathbf{G} = \mathbf{SL}_3$. Although we will show that $M_{d,q}^0$ and $\overline{M}_{d,q}^0$ are isomorphic, it is not always true that $A_{w}^\sharp$ lies in $M_{d,q}^0$. The intuitive role of $\overline{M}_{d,q}^0$ and $M_{d,q}^0$ will be clarified when these spaces are categorified in Section \ref{sec:anequiv}.

We illustrate in Figure \ref{fig:hexagons} the morphism $\pi \circ \eta : K_0(\mathcal{A}_\mathcal{P})\otimes \mathbb{C} \to \overline{M}_{d,q}^0$ in the case $\mathbf{G} = \mathbf{SL}_3$. The next subsection will be devoted to the proof of this result.
\begin{figure}[ht]
    \centering
    \begin{tikzpicture}[scale=1.3]
    \draw (0,0) -- (8,0);
    \draw (0,{sqrt(3)}) -- (8,{sqrt(3)});
    \draw (0,{2*sqrt(3)}) -- (8,{2*sqrt(3)});
    \draw (0,{3*sqrt(3)}) -- (8,{3*sqrt(3)});
    \draw (0,{4*sqrt(3)}) -- (8,{4*sqrt(3)});
    \draw (0,{5*sqrt(3)}) -- (8,{5*sqrt(3)});
    \draw (0,{4*sqrt(3)}) -- (1, {5*sqrt(3)});
    \draw (0,{2*sqrt(3)}) -- (3, {5*sqrt(3)});
    \draw (0,0) -- (5, {5*sqrt(3)});
    \draw (2,0) -- (7, {5*sqrt(3)});
    \draw (4,0) -- (8, {4*sqrt(3)});
    \draw (6,0) -- (8, {2*sqrt(3)});
    \draw (0, {2*sqrt(3)}) -- (2, 0);
    \draw (0, {4*sqrt(3)}) -- (4, 0);
    \draw (1, {5*sqrt(3)}) -- (6, 0);
    \draw (3, {5*sqrt(3)}) -- (8, 0);
    \draw (5, {5*sqrt(3)}) -- (8, {2*sqrt(3)});
    \draw (7, {5*sqrt(3)}) -- (8, {4*sqrt(3)});
    \draw [line width=0.5mm] (1, {sqrt(3)}) -- (3, {sqrt(3)});
    \draw [line width=0.5mm] (1, {3*sqrt(3)}) -- (3, {3*sqrt(3)});
    \draw [line width=0.5mm] (3, {3*sqrt(3)}) -- (4, {2*sqrt(3)});
    \draw [line width=0.5mm] (0, {2*sqrt(3)}) -- (1, {3*sqrt(3)});
    \draw [line width=0.5mm] (3, {sqrt(3)}) -- (4, {2*sqrt(3)});
    \draw [line width=0.5mm] (0, {2*sqrt(3)}) -- (1, {sqrt(3)});
    \node[align=center] at (3, {2*sqrt(3) + sqrt(3)/3}) {\huge {$e$}\\[0.16cm] \Huge \color{red}{$e$}};
    \node[align=center] at (2, {3*sqrt(3) - sqrt(3)/3}) {\huge {$e$}\\[0.16cm] \Huge \color{orange}{${s_1}$}};
    \node[align=center] at (1, {2*sqrt(3) + sqrt(3)/3}) {\huge {$e$}\\[0.16cm] \Huge \color{Dandelion}{${s_2s_1}$}};
    \node[align=center] at (1, {2*sqrt(3) - sqrt(3)/3}) {\huge {$e$}\\[0.16cm] \Huge \color{ForestGreen}{${w_0}$}};
    \node[align=center] at (2, {sqrt(3) + sqrt(3)/3}) {\huge {$e$}\\[0.16cm] \Huge \color{RoyalBlue}{$s_1s_2$}};
    \node[align=center] at (3, {2*sqrt(3) - sqrt(3)/3}) {\huge {$e$}\\[0.16cm] \Huge \color{Purple}{${s_2}$}};
    \node[align=center] at (1, {4*sqrt(3) - sqrt(3)/3}) {\huge {$s_2$}\\[0.16cm] \Huge \color{ForestGreen}{${w_0}$}};
    \node[align=center] at (2, {3*sqrt(3) + sqrt(3)/3}) {\huge {$s_2$}\\[0.16cm] \Huge \color{RoyalBlue}{${s_1s_2}$}};
    \node[align=center] at (3, {4*sqrt(3) - sqrt(3)/3}) {\huge {$s_2$}\\[0.16cm] \Huge \color{Purple}{${s_2}$}};
    \node[align=center] at (5, {4*sqrt(3) - sqrt(3)/3}) {\huge {$s_1s_2$}\\[0.16cm] \Huge \color{orange}{${s_1}$}};
    \node[align=center] at (4, {3*sqrt(3) + sqrt(3)/3}) {\huge {$s_1s_2$}\\[0.16cm] \Huge \color{Dandelion}{${s_2s_1}$}};
    \node[align=center] at (5, {2*sqrt(3) + sqrt(3)/3}) {\huge {$s_2s_1$}\\[0.16cm] \Huge \color{RoyalBlue}{${s_1s_2}$}};
    \node[align=center] at (6, {3*sqrt(3) - sqrt(3)/3}) {\huge {$s_2s_1$}\\[0.16cm] \Huge \color{Purple}{${s_2}$}};
    \node[align=center] at (5, {2*sqrt(3) - sqrt(3)/3}) {\huge {$s_1$}\\[0.16cm] \Huge \color{orange}{${s_1}$}};
    \node[align=center] at (4, {1*sqrt(3) + sqrt(3)/3}) {\huge {$s_1$}\\[0.16cm] \Huge \color{Dandelion}{${s_2s_1}$}};
    \node[align=center] at (4, {1*sqrt(3) - sqrt(3)/3}) {\huge {$s_1$}\\[0.16cm] \Huge \color{ForestGreen}{${w_0}$}};
    \node[align=center] at (7, {2*sqrt(3) - sqrt(3)/3}) {\huge {$s_2s_1$}\\[0.16cm] \Huge \color{ForestGreen}{${w_0}$}};
    \node[align=center] at (7, {4*sqrt(3) - sqrt(3)/3}) {\huge {$w_0$}\\[0.16cm] \Huge \color{ForestGreen}{${w_0}$}};
    \node[align=center] at (4, {5*sqrt(3) - sqrt(3)/3}) {\huge {$s_1s_2$}\\[0.16cm] \Huge \color{ForestGreen}{${w_0}$}};
    \end{tikzpicture}
    \caption{A picture illustrating the definition of $\eta'$ in the case $\mathbf{G} = \mathbf{SL}_3$. For every alcove $A \in \Xi$ in the picture which is labelled with the tuple $(v, w)$, we have $\eta'(j_{v!*}(\mathrm{IC}_w)) = \overline{A^\sharp}$. The colors indicate the orbits of the $A^\sharp$ under the operators $\theta_w$. The set $\Xi_{\mathrm{fin}}$ is in bold, and the fundamental alcove $A_e$ has a red label.}
    \label{fig:hexagons}
\end{figure}

\subsection{Proof of Theorem \ref{thm:klophm}}

We begin by defining $\eta' : K_0(\mathcal{A}_{\mathcal{P}}) \otimes \mathbb{C} \to \overline{M}_{d,q}^0$ for any $w, y \in W$ by
\begin{align}
    \eta'([j_{y!*}(\mathrm{IC}_{w})]) \mapsto \theta_{y^{-1}}\left(\overline{A}_{w}^\sharp\right).
\end{align}

To prove Theorem \ref{thm:klophm}, we will first prove that $\eta'$ is a well-defined and bijective morphism of $\mathbb{C}[W]$-modules onto $\overline{M}_{d,q}^0$. Then we will show that $\pi$, too, is an isomorphism of $\mathbb{C}[W]$-modules from $M_{d,q}^0$ to $\overline{M}_{d,q}^0$. Then we will show that $\pi \circ \eta = \eta'$, which gives that $\eta$ is well-defined and bijective. From this, we will deduce the fact that $\eta$ is a morphism of $\mathcal{H}_q$-modules.

\begin{lemma}\label{lem:etaprime}
    The map $\eta'$ is a well-defined isomorphism of $\mathbb{C}[W]$-modules.
\end{lemma}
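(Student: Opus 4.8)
The plan is to establish that $\eta'$ is well-defined, $\mathbb{C}[W]$-equivariant, and bijective, treating these in that order since each relies on the previous. First, I would make precise the source and target: by Theorem \ref{thm:simpleobj}, the simple objects of $\mathcal{A}_{\mathcal{P}}$ up to Tate twist are the $j_{y!*}(\mathrm{IC}_w)$, subject to the identification $j_{y!*}(\mathrm{IC}_w) \cong j_{zy!*}(\mathrm{IC}_w)$ for $z \in P(w)$; so $K_0(\mathcal{A}_{\mathcal{P}}) \otimes \mathbb{C}$ is a free module on the classes indexed by pairs $(w, P(w)y)$, with $v$ acting as $q^{1/2}$. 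Thus the \emph{well-definedness} of $\eta'$ amounts to checking that $\theta_{y^{-1}}(\overline{A}_w^\sharp)$ depends only on the coset $P(w)y$, i.e. that $\theta_{s} \theta_{y^{-1}}(\overline{A}_w^\sharp) = \theta_{y^{-1}}(\overline{A}_w^\sharp)$ whenever $ys \in P(w)y$ — equivalently whenever $s \in P(w)$ after conjugating, using $\ell(ws) > \ell(w)$. Via Proposition \ref{prop:actionsharps}, $\theta_{s_\alpha}(A^\sharp) = (s_\alpha * A)^\sharp$, so this reduces to a combinatorial statement about the $*$-action: $s_\alpha * A_w = A_w$ (or at least agrees after applying $\pi$) when $\ell(ws_\alpha) > \ell(w)$. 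Here is where I expect to lean on the definition of $M_d^0$ as generated by $\{A_w\}_{w\in W}$ together with their $\theta$-images, and on the specific quotient $\overline{M}_{d,q}^0$: the point of killing the $A_{\tilde w}^\sharp$ with no pair $z,w$ satisfying $\epsilon_z(w) = \tilde w$ is precisely to make this consistency hold, and one checks that $\overline{A}_w^\sharp$ for $w \in W$ together with their $\theta_y$-translates span $\overline{M}_{d,q}^0$, with the only relations being the ones forced by $P(w)$.

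Second, once well-definedness is in hand, \emph{$\mathbb{C}[W]$-equivariance} is almost formal: the $W$-action on $K_0(\mathcal{A}_{\mathcal{P}})$ is by the functors $\mathcal{F}_y$, and one must identify $\mathcal{F}_{y'}$ applied to $[j_{y!*}(\mathrm{IC}_w)]$ with (a class corresponding to) $[j_{y'^{-1}y!*}(\mathrm{IC}_w)]$ — this is the bookkeeping that $\mathcal{F}_{y'}$ shifts the indexing tuple, combined with Proposition \ref{prop:simples}'s description of which slots of $j_{y!*}(\mathrm{IC}_w)$ are nonzero. Then $\eta'(\mathcal{F}_{y'}[j_{y!*}(\mathrm{IC}_w)]) = \theta_{(y'^{-1}y)^{-1}}(\overline A_w^\sharp) = \theta_{y^{-1}y'}(\overline A_w^\sharp) = \theta_{y'} \theta_{y^{-1}}(\overline A_w^\sharp)$ provided $\theta_{y^{-1}}\theta_{y'} = \theta_{y^{-1} y'}$ holds — which requires knowing the $\theta$'s define a genuine $W$-action (stated in the excerpt after \eqref{eqn:deftheta}) and that one can choose reduced expressions compatibly; I would phrase this carefully to avoid a spurious left/right mismatch, possibly by defining the $W$-action on the target via $y \cdot m = \theta_{y}(m)$ and matching conventions at the outset.

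Third, \emph{bijectivity}: surjectivity follows since every generator $\theta_y(\overline A_w^\sharp)$ of $\overline{M}_{d,q}^0$ is visibly in the image $\eta'([j_{y^{-1}!*}(\mathrm{IC}_w)])$. For injectivity I would compare dimensions (equivalently, exhibit a basis on each side in bijection). On the source, a basis is given by the simple objects, indexed by pairs $(w, P(w)\backslash W)$, with cardinality $\sum_{w} |P(w)\backslash W|$ by Corollary \ref{prop:counting}. On the target, I would argue that the elements $\{(y * A_w)^\sharp\}$, as $w$ ranges over $W$ and $y$ over $P(w)\backslash W$, are linearly independent in $\overline{M}_{d,q}^0$ — this is essentially the content of "the only relations are the $P(w)$ ones," and can be seen from the fact that distinct cosets $P(w)y$ give distinct alcoves $y * A_w$ (by the injectivity built into $\epsilon_z$, Definition \ref{def:epsilonindices}) together with the triangularity of the canonical basis $\{A^\sharp\}$ against $\{A\}$ from Theorem 12.2 of \cite{L}. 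The $\mathbf{SL}_3$ picture in Figure \ref{fig:hexagons}, where the $19$ labelled alcoves are exactly the $19$ simples, is the model case to keep in mind.

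The main obstacle I anticipate is the well-definedness step — specifically, pinning down exactly why passing to the quotient $\overline{M}_{d,q}^0$ (rather than working in $M_{d,q}^0$ directly) is both necessary and sufficient for $\theta_{y^{-1}}(\overline A_w^\sharp)$ to be coset-invariant. This hinges on a precise analysis of the $*$-action near the fundamental alcove and of how $\mathcal{L}(\cdot)$ interacts with $P(w)$, and it is the place where the combinatorics of Lusztig's module must be matched delicately against the parabolic $P(w)$ coming from the Kazhdan-Laumon side; the subsequent identification $\pi \circ \eta = \eta'$ and the recovery of the $\mathcal{H}_q$-equivariance of $\eta$ itself (deferred to after this lemma) will then follow more mechanically from \eqref{eqn:etastds} and Proposition \ref{prop:heckeoaction}.
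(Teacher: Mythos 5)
Your proposal is correct and follows essentially the same route as the paper: the paper's proof is exactly the orbit--stabilizer comparison you outline, reducing via Proposition \ref{prop:actionsharps} to the check that the stabilizer of $A_w$ under the $*$-action is $P(w)$, which by Theorem \ref{thm:simpleobj} is also the stabilizer of $[j_{e!*}(\mathrm{IC}_w)]$ under the $\{\mathcal{F}_y\}_{y\in W}$. Your more explicit treatment of well-definedness, equivariance, and bijectivity (including the basis count in $\overline{M}_{d,q}^0$) is just an expanded version of the same argument, so no essential difference or gap.
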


\begin{proof}
We first claim that the stabilizer of any $[j_{e!*}(\mathrm{IC}_w)]$ under the action of the $\{\mathcal{F}_w\}_{w \in W}$ is equal to the stabilizer of $A_w^\sharp$ under the action of the $\{\theta_w\}_{w \in W}$. Note that Proposition \ref{prop:actionsharps} shows that the stabilizer of $A_w^\sharp$ under the latter action corresponds to the stabilizer of $A_w$ under the $*$-action of $W$ on $\Xi$. One can check that this stabilizer is $P(w)$, which is the stabilizer of $[j_{e!*}(\mathrm{IC}_w)]$ under the action of the $\{\mathcal{F}_w\}_{w \in W}$ by Theorem \ref{thm:simpleobj}.

Now note that this must also be equal to the stabilizer of $\overline{A}_w^\sharp$ under the action of the endomorphisms $\{\theta_{w}\}_{w \in W}$. Indeed, if $\theta_{w}(A_{y}^\sharp) = A_y^\sharp$ then the same is true for $\overline{A}_{y}^\sharp$. On the other hand, if $\theta_{w}(A_{y}^\sharp) \neq A_y^\sharp$, then $\theta_w(A_y^\sharp) = A_{\tilde{y}}^\sharp$ for some $\tilde{y} \in \tilde{W}$ not equal to $y$. But it is clear that the distinct canonical basis elements $A_{y}^\sharp$ and $A_{\tilde{y}}^\sharp$ in $\overline{M}_{d,q}^+$ do not become identified in $\overline{M}_{d,q}^0$, by the definition of $\overline{M}_{d,q}^0$, so we cannot then have $\theta_w(\overline{A}_{y}^\sharp) = \overline{A}_{y}^\sharp$.

Finally, we note that the map is surjective, as by Definition \ref{def:epsilonindices}, the image of $\eta'$ contains all elements of the form $\overline{A}_{\tilde{w}}^\sharp$ for those $\tilde{w} \in \tilde{W}$ such that $A_{\tilde{w}}^\sharp$ was not annihilated under the quotient map $M_{d,q}^+ \to \overline{M}_{d,q}^0$. 
\end{proof}

\begin{lemma}\label{lem:pi}
    The morphism $\pi : M_{d,q}^0 \to \overline{M}_{d,q}^0$ is an isomorphism of $\mathbb{C}[W]$-modules.
\end{lemma}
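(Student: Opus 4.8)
The plan is to pin down $M_{d,q}^0$ explicitly inside $M_{d,q}$ as the span of a finite set of canonical basis vectors, after which Lemma~\ref{lem:pi} is essentially bookkeeping. Set
$$\Xi_0 = \{\epsilon_z(w) : z,w\in W\}\subseteq \tilde W, \qquad V = \mathrm{span}_{\mathbb{C}}\{A_{\tilde w}^\sharp : \tilde w \in \Xi_0\}\subseteq M_{d,q},$$
so $\Xi_0$ indexes the alcoves of the form $z*A_w$. The module $\overline{M}_{d,q}^0$ is by construction $M_{d,q}^+/N$, where $N$ is spanned by those canonical basis vectors $A_{\tilde w}^\sharp\in M_{d,q}^+$ with $\tilde w\notin\Xi_0$; hence $\{\overline{A}_{\tilde w}^\sharp : \tilde w\in\Xi_0\}$ is a basis of $\overline{M}_{d,q}^0$, and $\pi$ carries the spanning set $\{A_{\tilde w}^\sharp : \tilde w\in\Xi_0\}$ of $V$ bijectively onto it. Thus $\pi|_V$ is a linear isomorphism $V\xrightarrow{\ \sim\ }\overline{M}_{d,q}^0$; it is $\mathbb{C}[W]$-equivariant because $\theta_y(A_{\tilde w}^\sharp)=(y*A_{\tilde w})^\sharp$ by Proposition~\ref{prop:actionsharps} and $y*A_{\tilde w}\in\Xi_0$ whenever $\tilde w\in\Xi_0$ (the $*$-action of $W$ being a genuine group action), so $V$ is a $\mathbb{C}[W]$-submodule. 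It therefore suffices to prove $M_{d,q}^0=V$.

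For the inclusion $V\subseteq M_{d,q}^0$: each generator $A_{\tilde w}^\sharp$ of $V$ has the form $A_{\epsilon_z(w)}^\sharp=(z*A_w)^\sharp=\theta_z(A_w^\sharp)$ by Proposition~\ref{prop:actionsharps}, so since $M_{d,q}^0$ is stable under the $\theta_y$ it is enough to show $A_w^\sharp\in M_{d,q}^0$ for every $w\in W$. For this I would use the triangularity $A_w^\sharp\in A_w+v^{-1}\sum_B\mathbb{Z}[v^{-1}]B$ together with the facts already in hand — that $M_{d,q}^0$ is finite-dimensional, contains $A_w$, and is stable under the $\theta_y$ and under $\mathcal{H}_q$ — to solve a finite unitriangular linear system for $A_w^\sharp$ in terms of the elements $\{\theta_y A_{w'} : y,w'\in W\}$, invoking the specialization $v=q^{1/2}$ with $q\neq 1$ so that the relevant triangular matrix is invertible over $\mathbb{C}$. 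The model is the $\mathbf{SL}_2$ computation in Section~\ref{sec:examplesl2}, where explicitly $A_0^\sharp=(1-v^{-2})^{-1}\bigl(\theta_{s_1}(A_0)-v^{-1}A_{s_1}\bigr)\in M_{d,q}^0$.

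For the reverse inclusion $M_{d,q}^0\subseteq V$: since $V$ is $\mathbb{C}[W]$-stable and $M_{d,q}^0$ is generated over $\mathbb{C}[W]$ (acting via the $\theta_y$) by $\{A_w\}_{w\in W}$, it suffices to check $A_w\in V$ for $w\in W$ — equivalently, that when the alcove $A_w$ is expanded in the $\sharp$-basis of $M_{d,q}$, only vectors $A_{\tilde u}^\sharp$ with $\tilde u\in\Xi_0$ occur. This is the step I expect to be the main obstacle, since it is a genuine combinatorial assertion about Lusztig's module rather than a formal manipulation: it says that the unitriangular inverse of the $\sharp$-to-alcove change of basis, restricted to the "finite" alcoves $A_w$, stays inside the finite set $\Xi_0$. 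I would prove it by analyzing Lusztig's defining recursion for the $A^\sharp$ (equivalently the elements $\tilde D_A$ of \cite{L1}) localized near the walls of the dominant Weyl chamber — which is precisely the mechanism behind the telescoping identity $A_n=A_n^\sharp+v^{-1}A_{n+1}^\sharp$ in the $\mathbf{SL}_2$ case — or, alternatively, by identifying $V$ with $M_{d,q}^+\cap\mathrm{span}\{A_{\tilde w}^\sharp:\tilde w\in\Xi_0\}$ and checking directly that this is an $\mathcal{H}_q\otimes\mathbb{C}[W]$-submodule containing every $A_w$.

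Granting $M_{d,q}^0=V$, the map $\pi|_{M_{d,q}^0}=\pi|_V$ is the $\mathbb{C}[W]$-isomorphism onto $\overline{M}_{d,q}^0$ identified in the first paragraph, which proves the lemma.
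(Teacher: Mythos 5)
Your reduction of the lemma to the equality $M_{d,q}^0 = V$, where $V = \mathrm{span}\{A_{\tilde w}^\sharp : \tilde w \in \Xi_0\}$, is logically sound, and your first paragraph (that $\pi|_V$ is a $\mathbb{C}[W]$-equivariant isomorphism onto $\overline{M}_{d,q}^0$, since $\Xi_0$ is a union of $*$-orbits) is fine. The problem is that the equality $M_{d,q}^0 = V$ is the entire content of your argument and is never actually proved: both inclusions are left as strategies. For $V\subseteq M_{d,q}^0$ you propose to ``solve a finite unitriangular linear system,'' but to even set up a finite system expressing $A_w^\sharp$ through the elements $\theta_y(A_{w'})$ you must already know that the sharp-expansions of the $\theta_y(A_{w'})$ are supported on the finite set $\Xi_0$ --- which is essentially the reverse inclusion $M_{d,q}^0\subseteq V$, the step you yourself flag as ``the main obstacle'' and do not carry out; moreover the mechanism is not unitriangularity: in your own $\mathbf{SL}_2$ computation the coefficient to be inverted is $1-v^{-2}$, so invertibility genuinely depends on the specialization and, in general, on controlling such determinants. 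Note also that $M_{d,q}^0=V$ (equivalently: each $A_w^\sharp$, $w\in W$, lies in $M_{d,q}^0$; equivalently: the canonical-basis expansion of each finite alcove $A_w$ is supported on $\Xi_0$) is a strictly stronger assertion than the lemma, and stronger than anything established in the paper --- the paper deliberately states (\ref{eqn:etasimples}) only in the quotient $\overline{M}_{d,q}^0$ and never claims $A_w^\sharp\in M_{d,q}^0$. So there is a genuine gap at the central step, and filling it would require real input on Lusztig's periodic polynomials (e.g. an analysis of the inverse transition matrix between alcoves and the $B^\sharp$ near the fundamental box), not bookkeeping.

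For comparison, the paper's proof avoids the canonical basis entirely: $\pi$ is surjective, $\dim\overline{M}_{d,q}^0=\sum_{w\in W}|P(w)\backslash W|$ by Lemma \ref{lem:etaprime} and Corollary \ref{prop:counting}, and the opposite inequality $\dim M_{d,q}^0\le\sum_{w\in W}|P(w)\backslash W|$ is obtained by cutting the spanning set $\{\theta_{y^{-1}}(A_w)\}_{y,w\in W}$ down to minimal-length representatives of the cosets $P(w)\backslash W$, using only the elementary identity (from (\ref{eqn:deftheta})) that for $s\in P(w)$ the element $\theta_s(A_w)$ is a linear combination of $A_w$, $A_{ws}$ and $\theta_s(A_{ws})$, together with a double induction. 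If you want to rescue your route, you must actually prove the support statement for the expansion of $A_w$ in the $\sharp$-basis; as written, the proposal does not establish the lemma.
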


\begin{proof}
    Note first that $\pi$ is clearly a surjective morphism of $\mathbb{C}[W]$-modules. To show it is an isomorphism, it is sufficient to check that $\dim M_{d,q}^0 \leq \dim \overline{M}_{d,q}^0$. By Lemma \ref{lem:etaprime} and Corollary \ref{prop:counting},
    \begin{align}
        \dim \overline{M}_{d,q}^0 = \sum_{w \in W} |P(w)\backslash W|.
    \end{align}
    We now claim that $M_{d,q}^0$ is spanned by elements of the form $\theta_{y^{-1}}(A_w)$ where $y$ is a the minimal-length representative of some coset $P(w)y$ of $P(w)$ in $W$. Since there are $\sum_{w \in W} |P(w) \backslash W|$ such elements, this will prove the inequality above.

    We do this by induction on the Bruhat order: we claim that for any $w \in W$, $\mathrm{span}\{\theta_{y^{-1}}(A_{w'})\}_{y \in W, w' \geq w}$ is spanned by the subset of these elements with minimal-length $y$ in the coset $P(w)y$ as above. For $w = w_0$, this is trivial as $P(w) = \varnothing$.

    Suppose now that it is true for some $w$. Then by further induction on the length of $y$ it is enough to show that for any $s \in P(w)$, $\theta_s(A_w)$ is a linear combination of $A_{ws}$, $A_w$, and $\theta_s(A_{ws})$. Indeed, by the definition of $\theta_s$ in (\ref{eqn:deftheta}), one can check that, since $\ell(ws) > \ell(w)$,
    \begin{align}
        \theta_s(A_w) & = A_w + v^{-1}A_{ws} - v^{-1}\theta_s(A_s),
    \end{align}
    completing the proof.
\end{proof}

Now we define $\eta = \pi^{-1} \circ \eta'$. By Lemmas \ref{lem:etaprime} and \ref{lem:pi}, we know $\eta$ is an isomorphism of $\mathbb{C}[W]$-modules. The following proposition will follow from Proposition \ref{prop:je}, which is proved in the next subsection, and so we will delay its proof until then.

\begin{proposition}
\label{prop:etastds}
For any $w \in W$,
\begin{align*}
    \eta([j_{e!}(\Delta_w)]) & = A_w
\end{align*}
\end{proposition}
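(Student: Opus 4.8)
The plan is to deduce Proposition~\ref{prop:etastds} from the characterization of $\eta$ in terms of simple objects. We already know from Lemmas~\ref{lem:etaprime} and~\ref{lem:pi} that $\eta = \pi^{-1}\circ\eta'$ is an isomorphism of $\mathbb{C}[W]$-modules satisfying $\eta([j_{e!*}(\mathrm{IC}_w)]) = A_w^\sharp$ (this last equality following from the definition of $\eta'$ and the promised equation~(\ref{eqn:etasimples})). The idea is then to express $[j_{e!}(\Delta_w)]$ in $K_0(\mathcal{A}_{\mathcal{P}})\otimes\mathbb{C}$ as an explicit $\mathbb{Z}[v,v^{-1}]$-linear combination of classes of simple objects $[j_{z!*}(\mathrm{IC}_{w'})]$ (up to Tate twist), apply $\eta$ termwise using the known values on simples, and then recognize the resulting combination of $A_{w'}^\sharp$'s (and their $\theta$-translates) as precisely $A_w$. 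Since Lusztig's canonical basis satisfies $A^\sharp = A + v^{-1}\sum_B \mathbb{Z}[v^{-1}]B$, inverting the unitriangular change of basis expresses $A_w$ as $A_w^\sharp$ plus lower terms, and the matching combination on the Kazhdan-Laumon side should be the Jordan-Hölder decomposition of $j_{e!}(\Delta_w)$.

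Concretely, the first step is to understand the composition series of $j_{e!}(\Delta_w)$ in $\mathcal{A}_{\mathcal{P}}$. Here I would use the fact (from \cite{P}) that $j_{e!}$ is exact and that on the $e$-component $j_e^*(j_{e!}(\Delta_w)) = \Delta_w$, whose Jordan-Hölder content in $\mathrm{Perv}_{B,\mathrm{m}}(X)$ is governed by Kazhdan-Lusztig polynomials: $[\Delta_w] = \sum_{x\le w}(\text{KL-coeff})[\mathrm{IC}_x]$ up to twists. Combined with Proposition~\ref{prop:simples}, which tells us exactly which simple objects of $\mathcal{A}_{\mathcal{P}}$ restrict to a given $\mathrm{IC}_x$ on which components, one should be able to pin down $[j_{e!}(\Delta_w)]$ as a combination of $[j_{z!*}(\mathrm{IC}_x)]$. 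The cleanest route may be to prove the statement by downward induction on $\ell(w)$ in the Bruhat order: for $w = w_0$ one has $\Delta_{w_0} = \mathrm{IC}_{w_0} = \nabla_{w_0}$ and $P(w_0)$ trivial, so $[j_{e!}(\Delta_{w_0})] = [j_{e!*}(\mathrm{IC}_{w_0})] \mapsto A_{w_0}^\sharp = A_{w_0}$ (the last equality because $A_{w_0}$ is already $\tilde b$-invariant in the relevant range, or directly from~(\ref{eqn:sl2simple})-type computations), and then use the standard short exact sequences relating $\Delta_w$, $\mathrm{IC}_w$, and $\Delta_x$ for $x > w$ together with the inductive hypothesis.

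Alternatively — and this is likely the slicker approach the author has in mind given the deferral to Proposition~\ref{prop:je} — one transports the Hecke-module structure. The element $A_w \in M_{d,q}$ is by definition $\tilde T_w \cdot A_e$ (since $w \in W \subset \tilde W$ acts via the left $\widetilde{\mathcal{H}}$-action on $M_c$, extended to $M_d$), while on the Kazhdan-Laumon side the natural guess is $[j_{e!}(\Delta_w)] = \tilde T_w \cdot [j_{e!}(\Delta_e)]$ under the $\mathcal{H}_q$-action of Proposition~\ref{prop:heckeoaction}, using that convolution $\nabla_s * (-)$ sends $\Delta_x$ to $\Delta_{sx}$ when $\ell(sx) > \ell(x)$ and that $j_{e!}$ commutes with left convolution (Proposition~\ref{prop:f!conv} and the description of $F_{s,!}$). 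One then reduces to the single identity $\eta([j_{e!}(\Delta_e)]) = A_e$, i.e. $\eta([j_{e!*}(\mathrm{IC}_e)]) = A_e$ (since $\Delta_e = \mathrm{IC}_e$), which holds because $A_e^\sharp = A_e$ is already self-dual (no alcove lies below $A_e$ in the relevant support condition, cf.~the $\mathbf{SL}_2$ computation~(\ref{eqn:sl2simple})). The main obstacle is establishing that $\eta$ is a morphism of $\mathcal{H}_q$-modules \emph{before} we know~(\ref{eqn:etastds}) — but the text's structure (deferring to Proposition~\ref{prop:je}) signals that $\eta$'s $\mathcal{H}_q$-equivariance is proved directly on standard-type objects in the next subsection, so here we may simply invoke Proposition~\ref{prop:je} to compute $\eta([j_{e!}(\Delta_w)])$ by peeling off simple reflections and matching with the $\tilde T_s$-action on $A_e$, $A_w$ in $M_{d,q}^0$, being careful about the two cases ($s \in \mathcal{L}(A)$ or not) in~(\ref{eqn:heckeaction}) and about Tate twists.
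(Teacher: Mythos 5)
Both of your routes run into genuine obstructions. The Hecke-transport route fails already at its first step: for a finite simple reflection $s$ one has $s \in \mathcal{L}(A_e)$, so by \eqref{eqn:heckeaction} $\tilde{T}_s A_e = A_s + (v - v^{-1})A_e$, not $A_s$ (see the $\mathbf{SL}_2$ computation in Section~\ref{sec:examplesl2}); correspondingly $[j_{e!}(\nabla_s * \Delta_e)] \neq [j_{e!}(\Delta_s)]$, so $A_w \neq \tilde{T}_w\cdot A_e$ and $[j_{e!}(\Delta_w)] \neq \tilde{T}_w\cdot[j_{e!}(\Delta_e)]$. One could still try to induct using these corrected formulas, but that requires knowing $\eta$ is $\mathcal{H}_q$-equivariant, which in the paper is \emph{deduced from} Proposition~\ref{prop:etastds} (Lemma~\ref{lem:etahmodules}), so this is circular; and Proposition~\ref{prop:je}, which you invoke to escape the circle, says only that $J_e \circ \eta = j_e^*$ — it contains no equivariance statement and does not by itself let you ``peel off simple reflections.'' Your proposed base case is also not available: $j_{e!}(\Delta_e)$ is not the simple object $j_{e!*}(\mathrm{IC}_e)$ (e.g. $j_s^*(j_{e!}(\Delta_e)) = \nabla_s(\tfrac{1}{2})$ while $j_s^*(j_{e!*}(\mathrm{IC}_e)) = \mathrm{IC}_e$), so even for $w=e$ the statement is not a restatement of the value of $\eta$ on a simple object. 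The Jordan--H\"older route has a similar problem at its foundation: the paper only establishes $(\pi\circ\eta)([j_{e!*}(\mathrm{IC}_w)]) = \overline{A}_w^\sharp$ in the quotient $\overline{M}_{d,q}^0$; the equality $\eta([j_{e!*}(\mathrm{IC}_w)]) = A_w^\sharp$ in $M_{d,q}^0$ that you want to use is not known (indeed it is not even clear that $A_w^\sharp$ lies in $M_{d,q}^0$), and matching the multiplicities of all simples $j_{z!*}(\mathrm{IC}_x)$ in $j_{e!}(\Delta_w)$ against the alcove-to-canonical-basis transition matrix of $M_d$ (which involves Lusztig's periodic polynomials, not the finite KL polynomials) is essentially the content of the theorem rather than a computation one can ``recognize.''

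What the paper actually does, and what is missing from your proposal, is the following reduction and its combinatorial core. Since $\eta$ is already known to be a $\mathbb{C}[W]$-isomorphism and $J_e\circ\eta = j_e^*$ (Proposition~\ref{prop:je}), Corollary~\ref{cor:zero} shows that $\eta([j_{e!}(\Delta_w)]) = A_w$ is equivalent to the family of identities $J_e(\theta_z(A_w)) = [\Delta_w * \nabla_{z^{-1}}(\tfrac{l(z)}{2})]$ for all $z \in W$, because $j_e^*\mathcal{F}_z(j_{e!}(\Delta_w)) = \Delta_w * \nabla_{z^{-1}}(\tfrac{l(z)}{2})$. The real work is then an induction on $l(z)$: one must control the alcoves other than the fundamental ones that appear in $\theta_z(A_w)$ and show that applying a further $\theta_s$ never pushes them back into $\Xi_{\mathrm{fin}}$, where they would corrupt $J_e$. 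This is exactly the content of the $\Xi^+_z$ regions, Lemma~\ref{lem:thetasxiplus} and Corollary~\ref{cor:thetasr}, together with the one sheaf-theoretic input $J_e(A_w^\sharp) = [\mathrm{IC}_w]$ (via the Kazhdan--Lusztig conjectures) used in the proof of Proposition~\ref{prop:je}. Any correct proof has to confront this ``no leakage back into the fundamental box'' phenomenon, and neither of your routes addresses it.
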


Before we use Proposition \ref{prop:etastds} to show that $\eta$ is a morphism of $\mathcal{H}_q$-modules to prove Lemma \ref{lem:etahmodules} below, we need the following straightforward consequence of Polishchuk's work on ``good representations" in \cite{P}. 

\begin{lemma}[\cite{P}, Theorem 11.5.1]\label{lem:goodrep}
The elements $\{[j_{z!}(\Delta_w)]\}_{z, w \in W}$ generate $K_0(\mathcal{A}_\mathcal{P}) \otimes_{\mathbb{Z}[v, v^{-1}]} \mathbb{C}$ whenever $v$ is specialized to a value which is not a root of unity (in particular, in our situation where $v \mapsto q^{\frac{1}{2}}$). 
\end{lemma}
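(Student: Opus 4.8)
\textbf{Proof proposal for Lemma \ref{lem:goodrep}.}

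The plan is to reduce the statement to a direct application of Polishchuk's theory of ``good representations'' as developed in Section 11 of \cite{P}. Recall that for a braid group acting on a category by auto-equivalences (here $B_W$ acting on $\mathcal{P}$ via the Fourier transforms $F_{s,!}$), Polishchuk constructs the glued category and shows that its Grothendieck group carries an action of a certain algebra, and that under a genericity hypothesis on the ``rotation'' parameter the classes of the objects $j_{z!}(\mathcal{F})$ for $\mathcal{F}$ ranging over a generating set of $K_0(\mathcal{P})$ and $z$ over $W$ span $K_0$ of the glued category. First I would recall precisely how the hypotheses of Theorem 11.5.1 of \cite{P} are phrased, identify the relevant parameter (it is governed by the eigenvalues of the Fourier transform operators, equivalently by the quadratic relation satisfied by the $F_{s,!}$ acting on $K_0(\mathcal{P})$), and check that specializing $v \mapsto q^{1/2}$ with $q$ not a root of unity puts us in the generic regime.

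The key steps, in order, are: (1) observe that $K_0(\mathcal{P}) \otimes \mathbb{C}$ is, by the Proposition following the definition of $\mathcal{P}(Y)$, identified with $\mathcal{H}_q$ as a left module over itself, with the classes $[\Delta_w]$ (equivalently $[\nabla_w]$, up to a unitriangular change of basis) forming a $\mathbb{C}$-basis; in particular $\{[\Delta_w]\}_{w \in W}$ generates $K_0(\mathcal{P}) \otimes \mathbb{C}$. (2) Recall that the functors $F_{s,!}$ are given on $\mathcal{P}$ by $\mathcal{G} \mapsto {}^pH^0(\mathcal{G} * \nabla_s(\tfrac 12))$ (Proposition \ref{prop:f!conv}), so on $K_0$ the induced operators are the right-convolution Hecke operators, whose eigenvalues on $\mathcal{H}_q$ are the roots $q^{1/2}$ and $-q^{-1/2}$ of the quadratic relation \eqref{eqn:heckerelation}. (3) Verify that the ratio of these eigenvalues, namely $-q$, is not a root of unity precisely when $q$ is not a root of unity, which is the genericity condition required by Theorem 11.5.1 of \cite{P}. (4) Apply that theorem: it yields that $\{[j_{z!}(\mathcal{F})]\}$, as $z$ runs over $W$ and $\mathcal{F}$ over a generating set of $K_0(\mathcal{P}) \otimes \mathbb{C}$, spans $K_0(\mathcal{A}_{\mathcal{P}}) \otimes \mathbb{C}$; combining with (1) and the exactness of $j_{z!}$ on classes, the set $\{[j_{z!}(\Delta_w)]\}_{z,w \in W}$ spans.

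I expect the main obstacle to be purely expository rather than mathematical: matching Polishchuk's conventions (the precise normalization of the Fourier transforms, whether his genericity hypothesis is stated in terms of an eigenvalue ratio, a determinant, or non-vanishing of certain structure constants, and whether it is stated for $\mathcal{A}$ or already in the $B$-equivariant / mixed setting) to the setup used here, so that the citation of Theorem 11.5.1 is genuinely applicable to $\mathcal{A}_{\mathcal{P}}$ as opposed to the full category $\mathcal{A}_{\mathrm{m}}$. A secondary point to be careful about is that $j_{z!}$ is only right-exact, so strictly speaking one should pass to the derived functor and note that $[j_{z!}(\mathcal{F})] = \sum_i (-1)^i [{}^pH^i j_{z!}(\mathcal{F})]$; since the $[\Delta_w]$ already form a basis of $K_0(\mathcal{P})\otimes\mathbb{C}$ and $K_0$ only sees Euler characteristics, this causes no difficulty, but it is worth a sentence. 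Once these bookkeeping matters are settled the lemma is immediate.
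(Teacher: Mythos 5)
Your proposal is correct and follows essentially the same route as the paper: reduce to Theorem 11.5.1 of \cite{P}, verify its hypothesis (that the $F_{w,!}$-action on $K_0$ factors through the Hecke algebra at a non-root-of-unity parameter) via the convolution description in Proposition \ref{prop:f!conv}, and conclude because the standard objects $\Delta_w$ and their twists generate $K_0(\mathcal{P})$. The extra remarks on eigenvalues and on $j_{z!}$ being only right-exact are harmless bookkeeping beyond what the paper records.
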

\begin{proof}
    In Theorem 11.5.1 of \cite{P}, it is shown that $K_0(\mathcal{A}_\mathcal{P})$ must be generated by objects of the form $[j_{z!}(A)]$ for $A \in \mathcal{P}$ so long as the action defined by the $F_{w,!}$ functors factors through the Hecke algebra $\mathcal{H}_q$. Although this is not true for the action of these functors on $K_0(\mathcal{A})$, it holds for $K_0(\mathcal{A}_\mathcal{P})$ by Proposition \ref{prop:f!conv}. Since $\mathcal{P}$ is generated by standard objects $\{\Delta_w\}_{w \in W}$ and their Tate twists, this proves the lemma.
\end{proof}
Note that it is not true that the elements in Lemma \ref{lem:goodrep} generate $K_0(\mathcal{A}_\mathcal{P})$ alone before specializing at $q^{\frac{1}{2}}$; this follows from the example provided in the appendix to \cite{P}. The question of whether the Grothendieck group $K_0(\mathcal{A})$ of the full Kazhdan-Laumon category is generated by a similar collection is the subject of much of the work in \cite{P}, and this has now been addressed further in \cite{CMF3}.

\begin{lemma}
\label{lem:etahmodules}
$\eta$ is a morphism of $\mathcal{H}_q$-modules.
\end{lemma}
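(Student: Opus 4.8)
The plan is to show that $\eta$ intertwines the $\mathcal{H}_q$-action on $K_0(\mathcal{A}_\mathcal{P})\otimes\mathbb{C}$ (convolution by costandard objects $\nabla_s$ on the left, as in Proposition \ref{prop:heckeoaction}) with the $\widetilde{\mathcal{H}}$-action on $M_{d,q}^0$ (restricted from $M_d$ via the formula in \eqref{eqn:heckeaction}). Since by Lemma \ref{lem:goodrep} the classes $\{[j_{z!}(\Delta_w)]\}_{z,w\in W}$ span $K_0(\mathcal{A}_\mathcal{P})\otimes\mathbb{C}$, and since $\eta$ is already known to be a $\mathbb{C}[W]$-module isomorphism that sends $[j_{e!}(\Delta_w)]$ to $A_w$ by Proposition \ref{prop:etastds}, it suffices to check the $\mathcal{H}_q$-compatibility on the generating set $\{[j_{e!}(\Delta_w)]\}_{w\in W}$ and then transport the identity to arbitrary $[j_{z!}(\Delta_w)]$ using the $W$-equivariance. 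Concretely, $\mathcal{F}_z$ sends $[j_{e!}(\Delta_w)]$ to (a twist of) $[j_{z^{-1}!}(\Delta_w)]$, so once we know $\eta(\tilde{T}_s\cdot[j_{e!}(\Delta_w)])=\tilde{T}_s\cdot\eta([j_{e!}(\Delta_w)])$ and that the $\mathcal{H}_q$- and $W$-actions commute on both sides, the general case follows formally.

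\textbf{Key steps.} First I would compute $\tilde{T}_s\cdot[j_{e!}(\Delta_w)]$ in $K_0(\mathcal{A}_\mathcal{P})$. By Proposition \ref{prop:heckeoaction} this equals $[j_{e!}(\nabla_s*\Delta_w)]$; using the standard convolution identities in $\mathrm{Perv}_B(G/U)$ (e.g. 7.2 of \cite{Achar}), $\nabla_s*\Delta_w\cong\Delta_{sw}$ when $\ell(sw)>\ell(w)$, while when $\ell(sw)<\ell(w)$ one has a distinguished triangle expressing $[\nabla_s*\Delta_w]=[\Delta_{sw}]+(v^2-1)[\Delta_w]$ at the level of Grothendieck groups, after accounting for the Tate twist $(\tfrac{1}{2})$ in the definition of $\tilde{T}_s$. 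Second, I would compare this with the $\widetilde{\mathcal{H}}$-action on $A_w\in M_{d,q}^0$: here the point is that for $w\in W\subset\widetilde{W}$, the alcove $A_w=wA^+$ has $\mathcal{L}(A_w)$ equal to the \emph{left} descent set of $w$ in the sense of \eqref{eqn:heckeaction}, so that $\tilde{T}_s A_w=A_{sw}$ when $s\notin\mathcal{L}(A_w)$ and $\tilde{T}_sA_w=A_{sw}+(v-v^{-1})A_w$ otherwise — and one must verify that "$s\notin\mathcal{L}(A_w)$" corresponds exactly to "$\ell(sw)>\ell(w)$". Matching these two computations (remembering $\tilde{T}_s=v^{-1}T_s$, so the quadratic relation $T_s^2=(v^2-1)T_s+v^2$ becomes \eqref{eqn:heckerelation}) gives the compatibility on generators. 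Third, I would invoke that both actions commute with the respective $W$-actions ($\mathcal{F}_w$ commutes with $F_{s,!}$ since the latter acts diagonally, shown in the discussion before Proposition \ref{prop:heckeoaction}; $\theta_y$ commutes with $\widetilde{\mathcal{H}}$ since $\Gamma$ and hence $W$ act commuting with $\widetilde{\mathcal{H}}$ on $M_d$) to extend from $[j_{e!}(\Delta_w)]$ to all $[j_{z!}(\Delta_w)]=\mathcal{F}_{z^{-1}}([j_{e!}(\Delta_w)])$, and conclude by Lemma \ref{lem:goodrep}.

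\textbf{Main obstacle.} The delicate point is the dictionary between the affine-Weyl-group combinatorics on alcoves and the finite Weyl group combinatorics on the cells $X_w$. Specifically, one must be careful that the convention fixed after Definition of $M_c$ — where the \emph{left} action of $s\in\widetilde{S}$ moves an alcove to a neighbor and $\mathcal{L}(A)$ records when $A$ is "above" $sA$ — produces, upon restricting to the finite alcoves $A_w$ for $w\in W$, precisely the left cell combinatorics matching convolution by $\nabla_s$ on the left in $\mathrm{Perv}_B(G/U)$ (and not, say, the opposite convention or a twist by $w_0$). I expect this bookkeeping — together with getting the half-Tate-twist normalizations exactly right so that $[\nabla_s(\tfrac12)]$ acts as $\tilde{T}_s$ rather than $T_s$ or $\tilde{T}_s^{-1}$ — to be the part requiring the most care, though once the conventions are pinned down the verification is a short Grothendieck-group computation on the two generating families.
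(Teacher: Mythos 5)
Your overall strategy is the same as the paper's: verify the identity $\eta(\tilde{T}_s\cdot[j_{e!}(\Delta_w)])=\tilde{T}_s\cdot\eta([j_{e!}(\Delta_w)])$ by a direct Grothendieck-group computation, then extend to all of $K_0(\mathcal{A}_\mathcal{P})\otimes\mathbb{C}$ using Lemma \ref{lem:goodrep} together with the fact that the $W$-action (by the $\mathcal{F}_z$, resp.\ the $\theta_z$) commutes with the $\mathcal{H}_q$-action and that $\eta$ is already a $\mathbb{C}[W]$-map. That skeleton is exactly the paper's proof.

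However, the two key identities you state are each reversed, and you explicitly defer (rather than carry out) the verification of the one you flag as delicate. With the paper's normalization $\mathrm{ch}([\nabla_w])=\tilde{T}_w$ and hence $\mathrm{ch}([\Delta_w])=\overline{\tilde{T}_w}=\tilde{T}_{w^{-1}}^{-1}$, one computes $\tilde{T}_s\tilde{T}_{w^{-1}}^{-1}=\tilde{T}_{(sw)^{-1}}^{-1}$ when $\ell(sw)<\ell(w)$, and $\tilde{T}_s\tilde{T}_{w^{-1}}^{-1}=\tilde{T}_{(sw)^{-1}}^{-1}+(v-v^{-1})\tilde{T}_{w^{-1}}^{-1}$ when $\ell(sw)>\ell(w)$; so $[\nabla_s*\Delta_w]=[\Delta_{sw}]$ in the case $\ell(sw)<\ell(w)$, not $\ell(sw)>\ell(w)$ as you claim, and the extra term has coefficient $(v-v^{-1})$, not $(v^2-1)$ (the action in Proposition \ref{prop:heckeoaction} is by the untwisted $\nabla_s$, i.e.\ by $\tilde{T}_s$, so no twist-bookkeeping converts this to the $T_s$ relation). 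Correspondingly, on the alcove side the correct dictionary is $s\in\mathcal{L}(A_w)\iff\ell(sw)>\ell(w)$ for $w\in W$ — the opposite of the statement you propose to verify — as one can check against the $\mathbf{SL}_2$ computation in Section \ref{sec:examplesl2}, where $\tilde{T}_{s_1}A_0=A_{-1}+(v-v^{-1})A_0$ while $\tilde{T}_{s_1}A_{-1}=A_0$. Because your two reversals are mutually consistent, the matching you would write down formally closes, but it would rest on two false intermediate claims; the fix is simply to swap both cases, after which your argument coincides with the paper's (which matches the computation of $[\nabla_s*\Delta_w]$ against \eqref{eqn:heckeaction} and then spans by $[j_{z!}(\Delta_w)]=\mathcal{F}_{z^{-1}}[j_{e!}(\Delta_w)]$ exactly as you propose).
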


\begin{proof}
By Section \ref{sec:cato},
\begin{equation*}
\begin{split}
    \eta(\tilde{T}_s\cdot [j_{e!}(\Delta_w)]) & = \eta([j_{e!}(\nabla_s * \Delta_w)])\\
    & = \begin{cases}
    \eta([j_{e!}(\Delta_{sw})]) & \ell(sw) < \ell(w),\\
    \eta([j_{e!}(\Delta_{sw})] + [j_{e!}(\Delta_w(-\tfrac{1}{2})] - [j_{e!}(\Delta_w(\tfrac{1}{2})]) & \ell(sw) > \ell(w)
    \end{cases}\\
    & = \begin{cases}
    A_{sw} & \ell(sw) < \ell(w),\\
    A_{sw} + (q^{1/2} - q^{-1/2})A_w & \ell(sw) > \ell(w).
    \end{cases}
\end{split}
\end{equation*}
Comparing this with (\ref{eqn:heckeaction}), we conclude
\begin{equation}\label{eqn:heckeactiononbasics}
    \eta(\tilde{T}_s\cdot [j_{e!}(\Delta_w)]) = \tilde{T}_s \cdot \eta([j_{e!}(\Delta_w)])
\end{equation}
for any $w \in W$, $s \in S$. 

By Lemma \ref{lem:goodrep}, the elements $[j_{z!}(\Delta_w)]$ for $z, w \in W$ span the space $K_0(\mathcal{A}_{\mathcal{P}})\otimes \mathbb{C}$. Since the $W$-action commutes with the action of $\mathcal{H}_q$, the equation (\ref{eqn:heckeactiononbasics}) implies that $\eta$ is a $\mathcal{H}_q$-module homomorphism on all of $K_0(\mathcal{A}_{\mathcal{P}}) \otimes \mathbb{C}$, as desired. 
\end{proof}

Combining Lemmas \ref{lem:etaprime} and \ref{lem:etahmodules} (which, to recall, show together that $\eta$ is a $\mathbb{C}[W]$-module isomorphism) with Proposition \ref{prop:etastds}, Theorem \ref{thm:klophm} is proved. It remains to prove Proposition \ref{prop:etastds}; this will be our main focus in the next subsection.

\subsection{Restriction to fundamental alcoves} Let $\Xi_{\mathrm{fin}}$ denote the set of alcoves $\{A_w\}_{w \in W}$ which we call the fundamental alcoves. In this subsection, we define a map $J_e : M_{d,q}^0 \to K_0(\mathcal{P})\otimes_{\mathbb{Z}[v, v^{-1}]} \mathbb{C}$ (which can be interpreted as a sort of restriction to $\Xi_\mathrm{fin}$) such that the diagram
\begin{align}\label{jediagram}
   \begin{tikzcd}[ampersand replacement=\&]
             K_0(\kl)\otimes_{\mathbb{Z}[v, v^{-1}]} \mathbb{C} \arrow[r, "\eta"] \arrow[dr, "j_e^*"'] \& M_{d,q}^0 \arrow[d, "J_e"]\\
             \& K_0(\mathcal{P})\otimes_{\mathbb{Z}[v, v^{-1}]} \mathbb{C}
    \end{tikzcd}
    \end{align}
commutes. This gives an interpretation of the functor $j_e^*$ on the $M_d^0$ side of the bijection from Theorem \ref{thm:klophm}, and will allow us to prove Proposition \ref{prop:etastds}.

\begin{defn}
We define the map $J_e : M_{d,q}^0 \to K_0(\mathcal{P}) \otimes_{\mathbb{Z}[v, v^{-1}]} \mathbb{C}$ as the unique $\mathbb{C}$-linear map satisfying
\begin{align*}
    J_e(A_w) & = \begin{cases}
    [\Delta_w] & \text{if $w \in W$}\\
    0 & \text{if $w \in \tilde{W} \setminus W$},
    \end{cases}
\end{align*}
extending linearly to the domain $M_{d,q}^0$.

Let $J_e' : K_0(\mathcal{P})\otimes_{\mathbb{Z}[v, v^{-1}]} \mathbb{C} \to M_{d,q}^0$ be the unique section of $J_e$ whose image lies in the subspace of $M_d^0$ spanned by $\{A_w\}_{w \in W}.$ Let $\xi = J_e' \circ J_e$ be the projection onto the subspace of $M_{d,q}^0$ spanned by $\{A_w\}_{w \in W}$. Finally, let $\rho = \mathrm{id} - \xi$ be the projection onto the subspace of $M_{d,q}^0$ spanned by $\{A_{\tilde{w}}\}_{\tilde{w} \in \tilde{W}\setminus W}$.
\end{defn}

\begin{proposition}\label{prop:je}
The diagram (\ref{jediagram}) commutes, i.e. $J_e \circ \eta = j_e^*$.
\end{proposition}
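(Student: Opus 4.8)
The plan is to verify the identity $J_e \circ \eta = j_e^*$ on a spanning set of $K_0(\mathcal{A}_\mathcal{P})\otimes\mathbb{C}$ and then propagate it to the whole space using compatibility with the $W$-action. The natural spanning set to use is the one provided by Lemma \ref{lem:goodrep}, namely $\{[j_{z!}(\Delta_w)]\}_{z,w\in W}$. However, before specializing one does not even know a clean formula for $\eta$ on these; what we do know cleanly is the value of $\eta'=\pi\circ\eta$ on the simple objects $[j_{y!*}(\mathrm{IC}_w)]$. So I would first establish the claim on the simple objects, where it is essentially definitional, and then bootstrap to the standard objects $[j_{e!}(\Delta_w)]$, and finally to all $[j_{z!}(\Delta_w)]$ via the $\mathcal{F}_z$-functors, which $J_e$, $\eta$, and $j_e^*$ must all intertwine appropriately with the $\theta_z$-action.

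The first step is the following observation: by construction $\eta'([j_{y!*}(\mathrm{IC}_w)]) = \theta_{y^{-1}}(\overline{A}_w^\sharp)$, and $\eta$ is $\pi^{-1}\circ\eta'$, so $\eta([j_{y!*}(\mathrm{IC}_w)]) = \pi^{-1}(\theta_{y^{-1}}(\overline{A}_w^\sharp))$. The content of the proposition at $y=e$ then amounts to computing $J_e$ applied to the (lift to $M_{d,q}^0$ of the) element $\overline{A}_w^\sharp$, and comparing it with $j_e^*([j_{e!*}(\mathrm{IC}_w)]) = \mathrm{IC}_w$. Using the expansion $A^\sharp = A + v^{-1}\sum_B \mathbb{Z}[v^{-1}]B$ from Theorem 12.2 of \cite{L} together with the defining property $J_e(A_{\tilde w}) = 0$ for $\tilde w \notin W$, the only fundamental alcoves $A_{w'}$ ($w'\in W$) that can appear in $A_w^\sharp$ are those with $w' \geq w$ in a suitable order, and by the character formula relating the $A^\sharp$ to Kazhdan–Lusztig polynomials one checks $J_e(A_w^\sharp)$ agrees with $\mathrm{ch}^{-1}(C_w)=[\mathrm{IC}_w]$ under the isomorphism $K_0(\mathcal P)\cong\mathcal H$ of Section \ref{sec:cato}; this is the SL$_2$ computation \eqref{eqn:sl2simple} in general form and is where I expect to spend the most care. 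Then, since the $A_w$ and the $\theta_y$-images of $A_w^\sharp$ together span $M_{d,q}^0$ (Lemma \ref{lem:pi}) and $j_e^*$ is additive, a change-of-basis argument converts the value of $J_e\circ\eta$ on simples into its value on standards $[j_{e!}(\Delta_w)]$, giving $J_e(\eta([j_{e!}(\Delta_w)])) = [\Delta_w] = j_e^*([j_{e!}(\Delta_w)])$.

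For the final step I would check that all three maps in the triangle are compatible with the $W$-actions on each side in the following precise sense: on $K_0(\mathcal{A}_\mathcal{P})\otimes\mathbb{C}$ we have the $\mathcal{F}_z$-action, on $M_{d,q}^0$ the $\theta_z$-action, and $\eta$ intertwines them by Theorem \ref{thm:klophm} (Lemma \ref{lem:etaprime}); meanwhile $j_e^*(\mathcal{F}_z(\mathcal{G})) = j_z^*(\mathcal{G})$ by definition of the Kazhdan–Laumon functors, and $j_z^*(j_{e!}(\Delta_w)) = F_{z,!}(\Delta_w)$, whose class is computed via convolution with $\nabla_z$ by Proposition \ref{prop:f!conv}; on the other side $J_e(\theta_z(A_w))$ must be shown to equal this same class. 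So the remaining content is the identity $J_e\circ\theta_z = (\text{convolution by }\nabla_z\text{ followed by }{}^pH^0)\circ J_e$ on $M_{d,q}^0$ — or at least enough of it to conclude. The cleanest route is to note that Lemma \ref{lem:goodrep} lets us write every $[j_{z!}(\Delta_w)]$ as an element of the $\mathcal{F}_\bullet$-orbit-span of the $[j_{e!}(\Delta_w)]$, so it suffices to have the identity on that orbit-span, which reduces to the single-simple-reflection case $z=s$. Here one uses the explicit formula for $\theta_s$ from \eqref{eqn:deftheta} (as in the SL$_2$ computation $\theta_{s_1}(A_n^\sharp)=A_{-n}^\sharp$), tracks which fundamental alcoves survive $J_e$, and matches against the right-convolution description of $F_{s,!}$ from Proposition \ref{prop:f!conv}. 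The main obstacle, as noted, is the combinatorial bookkeeping in identifying $J_e(A_w^\sharp)$ with $[\mathrm{IC}_w]$: one must control precisely which fundamental alcoves occur in the semi-infinite sum defining $A_w^\sharp$ and with which coefficients, which is where Lusztig's results from \cite{L} on the $A^\sharp$ basis and the relation to KL polynomials do the real work.
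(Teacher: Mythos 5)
Your identification of the crux at $y=e$ is the same as the paper's: reduce by linearity to the simple classes and prove $J_e(A_w^\sharp)=[\mathrm{IC}_w]$ using Lusztig's results tying the coefficients of the $\sharp$-basis on fundamental alcoves to Kazhdan--Lusztig polynomials (the paper invokes the Kazhdan--Lusztig conjectures via (1.5.b) of \cite{KL1} together with 11.19 and the remark after 13.10 of \cite{L1}). But your verification on simples is incomplete, and this is a genuine gap: you never treat the vanishing case. For $y\notin P(w)$ one has $j_e^*([j_{y!*}(\mathrm{IC}_w)])=0$ by Theorem \ref{thm:simpleobj}, so one must show $J_e(\theta_{y^{-1}}(A_w^\sharp))=J_e((y^{-1}*A_w)^\sharp)=0$. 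This is half of the paper's proof: if $a*A_w\neq A_w$ then $a*A_w$ is not dominated by any alcove of $\Xi_{\mathrm{fin}}$ in the alcove order, so no fundamental alcove occurs in the $\sharp$-expansion and $J_e$ kills it. (For $y\in P(w)$ the stabilizer computation from Lemma \ref{lem:etaprime} reduces you to $y=e$, as you implicitly use.) Once the identity is checked on all simples $[j_{y!*}(\mathrm{IC}_w)]$ and their Tate twists, you are finished by linearity, since simple classes span the Grothendieck group; your further bootstrapping to $[j_{e!}(\Delta_w)]$ and to all $[j_{z!}(\Delta_w)]$ is unnecessary for an equality of linear maps, and Lemma \ref{lem:goodrep} plays no role here. (A small additional point: when you replace $\overline{A}_w^\sharp$ by a lift, you should note, as the paper does, that $J_e$ vanishes on $\ker\pi$, so the value is independent of the lift.)

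The propagation step you propose instead would, moreover, fail as sketched. The identity $J_e\circ\theta_z = \bigl({}^pH^0(-*\nabla_z(\tfrac{l(z)}{2}))\bigr)\circ J_e$ does not reduce to the single-reflection case $z=s$ by composing: $J_e$ is not injective, and $\theta_s$ applied to an alcove outside $\Xi_{\mathrm{fin}}$ can produce fundamental alcoves, so $J_e(\theta_s(m))$ is not determined by $J_e(m)$. The true single-reflection statement (Lemma \ref{lem:jebasecase}) holds only on the span of $\Xi_{\mathrm{fin}}$, and upgrading it to general $z$ is precisely the content of the paper's proof of Proposition \ref{prop:etastds}: one must show the error term $\rho(\theta_z(A_w))$ stays inside the region $\Xi^+_z$ (Corollary \ref{cor:thetasr}) and that $\theta_s$ applied to $\Xi^+_z$, for $l(sz)>l(z)$, creates no fundamental components (Lemma \ref{lem:thetasxiplus}). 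That bookkeeping is developed separately from, and after, Proposition \ref{prop:je}; if you route the proof through this intertwining identity you must supply it, whereas completing the simples computation as above makes the whole detour avoidable.
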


\begin{proof}
By the linearity of each of these maps, it is enough to check this statement on the elements $\{[j_{z!*}(\mathrm{IC}_w)]\}_{z, w \in W}$, which span $K_0(\mathcal{A}_\mathcal{P})$ under $\mathbb{Z}[v, v^{-1}]$ by Theorem \ref{thm:simpleobj}. By the same result, we know that
\begin{align*}
    j_{e}^*([j_{z!*}(\mathrm{IC}_w)]) & = \begin{cases}
    [\mathrm{IC}_w] & z \in P(w)\\
    0 & z\not\in P(w),
    \end{cases}
\end{align*}
while $J_e(\eta([j_{z!*}(\mathrm{IC}_w)])) = J_e(\theta_{z^{-1}}(A_w^\sharp))$. Since $\pi \circ \eta = \eta'$ and it is easy to check that $J_e$ is zero on any objects in the kernel of $\pi$ by the definition of $\overline{M}_{d,q}^0$, it is enough to show that
\begin{align*}
    J_e(\theta_{z^{-1}}(A_w^\sharp)) & = \begin{cases}
    [\mathrm{IC}_w] & z \in P(w)\\
    0 & z \not\in P(w).
    \end{cases}
\end{align*}
In the case where $z \in P(w)$, we know by the proof of Lemma \ref{lem:etaprime} that $\theta_z(A_w^\sharp) = A_w^\sharp$, so this reduces to showing $J_e(A_w^\sharp) = [\mathrm{IC}_w]$. By the Kazhdan-Lusztig conjectures, formula (1.5.b) in \cite{KL1} holds, and so our claim follows from the remark following 13.10 in \cite{L1} combined with 11.19 of loc.\ cit. (for $\epsilon = 0$), c.f.\ 11.15 of \cite{L}.

On the other hand, if $z \not\in P(w)$, then $\theta_{z^{-1}}(A_w^\sharp) = (z^{-1}*A_w)^\sharp$. It is easy to see that if $a * A_w \neq A_w$ for $a, w \in W$, then $a * A_w$ is not dominated by any alcove in $\{A_{w'}\}_{w' \in W}$ under the partial ordering of alcoves given in \cite{L1}. As a result, we cannot have any of the alcoves in $\{A_{w'}\}_{w' \in W}$ occurring with nonzero coefficient in $(z * A_w)^\sharp$, and so $J_e(\theta_{z^{-1}}(A_w^\sharp)) = 0$ in this case, as desired.
\end{proof}

\begin{corollary}\label{cor:zero}
An element $C \in M_d^0$ is zero if and only if $J_e(\theta_z(C)) = 0$ for all $z \in W$.
\end{corollary}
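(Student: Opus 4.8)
The plan is to reduce the statement, via the isomorphism $\eta\colon K_0(\kl)\otimes\mathbb{C}\xrightarrow{\sim} M_{d,q}^0$ of Theorem~\ref{thm:klophm}, to a transparent assertion about the basis of simple objects (throughout I read $M_d^0$ here as its specialization $M_{d,q}^0$, on which $J_e$ is defined). The forward implication is trivial, as $J_e(\theta_z(0)) = 0$, so the work is entirely in the converse.

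First I would assemble three compatibilities. Since $\eta = \pi^{-1}\circ\eta'$ with $\eta'$ and $\pi$ both isomorphisms of $\mathbb{C}[W]$-modules (Lemmas~\ref{lem:etaprime} and~\ref{lem:pi}), $\eta$ intertwines the action of the functors $\mathcal{F}_z$ on $K_0(\kl)\otimes\mathbb{C}$ with that of the operators $\theta_z$ on $M_{d,q}^0$; that is, $\theta_z\circ\eta = \eta\circ\mathcal{F}_z$. Second, $J_e\circ\eta = j_e^*$ by Proposition~\ref{prop:je}. Third, unwinding the definition $\mathcal{F}_z\big((\mathcal{G}_{w'})_{w'}\big) = (\mathcal{G}_{w'z})_{w'}$ gives $j_e^*\circ\mathcal{F}_z = j_z^*$ on $K_0(\kl)\otimes\mathbb{C}$. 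Putting these together, for $C = \eta(x)$ we obtain
\[
J_e(\theta_z(C)) \;=\; J_e\big(\eta(\mathcal{F}_z x)\big) \;=\; j_e^*(\mathcal{F}_z x) \;=\; j_z^*(x),
\]
so, $\eta$ being an isomorphism, it remains only to show that an element $x\in K_0(\kl)\otimes\mathbb{C}$ with $j_z^*(x) = 0$ for all $z\in W$ must vanish.

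For this last step I would expand $x$ in the basis of $K_0(\kl)\otimes\mathbb{C}$ given by Theorem~\ref{thm:simpleobj}: write $x = \sum_{w\in W}\sum_{\overline{w'}\in P(w)\backslash W} c_{w,\overline{w'}}\,[j_{w'!*}(\mathrm{IC}_w)]$, choosing an arbitrary representative $w'$ in each coset. By Proposition~\ref{prop:simples}, $j_z^*\big([j_{w'!*}(\mathrm{IC}_w)]\big)$ equals $[\mathrm{IC}_w]$ when $z\in P(w)w'$ and $0$ otherwise; and for fixed $w$ and $z$ there is exactly one coset, namely $\overline{w'} = P(w)z$, satisfying $z\in P(w)w'$. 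Hence $j_z^*(x) = \sum_{w\in W} c_{w,\,P(w)z}\,[\mathrm{IC}_w]$. Since $\{[\mathrm{IC}_w]\}_{w\in W}$ is linearly independent in $K_0(\mathcal{P})\otimes\mathbb{C}$, the vanishing of $j_z^*(x)$ for all $z$ forces $c_{w,P(w)z} = 0$ for every $w$ and $z$; letting $z$ range over $W$ makes $P(w)z$ range over all of $P(w)\backslash W$, so all $c_{w,\overline{w'}} = 0$, i.e. $x = 0$ and $C = 0$.

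Nothing in this is genuinely hard — it all rests on inputs already in place (the isomorphism $\eta$, Proposition~\ref{prop:je}, and the classification of simples). The one point that needs a little care, and the only plausible source of error, is that two non-isomorphic simple objects of $\kl$ can share the same support pattern $\{z : j_z^*\neq 0\}$ (e.g. $j_{e!*}(\mathrm{IC}_{s_1})$ and $j_{e!*}(\mathrm{IC}_{s_2s_1})$ in the $\mathbf{SL}_3$ table both have support $\{e,s_2\}$), so the argument cannot be run purely on supports; it is the linear independence of the classes $[\mathrm{IC}_w]$ in $K_0(\mathcal{P})\otimes\mathbb{C}$ that makes the coefficient bookkeeping work.
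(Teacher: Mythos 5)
Your proposal is correct and follows essentially the same route as the paper: reduce via the $\mathbb{C}[W]$-equivariance of $\eta$, Proposition \ref{prop:je}, and the identity $j_e^*\circ\mathcal{F}_z = j_z^*$ to the claim that an element of $K_0(\kl)\otimes\mathbb{C}$ with $j_z^*(x)=0$ for all $z\in W$ vanishes. The only difference is that the paper asserts this last claim without elaboration, whereas you spell it out by expanding in the basis of simples from Theorem \ref{thm:simpleobj} and using linear independence of the classes $[\mathrm{IC}_w]$ — a harmless (and careful) filling-in of the same argument.
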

\begin{proof}
By the previous proposition and the fact that $\eta$ is an isomorphism of $\mathbb{C}[W]$-modules, this reduces to the fact that $[\mathcal{F}] = 0$ for $\mathcal{F} \in \kl$ if and only if $j_z^*(\mathcal{F}) = 0$ for all $z \in W$.
\end{proof}

Now to prove Proposition \ref{prop:etastds}, we will need the following; it is an easy computation from the definition of $\theta_s$.

\begin{lemma}\label{lem:jebasecase}
For any $w \in W$, $s \in S$,
\begin{align*}
    J_e(\theta_s(A_w)) & = [\Delta_w * \nabla_s(\tfrac{1}{2})].
\end{align*}
\end{lemma}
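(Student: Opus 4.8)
\textbf{Proof proposal for Lemma \ref{lem:jebasecase}.}

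The plan is to compute both sides explicitly. On the left, I first recall the definition \eqref{eqn:deftheta} of $\theta_s$ applied to the fundamental alcove $A_w$ for $w \in W$. The key observation is that among the sequence $A_w^0 = A_w s, A_w^1, A_w^2, \dots$ appearing in that formula, essentially at most one term lies in $\Xi_{\mathrm{fin}} = \{A_y\}_{y \in W}$ (and it lies there only in favorable length configurations), so after applying $J_e$ — which kills every alcove outside $\Xi_{\mathrm{fin}}$ — only a very short expression survives. Concretely I would split into the two cases $\ell(ws) > \ell(w)$ and $\ell(ws) < \ell(w)$. When $\ell(ws) < \ell(w)$, the alcove $A_w s = A_{ws}$ is itself fundamental, it is the $n=0$ term, and one checks from the geometry of the $\alpha$-strip that none of the higher $A_w^n$ are fundamental; thus $J_e(\theta_s(A_w)) = v^{-1}[\Delta_{ws}]$. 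When $\ell(ws) > \ell(w)$, the term $A_w s$ is not fundamental, but $A_w$ itself reappears among the $A_w^n$ (this is exactly the content of the computation $\theta_s(A_w) = A_w + v^{-1}A_{ws} - v^{-1}\theta_s(A_s)$-type identity used in Lemma \ref{lem:pi}, specialized appropriately), giving a surviving contribution of $[\Delta_w]$ together with a $v^{-1}[\Delta_{ws}]$ term once we track which index sits in $\Xi_{\mathrm{fin}}$. So in both cases the left side is a small explicit $\mathbb{Z}[v,v^{-1}]$-combination of $[\Delta_w]$ and $[\Delta_{ws}]$.

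On the right, I would expand $[\Delta_w * \nabla_s(\tfrac12)]$ using the convolution formulas recalled in Section \ref{sec:cato}: if $\ell(ws) > \ell(w)$ then $\Delta_w * \nabla_s \cong \Delta_{ws}$ (up to the recorded Tate twist), while if $\ell(ws) < \ell(w)$ there is a short exact sequence / triangle expressing $\Delta_w * \nabla_s$ in terms of $\Delta_{ws}$ and a twist of $\Delta_w$, mirroring the Hecke relation \eqref{eqn:heckerelation}. Passing to $K_0(\mathcal P) \otimes \mathbb C$ with $v \mapsto q^{1/2}$ and $v^{-1}$ acting by $(\tfrac12)$, this produces precisely the same combinations of $[\Delta_w]$ and $[\Delta_{ws}]$ as on the left. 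Matching the two computations case by case finishes the proof.

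The main obstacle — really the only nontrivial point — is the combinatorial geometry of the sequence $A_w^n$ in the $\alpha$-strip through a fundamental alcove: one has to verify carefully that no fundamental alcove other than the expected one(s) appears among the $A_w^n$, so that $J_e$ genuinely truncates $\theta_s(A_w)$ to the two-term expression. This is where I expect to lean on the precise definitions of $d_\alpha$ and the $\alpha$-strip from \cite{L}; everything after that is bookkeeping with Tate twists to make the normalizations on the two sides agree. Once Lemma \ref{lem:jebasecase} is in hand, it feeds (together with Lemma \ref{lem:goodrep} and Corollary \ref{cor:zero}) into the proof of Proposition \ref{prop:je} and hence Proposition \ref{prop:etastds}, so it is worth stating and proving it in this isolated, computational form.
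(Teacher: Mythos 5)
Your overall strategy is exactly the intended one --- the paper disposes of this lemma with the single remark that ``it is an easy computation from the definition of $\theta_s$,'' and spelling that computation out, case by case in the length of $ws$, with the only real content being which terms of the sequence $A^n$ lie in $\Xi_{\mathrm{fin}}$, is precisely what is meant. However, as written your case bookkeeping is internally inconsistent and would not close. First, on the left-hand side: your claim that $A_w s$ is not fundamental when $\ell(ws) > \ell(w)$ is false. The right action of a finite simple reflection $s_\alpha$ is reflection in the hyperplane $H_\alpha$ through the origin, and the fundamental alcoves are exactly the alcoves whose closure contains the origin, so this set is preserved; in fact $A_w s_\alpha = A_{w s_\alpha}$ for all $w \in W$, so the $n=0$ term always contributes $v^{-1}[\Delta_{ws}]$. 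The correct dichotomy is about the higher terms: when $\ell(ws) < \ell(w)$ the alcove $A^0 = A_{ws}$ lies in the strip $0 < \langle \alpha, x\rangle < 1$ and all $A^n$ with $n \geq 1$ sit in strips of $\alpha$-height at least $1$, hence are not fundamental, giving $J_e(\theta_s(A_w)) = v^{-1}[\Delta_{ws}]$; when $\ell(ws) > \ell(w)$ the alcove $A^0 = A_{ws}$ lies in the strip just below $H_\alpha$, the first step of the sequence reflects it back across $H_\alpha$, so $A^1 = A_w$ is fundamental and appears with coefficient $v^{0} - v^{-2}$, giving $J_e(\theta_s(A_w)) = v^{-1}[\Delta_{ws}] + (1 - v^{-2})[\Delta_w]$. (The $\mathbf{SL}_2$ computations in Section \ref{sec:examplesl2} already exhibit both cases and would have caught this.)

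Second, on the right-hand side your convolution formulas have the two cases interchanged: it is when $\ell(ws) < \ell(w)$ that $\Delta_w * \nabla_s \cong \Delta_{ws}$ (write $\Delta_w \cong \Delta_{ws} * \Delta_s$ and cancel $\Delta_s * \nabla_s$), whereas for $\ell(ws) > \ell(w)$ one gets the two-step extension with $[\Delta_w * \nabla_s] = [\Delta_{ws}] + (v - v^{-1})[\Delta_w]$, mirroring the quadratic relation; this is also the convention used in the paper's proof of Lemma \ref{lem:etahmodules} (there for left convolution). As your proposal stands, the side on which you find a single surviving term is the side on which you predict two terms from convolution, so ``matching the two computations case by case'' would produce a contradiction rather than a proof. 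Once both slips are corrected, the two sides agree on the nose: $v^{-1}[\Delta_{ws}]$ when $\ell(ws) < \ell(w)$, and $v^{-1}[\Delta_{ws}] + (1 - v^{-2})[\Delta_w] = [\Delta_w * \nabla_s(\tfrac{1}{2})]$ when $\ell(ws) > \ell(w)$, which is the content of the lemma; no further input beyond the definition of $\theta_s$, the geometry of the $\alpha$-strips near $H_\alpha$, and the standard convolution identities is needed.
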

We now wish to upgrade this result by induction and show that $J_e(\theta_z(A_w)) = [\Delta_w * \nabla_z(\tfrac{\ell(z)}{2})]$ for any $w, z \in W$. To do so, we will need to keep track of possible positions of alcoves after applying $\theta_z$ to an alcove in $\Xi_{\mathrm{fin}}$. To make this precise, we introduce the following definition and subsequent lemmas.

\begin{defn}
For any $v \in W$ and reduced word $\overline{v}$ for $v$, recall that we can define a subset of positive roots $R(\overline{v})$. If $\overline{v} = s_{i_1}\dots s_{i_k}$, we set
\begin{align*}
    R(\overline{v}) & = \{\alpha_{i_1}, s_{i_1}(\alpha_{i_2}), \dots, s_{i_1} \dots s_{i_{k-1}}(\alpha_{i_k})\}.
\end{align*}
Note that if $\overline{v}$ is reduced and $s$ is a simple reflection with $\ell(sv) > \ell(v)$, then
\begin{align*}
    R(s\overline{v}) & = sR(\overline{v}) \cup \{\alpha_s\}.
\end{align*}
We will sometimes use $R(v)$ to denote $R(\overline{v})$ for some choice of reduced word $\overline{v}$ for $v \in W$. 
\end{defn}
\begin{defn}
Given any subset $R$ of positive roots, we define $\Xi^+(R) \subset \Xi$ to be the set of alcoves $A$ satisfying the following two conditions. First, we require that $A \not\in \Xi_{\mathrm{fin}}$. Second, we require that $A$ lies in the same Weyl chamber as some element of $R$. By this definition, each each $\Xi^+(R)$ will be a union of Weyl chambers but with $\Xi_{\mathrm{fin}}$ excised.
\end{defn}

One can easily check for any two reduced expressions $\overline{w}$ and $\overline{w}'$ for $w \in W$ that $\Xi^+(R(\overline{w})) = \Xi^+(R(\overline{w}'))$.
\begin{defn}
For any $w \in W$, let $\Xi_w^+ = \Xi^+(R(\overline{w}))$ for a choice of reduced expression $\overline{w}$ for $w$.
\end{defn}

The following is a straightforward computation which follows from the definition of $\theta_s$.
\begin{lemma}\label{lem:thetasxiplus}
If $v \in W$ and $s \in S$ with $\ell(sv) > \ell(v)$, then
\begin{align*}
    \theta_s(\mathrm{span}(\Xi^+_v) \cup \Xi_{\mathrm{fin}})) & \subset \mathrm{span}(\Xi^+_{sv} \cup \Xi_{\mathrm{fin}}),\\
    \xi (\theta_s(\mathrm{span}(\Xi^+_v))) & = 0.
\end{align*}
\end{lemma}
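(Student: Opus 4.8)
The plan is to prove Lemma \ref{lem:thetasxiplus} by a direct unwinding of the definition of $\theta_s$ in \eqref{eqn:deftheta}, tracking carefully in which Weyl chambers the alcoves $A^0, A^1, A^2, \dots$ appearing in that formula can lie when we start with an alcove $A$ in $\mathrm{span}(\Xi^+_v) \cup \Xi_{\mathrm{fin}}$. The key observation is that $\theta_s(A)$ is supported on the $\alpha_s$-strip through $A$ (the infinite strip of alcoves between two consecutive walls $H$, $H'$ perpendicular to $\alpha_s^\vee$ that contains $A$), starting from $As_\alpha$ and marching off in the direction of increasing $d_{\alpha}$. So the first step is to record precisely: if $A$ lies in a Weyl chamber $w\mathcal{C}^+$, then the alcoves $A^n$ in the $\alpha$-strip through $A$ lie either in $w\mathcal{C}^+$ or in $s_\alpha w\mathcal{C}^+$ (the chamber on the far side of the wall $H_{\alpha}$), and only finitely many — in fact at most the first few, governed by how far $A$ sits from that wall — can be in the ``wrong'' chamber before the sequence settles into the dominant side. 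This is exactly the ``$\alpha$-strip'' combinatorics from Section 3 of \cite{L}, and I would cite it rather than reprove it.

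Granting that, the first inclusion follows by a root-system bookkeeping argument. Take $A \in \Xi^+_v$, so $A$ lies in some chamber $u\mathcal{C}^+$ with $u^{-1}(\text{some positive root in the chamber direction})$ matching an element of $R(v)$ — more precisely $A$ is in a chamber indexed by an element $u$ with the property defining $\Xi^+_v$. Applying $\theta_s$ spreads $A$ across the two chambers adjacent along $H_{\alpha_s}$: the original chamber and its $s = s_{\alpha_s}$-reflection. Using the identity $R(s\overline{v}) = sR(\overline{v}) \cup \{\alpha_s\}$ recorded just before the lemma, one checks that both of these chambers are among those defining $\Xi^+_{sv}$, so the non-fundamental alcoves in $\theta_s(A)$ land in $\Xi^+_{sv}$; the finitely many fundamental alcoves that might appear are absorbed into the $\Xi_{\mathrm{fin}}$ term. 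The case $A \in \Xi_{\mathrm{fin}}$, i.e. $A = A_y$ for $y \in W$, is handled the same way using Lemma \ref{lem:jebasecase}'s computation: $\theta_s(A_y)$ is a combination of $A_y$, $A_{ys}$ (both in $\Xi_{\mathrm{fin}}$) and alcoves in the strip past $A_{ys}$, which one checks lie in $\Xi^+_{sv}$ when $l(sv) > l(v)$ — here is where the length hypothesis enters, ensuring $\alpha_s \in R(sv)$ and the ``escaping'' alcoves point in an allowed direction.

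For the second statement, $\xi(\theta_s(\mathrm{span}(\Xi^+_v))) = 0$, I would argue that when $A \notin \Xi_{\mathrm{fin}}$ to begin with, no fundamental alcove can appear in $\theta_s(A)$ at all. The point is that $\xi$ is the projection onto $\mathrm{span}\{A_w\}_{w \in W}$, and an alcove $A^n$ in the formula for $\theta_s(A)$ is fundamental only if it equals some $A_w = wA^+$; but since $A$ lies strictly inside a chamber (being in $\Xi^+_v$ means $A \notin \Xi_{\mathrm{fin}}$, and more importantly $A$ is ``far'' from the fundamental alcoves in the relevant direction), the entire $\alpha$-strip through $A$ avoids $\Xi_{\mathrm{fin}}$. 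This uses the geometric fact — which I would extract from the description of $\Xi_{\mathrm{fin}}$ and the partial order on alcoves in \cite{L1}, as invoked in the proof of Proposition \ref{prop:je} — that the fundamental alcoves form a ``bounded cluster'' near the origin and the strips emanating from a chamber in $\Xi^+_v$ have already left that cluster.

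The main obstacle I anticipate is making the ``chamber of $A^n$'' claim fully precise: the sequence $A^0, A^1, \dots$ can genuinely cross the wall $H_{\alpha_s}$ once (when $A^0 = As_{\alpha_s}$ lands on the opposite side from where the $d_\alpha$-increasing direction eventually sends us), so one must carefully separate the ``at most one'' initial terms on the wrong side from the tail, and verify that even those wrong-side terms lie in $\Xi^+_{sv} \cup \Xi_{\mathrm{fin}}$ rather than escaping entirely. This is a finite, checkable case analysis on the position of $A$ relative to $H_{\alpha_s}$, but it is the only place where one genuinely has to think rather than cite \cite{L}; everything else is bookkeeping with the sets $R(\overline{w})$ and the identity $R(s\overline{v}) = sR(\overline{v}) \cup \{\alpha_s\}$.
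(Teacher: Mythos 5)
The paper itself offers no written proof here (it is introduced as ``a straightforward computation which follows from the definition of $\theta_s$''), so the question is whether your sketch would actually carry that computation through; as written it would not, because two of the claims it leans on are false, and they sit exactly where the content of the lemma lies. First, the support of $\theta_s(A)$ is not confined to ``the original chamber and its $s$-reflection.'' Take type $\mathbf{A}_2$, $v=s_2$, $s=s_1$ (so $\ell(sv)>\ell(v)$) and $A=A^++\check\alpha_2\in\Xi^+_{s_2}$. Using the compatibility of the $W$- and $\Gamma$-actions ($\theta_{s_1}t_{\check\alpha_2}=t_{s_1\check\alpha_2}\theta_{s_1}$) together with the base case $\theta_{s_1}(A_e)$, one finds that $\theta_{s_1}(A)$ is the translate of $\theta_{s_1}(A^+)$ by $\check\alpha_1+\check\alpha_2$, and its tail lies in the chamber containing $\check\alpha_1$, i.e.\ in $s_2\mathcal{C}^+$ --- a chamber which is neither the chamber of $A$ (namely $s_1\mathcal{C}^+$) nor its $s_1$-reflection ($\mathcal{C}^+$). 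The first inclusion is true, but only because $s_2\mathcal{C}^+$ happens to be one of the \emph{new} chambers recorded by $R(s\overline v)=sR(\overline v)\cup\{\alpha_s\}$; a bookkeeping argument that only allows the original chamber and its reflection cannot establish it.

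Second, and more seriously, your argument for $\xi(\theta_s(\mathrm{span}(\Xi^+_v)))=0$ rests on a false principle, and your explicit assertion that $\ell(sv)>\ell(v)$ is only needed for the first inclusion is wrong. The operator $\theta_s$ first reflects ($A^0=As_\alpha$ lies across $H_s$, far from $A$) and then sweeps back over the origin region, so being ``far from $\Xi_{\mathrm{fin}}$'' protects nothing: already for $\mathbf{SL}_2$, $\theta_{s_1}(A_m)$ for $m\geq 1$ contains both $A_0$ and $A_{-1}$ with nonzero coefficients. Inside a genuine $\Xi^+_v$: in type $\mathbf{A}_2$ the alcove $A=A^++n\check\alpha_1$ ($n\geq1$) lies in the chamber containing $\alpha_1$, hence in $\Xi^+_{s_1}$, and is arbitrarily far from $\Xi_{\mathrm{fin}}$; yet $\theta_{s_1}(A)=\theta_{s_1}(A^+)-n\check\alpha_1$, whose support contains $A_e$ and $A_{s_1}$. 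This does not contradict the lemma, because $\ell(s_1\cdot s_1)<\ell(s_1)$ --- but it shows the second statement is simply false without the length hypothesis, so no argument ignoring the relation $\alpha_s\notin R(v)$ (equivalently $\ell(sv)>\ell(v)$) between the allowed chambers and the sweeping direction can prove it, and in particular neither the ``entire $\alpha$-strip avoids $\Xi_{\mathrm{fin}}$'' claim nor the ``bounded cluster'' heuristic can be the mechanism. What actually has to be shown is that when $A$ lies in a chamber meeting $R(v)$ and $\alpha_s\notin R(v)$, the ray of the $\alpha_s$-strip beginning at $As_\alpha$ and moving in the positive direction never meets $\Xi_{\mathrm{fin}}$; that interaction between $R(v)$ and $\alpha_s$ is the whole point of the lemma and is exactly the step your sketch waves away.
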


\begin{corollary}\label{cor:thetasr}
For any $w, z \in W$,
\begin{align*}
    \rho(\theta_z(A_w)) \in \mathrm{span}(\Xi^+_z).
\end{align*}
\end{corollary}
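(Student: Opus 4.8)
The plan is to deduce Corollary~\ref{cor:thetasr} from Lemma~\ref{lem:thetasxiplus} by a short induction on $\ell(z)$. Rather than tracking $\rho$ directly, I will first establish the cleaner auxiliary claim that $\theta_z(A_w) \in \mathrm{span}(\Xi^+_z \cup \Xi_{\mathrm{fin}})$ for all $z, w \in W$, and then observe that the corollary follows at once. Indeed, $\Xi^+_z$ and $\Xi_{\mathrm{fin}}$ are disjoint sets of alcoves, so $\mathrm{span}(\Xi^+_z \cup \Xi_{\mathrm{fin}}) = \mathrm{span}(\Xi^+_z) \oplus \mathrm{span}(\Xi_{\mathrm{fin}})$; since $\mathrm{span}(\Xi_{\mathrm{fin}}) = \mathrm{span}\{A_w\}_{w \in W}$ while $\mathrm{span}(\Xi^+_z) \subseteq \mathrm{span}\{A_{\tilde w}\}_{\tilde w \in \tilde W \setminus W}$ (as every alcove in $\Xi^+_z$ lies outside $\Xi_{\mathrm{fin}}$), and $\rho = \mathrm{id} - \xi$ is by definition the projection onto the latter span that kills the former, we obtain $\rho(\theta_z(A_w)) \in \mathrm{span}(\Xi^+_z)$, as desired.

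For the auxiliary claim itself, the base case $z = e$ is trivial: $R(e) = \varnothing$ forces $\Xi^+_e = \varnothing$, and $\theta_e(A_w) = A_w \in \Xi_{\mathrm{fin}}$. For the inductive step I would write $z = sv$ with $s \in S$ and $\ell(v) = \ell(z) - 1$, so that $\ell(sv) > \ell(v)$ and $\theta_z = \theta_s \circ \theta_v$. By the inductive hypothesis applied to $v$ (for the same $w$), $\theta_v(A_w) \in \mathrm{span}(\Xi^+_v \cup \Xi_{\mathrm{fin}})$, and then Lemma~\ref{lem:thetasxiplus} gives $\theta_z(A_w) = \theta_s(\theta_v(A_w)) \in \theta_s\big(\mathrm{span}(\Xi^+_v \cup \Xi_{\mathrm{fin}})\big) \subseteq \mathrm{span}(\Xi^+_{sv} \cup \Xi_{\mathrm{fin}}) = \mathrm{span}(\Xi^+_z \cup \Xi_{\mathrm{fin}})$, completing the induction.

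I do not expect a substantive obstacle: essentially all of the content is in Lemma~\ref{lem:thetasxiplus}, which says precisely that applying $\theta_s$ to a combination of fundamental alcoves and alcoves ``above the roots of $v$'' produces only fundamental alcoves and alcoves ``above the roots of $sv$''. The two points requiring a little care are purely bookkeeping: that $\theta_z$ factors as $\theta_s \circ \theta_v$ for the chosen reduced factorization $z = sv$ (this is the well-definedness of the $W$-action on $M_{\geq}$, together with the fact that $s$ followed by a reduced word for $v$ is again reduced), and that $\Xi^+_z$ is independent of the chosen reduced word for $z$, which is the observation recorded just before the definition of $\Xi^+_w$. Once these are noted, the two inclusions above are immediate and the corollary follows.
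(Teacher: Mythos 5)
Your proof is correct and takes essentially the same route as the paper: an induction on $\ell(z)$ whose inductive step is exactly the first part of Lemma \ref{lem:thetasxiplus}, combined with the decomposition of $M_{d,q}^0$ into fundamental and non-fundamental alcoves. The only difference is cosmetic bookkeeping — you prove $\theta_z(A_w) \in \mathrm{span}(\Xi^+_z \cup \Xi_{\mathrm{fin}})$ first and apply $\rho$ once at the end, whereas the paper inserts $\rho + \xi = \mathrm{id}$ at each inductive step; the two arguments are interchangeable.
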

\begin{proof}
We go by induction on $\ell(z)$. For $\ell(z) = 0$, there is nothing to prove. Now suppose the result holds for $z$, and let $s \in S$ be such that $\ell(sz) > \ell(z)$. Then since $\rho + \xi = \mathrm{id}$,
\begin{align*}
\rho(\theta_{sz}(A_w)) & = (\rho \circ \theta_s)[\rho(\theta_z(A_w)) + \xi(\theta_z(A_w))].
\end{align*}
Clearly $\xi(\theta_z(A_w)) \in \mathrm{span}(\Xi_{\mathrm{fin}})$, and by our induction hypothesis, $\rho(\theta_z(A_w)) \in \mathrm{span}(\Xi^+_z)$. So Lemma \ref{lem:thetasxiplus} guarantees that $\theta_s[\rho(\theta_z(A_w)) + \xi(\theta_z(A_w))]$ lies in the span of $\Xi^+_{sz}\cup \Xi_{\mathrm{fin}}$, and therefore its image under $\rho$ lies in $\mathrm{span}(\Xi^+_{sz})$.
\end{proof}

With these new definitions and lemmas in hand, we can now provide an inductive proof of Proposition \ref{prop:etastds}.

\begin{proof}[Proof of Proposition \ref{prop:etastds}]
By the definition of $j_{e!}$,
\begin{align*}
    j_e^*(\mathcal{F}_z(j_{e!}(\Delta_w)]))) & = \Delta_w * \nabla_{z^{-1}}(\tfrac{\ell(z)}{2}),
\end{align*}
so by Corollary \ref{cor:zero}, to show that $\eta([j_{e!}(\Delta_w)]) = A_w$ it is enough to show that 
\begin{align}\label{eqn:jeeqn}
    J_e(\theta_z(A_w)) & = [\Delta_w * \nabla_{z^{-1}}(\tfrac{\ell(z)}{2})],
\end{align}
for all $w, z \in W$. We will fix $w$ and proceed by induction on $\ell(z)$. For $\ell(z) = 0$, this is immediate from the definition of $J_e$.

Suppose then that we know (\ref{eqn:jeeqn}) holds for some $z$. Suppose $s \in S$ is such that $\ell(sz) > \ell(z)$. Then by this assumption along with Lemma \ref{lem:jebasecase} (extended by linearity, using linearity of the convolution $- * \nabla(\tfrac{1}{2})$) that
\begin{align*}
(J_e \circ \theta_s \circ \xi \circ \theta_z)(A_w) & = J_e(\theta_s(J_e'([\Delta_w * \nabla_{z^{-1}}(\tfrac{\ell(z)}{2})])))\\
& = [\Delta_w * \nabla_{z^{-1}}(\tfrac{\ell(z)}{2}) * \nabla_s(\tfrac{1}{2})]\\
& = [\Delta_w * \nabla_{(sz)^{-1}}(\tfrac{\ell(sz)}{2})].
\end{align*}
To complete the proof, it remains to show that
\begin{align*}
    J_e(\theta_{sz}(A_w)) & = (J_e \circ \theta_s \circ \xi \circ \theta_z)(A_w).
\end{align*}
By Corollary \ref{cor:thetasr}, we know $\rho(\theta_z(A_w)) \in \mathrm{span}(\Xi^+_z)$. By the first part of Lemma \ref{lem:thetasxiplus}, we then have
\begin{align*}
    J_e(\theta_{sz}(A_w)) - J_e(\theta_s(\xi(\theta_z(A_w)))) & = (J_e\circ \theta_s)[\theta_z(A_w) - \xi(\theta_z(A_w))]\\
    & = (J_e\circ \theta_s)(\rho(\theta_z(A_w)))\\
    & \in J_e(\theta_s(\mathrm{span}(\Xi^+_z))),
\end{align*}
which is zero by the second part of Lemma \ref{lem:thetasxiplus}.
\end{proof}

\section{The Schwartz space of the basic affine space\label{sec:schwartz}}

In this section, we recall the definition of the Schwartz space of the basic affine space $\mathcal{S}$ as defined in \cite{BK}. We then focus on Iwahori-invariants $\mathcal{S}^{I \times \mathbf{T}(\mathcal{O})}$ of this space, and revisit a claim made in loc.\ cit. about its connection to $M_d$. This will then serve as a bridge to a more direct connection, which we will consider in the final subsection, between $\mathcal{S}$ and the Kazhdan-Laumon category $\mathcal{A}$.

\subsection{Preliminaries from \cite{BK}}
In this section, let $k$ be a non-Archimedean local field with residue field $\kappa = \mathbb{F}_q$, and $\mathcal{O}$ its ring of integers. Let $\pi$ be a uniformizer of $k$, and suppose the norm on $k$ is chosen so that $||\pi|| = q^{-1}$. 

In this section only, we write $G_k$ for the base change of $\mathbf{G}$ to $k$; we will consider the basic affine space $\X = (\mathbf{G}/\mathbf{U})(k)$. Let $I \subset \mathbf{G}(k)$ be an Iwahori subgroup. The setup of \cite{BK} begins with considering the space $\mathcal{S}_c$ of locally constant compactly supported functions on $\X$; let $\mathcal{S} \supset \mathcal{S}_c$ be the Schwartz space of the basic affine space defined in loc.\ cit.

\begin{defn}
For any $n \in \mathbb{Z}$, let $\psi_n$ be a choice of additive character of conductor $n$ of the field $k$ (i.e. a character for which $\pi^n \mathcal{O} \subset \ker \psi_n$ but $\pi^{n-1}\mathcal{O} \not\subset \ker \psi_n$).
\end{defn}

In \cite{BK}, a $W$-action on $\mathcal{S}$ is defined via Fourier transforms $\Phi_\alpha$ associated to any $s_\alpha \in S$ as follows. Let $P_{\alpha}(k)$ be the parabolic subgroup of $\mathbf{G}(k)$ naturally corresponding to $s_\alpha$, and let $Q_{\alpha}(k) = [P_\alpha(k), P_\alpha(k)]$. Then $\pi_\alpha : \X \to \mathbf{G}(k)/Q_\alpha(k)$ is a fibration with fibers isomorphic to $\mathbb{A}^2_k \setminus \{(0, 0)\}$; the Fourier transform $\Phi_\alpha$ is defined fiberwise. Because the definition in \cite{BK} (c.f.\ the more explicit definition in \cite{KForms}, which we will now slightly modify for our choice of the character $\psi_1$) depends on a choice of character and normalization, in the following definition we choose conventions explicitly for the purposes of the present paper.
\begin{defn}\label{def:fouriertransforms}
Define the Fourier transform operator $\Phi_\alpha : \mathcal{S} \to \mathcal{S}$ associated to a simple root $s_\alpha \in S$ by
\begin{align}
    (\Phi_{\alpha}(f))(x) & = q\int_{\pi_\alpha^{-1}(\pi_\alpha(x))} \psi_1(\langle x, x'\rangle_{s_\alpha})f(x')dx',
\end{align}
where we use the identification of the fiber $\pi_\alpha^{-1}(\pi_\alpha(x))$ with $V_s(x) - \{0\}$, where $V_s(x)$ is a two-dimensional vector space with a volume form defined by the pairing $\langle, \rangle_{s_\alpha} : V \times V \to k$ (following the setup and notation of Section 1.1.2 of \cite{KForms}).

We define $\Phi_w$ for any $w \in W$ by $\Phi_{w} = \Phi_{\alpha_1} \cdots \Phi_{\alpha_k}$ where $w = s_{\alpha_1} \dots s_{\alpha_k}$ is a reduced expression.
\end{defn}

\begin{remark}
The definition of the Fourier transform endomorphism $\Phi_\alpha$ in \cite{BK} depends on a choice of additive character of $k$. From now on, we will always use the character $\psi_1$ and the normalization (by $q$) chosen in Definition \ref{def:fouriertransforms}. This is the only choice of character for which Theorem \ref{thm:bksphm} will hold as it is written. The usual choice of character in the literature (as is used in Example 3.2 of \cite{BK} and throughout \cite{KForms} and \cite{D}, for example) is $\psi_0$. However, this choice is not compatible with the normalizations chosen in the statement of Theorem \ref{thm:bksphm}, as we explain in Example \ref{eg:doesntwork}. The reader should beware that our nonstandard choice of $\psi$ is such that, in the $\mathbf{SL}_2$ case,
\begin{align*}
    \Phi_\alpha(\chi_{\mathcal{O}\times \pi \mathcal{O}}) & = \chi_{\mathcal{O}\times \pi\mathcal{O}},\\
    \Phi_\alpha(\chi_{\mathcal{O}\times \mathcal{O}}) & = q\chi_{\pi\mathcal{O}\times \pi\mathcal{O}}\neq \chi_{\mathcal{O}\times \mathcal{O}},
\end{align*}
which does not agree with the computation in Example 3.2 of \cite{BK} due to the discrepancy between the characters $\psi_1$ and $\psi_0$, as well as the constant $q$ by which we multiply the integral in Definition \ref{def:fouriertransforms}. 
\end{remark}

The Iwahori subgroup $I$ acts naturally on $\X$ on the left, while $\mathbf{T}(\mathcal{O})$ acts on the right. We consider now the $I$-invariant subspace $\mathcal{S}^{I \times \mathbf{T}(\mathcal{O})}$, as is studied in Section 4 of \cite{BK}. Recall, as explained in loc.\ cit., that the $I$-orbits on $\X$ are indexed by $\aW$; for any $w \in \aW$ we can associate the $I$-orbit $Iw\mathbf{U}(k) \subset \mathbf{X}(k)$. We also know that for all $w \in \aW$ and $f \in \bks$, we can define the convolution $\chi_{IwI} * f$, where $\chi_{IwI}$ is the indicator function on the $I$-orbit $IwI \subset G(k)/I$. This defines an action of the affine Hecke algebra $\aH_q$, with $T_w$ acting such that
\begin{align}
\label{eqn:convolutionaction}
    T_{w^{-1}}^{-1}(f) & = (-q^{-1/2})^{\ell(w)}\chi_w * f.
\end{align}

\subsection{Connection to $M_d$}

In Lemma 4.3 and Corollary 4.6 of \cite{BK}, it was claimed that $\uphm{q} \cong \mathcal{S}(X)^{I\times \mathbf{T}(\mathcal{O})}$ as $\mathcal{H}_q \otimes \mathbb{C}[W]$-modules, where on the Schwartz space side the $\mathcal{H}_q$-action is given by convolution and the $W$-action on the  given by the $\Phi_w$, while on the $M_{d,q}$ side the $W$-action is given by the $\theta_w$. We reiterate that this result is not true if the ``usual" definition of the Fourier transforms using the character $\psi_0$ is used, as we explain in the following example. 

\begin{example}\label{eg:doesntwork}
Suppose $\mathbf{G} = \mathbf{SL}_2$, so $\X \cong \mathbb{A}_k^2\setminus\{(0, 0)\}$. For each $w \in \widetilde{A}_1 = \langle s_1, s_0\rangle$, there is an Iwahori-orbit $Iw\mathbf{U}(k) \subset \X$. These sets are either of the form $\pi^n\mathcal{O} \times \pi^n\mathcal{O} \setminus \pi^n\mathcal{O} \times \pi^{n+1}\mathcal{O}$ or $\pi^n \mathcal{O} \times \pi^{n+1}\mathcal{O} \setminus \pi^{n+1}\mathcal{O}\times \pi^{n+1}\mathcal{O}$ for some $n \in \mathbb{Z}$.

Lemma 4.3 in \cite{BK} asserts that the map $\gamma : \mathcal{S}(X)^{I\times \mathbf{T}(\mathcal{O})} \to \uphm{q}$ takes some multiple of the indicator function on each Iwahori orbit $Iw\mathbf{U}(k)$ to some alcove $A \in \Xi$. Accordingly, set $A^* = \gamma(-q^{-\frac{1}{2}}\chi_{Is_1\mathbf{U}(k)})$. (Note that $Is_1\mathbf{U}(k) = \mathcal{O}\times \mathcal{O} \setminus \mathcal{O}\times \pi\mathcal{O}$.)

Following the computations in Section \ref{sec:sl2phm}, one can compute that we must have $\gamma(-q^{-\frac{1}{2}}\chi_{\mathcal{O}\times \mathcal{O}}) = (A^*)^\sharp$. Since $\chi_{\mathcal{O}\times \mathcal{O}}$ is $G(\mathcal{O})$-invariant, we know that $T_{s_1}(\chi_{\mathcal{O}\times \mathcal{O}}) = -q^{-1/2}\chi_{\mathcal{O}\times \mathcal{O}}$. So if $\gamma$ is to be an isomorphism of $\mathcal{H}_q$-modules, we must have that $T_{s_1}((A^*)^\sharp) = -q^{-1/2}(A^*)^\sharp$, meaning $A^* = A_n$ for some odd $n$, by the computations in Section \ref{sec:sl2phm}.

This is impossible, however, since if the character $\psi_0$ is used, then $\Phi_{\alpha}(\chi_{\mathcal{O} \times \mathcal{O}}) = \chi_{\mathcal{O}\times \mathcal{O}}$, and so we expect $\theta_{s_1}(A_n^\sharp) = A_n^\sharp$. But this only holds for $n = 0$, which is not odd, contradicting the existence of such a $\gamma$. This example is intended to justify Definition \ref{def:fouriertransforms}, and specifically our nonstandard choice of additive character, showing that such a choice is necessary if Corollary 4.6 in \cite{BK} is to hold.
\end{example}

We can fix this by our use of the character $\psi_1$ in Definition \ref{def:fouriertransforms}, thereby yielding the following result. For ease of notation, we denote by $\delta_w$ the function $(-q^{1/2})^{d(A_e, A_w)}\chi_{Iw\mathbf{U}(k)} \in \bks$.
\begin{theorem}[\cite{BK}]
\label{thm:bksphm}
The map $\Psi : M_c \to \mathcal{S}_c^{I\times \mathbf{T}(\mathcal{O})}$ defined on alcoves $A_w$ by
\begin{align*}
    \Psi(A_w) & = \delta_w,
\end{align*}
extends to an isomorphism between $M_{d,q}$ and $\bks$ of $\mathcal{H}_q \otimes \mathbb{C}[\Gamma \rtimes W]$-modules.
\end{theorem}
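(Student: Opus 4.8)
The plan is to build the map $\Psi$ explicitly on the $\mathbb{C}$-span of the alcoves, verify it matches up the two module structures on generators, and then invoke the fact — already established in the excerpt via Lusztig's work — that $M_{d,q}$ is generated over $\mathcal{H}_q \otimes \mathbb{C}[\Gamma \rtimes W]$ by a controlled set of elements (indeed by $M_c$ together with the completion procedure, and the Iwahori-orbit picture on the Schwartz-space side is the geometric mirror of this). Concretely, first I would recall from \cite{BK} that the $I$-orbits on $\X$ are indexed by $\widetilde{W}$, so $\mathcal{S}_c^{I \times \mathbf{T}(\mathcal{O})}$ has the indicator functions $\chi_{Iw\mathbf{U}(k)}$ as a topological basis and $M_c$ has the $A_w$ as a basis; the normalization $\delta_w = (-q^{1/2})^{d(A_e, A_w)}\chi_{Iw\mathbf{U}(k)}$ is designed precisely so that the formula for $T_{w^{-1}}^{-1}$ in \eqref{eqn:convolutionaction} turns into Lusztig's formula \eqref{eqn:heckeaction}. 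So the first real step is a direct local computation: compute $\chi_{IsI} * \delta_w$ for $s \in \widetilde{S}$ and check, case by case on whether $s \in \mathcal{L}(A_w)$, that $\Psi$ intertwines the $\widetilde{\mathcal{H}}_q$-action on $M_c$ with the convolution action, hence a fortiori the $\mathcal{H}_q$-action.

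Second, I would check compatibility with the $\Gamma \rtimes W$-action. For the $\Gamma$ part this is the statement that translation of alcoves $\gamma \cdot A = A + \gamma$ corresponds on the Schwartz side to the geometric action of the coweight lattice (the $\mathbf{T}(k)/\mathbf{T}(\mathcal{O})$-action), which permutes $I$-orbits in the expected way. For the $W$ part, the key identity is that the Fourier transform $\Phi_\alpha$ acts on the indicator functions $\delta_w$ exactly as $\theta_\alpha$ acts on the $A_w$ in formula \eqref{eqn:deftheta}; this is a fiberwise computation over $\mathbf{G}(k)/Q_\alpha(k)$ reducing to rank one, i.e. to the $\mathbf{SL}_2$ computation. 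Here is exactly where the choice of additive character $\psi_1$ and the normalization factor $q$ in Definition \ref{def:fouriertransforms} enter: one performs the Gauss-sum-type integral $\Phi_\alpha(\delta_w)$ over $\mathbb{A}^2_k \setminus \{0\}$ broken into the annuli $\pi^n\mathcal{O}^2 \setminus \pi^{n+1}\mathcal{O}^2$, and the resulting geometric series in $q^{-1/2}$ matches the alternating series $v^{-1}A^0 + \sum (-1)^{n+1}(v^{-n+1} - v^{-n-1})A^n$ after the specialization $v = q^{1/2}$ — but only for $\psi_1$, as Example \ref{eg:doesntwork} shows the $\psi_0$ normalization fails. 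The $\mathbf{SL}_2$ numerology was already recorded in Section \ref{sec:examplesl2} and in the Remark following Definition \ref{def:fouriertransforms}, so this step is bookkeeping plus one honest rank-one integral.

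Third, having shown $\Psi$ is a morphism of $M_c \to \mathcal{S}_c^{I \times \mathbf{T}(\mathcal{O})}$ respecting all three actions, I would pass to completions: the semi-infinite completion $M_\geq$ corresponds to allowing infinite sums of $\delta_w$ supported on sets bounded below, which is exactly the condition defining the Schwartz space $\mathcal{S}$ as an enlargement of $\mathcal{S}_c$ in \cite{BK} (functions need not be compactly supported but satisfy a decay/support condition near the "small" end). One checks $\Psi$ extends continuously and sends the canonical basis element $A^\sharp$ to the corresponding element of $\mathcal{S}$; since $M_{d,q}$ is the $\mathbb{C}$-span of the $A^\sharp$ inside $M_\geq$, the extension restricts to an isomorphism $M_{d,q} \xrightarrow{\sim} \mathcal{S}^{I \times \mathbf{T}(\mathcal{O})}$. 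Injectivity is clear since $\Psi$ is injective already on $M_c$ and the completions are compatible; surjectivity follows because the $\delta_w$ topologically span $\mathcal{S}^{I \times \mathbf{T}(\mathcal{O})}$.

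The main obstacle I expect is the rank-one Fourier-transform computation in the second step: getting the signs, the powers of $q^{1/2}$, and the conductor of $\psi_1$ all to line up with Lusztig's $\theta_\alpha$ requires care, and it is precisely the point where a wrong normalization silently breaks the theorem (as the author emphasizes in Example \ref{eg:doesntwork}). Everything else — the Hecke compatibility, the $\Gamma$-compatibility, the passage to completions — is structural and follows the pattern of \cite{BK} and the combinatorics recalled in Section 3. One should also be slightly careful that the $W$-action via $\theta_w$ is only a $W$-action after restricting to $M_d$ (the braid relations hold there), matching the fact that $\Phi_w$ is only well-defined via a reduced word and the relations are checked on the nose on $\mathcal{S}$; this is where invoking Lusztig's well-definedness result (Proposition \ref{prop:actionsharps} and the discussion around \eqref{eqn:deftheta}) rather than reproving it saves substantial work.
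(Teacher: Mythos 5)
Your plan matches the paper's: the theorem is imported from \cite{BK} apart from the normalization, and the paper's own verification is exactly your rank-one argument — comparing the Hecke and $\Gamma$ actions formula-by-formula with (\ref{eqn:heckeaction}) and then checking the $\psi_1$-normalized, $q$-scaled Fourier integral against Lusztig's operator in the $\mathbf{SL}_2$ case, the general case reducing fiberwise to rank one. The only cosmetic difference is that the paper runs the Fourier check on the canonical basis elements $A_n^\sharp$, whose images under $\Psi$ are single lattice indicator functions so the integral is one line and no infinite series appears on the function side, whereas you expand $\Phi_\alpha(\delta_w)$ into the alternating series of (\ref{eqn:deftheta}); the two verifications are equivalent.
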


\begin{example}
We now explain this bijection explicitly in the case of $\mathbf{SL}_2$; its proof in general follows from this computation. In this case, $W = \langle s_1\rangle$; we also denote by $s_0$ the simple reflection associated to the affine root. Let $A_n$ be the alcove $(n, n + 1)$. Note that for any $n$, $A_{2n} = A_{(s_1s_0)^n}$, while $A_{2n-1} = A_{s_1(s_1s_0)^n}$.

The action of $\mathcal{H}_q$ on $M_{d,q}$ is given by the formulas of Section \ref{sec:examplesl2} specialized at $v = q^{\frac{1}{2}}$. Comparing these with the action of $T_{s_1}$ on $\bks$ whose inverse is given by convolution with $-q^{-1/2}\chi_{Is_1I}$, one can check that the actions agree; the same goes for the action of $\Gamma$.

It remains to check that the $W$-actions agree, which is only true when using the character $\psi_1$ in the definition of $\Phi_{s_1}$. By applying $\Psi$ to the formula (\ref{eqn:sl2simple}),
\begin{align*}
\Psi(A_n^\sharp) & = \begin{cases}
(-q^{1/2})^n\chi_{(\pi^{n/2}\mathcal{O})\times (\pi^{n/2 + 1}\mathcal{O})} & \text{if $n$ is even,}\\
(-q^{1/2})^n\chi_{(\pi^{(n+1)/2}\mathcal{O})\times (\pi^{(n+1)/2}\mathcal{O})}& \text{if $n$ is odd.}
\end{cases}
\end{align*}
Since for $(x, y) \in k\times k$,
\begin{equation*}
    \begin{split}
        \Phi_{s_1}(\chi_{\pi^k \mathcal{O}\times \pi^{k+1}\mathcal{O}})(x, y) & = q\int_{a \in \pi^k\mathcal{O}}\int_{b \in \pi^{k+1}\mathcal{O}} \psi_1(xb - ya)\\
        & = q\left(\int_{a \in \pi^k\mathcal{O}}\psi_1(-ya)\right)\left(\int_{b \in \pi^{k+1}\mathcal{O}}\psi_1(xb)\right)\\
        & = q\cdot q^{-2k - 1}\chi_{\pi^{-k}\mathcal{O}}(x)\chi_{\pi^{-k+1}\mathcal{O}}(y)\\
        & = q^{-2k}\chi_{\pi^{-k}\mathcal{O}\times \pi^{-k+1}\mathcal{O}}(x, y),
    \end{split}
\end{equation*}
which implies that
\begin{align*}
\Phi_{s_1}\Psi(A_n^\sharp) & = \Psi(A_{-n}^\sharp) = \Psi(\theta_{s_1}(A_{-n}^\sharp))
\end{align*}
for $n$ even, while a similar proof works for $n$ odd.
\end{example}

\subsection{Schwartz space and Kazhdan-Laumon categories\label{sec:eisenstein}}

We now translate Theorem \ref{thm:klophm} into a statement about the Schwartz space of the basic affine space, since we can relate $M_{d,q}$ to $\bks$ by Theorem \ref{thm:bksphm}.

\begin{theorem}
\label{thm:kloschwartz}
There is an injection $\Theta : K_0(\kl)\otimes_{\mathbb{Z}[v, v^{-1}]} \mathbb{C} \to \bks$ of $\fH_q\otimes \mathbb{C}[W]$-modules given by
\begin{equation}
\label{eqn:deltadelta}
    \begin{split}
        \Theta([j_{e!}(\Delta_w)]) & = \delta_w
    \end{split}
\end{equation}
for all $w \in W$.
\end{theorem}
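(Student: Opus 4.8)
The plan is to construct $\Theta$ as the composition of the isomorphism $\eta : K_0(\mathcal{A}_{\mathcal{P}}) \otimes \mathbb{C} \xrightarrow{\sim} M_{d,q}^0$ from Theorem \ref{thm:klophm} with the inclusion $M_{d,q}^0 \hookrightarrow M_{d,q}$ and the isomorphism $\Psi : M_{d,q} \xrightarrow{\sim} \bks$ from Theorem \ref{thm:bksphm}. That is, set $\Theta = \Psi|_{M_{d,q}^0} \circ \eta$. Everything then reduces to checking three things: that this composition is well-defined and injective, that it is a map of $\fH_q \otimes \mathbb{C}[W]$-modules, and that it satisfies the normalization \eqref{eqn:deltadelta}.

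First I would verify the normalization. By \eqref{eqn:etastds} in Theorem \ref{thm:klophm} we have $\eta([j_{e!}(\Delta_w)]) = A_w \in M_{d,q}^0$ (note $A_w \in M_d^0$ since it is one of the generators $\{A_w\}_{w \in W}$ of $M_d^0$ in Definition \ref{def:m0d}). Then by Theorem \ref{thm:bksphm}, $\Psi(A_w) = \delta_w$. Composing gives $\Theta([j_{e!}(\Delta_w)]) = \delta_w$, which is exactly \eqref{eqn:deltadelta}. Since $\eta$ is an isomorphism and $\Psi$ is an isomorphism, the composition $\Theta$ is injective (its image is precisely $\Psi(M_{d,q}^0) \subset \bks$), and the failure of surjectivity onto all of $\bks$ is just the statement that $M_{d,q}^0$ is a proper submodule of $M_{d,q}$ in general.

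Next I would check the module structures are intertwined. The $\fH_q$-action: $\eta$ is an $\fH_q$-module map by Lemma \ref{lem:etahmodules}, and $\Psi$ is an $\fH_q$-module map by Theorem \ref{thm:bksphm} (both actions being the restriction to $\mathcal{H}_q \subset \tilde{\mathcal{H}}_q$ of the respective affine Hecke actions — on the $M_d$ side via \eqref{eqn:heckeaction}, on the Schwartz side via convolution \eqref{eqn:convolutionaction}, with the compatibility being exactly the content of Theorem \ref{thm:bksphm}); so the composition is an $\fH_q$-module map. The $\mathbb{C}[W]$-action: on $K_0(\mathcal{A}_{\mathcal{P}})$ the $W$-action is by the functors $\mathcal{F}_w$, on $M_{d,q}^0$ by the operators $\theta_w$, and $\eta$ intertwines these by Theorem \ref{thm:klophm}; on the Schwartz side the $W$-action is by the Fourier transforms $\Phi_w$, and Theorem \ref{thm:bksphm} asserts $\Psi$ intertwines $\theta_w$ with $\Phi_w$ (this being precisely the subtle point that forced the nonstandard choice of character $\psi_1$ in Definition \ref{def:fouriertransforms}, as illustrated in Example \ref{eg:doesntwork}). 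Hence $\Theta$ intertwines $\mathcal{F}_w$ with $\Phi_w$, and combining with the $\fH_q$-compatibility (and the fact that both actions commute on each side) gives that $\Theta$ is a morphism of $\fH_q \otimes \mathbb{C}[W]$-modules.

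The only genuine subtlety — and the step I expect to require the most care — is making sure the $W$-action used on $M_{d,q}^0$ in Theorem \ref{thm:klophm} (via $\theta_w$, or equivalently the $*$-action on fundamental alcoves $\{A_w\}_{w\in W}$) is the \emph{same} operator as the $\theta_w$ appearing in Theorem \ref{thm:bksphm}, and likewise that the $\fH_q$-actions match under the restriction $\mathcal{H}_q \hookrightarrow \tilde{\mathcal{H}}_q$; once the bookkeeping of conventions (half-Tate twists versus $v$, the identifications $A_{2n} = A_{(s_1 s_0)^n}$ etc. in rank one, and the normalizing constants $(-q^{1/2})^{d(A_e,A_w)}$ in the definition of $\delta_w$) is pinned down, the proof is a formal composition of the two established isomorphisms. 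I would therefore present the proof essentially as: ``$\Theta := \Psi \circ \eta$; well-definedness, injectivity, and the normalization are immediate from Theorems \ref{thm:klophm} and \ref{thm:bksphm}; the $\fH_q \otimes \mathbb{C}[W]$-equivariance follows by combining Lemma \ref{lem:etahmodules}, Theorem \ref{thm:klophm}, and Theorem \ref{thm:bksphm}, since the $W$- and $\fH_q$-actions on $M_{d,q}$ restrict compatibly to $M_{d,q}^0$.''
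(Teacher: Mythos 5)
Your proposal is correct and is essentially the paper's own argument: the paper proves this by composing the isomorphism $\eta$ of Theorem \ref{thm:klophm} with the isomorphism $\Psi$ of Theorem \ref{thm:bksphm} restricted to $M_{d,q}^0 \subset M_{d,q}$, deducing the normalization (\ref{eqn:deltadelta}) by comparing (\ref{eqn:etastds}) with $\Psi(A_w) = \delta_w$. Your write-up simply makes explicit the equivariance and injectivity checks that the paper leaves implicit.
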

\begin{proof}
This follows directly from Theorems \ref{thm:bksphm} and \ref{thm:klophm}. The last part follows in particular by comparing equation (\ref{eqn:etastds}) with Theorem \ref{thm:bksphm} and the definition of $\delta_w$.
\end{proof}

Now we place Theorem \ref{thm:klophm} in this context, rephrasing the result in terms of a more direct connection between the Kazhdan-Laumon category $\mathcal{A}$ containing $\mathcal{A}_B$, and the Schwartz space $\mathcal{S} \supset \bks$.

Grothendieck's sheaf-function correspondence tells us that for any $\mathcal{F} \in \mathcal{P}$, we can produce a function $\text{tr}(\mathcal{F}) : (\mathbf{G}/\mathbf{U})(\mathbb{F}_q) \to \overline{\mathbb{Q}}_\ell$ (we implicitly pull back along $\pi : G/U \to G/B$ throughout this section to work only with sheaves and functions on the basic affine space rather than the flag variety, for consistency). Applying this to the standard sheaves $\Delta_w \in \mathcal{P}$, it follows that $\text{tr}(\Delta_w) = (-q^{-1/2})^{\ell(w)}\chi_w$, where $\chi_w$ is the indicator function on the $\mathbb{F}_q$-points of the Bruhat cell $BwU \subset G/U$. Returning to the setting where $k$ is a local field, $\mathcal{O}$ its ring of integers, $\pi$ the uniformizer, and $\kappa$ the quotient, one can construct an ``Eisenstein map" as follows.

Let $\mathbf{X}(\mathcal{O})_{\mathrm{fin}} \subset \mathbf{X}(\mathcal{O})$ denote the union of the Bruhat cells $IwU$ across all $w$ in the finite Weyl group $W$. The subset $\mathbf{X}(\mathcal{O})_{\mathrm{fin}}$ has the property that the projection $\mathcal{O} \twoheadrightarrow \kappa$ induces a well-defined and surjective map $\mathbf{X}(\mathcal{O})_{\mathrm{fin}} \to \mathbf{X}(\kappa)$. 

We then have the diagram
\[
\begin{tikzcd}
         & \mathbf{X}(\mathcal{O})_{\mathrm{fin}} \arrow[ld, "\iota"'] \arrow[rd, "p"] &               \\
\mathbf{X}(k) &                                                           & \mathbf{X}(\kappa)
\end{tikzcd}\]
which we obtain from the natural maps $k \hookleftarrow\mathcal{O} \twoheadrightarrow \kappa$. One can then consider $\iota_!\circ p^*$ as a map from the space $\mathcal{C}(\mathbf{X}(\kappa))$ of functions on $\mathbf{X}(\mathbb{\kappa})$ to $\mathcal{S}_c = \mathcal{S}_c(\mathbf{X}(k))$.

\begin{defn}
Let $\mathcal{S}_0$ be the subspace of $\mathcal{S}$ generated under the Fourier transforms $\Phi_w$ by the image of $\iota_! \circ p^*$.
\end{defn}

Then (\ref{eqn:deltadelta}) in Theorem \ref{thm:kloschwartz} tells us the following.
\begin{proposition}\label{prop:eisenstein}
The map $\Theta$ is an isomorphism onto $\bks_0$. For $\mathcal{F} \in \mathcal{P}$,
    \begin{align*}
        \Theta([j_{e!}(\mathcal{F})]) & = (\iota_! \circ p^*)(\mathrm{tr}(\mathcal{F}))\in \mathcal{S}_c^{I\times \mathbf{T}(\mathcal{O})}.
    \end{align*}
\end{proposition}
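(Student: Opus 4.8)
The plan is to first pin down the displayed identity on standard objects and then exploit it in two ways. First I would evaluate the right‑hand side on $\mathcal{F} = \Delta_w$. Since $\mathrm{tr}(\Delta_w) = (-q^{-1/2})^{l(w)}\chi_w$ and the reduction map $p$ carries the Iwahori orbit $Iw\mathbf{U}(k) \subset \mathbf{X}(\mathcal{O})_{\mathrm{fin}}$ onto the Bruhat cell $BwU \subset \mathbf{X}(\kappa)$, while the cells $\{BwU\}_{w\in W}$ partition $\mathbf{X}(\kappa)$ (so that $p^{-1}(BwU)\cap \mathbf{X}(\mathcal{O})_{\mathrm{fin}} = Iw\mathbf{U}(k)$), we obtain $(\iota_! \circ p^*)(\mathrm{tr}(\Delta_w)) = (-q^{-1/2})^{l(w)}\chi_{Iw\mathbf{U}(k)}$. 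On the other side, Theorem~\ref{thm:kloschwartz} gives $\Theta([j_{e!}(\Delta_w)]) = \delta_w = (-q^{1/2})^{d(A_e, A_w)}\chi_{Iw\mathbf{U}(k)}$, and the two coincide once one checks $d(A_e, A_w) = -l(w)$ for $w \in W$; this is immediate from Lusztig's normalization of the relative distance function (and can be read off the $\mathbf{SL}_2$ computation of Section~\ref{sec:examplesl2} in that case).

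Next I would propagate the identity $\Theta([j_{e!}(\mathcal{F})]) = (\iota_! \circ p^*)(\mathrm{tr}(\mathcal{F}))$ from $\{\Delta_w\}$ to all of $\mathcal{P}$. Both sides are additive in the class $[\mathcal{F}]\in K_0(\mathcal{P})\otimes_{\mathbb{Z}[v,v^{-1}]}\mathbb{C}$ — the left because $j_{e!}$ and $\Theta$ descend linearly to Grothendieck groups, the right because $\mathrm{tr}$ is additive on $K_0$ and $\iota_! \circ p^*$ is linear — and both carry the half‑Tate twist $(\tfrac{1}{2})$ to multiplication by $q^{-1/2}$ (on the left because $v^{-1}$ acts by $(\tfrac{1}{2})$ and specializes to $q^{-1/2}$; on the right because geometric Frobenius acts by $q^{-1/2}$ on $\Ql(\tfrac{1}{2})$ for our fixed square root). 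As $\{[\Delta_w]\}_{w\in W}$ spans $K_0(\mathcal{P})\otimes\mathbb{C}$, the identity extends to every $\mathcal{F}\in\mathcal{P}$. In particular $\Theta$ is injective (already known from Theorem~\ref{thm:kloschwartz}), so to finish the first assertion I only need to identify its image. By Lemma~\ref{lem:goodrep} the classes $[j_{z!}(\Delta_w)]$ span $K_0(\mathcal{A}_{\mathcal{P}})\otimes\mathbb{C}$, and combining the identity $j_{z!}(\mathcal{G}) \cong \mathcal{F}_{z^{-1}}(j_{e!}(\mathcal{G}))$ (immediate from the adjunction defining $j_{e!}$ and from $\mathcal{F}_z^{-1} = \mathcal{F}_{z^{-1}}$) with the fact that $\Theta$ intertwines $\mathcal{F}_w$ and $\Phi_w$ (by Theorems~\ref{thm:klophm} and~\ref{thm:bksphm}), the image of $\Theta$ is exactly the $\mathbb{C}$‑span of $\{\Phi_z(\delta_w)\}_{z,w\in W}$, i.e.\ the $\Phi_w$‑span of $\mathrm{span}\{\chi_{Iw\mathbf{U}(k)}\}_{w\in W}$.

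It remains to match this with $\bks_0$. I would first observe that $\mathcal{S}_0$ is stable under the left $I$‑action and the right $\mathbf{T}(\mathcal{O})$‑action: the subspace $(\iota_! \circ p^*)(\mathcal{C}(\mathbf{X}(\kappa)))$ is stable because $\iota_! \circ p^*$ intertwines these actions with the $B(\kappa)\times T(\kappa)$‑action on $\mathcal{C}(\mathbf{X}(\kappa))$, and each Fourier transform $\Phi_w$ is $G(k)$‑equivariant (hence $I$‑equivariant, $G$ being simply connected) and right $\mathbf{T}(\mathcal{O})$‑equivariant (the character by which the fiberwise volume forms are rescaled being trivial on $\mathbf{T}(\mathcal{O})$), so the $\Phi_w$‑translates are stable too. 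Therefore the averaging idempotent $e = e_{I\times\mathbf{T}(\mathcal{O})}$ restricts to a projection of $\mathcal{S}_0$ onto $\bks_0 = \mathcal{S}_0 \cap \bks$ and commutes with every $\Phi_w$, giving $\bks_0 = \sum_w \Phi_w(e((\iota_! \circ p^*)(\mathcal{C}(\mathbf{X}(\kappa)))))$; and since $\iota_! \circ p^*$ intertwines the averaging idempotents, $e((\iota_! \circ p^*)(\mathcal{C}(\mathbf{X}(\kappa)))) = (\iota_! \circ p^*)(\mathcal{C}(\mathbf{X}(\kappa))^{B(\kappa)}) = \mathrm{span}\{\chi_{Iw\mathbf{U}(k)}\}_{w\in W}$, which is exactly the image of $\Theta$ computed above. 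I expect this last paragraph to be the main obstacle: verifying that the $\Phi_w$ are equivariant enough to commute with $I\times\mathbf{T}(\mathcal{O})$‑averaging, so that invariants of $\mathcal{S}_0$ can be computed on generators, and confirming that the orbit/reduction identities ($p^{-1}$ of a Bruhat cell, right $\mathbf{T}(\mathcal{O})$‑stability of Iwahori orbits) hold exactly rather than up to negligible sets; the rest is routine bookkeeping with trace functions and Tate twists.
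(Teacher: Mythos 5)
Your proposal is correct and follows essentially the same route as the paper, which deduces the proposition directly from the identity $\Theta([j_{e!}(\Delta_w)]) = \delta_w$ of Theorem \ref{thm:kloschwartz} together with $\mathrm{tr}(\Delta_w) = (-q^{-1/2})^{l(w)}\chi_w$ and the definition of $\mathcal{S}_0$. You simply make explicit the details the paper leaves implicit — the matching $d(A_e,A_w) = -l(w)$, the compatibility of reduction with Iwahori orbits, linear extension over Tate twists, and the $I\times\mathbf{T}(\mathcal{O})$-averaging argument identifying the image with $\bks_0$ — and these verifications are all sound.
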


This concludes the proof of Theorem \ref{thm:schwartzconnection} from the introduction, giving a direct interpretation of the composition $K_0(\mathcal{A}_{\mathcal{P}}) \otimes \mathbb{C} \to M_{d,q} \to \mathcal{S}^{I\times \mathbf{T}(\mathcal{O})}$ (each of which is originally described combinatorially) in terms of Grothendieck's sheaf-function dictionary via the map $\iota_! \circ p^*$.

\section{Perverse sheaves on the semi-infinite flag manifold}
\subsection{Preliminaries\label{sec:semiinfprelim}}

In this section, we follow the technical setup of \cite{ABBGM}. In loc.\ cit., the authors define a category $\flinf$, intended as a model for a category of Iwahori-monodromic perverse sheaves on the semi-infinite flag variety by using the Drinfeld space $\overline{\mathrm{Bun}}_{N^-}$. (In this section, we use the notation $N^-$ for the maximal unipotent subgroup opposite $U$ to be consistent with loc.\ cit.) This construction builds off of earlier work in \cite{Flags1}, \cite{FFKM2} and \cite{BFGM}.

It is shown in \cite{ABBGM} that the category $\flinf$ has simple objects $\mathsf{IC}_{\tilde{w}}$ indexed by $\tilde{w} \in \tilde{W}$. For each such $\tilde{w}$, the standard and costandard objects $\boldsymbol{\nabla}_{\tilde{w}}$ and $\boldsymbol{\Delta}_{\tilde{w}}$ are also defined using maps $i_{\tilde{w}}$ corresponding to inclusion of the appropriate stratum indexed by $\tilde{w}$ (we use bold symbols for standard and costandard objects in $\flinf$ to distinguish them from the usual standard and costandard objects $\Delta_w$ and $\nabla_w$ in $\mathrm{Perv}_B(G/B)$; note also that their labelling as standard and costandard sheaves respectively, as opposed to the reverse, is nonstandard but follows the conventions of \cite{ABBGM}). One of the main results in loc.\ cit. is an equivalence between the category of Artinian objects in $\flinf$ and a certain category of graded modules over the small quantum group. This result categorifies a known connection between the Grothendieck group of the latter category and Lusztig's periodic Hecke module which appears in \cite{AJS}, c.f.\ Theorem 17.8. The latter category admits a $W$-action given in Lemma 1.1.16 of loc.\ cit. As a result, there is a natural $W$-action on simple objects in $\flinf$ given as follows. We note that equation (55) in Section 6.1.8 of loc.\ cit. describes this same result in terms of ``restricted irreducible" objects, but there is a typo in the indices in the equation referenced, so we reproduce a corrected version here employing instead the conventions and notation of the present paper.

\begin{proposition}[Section 6.18 in \cite{ABBGM}]
The category $\flinf$ admits a $W$-action by endofunctors $\{\mathsf{F}_w\}_{w \in W}$. These functors send simple objects to simple objects, and in particular
\begin{align*}
\mathsf{F}_{s}(\mathsf{IC}_{i(\tilde{w})}) & = \mathsf{IC}_{i(\epsilon_s(\tilde{w}))}
\end{align*}
for any $s \in S$, $\tilde{w} \in \widetilde{W}$ (c.f.\ Definition \ref{def:epsilonindices}), where $i : \waff \to \waff$ denotes the involution sending $w\cdot \lambda \to w\cdot (-\lambda)$ for $w \in W$, $\lambda \in \Gamma$.
\end{proposition}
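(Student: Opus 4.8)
The plan is to deduce the proposition from the main equivalence of \cite{ABBGM}, which identifies the category of Artinian objects of $\flinf$ with a block of the category of $\Gamma$-graded modules over Lusztig's small quantum group $\mathfrak{u}$, together with the $W$-action on the latter category recorded in Lemma 1.1.16 of loc. cit. First I would recall that under this equivalence the simple object $\mathsf{IC}_{\tilde{w}}$ corresponds to the simple graded $\mathfrak{u}$-module attached to $\tilde{w} \in \waff$ via the Andersen--Jantzen--Soergel parametrization of \cite{AJS}, and that, by the construction in Section 6.1.8 of \cite{ABBGM}, the endofunctors $\mathsf{F}_w$ of $\flinf$ are by definition the transport through this equivalence of the $W$-action on graded $\mathfrak{u}$-modules. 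Since that action permutes the simple modules, the $\mathsf{F}_w$ permute the simple objects $\{\mathsf{IC}_{\tilde{w}}\}_{\tilde{w}\in\waff}$; this establishes the first two assertions and reduces everything to identifying the permutation of the index set $\waff$ induced by $\mathsf{F}_s$.

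To pin down this permutation it suffices to work in the Grothendieck group, since distinct simples have linearly independent classes and hence a simple-preserving exact functor is determined on simples by its effect on $K_0$. Here I would use the compatibility of the \cite{ABBGM} equivalence with Lusztig's periodic module: on $K_0$ the class of $\mathsf{IC}_{\tilde{w}}$ is sent to the canonical basis element $A_{\tilde{w}}^\sharp$ of $M_d$ (this is the categorification in \cite{ABBGM}, via the Kazhdan--Lusztig conjectures, of the relation recalled in Theorem 17.8 of \cite{AJS}), and under this identification the action of the $\mathsf{F}_w$ becomes Lusztig's $\theta$-action. By Proposition \ref{prop:actionsharps} one has $\theta_s(A_{\tilde{w}}^\sharp) = (s * A_{\tilde{w}})^\sharp$, so on indices $\mathsf{F}_s$ acts by $\tilde{w}\mapsto \epsilon_s(\tilde{w})$ in the notation of Definition \ref{def:epsilonindices}, up to a conventional discrepancy. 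That discrepancy is exactly the involution $i$: the parametrization of the simples of $\flinf$ by $\waff$ used in \cite{ABBGM} --- which works with the opposite maximal unipotent $N^-$ and labels the standard and costandard objects $\boldsymbol{\Delta}_{\tilde{w}},\boldsymbol{\nabla}_{\tilde{w}}$ oppositely to the usual convention --- differs by the twist $w\cdot\lambda\mapsto w\cdot(-\lambda)$ from the parametrization compatible with Lusztig's conventions for the $*$-action. Inserting this twist on both sides yields $\mathsf{F}_s(\mathsf{IC}_{i(\tilde{w})}) = \mathsf{IC}_{i(\epsilon_s(\tilde{w}))}$, the corrected form of equation (55) of loc. cit.

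The main obstacle is this last bookkeeping step: reconciling the three sets of conventions in play --- those of \cite{ABBGM} for $\flinf$, those of \cite{AJS} for the small quantum group, and those of \cite{L1} and \cite{L} for the periodic module and the $*$-action --- so that the twist $\lambda\mapsto-\lambda$ emerges in exactly the right place (it is, after all, the source of the typo that this proposition corrects). The computation is mechanical once the dictionaries are fixed, so I would pin it down by first checking the rank-one case $\mathbf{G}=\mathbf{SL}_2$ against the explicit formulas of Section \ref{sec:examplesl2}, where $s_1 * A_n = A_{-n}$ and $\theta_{s_1}(A_n^\sharp) = A_{-n}^\sharp$ make the sign transparent, and then propagating to arbitrary $w\in W$ by the well-definedness of the operators $\theta_w = \theta_{\alpha_1}\cdots\theta_{\alpha_k}$ on reduced expressions.
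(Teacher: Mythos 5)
Your proposal matches the paper's treatment: the paper gives no independent proof of this statement, but simply cites the main equivalence of \cite{ABBGM} with graded modules over the small quantum group, the $W$-action of Lemma 1.1.16 of loc. cit., and equation (55) of Section 6.1.8 (with a typo corrected), explaining the involution $i$ and the sign on $\lambda$ exactly as a reconciliation of the conventions of \cite{ABBGM}/\cite{S} with those of \cite{L1} and \cite{L} used here. Your sketch — transporting the $W$-action through the equivalence, pinning down the induced permutation of simples via $K_0$ and the periodic module, and isolating the twist $w\cdot\lambda \mapsto w\cdot(-\lambda)$ as the conventional discrepancy, checked in the $\mathbf{SL}_2$ case — is a faithful expansion of precisely this argument, so it is correct and takes essentially the same route.
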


In Section 6.13 of \cite{FFKM2}, the authors explain a direct connection between perverse sheaves on the semi-infinite flag variety and Lusztig's periodic Hecke module $M_{d}$. This is made precise for the category $\flinf$ by \cite{ABBGM}, c.f.\ Theorem 4.3.6. In 6.1.8 of loc.\ cit., the authors remark that there exists a category $\mathcal{P}^{\frac{\infty}{2}}$ consisting of ``mixed $D$-modules of Hodge-Tate type in $\flinf$" equipped with a half-Tate twist endofunctor $(\tfrac{1}{2})$ corresponding to $q^{-1/2}$ generated by the simple objects $\mathsf{IC}_{\tilde{w}}(\tfrac{m}{2})$ for all $\tilde{w} \in \tilde{W}$ and $m \in \mathbb{Z}$. As before, the corresponding Grothendieck group is then a $\mathbb{Z}[v, v^{-1}]$-module where $v^{-1}$ acts by $(\tfrac{1}{2})$. Similarly we write $\mathcal{P}^{\frac{\infty}{2}}_I$ for the analogous construction for Iwahori-equivariant objects, and $K_0(\mathcal{P}_I^{\frac{\infty}{2}}) \cong K_0(\mathcal{P}^{\frac{\infty}{2}})$. In the equivariant case when the context is clear we will use the same notation for standard, costandard, and simple objects.

Note that in the original result of \cite{FFKM2} loc.\ cit., the setup and notation of the periodic Hecke module follows the conventions of \cite{S}, which differs from those of \cite{L1} which we use in the present paper, hence the presence of the sign change on $\lambda$ in the results in this section. We also make reference to the $\fH$-action on $K_0(\flinf)$ discussed in \cite{ABBGM} and \cite{FFKM2}. Translating the result to our conventions, we arrive the following. 

\begin{theorem}[\cite{FFKM2}]\label{thm:semiinfmd}
There is an isomorphism of $\mathcal{H}_q\otimes \mathbb{C}[W]$-modules
\begin{align*}
K_0\left(\mathcal{P}^{\frac{\infty}{2}}\right) \otimes_{\mathbb{Z}[v, v^{-1}]} \mathbb{C} \to M_{d,q}
\end{align*}
such that for any $\tilde{w} = w\cdot \lambda$ (for $w \in W$, $\lambda \in \Gamma$),
\begin{align*}
[\mathsf{IC}_{\tilde{w}}] & \mapsto A_{w\cdot (-\lambda)}^\sharp,\\
[\boldsymbol{\nabla}_{\tilde{w}}] & \mapsto A_{w\cdot (-\lambda)}.
\end{align*}
\end{theorem}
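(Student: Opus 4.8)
The plan is to assemble the statement from the results of \cite{FFKM2} and \cite{ABBGM} recalled above, the bulk of the work being careful bookkeeping of the differing alcove conventions. First I would invoke the isomorphism of $\mathcal{H}_q$-modules $K_0(\mathcal{P}^{\frac{\infty}{2}}) \otimes_{\mathbb{Z}[v,v^{-1}]} \mathbb{C} \xrightarrow{\sim} M_{d,q}$ furnished by the discussion of Section 6.13 of \cite{FFKM2} (made precise for $\flinf$ in Theorem 4.3.6 of \cite{ABBGM}). On the level of bases, this isomorphism matches the costandard objects $\boldsymbol{\nabla}_{\tilde w}$ with alcoves and the simple objects $\mathsf{IC}_{\tilde w}$ with Lusztig's canonical basis elements $A^\sharp$; the latter identification is precisely the Kazhdan--Lusztig-type theorem for the semi-infinite flag variety, whose proof in loc. cit. reduces the self-duality and $v^{-1}\mathbb{Z}[v^{-1}]$-unitriangularity of the $\mathsf{IC}$'s against the $\boldsymbol{\nabla}$'s to the defining characterization of the $A^\sharp$ in Proposition 12.2 of \cite{L}. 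The only subtlety is that \cite{FFKM2} follows the conventions of \cite{S}, which differ from those of \cite{L1} used here by the sign flip $\lambda \mapsto -\lambda$ on the lattice part; tracking this flip through shows that $[\boldsymbol{\nabla}_{\tilde w}]$ and $[\mathsf{IC}_{\tilde w}]$ for $\tilde w = w \cdot \lambda$ correspond to $A_{w\cdot(-\lambda)}$ and $A^\sharp_{w\cdot(-\lambda)}$, i.e. to $A_{i(\tilde w)}$ and $A^\sharp_{i(\tilde w)}$. This yields the two displayed formulas, and in particular exhibits the assignment as a well-defined $\mathbb{C}$-linear bijection (the $A^\sharp$ form a basis of $M_{d,q}$, and the $\mathsf{IC}_{\tilde w}$ together with their Tate twists a basis of $K_0(\mathcal{P}^{\frac{\infty}{2}})\otimes\mathbb{C}$) which is $\mathcal{H}_q$-linear by construction.

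It then remains to check that this bijection intertwines the $W$-actions, i.e.\ that $\mathsf{F}_w$ corresponds to $\theta_w$. Since both actions are generated by simple reflections and defined through reduced expressions, and both preserve the respective distinguished bases, it suffices to verify the identity on a single class $[\mathsf{IC}_{\tilde v}]$ for a single $s \in S$. By the Proposition of Section 6.18 of \cite{ABBGM} quoted above, $\mathsf{F}_s([\mathsf{IC}_{i(\tilde w)}]) = [\mathsf{IC}_{i(\epsilon_s(\tilde w))}]$; writing $\tilde v = i(\tilde w)$ and using that $i$ is an involution, this reads $\mathsf{F}_s([\mathsf{IC}_{\tilde v}]) = [\mathsf{IC}_{i(\epsilon_s(i(\tilde v)))}]$, which under our bijection maps to $A^\sharp_{\epsilon_s(i(\tilde v))}$. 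On the other hand $[\mathsf{IC}_{\tilde v}]$ maps to $A^\sharp_{i(\tilde v)}$, and by Proposition \ref{prop:actionsharps} together with Definition \ref{def:epsilonindices} one has $\theta_s(A^\sharp_{i(\tilde v)}) = (s * A_{i(\tilde v)})^\sharp = A^\sharp_{\epsilon_s(i(\tilde v))}$. The two agree, so the bijection intertwines $\mathsf{F}_s$ with $\theta_s$ for every $s$, hence $\mathsf{F}_w$ with $\theta_w$ for every $w$. Combined with the $\mathcal{H}_q$-linearity above, and with the fact that on both sides the $\mathcal{H}_q$- and $W$-actions commute (so that the two genuinely assemble into an $\mathcal{H}_q \otimes \mathbb{C}[W]$-module structure), this completes the proof.

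The main obstacle I anticipate is not any single computation but the cumulative convention-matching across \cite{L1}, \cite{L}, \cite{S}, \cite{FFKM2}, and \cite{ABBGM}: in particular confirming that the $W$-action on $\flinf$, which is only visible through the equivalence of loc.\ cit.\ with graded modules over the small quantum group (categorifying \cite{AJS}, Theorem 17.8), really does correspond on $K_0$ to Lusztig's $\theta_w$ rather than to a variant twisted by $w_0$ or by an inverse, and that the ``restricted irreducible'' indexing of equation (55) in \cite{ABBGM}, once its stated typo is corrected, matches Definition \ref{def:epsilonindices} exactly. Once these points are pinned down, the argument is purely formal.
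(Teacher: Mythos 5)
Your proposal is correct and matches the paper's treatment: the paper states this theorem as an imported result of \cite{FFKM2} (made precise in Theorem 4.3.6 of \cite{ABBGM}), with the only work being the translation from the alcove conventions of \cite{S} to those of \cite{L1}, which is exactly the bookkeeping you carry out. Your explicit check that $\mathsf{F}_s$ corresponds to $\theta_s$ via Proposition \ref{prop:actionsharps} and Definition \ref{def:epsilonindices} is a correct unwinding of what the paper leaves implicit in that translation.
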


\begin{defn}
Let $W_{\leq} \subset \tilde{W}$ be the set of $\tilde{w} \in \tilde{W}$ such that for any $y \in W$, $\epsilon_y(\tilde{w})$ can be written as $w \cdot \check{\nu}$ for some $w \in W$, and some coweight $\check{\nu}$ for which $\check{\nu} \leq 0$.

Let $W' \subset W_{\leq} \subset \tilde{W}$ be the set of all $\tilde{w} \in \tilde{W}$ such that there exists $w, z \in W$ with $\mathsf{IC}_{\tilde{w}} = \mathsf{F}_z(\mathsf{IC}_{i(w)})$. In other words, it is the orbit of $W$ in $\tilde{W}$ under the action defined by $i(\tilde{w}) \mapsto \epsilon_w(i(\tilde{w}))$ for $w \in W$.

Finally, let $T = W_{\leq} - W'$ be the subset of $W_{\leq}$ consisting of all elements which do not lie in $W'$.
\end{defn}

\begin{defn}
Let $\tilde{\mathcal{P}}_{\leq}$ (resp. $\mathcal{T}$) be the Serre subcategory of $\mathcal{P}^{\frac{\infty}{2}}_I$ generated by half-integer twists of $\{\mathrm{IC}_{\tilde{w}}\}_{\tilde{w}\in W_{\leq}}$ (resp. $\{\mathrm{IC}_{\tilde{w}}\}_{\tilde{w}\in T}$) under extensions. Let $\mathcal{Q}$ be the Serre quotient $\tilde{\mathcal{P}}_{\leq} / \mathcal{T}$. 

Let $\tilde{\mathcal{P}}^\circ_{\leq 0}$, $\mathcal{T}^\circ$, and $\mathcal{Q}^\circ$ be the non-mixed analogues, e.g. $\tilde{\mathcal{P}}_{\leq}^\circ$ is the Serre subcategory of $\mathrm{Perv}(\mathcal{F}\ell^\frac{\infty}{2})^{I}$ generated by $\{\mathrm{IC}_{\tilde{w}}\}_{\tilde{w} \in W_+{\leq}}$.
\end{defn}

Our aim in the subsequent sections will be to categorify Theorem \ref{thm:introklomd} by showing that $\mathcal{Q}^\circ$ and $\mathcal{A}_B$ (resp. $\mathcal{Q}$ and $\mathcal{A}_{\mathcal{P}}$) are equivalent as categories, and then showing that the Serre quotient map $\pi : \tilde{\mathcal{P}}_{\leq} \to \tilde{\mathcal{P}}_{\leq}/\mathcal{T}$ admits a fully faithful right adjoint. By composing this adjoint functor with the equivalence of categories claimed above, we will then get an equivalence of categories between $\mathcal{A}_B$ (resp. $\mathcal{A}_{\mathcal{P}}$) and a full subcategory $\tilde{\mathcal{P}}$ of $\mathcal{P}^{\frac{\infty}{2}}_I$ as claimed in Theorem \ref{thm:introcategorification}.

\subsection{Setup}

We will continue to refer to the setup and notation of \cite{ABBGM}. In loc.\ cit., the authors define the category $\mathrm{Perv}(\mathcal{F}\ell^{\frac{\infty}{2}})^{I}$ as the subcategory of sheaves in $\mathrm{Perv}({}_\infty^{I} \overline{\mathrm{Bun}}_{N^-})$ satisfying certain conditions. For any coweight $\check{\nu}$, they define ${}_{\check{\nu}}^{I} \overline{\mathrm{Bun}}_{N^-} \subset {}_\infty^{I} \overline{\mathrm{Bun}}_{N^-}$, denoting by $\overline{i}_{\check{\nu}}$ the inclusion map. We will use this map in the case where $\check{\nu} = 0$. Similarly, they define ${}_{\leq \check{\nu}}^{I} \overline{\mathrm{Bun}}_{N^-} \subset {}_\infty^{I} \overline{\mathrm{Bun}}_{N^-}$.

For any $\tilde{w} \in W'$, it is easy to see that the strata indexed by $\check{\nu}$ on which $\mathrm{IC}_{\tilde{w}}$ is supported must be such that $\check{\nu} \leq 0$. (This is the statement that $W' \subset W_{\leq}$, which follows purely from the combinatorics of $M_d$ and the definition of the operators $\theta_w$; note that the orderings $\leq$ and $\geq$ are switched when translating from $\mathrm{Perv}(\mathcal{F}\ell^{\frac{\infty}{2}})^{I}$ to $M_d$, as we explain in Section \ref{sec:semiinfprelim}.)

\begin{lemma}[\cite{ABBGM}, Section 4.1.3]
The closure of ${}_0^{I} \overline{\mathrm{Bun}}_{N^-}$ in ${}_\infty^{I} \overline{\mathrm{Bun}}_{N^-}$ is ${}_{\leq 0}^{I} \overline{\mathrm{Bun}}_{N^-}$.
\end{lemma}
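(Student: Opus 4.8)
The plan is to recover this from the general structure of Drinfeld's compactification $\overline{\mathrm{Bun}}_{N^-}$ and its defect stratification, as developed in \cite{BFGM}, \cite{FFKM2} and \cite{Flags1}, specialized to the $I^0$-equivariant situation of \cite{ABBGM}. Recall that ${}_\infty^{I^0}\overline{\mathrm{Bun}}_{N^-}$ carries a stratification by locally closed substacks ${}_{\check{\nu}}^{I^0}\overline{\mathrm{Bun}}_{N^-}$ indexed by coweights $\check{\nu}$, where ${}_{\check{\nu}}^{I^0}\overline{\mathrm{Bun}}_{N^-}$ is the locus on which the generalized $N^-$-structure has defect exactly $\check{\nu}$ (so ${}_0^{I^0}\overline{\mathrm{Bun}}_{N^-}$ is the honest, defect-free locus), and that ${}_{\leq 0}^{I^0}\overline{\mathrm{Bun}}_{N^-}$ is, set-theoretically, the union $\bigcup_{\check{\nu}\leq 0}{}_{\check{\nu}}^{I^0}\overline{\mathrm{Bun}}_{N^-}$ (this description of ${}_{\leq 0}^{I^0}\overline{\mathrm{Bun}}_{N^-}$ is itself part of the basic formalism of \cite{BFGM}). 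Since the right-hand side is a union of strata, it suffices to establish the two inclusions: first, $\overline{{}_0^{I^0}\overline{\mathrm{Bun}}_{N^-}}\subseteq {}_{\leq 0}^{I^0}\overline{\mathrm{Bun}}_{N^-}$; and second, ${}_{\check{\nu}}^{I^0}\overline{\mathrm{Bun}}_{N^-}\subseteq \overline{{}_0^{I^0}\overline{\mathrm{Bun}}_{N^-}}$ for every $\check{\nu}\leq 0$ — the second of which, together with the first, also forces ${}_{\leq 0}^{I^0}\overline{\mathrm{Bun}}_{N^-}$ to be closed in ${}_\infty^{I^0}\overline{\mathrm{Bun}}_{N^-}$.

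For the first inclusion I would argue by the valuative criterion together with semicontinuity of the defect. The defect of a point of $\overline{\mathrm{Bun}}_{N^-}$ is detected by the orders of vanishing of the canonical (Plücker-type) maps that cut out the compactification inside the relevant generalized flag bundle; in a one-parameter family (over a discrete valuation ring) these orders are lower semicontinuous, so passing to the special point can only enlarge the defect divisor, i.e. can only move the defect coweight downward in the order $\leq$. A point of the generic fibre lying in ${}_0^{I^0}\overline{\mathrm{Bun}}_{N^-}$ has zero defect generically, hence its specialization has defect $\leq 0$; this is exactly the statement that the defect stratification has the standard closure order, and it yields $\overline{{}_0^{I^0}\overline{\mathrm{Bun}}_{N^-}}\subseteq {}_{\leq 0}^{I^0}\overline{\mathrm{Bun}}_{N^-}$.

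For the second inclusion I would pass to the local factorization model. An étale neighborhood of a point of ${}_{\check{\nu}}^{I^0}\overline{\mathrm{Bun}}_{N^-}$ is, up to a smooth factor lying in ${}_0$, modeled — compatibly with the stratifications — on a Zastava-type space attached to the coweight $-\check{\nu}$; the relevant structural input from \cite{BFGM} and \cite{FFKM2} is that this Zastava space is irreducible, that its open (defect-free) stratum is dense, and that its deepest stratum, which corresponds under the model to ${}_{\check{\nu}}^{I^0}\overline{\mathrm{Bun}}_{N^-}$, lies in the closure of the open one. Transporting along the model then gives ${}_{\check{\nu}}^{I^0}\overline{\mathrm{Bun}}_{N^-}\subseteq \overline{{}_0^{I^0}\overline{\mathrm{Bun}}_{N^-}}$. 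Alternatively one can make this explicit: starting from a defect-free generalized $N^-$-structure, one writes down a one-parameter family in which the colored points of the defect divisor are allowed to flow to the marked point, realizing an arbitrary prescribed $\check{\nu}\leq 0$ in the limit.

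The hard part is the second inclusion: the closure relations of the defect stratification are not a formal consequence of the definitions but rest on the irreducibility and factorization properties of the Zastava spaces, so the real content is to invoke (or, in the $I^0$-equivariant setting of \cite{ABBGM}, to re-derive) the relevant statements of \cite{BFGM} and \cite{FFKM2}. The first inclusion, by contrast, is a soft semicontinuity argument once the defect is phrased via vanishing orders of the defining maps, and would be the natural place to start.
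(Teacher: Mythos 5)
The paper does not actually prove this lemma: it is imported verbatim, with citation, from Section 4.1.3 of \cite{ABBGM}, where it belongs to the basic structure theory of the Iwahori version of Drinfeld's compactification and ultimately rests on \cite{BFGM}. So there is no proof in the paper to compare against; what you have written is a reconstruction of the argument behind the cited result, and as such it is essentially correct in outline. Your decomposition into two inclusions is the right one; the first inclusion is indeed the soft direction (semicontinuity of the orders of vanishing of the Pl\"ucker-type maps, so specialization can only push the coweight downward in the order $\leq$), and the second is the substantive one, reducing via factorization/local models to the irreducibility of Zastava spaces and the density of their defect-free locus, which is the input from \cite{BFGM} and \cite{FFKM2}. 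Two caveats are worth flagging. First, in \cite{ABBGM} the index $\check{\nu}$ on ${}_{\check{\nu}}^{I^0}\overline{\mathrm{Bun}}_{N^-}$ records the behaviour of the Pl\"ucker data at the marked point $x$ relative to the $I^0$-structure, not the total defect divisor on the curve, so both the semicontinuity argument and the degeneration argument (your one-parameter family in which points of the defect divisor flow into the marked point) must be run for the behaviour at $x$ specifically; your explicit family does exactly this, so the issue is bookkeeping of conventions rather than substance, but it should be matched carefully to the indexing of \cite{ABBGM}. Second, as you yourself note, the closure relations are not formal, and a genuinely self-contained proof would require re-deriving the Zastava density statements in the $I^0$-equivariant setting rather than merely invoking them; the paper sidesteps this entirely by citing \cite{ABBGM}, which is the appropriate level of detail for its purposes.
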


\begin{corollary}\label{cor:itsexact}
The functor $\overline{i}_0^* : \tilde{\mathcal{P}}_{\leq}^\circ \to \mathrm{Perv}({}_0^{I} \overline{\mathrm{Bun}}_{N^-})$ is exact.
\end{corollary}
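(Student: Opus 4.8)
The plan is to deduce exactness of $\overline{i}_0^*$ from the preceding lemma, which identifies the closure of ${}_0^{I^0}\overline{\mathrm{Bun}}_{N^-}$ with ${}_{\leq 0}^{I^0}\overline{\mathrm{Bun}}_{N^-}$, together with the defining property of the subcategory $\tilde{\mathcal{P}}_{\leq}^\circ$: every object of $\tilde{\mathcal{P}}_{\leq}^\circ$ is (by construction) an iterated extension of simple objects $\mathrm{IC}_{\tilde{w}}$ with $\tilde{w} \in W_{\leq}$, and every such $\mathrm{IC}_{\tilde{w}}$ is supported, stratum by stratum, on the locus indexed by coweights $\check{\nu} \leq 0$. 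In particular, all of these sheaves are supported on the closed substack ${}_{\leq 0}^{I^0}\overline{\mathrm{Bun}}_{N^-}$, so for the purpose of applying $\overline{i}_0^*$ we may regard $\tilde{\mathcal{P}}_{\leq}^\circ$ as a category of perverse sheaves on this closed substack, and ${}_0^{I^0}\overline{\mathrm{Bun}}_{N^-}$ is then an \emph{open} subvariety of it (being the complement of the other strata, i.e. of the $\check{\nu} < 0$ locus). Restriction to an open subvariety is $t$-exact for the perverse $t$-structure, so $\overline{i}_0^*$ is exact on this larger ambient category, hence on $\tilde{\mathcal{P}}_{\leq}^\circ$; the only remaining point is that $\overline{i}_0^*$ lands in $\mathrm{Perv}({}_0^{I^0}\overline{\mathrm{Bun}}_{N^-})$ rather than merely in its derived category, which is exactly the statement that it is perverse-exact.

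Concretely I would organize it as follows. First, record that $j := \overline{i}_0$ factors as the open immersion ${}_0^{I^0}\overline{\mathrm{Bun}}_{N^-} \hookrightarrow {}_{\leq 0}^{I^0}\overline{\mathrm{Bun}}_{N^-}$ followed by the closed immersion ${}_{\leq 0}^{I^0}\overline{\mathrm{Bun}}_{N^-} \hookrightarrow {}_\infty^{I^0}\overline{\mathrm{Bun}}_{N^-}$, using the Lemma to know the second map is closed with the required image. Second, observe that any object $\mathcal{F}\in\tilde{\mathcal{P}}_{\leq}^\circ$ has $\mathcal{F} \cong k_*k^*\mathcal{F}$ where $k$ is that closed immersion, since $\mathcal{F}$ is an extension of the $\mathrm{IC}_{\tilde{w}}$, $\tilde{w}\in W_{\leq}$, each of which is supported on ${}_{\leq 0}^{I^0}\overline{\mathrm{Bun}}_{N^-}$ — this is the content of the parenthetical remark in the excerpt that $W' \subset W_{\leq}$ and, more to the point, that $\tilde{\mathcal{P}}_{\leq}^\circ$ is built from $\{\mathrm{IC}_{\tilde{w}}\}_{\tilde{w}\in W_{\leq}}$. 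Third, note $\overline{i}_0^* \mathcal{F} \cong (\text{open restriction})^* k^* \mathcal{F}$, and that $k^*\mathcal{F}$ is perverse on the closed substack (closed pullback is $t$-exact here since $k_*$ is $t$-exact and fully faithful), and open restriction is $t$-exact. Composing, $\overline{i}_0^*$ carries the abelian category $\tilde{\mathcal{P}}_{\leq}^\circ$ into the abelian category $\mathrm{Perv}({}_0^{I^0}\overline{\mathrm{Bun}}_{N^-})$ and is exact.

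The main obstacle is not conceptual but bookkeeping with the semi-infinite formalism: one must be careful that $\mathrm{Perv}(\mathcal{F}\ell^{\frac{\infty}{2}})^{I^0}$ is defined inside $\mathrm{Perv}({}_\infty^{I^0}\overline{\mathrm{Bun}}_{N^-})$ by stratum-wise conditions (as recalled in Section \ref{sec:semiinfprelim}), so ``support on ${}_{\leq 0}^{I^0}\overline{\mathrm{Bun}}_{N^-}$'' for the relevant $\mathrm{IC}$'s has to be taken in the precise sense of \cite{ABBGM} 4.1.3, and one should check the open immersion ${}_0^{I^0}\overline{\mathrm{Bun}}_{N^-} \hookrightarrow {}_{\leq 0}^{I^0}\overline{\mathrm{Bun}}_{N^-}$ is genuinely the complement of a \emph{closed} union of strata (so that $j^*$ is the restriction to an open, for which $t$-exactness is standard). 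Granting the cited structural results from \cite{ABBGM}, none of these points requires real work, so the corollary follows.
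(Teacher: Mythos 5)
Your proposal is correct and follows essentially the same route as the paper: objects of $\tilde{\mathcal{P}}_{\leq}^\circ$ are supported on ${}_{\leq 0}^{I^0}\overline{\mathrm{Bun}}_{N^-}$ because they are built from the $\mathrm{IC}_{\tilde{w}}$ with $\tilde{w} \in W_{\leq}$, and ${}_0^{I^0}\overline{\mathrm{Bun}}_{N^-}$ is open in that closed substack, so $\overline{i}_0^*$ is restriction to an open subset of the support and hence perverse-exact. Your extra bookkeeping (factoring through the closed immersion and using $\mathcal{F} \cong k_*k^*\mathcal{F}$) is just a more explicit spelling-out of the same argument the paper gives in two lines.
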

\begin{proof}
By the preceding discussion, any element of $\tilde{\mathcal{P}}_{\leq}^\circ$ is supported in ${}_{\leq 0}^{I} \overline{\mathrm{Bun}}_{N^-}$. It is shown in \cite{ABBGM} that ${}_0^{I} \overline{\mathrm{Bun}}_{N^-}$ is an open subset of ${}_{\leq 0}^{I} \overline{\mathrm{Bun}}_{N^-}$, from which the result follows.
\end{proof}

Now the following result, also shown in \cite{ABBGM}, will be crucial for our purposes.
\begin{proposition}[\cite{ABBGM}, Section 4]
There exists equivalences of categories
\begin{align}
    \mathrm{Perv}_U(G/B) \to {}^{'}\mathrm{Perv}({}_{0}^{I^0} \overline{\mathrm{Bun}}_{N^-})\\
    \mathrm{Perv}_B(G/B) \to {}^{'}\mathrm{Perv}({}_{0}^{I} \overline{\mathrm{Bun}}_{N^-}),
\end{align}
where ${}^{'}\mathrm{Perv}({}_{0}^{I^0}\overline{\mathrm{Bun}}_{N^-}) \subset \mathrm{Perv}({}_{0}^{I^0}\overline{\mathrm{Bun}}_{N^-})$ is the subcategory satisfying the same conditions as in the definition of $\mathrm{Perv}(\mathcal{F}\ell^\frac{\infty}{2})^{I^0} \subset \mathrm{Perv}({}_{\infty}^{I^0}\overline{\mathrm{Bun}}_{N^-})$, and similarly for the $I$-equivariant version. These equivalences send the standard sheaves $\Delta_w$ to the sheaves $\boldsymbol{\nabla}_w$ described in 4.4.3 of \cite{ABBGM}.
\end{proposition}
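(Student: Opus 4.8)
The plan is to extract this statement from the geometric analysis of the strata of ${}_\infty^{I^0}\overline{\mathrm{Bun}}_{N^-}$ carried out in Section~4 of \cite{ABBGM}, rather than to reprove it from scratch. First I would recall from loc.\ cit.\ the description of the $\check{\nu}=0$ stratum: as a stratified (ind-)scheme, ${}_0^{I^0}\overline{\mathrm{Bun}}_{N^-}$ carries a stratification whose strata are in natural bijection with the Bruhat cells $BwU/U \subset G/U$ for $w \in W$; equivalently, the open stratum of ${}_{\leq 0}^{I^0}\overline{\mathrm{Bun}}_{N^-}$ only ``sees'' the finite Weyl group $W$ sitting inside $\tilde{W}$. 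This is exactly the geometric mirror of the restriction to fundamental alcoves studied in Section~\ref{sec:thebijection}.

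Next I would check that the conditions defining $\mathrm{Perv}(\mathcal{F}\ell^{\frac{\infty}{2}})^{I^0}$ inside $\mathrm{Perv}({}_\infty^{I^0}\overline{\mathrm{Bun}}_{N^-})$---torus-monodromicity together with the support and cleanness conditions imposed in \cite{ABBGM}---restrict, stratum by stratum, to precisely the conditions that cut out the $B$-equivariant perverse sheaves among the $T$-monodromic perverse sheaves on $G/U$. Granting the identification of stratified spaces from the previous step, this identifies ${}^{'}\mathrm{Perv}({}_0^{I^0}\overline{\mathrm{Bun}}_{N^-})$ with $\mathrm{Perv}_B(G/U)$; this is essentially the content of 4.4 of \cite{ABBGM}, which I would cite directly rather than expand.

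Finally, to pin down the assertion about standard objects I would trace through conventions. On the $G/U$ side $\Delta_w = j_{w!}(\Qlu[l(w)])(\tfrac{l(w)}{2})$ is a $!$-extension from the Bruhat cell indexed by $w$; in \cite{ABBGM} the objects $\boldsymbol{\nabla}_w$ of 4.4.3 are $!$-extensions along the corresponding inclusions, but loc.\ cit.\ labels $!$-extensions as \emph{costandard}, which is the opposite of the convention used here (this label swap is already flagged in Section~\ref{sec:semiinfprelim}). After matching the perverse shifts and the half-Tate twist normalizations, the equivalence sends $\Delta_w$ to $\boldsymbol{\nabla}_w$, as claimed.

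The main obstacle is not hard geometry but careful bookkeeping of two separate framework mismatches: \cite{ABBGM} works with (mixed) $\mathcal{D}$-modules of Hodge-Tate type, while the present paper uses mixed $\ell$-adic sheaves, so one must invoke the standard comparison of the two formalisms; and the \cite{ABBGM} labelling of standard versus costandard objects, together with the precise choices of shifts and twists, must be handled scrupulously so that the correspondence comes out as $\Delta_w \leftrightarrow \boldsymbol{\nabla}_w$ and not, say, $\Delta_w \leftrightarrow \boldsymbol{\Delta}_w$ up to a twist.
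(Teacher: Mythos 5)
Your plan matches the paper exactly: the paper gives no independent argument for this proposition, citing it directly to Section 4 of \cite{ABBGM}, and your proposal amounts to the same citation together with the convention bookkeeping (the $\Delta$/$\boldsymbol{\nabla}$ labelling swap and the shift/twist normalizations) that the paper handles by remark elsewhere in Section \ref{sec:semiinfprelim}. Your reading of $\boldsymbol{\nabla}_w$ as the $!$-extension is consistent with the paper's later use of the identity $\overline{i}_{0!}(\Delta_w) = \boldsymbol{\nabla}_w$, so no gap to report.
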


From now on, by means of the equivalence in this proposition, we will implicitly identify $\mathrm{Perv}_B(G/B)$ with ${}^{'}\mathrm{Perv}({}_{0}^{I}\overline{\mathrm{Bun}}_{N^-})$, viewing $\mathrm{Perv}_B(G/B)$ as the target (resp. source) of the functor $\overline{i}_0^*$ (resp. ${}^p\overline{i}_{0!}$).

\begin{prop}\label{prop:fwquotient}
    The functor $\overline{i}_0^*$ is zero when restricted to $\mathcal{T}$. Further, the category $\mathcal{T}$ is closed under the action of the $\{\mathsf{F}_w\}_{w \in W}.$ As a result, the functor $\overline{i}_0^*$ and the functors $\{\mathsf{F}_w\}_{w \in W}$ are each well-defined on the quotient category $\mathcal{Q}$.
\end{prop}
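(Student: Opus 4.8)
The plan is to verify the three assertions in Proposition \ref{prop:fwquotient} in order, using the combinatorial description of the relevant index sets together with the $W$-equivariance of $\overline{i}_0^*$.

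First I would show that $\overline{i}_0^*$ vanishes on $\mathcal{T}$. By Corollary \ref{cor:itsexact}, $\overline{i}_0^*$ is exact on $\tilde{\mathcal{P}}_{\leq}^\circ$ (and similarly in the mixed setting on $\tilde{\mathcal{P}}_{\leq}$), so it suffices to check that $\overline{i}_0^*(\mathsf{IC}_{\tilde{w}}) = 0$ for every $\tilde{w} \in T$ and every half-integer Tate twist. Under the identification of $\mathrm{Perv}_B(G/U)$ with ${}^{'}\mathrm{Perv}({}_{0}^{I^0}\overline{\mathrm{Bun}}_{N^-})$, the functor $\overline{i}_0^*$ is $*$-restriction to the open stratum indexed by the coweight $\check\nu = 0$; for $\tilde{w} = w \cdot \check\nu$ with $\check\nu \leq 0$, the restriction is nonzero only if $\mathsf{IC}_{\tilde{w}}$ is actually supported on the $\check\nu = 0$ stratum, which under Theorem \ref{thm:semiinfmd} corresponds to the condition that $A_{w\cdot(-\check\nu)}^\sharp$ has a fundamental alcove $A_{w'}$ ($w' \in W$) appearing with nonzero coefficient. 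As in the proof of Proposition \ref{prop:je}, this forces $\check\nu = 0$ and $\tilde{w} = w' \in W$, and more generally (using that $T$ is a union of $W$-orbits, so $\mathsf{F}_z(\mathsf{IC}_{\tilde w})$ is again an $\mathsf{IC}$ indexed in $T$) we may conclude directly: $T = W_{\leq} \setminus W'$, and by construction $\tilde w \in T$ means there is no pair $w,z \in W$ with $\mathsf{IC}_{\tilde w} = \mathsf{F}_z(\mathsf{IC}_{i(w)})$; in particular $\tilde w \notin W$, so $\mathsf{IC}_{\tilde w}$ is not supported on the $\check\nu = 0$ stratum and its $\overline{i}_0^*$ vanishes. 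By exactness, $\overline{i}_0^*$ kills all of $\mathcal{T}$.

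Next I would show $\mathcal{T}$ is closed under each $\mathsf{F}_w$. Since $\mathsf{F}_w$ sends simple objects to simple objects with $\mathsf{F}_s(\mathsf{IC}_{i(\tilde v)}) = \mathsf{IC}_{i(\epsilon_s(\tilde v))}$, and $\mathcal{T}$ is the Serre subcategory generated by the twists of $\{\mathsf{IC}_{\tilde w}\}_{\tilde w \in T}$, it is enough to check that $\mathsf{F}_w$ permutes this index set $T$. The key point is that both $W_{\leq}$ and $W'$ are stable under the operation $i(\tilde v) \mapsto \epsilon_w(i(\tilde v))$: for $W'$ this is immediate from its definition as a $W$-orbit, and for $W_{\leq}$ it follows because the defining condition ``$\epsilon_y(\tilde w) = w'\cdot\check\nu$ with $\check\nu \leq 0$ for all $y \in W$'' is manifestly invariant under replacing $\tilde w$ by another element in the same $W$-orbit under this action (the quantifier over $y \in W$ absorbs the precomposition by $w$). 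Hence $T = W_{\leq} \setminus W'$ is also stable, and $\mathsf{F}_w(\mathcal{T}) = \mathcal{T}$. The one point requiring care is reconciling the involution $i$ appearing in the $\mathsf{F}_w$-formula with the definition of $W'$ as the orbit of $i(W)$ — this is a bookkeeping matter about which side the $i$ sits on, handled by unwinding Definitions of $W'$ and $\epsilon_z$.

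Finally, the descent to $\mathcal{Q}$ is a formal consequence of the universal property of the Serre quotient: a functor out of $\tilde{\mathcal{P}}_{\leq}$ (or $\tilde{\mathcal{P}}_{\leq}^\circ$) that is exact and annihilates $\mathcal{T}$ factors uniquely through $\mathcal{Q} = \tilde{\mathcal{P}}_{\leq}/\mathcal{T}$; this applies to $\overline{i}_0^*$ by the first step and Corollary \ref{cor:itsexact}. An exact endofunctor of $\tilde{\mathcal{P}}_{\leq}$ preserving the Serre subcategory $\mathcal{T}$ automatically induces an exact endofunctor of the quotient; this applies to each $\mathsf{F}_w$ by the second step (noting $\mathsf{F}_w$ is exact as it sends simples to simples and is part of a $W$-action by equivalences). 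I expect the second step — verifying the stability of the index sets, and in particular matching the involution $i$ with the definitions of $W_{\leq}$ and $W'$ — to be the main obstacle, as everything else is either an exactness argument or the universal property of Serre quotients.
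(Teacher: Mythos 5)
Your proposal is correct and follows essentially the same route as the paper: simples of $\mathcal{T}$ have indices $w\cdot\check\nu$ with $\check\nu<0$ (since $\tilde w\in W_{\leq}\setminus W'$ forces $\tilde w\notin W$), so their supports miss the open $\check\nu=0$ stratum and $\overline{i}_0^*$ kills $\mathcal{T}$ by exactness, while $\mathsf{F}_w$ permutes the simples indexed by $T$ because $W'$ is by definition an orbit and $W_{\leq}$ is stable under the index action, after which descent to the Serre quotient is formal. The paper's own proof is just a terser version of this (it does not spell out the $i$-bookkeeping or the stability of $W_{\leq}$ either), and your initial detour through Theorem \ref{thm:semiinfmd} is unnecessary but harmless since you then give the direct support argument.
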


\begin{proof}
    By definition $\mathcal{T}$ is generated by a collection of irreducible objects $\mathsf{IC}_{\tilde{w}}$, each of which has the property that $\tilde{w} = w\cdot \check{\nu}$ for $w \in W$ and $\check{\nu}$ some coweight with $\check{\nu} < 0$. This means $\overline{i}_0^*\mathsf{IC}_{\tilde{w}} = 0$ for each such irreducible, and therefore $\overline{i}_0^*(\mathcal{T}) = 0$.

    Next, note that by the definition of $W'$, irreducible objects $\mathsf{IC}_{\tilde{w}}$ for $\tilde{w} \in W_{\leq} - W'$ are sent to other such irreducible objects under the functors $\{\mathsf{F}_{w}\}_{w \in W}$; i.e. these functors send irreducible objects in $\mathcal{T}$ to other irreducible objects in $\mathcal{T}$. Since these functors are exact, this shows that $\mathcal{T}$ is closed under each of them.
\end{proof}

\subsection{An equivalence of categories}\label{sec:anequiv}

In \cite[Section 1]{P}, the gluing of abelian categories via so-called ``gluing data" is described; this specializes to Kazhdan-Laumon gluing when the gluing data is given by the symplectic Fourier transforms $F_{w,!}$ and certain natural transformations between them. We will now recall a recognition theorem for glued abelian categories which appears as Theorem 1.2.1 in loc.\ cit., see also \cite{BBP} for a more general statement which expresses gluing data in the language of coalgebras and comonads.

Suppose $(\mathcal{K}_w)_{w \in W}$ is a collection of abelian categories, and $\mathcal{K}$ is an abelian category equipped with exact functors $j_{w}^* : \mathcal{K} \to \mathcal{K}_w$ for all $w \in W$. Suppose that each of these functors has a left adjoint $j_{w!} : \mathcal{K}_w \to \mathcal{K}$. For any $y, w$, we define $G_{y,w} : \mathcal{K}_w \to \mathcal{K}_y$ by $G_{y,w} = j_{y}^*j_{w!}$. For any $w,y,z \in W$, we then obtain a natural transformation of functors $G_{z,y}G_{y,w} \to G_{z,w}$ as
\begin{align}
    G_{z,y}G_{y,w} = j_{z}^*j_{y!}j_y^*j_{w!} \to j_{z}^*j_{w!}
\end{align}
arising from the counit of the adjunction $(j_{y!}, j_y^*)$. The collection of functors $(G_{y,w})_{y, w \in W}$ along with these natural transformations then automatically satisfy the conditions for being a \emph{gluing data} as defined in Definition 1.1.1 of \cite{P}.

In the case of the category $\mathcal{A}_B$ considered in the present paper, the natural transformations
\begin{align}
    {}^pF_{w,!}\circ {}^pF_{y,!} \to {}^pF_{wy,!}\label{eqn:naturals}
\end{align}
which arise in this way can be described as follows. By Proposition \ref{prop:f!conv}, it suffices to describe the corresponding morphisms
\begin{align}\label{eqn:wywy}
    \nabla_w(\tfrac{\ell(w)}{2})* \nabla_y(\tfrac{\ell(y)}{2}) \to \nabla_{wy}(\tfrac{\ell(wy)}{2}).
\end{align}
When $\ell(w) + \ell(y) = \ell(wy)$, this becomes the canonical isomorphism inherent in the braid action. There is also a natural morphism
\begin{align}
    \nabla_s(\tfrac{1}{2}) * \nabla_s(\tfrac{1}{2}) \to \Delta_e
\end{align}
which comes from applying $- * \nabla_s(\tfrac{1}{2})$ to the canonical morphism
\begin{align}
    \nabla_s(\tfrac{1}{2}) \to \Delta_e \to \Delta_s(-\tfrac{1}{2}).\label{eqn:nabdeldel}
\end{align}
Together, these two types of morphisms assemble to define morphisms as in (\ref{eqn:wywy}) for any $y, w \in W$.

We now state the recongition theorem for glued abelian categories appearing in Theorem 1.2.1 of \cite{P}, adapted to the setting of Kazhdan-Laumon categories. 

\begin{theorem}[\cite{P}]\label{thm:recognition}
    Suppose $\mathcal{K}$ is an abelian category equipped with exact functors $j_{w}^* : \mathcal{K} \to \mathrm{Perv}_B(G/B)$, each of which has a left adjoint $j_{w!}$.  Suppose that these functors satisfy $j_w^*j_{v!} = {}^pF_{wv^{-1},!}$, and that the natural transformations ${}^pF_{y,!} \circ F_{w,!} \to {}^pF_{yw,!}$ for $y, w \in W$ induced from this adjunction agree with those described in the discussion following (\ref{eqn:naturals}). Assume also that if an object $A \in \mathcal{K}$ satisfies $j_w^*A = 0$ for all $w \in W$, then $A = 0$. Then there is a natural equivalence $\mathcal{K} \cong \mathcal{A}_{B}$.
\end{theorem}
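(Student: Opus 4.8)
\emph{Strategy.} The plan is to produce a comparison functor $\Phi\colon\mathcal{K}\to\mathcal{A}_B$ directly out of the given data, to check that it is exact and conservative (hence faithful), and then to promote it to an equivalence by comparing canonical two-term presentations of objects on the two sides. Write $\mathsf{j}_w^*$ and $\mathsf{j}_{w!}$ for the tautological functors on $\mathcal{A}_B$, and recall from \cite{KL} and \cite{P} that an object of $\mathcal{A}_B$ is a tuple $(\mathcal{G}_w)_{w\in W}$ in $\mathrm{Perv}_B(G/U)$ equipped with gluing morphisms relating $F_{s,!}(\mathcal{G}_w)$ and $\mathcal{G}_{sw}$ for $s\in S$, subject to cocycle conditions coming from the braid group action of the $F_{s,!}$ on $\mathrm{Perv}_B(G/U)$, and that $\mathsf{j}_{v!}(\mathcal{F})$ has $w$-th component $F_{wv^{-1},!}(\mathcal{F})$ with its tautological gluing data.

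First I would set $\Phi(A)_w = j_w^*(A)$. For each $v\in W$, applying $j_w^*$ to the counit $j_{v!}j_v^*A\to A$ of the adjunction $j_{v!}\dashv j_v^*$ and using $j_w^*j_{v!}=F_{wv^{-1},!}$ yields morphisms $F_{wv^{-1},!}(j_v^*A)\to j_w^*A$; specializing to $v=sw$ produces the candidate gluing data for $\Phi(A)$. The compatibility conditions defining an object of $\mathcal{A}_B$ should follow from the triangle identities for $j_{v!}\dashv j_v^*$ together with the coherence of the isomorphisms $j_w^*j_{v!}\cong F_{wv^{-1},!}$ with respect to composition in $B_W$. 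Naturality of the counits makes $\Phi$ a functor, it is exact since each $j_w^*$ is, and by construction $\Phi\circ j_{w!}\cong\mathsf{j}_{w!}$.

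Next, if $\Phi(A)=0$ then $j_w^*A=0$ for all $w$, so $A=0$ by hypothesis; applying this to images of morphisms shows $\Phi$ is faithful. For the remaining properties I would use that, by the triangle identity, $j_v^*$ turns each counit $j_{v!}j_v^*A\to A$ into a split epimorphism, so the sum of counits $\bigoplus_w j_{w!}j_w^*A\to A$ is an epimorphism in $\mathcal{K}$, and the identical argument works in $\mathcal{A}_B$ using the joint conservativity of the $\mathsf{j}_w^*$ from \cite{P}. Iterating once on the kernel, every object of $\mathcal{K}$ and of $\mathcal{A}_B$ admits a presentation $\bigoplus_v j_{v!}(\mathcal{G}_v)\to\bigoplus_w j_{w!}(\mathcal{F}_w)\to A\to 0$ with $\mathcal{F}_w,\mathcal{G}_v\in\mathrm{Perv}_B(G/U)$. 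Adjunction plus $j_w^*j_{v!}=F_{wv^{-1},!}$ gives $\mathrm{Hom}_{\mathcal{K}}(j_{v!}\mathcal{G},j_{w!}\mathcal{F})\cong\mathrm{Hom}(\mathcal{G},F_{vw^{-1},!}\mathcal{F})\cong\mathrm{Hom}_{\mathcal{A}_B}(\mathsf{j}_{v!}\mathcal{G},\mathsf{j}_{w!}\mathcal{F})$ compatibly with $\Phi$, so $\Phi$ carries a presentation of $A$ to a presentation of an object of $\mathcal{A}_B$; exactness of $\Phi$ then identifies $\Phi(A)$ with that cokernel, giving essential surjectivity, while the usual left-exact computation of $\mathrm{Hom}$ from a presentation gives full faithfulness. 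Hence $\Phi$ is an equivalence.

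The step I expect to be the main obstacle is the construction of the gluing data in the second paragraph: one must match the morphisms extracted from the adjunction counits with the precise cocycle data that \cite{KL} and \cite{P} use to define objects of $\mathcal{A}_B$, keeping careful track of the role of the adjoint of $F_{s,!}$ in the gluing datum and of the normalizations in the braid action of $B_W$ on $\mathrm{Perv}_B(G/U)$. Once that bookkeeping is in place, the remaining steps are formal consequences of exactness, conservativity, and the Hom-identity $\mathrm{Hom}(j_{v!}\mathcal{G},j_{w!}\mathcal{F})\cong\mathrm{Hom}(\mathcal{G},F_{vw^{-1},!}\mathcal{F})$.
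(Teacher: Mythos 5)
The paper does not actually prove this statement: it is imported verbatim from \cite{P} (with \cite{BBP} cited for the general gluing formalism), so the only comparison available is with the argument given there. Your outline is in fact the same strategy as in those references: build the comparison functor $\Phi(A)_w=j_w^*(A)$ with gluing maps extracted from the counits $j_{v!}j_v^*A\to A$, prove joint conservativity plus exactness gives the epimorphism $\bigoplus_w j_{w!}j_w^*A\to A$, deduce two-term presentations by objects $j_{w!}(\mathcal{F})$ on both sides, and conclude via the adjunction identity $\mathrm{Hom}(j_{v!}\mathcal{G},j_{w!}\mathcal{F})\cong\mathrm{Hom}(\mathcal{G},F_{vw^{-1},!}\mathcal{F})$. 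Those formal steps are carried out correctly.

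The genuine gap is the step you yourself defer: the verification that the counit-induced maps $F_{s,!}(j_{sw}^*A)\to j_w^*A$ satisfy the Kazhdan--Laumon/Polishchuk compatibility conditions, and that $\Phi\circ j_{w!}\cong\mathsf{j}_{w!}$ \emph{as glued objects}. This is not a routine bookkeeping exercise that follows from the hypotheses as literally stated: ``$j_w^*j_{v!}=F_{wv^{-1},!}$'' is only an isomorphism of functors, whereas your second paragraph needs these isomorphisms to be coherent --- compatible with the adjunction units and counits and with the structure maps $F_{y,!}F_{y',!}\to F_{yy',!}$ of the braid action of \cite{KL}, \cite{P}. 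Without that coherence the cocycle conditions on $\Phi(A)$ need not hold (one could a priori land in a twisted gluing), and the phrase ``compatibly with $\Phi$'' in your essential-surjectivity and fullness steps --- which is what lets you transport a presentation in $\mathcal{A}_B$ back to $\mathcal{K}$ and identify composition of morphisms between $j_{!}$-objects on the two sides --- is exactly this unproved compatibility again. In the precise formulations of \cite{P} and \cite{BBP} the coherence is built into the hypotheses/data, and the proof consists largely of exploiting it; so to turn your proposal into a proof you must either add these compatibilities to the hypotheses (as the sources do, and as is implicitly intended in the statement here) and then check the cocycle identities explicitly, or verify them in the concrete situation at hand. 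Once that is in place, the remaining steps of your argument (conservativity $\Rightarrow$ the counit sum is epi, presentations, the $\mathrm{Hom}$ computation giving full faithfulness and essential surjectivity) are correct and complete.
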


By the previous section, we know that $\overline{i}_0^*$ can be considered as a functor from $\mathcal{Q}$ to $\mathcal{P}$. Further, it is exact by Lemma \ref{cor:itsexact}. Accordingly, we make the following definition.

\begin{defn}
Let $\mathsf{j}_e^* : \mathcal{Q} \to \mathcal{P}$ be the functor $\overline{i}_0^*$. Further, for any $w \in W$, let
\begin{align*}
    \mathsf{j}_w^* & := \overline{i}_0^*\circ \mathsf{F}_{w},\\
    \mathsf{j}_{w!} & := \mathsf{F}_{w^{-1}} \circ {}^p\overline{i}_{0!}
\end{align*}
\end{defn}

\begin{prop}
    Each functor $\mathsf{j}_{w}^*$ has a left adjoint given by 
    \begin{align}
        \pi \circ \mathsf{F}_{w^{-1}} \circ {}^{p}\overline{i}_{0!}
    \end{align}
    where $\pi : \tilde{\mathcal{P}}_{\leq} \to \mathcal{Q}$ is the Serre quotient functor.
\end{prop}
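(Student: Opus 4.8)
The statement to be proved is that the functor $\mathsf{j}_w^* = \overline{i}_0^* \circ \mathsf{F}_w : \mathcal{Q} \to \mathcal{P}$ has left adjoint $\pi \circ \mathsf{F}_{w^{-1}} \circ {}^p\overline{i}_{0!}$. The plan is to assemble this from three adjunctions, one for each of the three functors appearing in $\mathsf{j}_w^*$, and then invoke uniqueness of adjoints. First I would record that $\mathsf{F}_w$ is an equivalence of categories (it is part of a $W$-action by endofunctors, hence invertible, with inverse $\mathsf{F}_{w^{-1}}$), so it is simultaneously left and right adjoint to $\mathsf{F}_{w^{-1}}$; in particular $\mathsf{F}_{w^{-1}}$ is left adjoint to $\mathsf{F}_w$. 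Second, since $\overline{i}_0$ is the inclusion of the open substack ${}_0^{I^0}\overline{\mathrm{Bun}}_{N^-} \hookrightarrow {}_{\leq 0}^{I^0}\overline{\mathrm{Bun}}_{N^-}$ (by the closure computation of \cite{ABBGM}, Section 4.1.3), the usual sheaf-theoretic adjunction $(i_{0!}, i_0^*)$ holds; composing with the perverse truncation and checking (via the standard argument that for an open immersion $i_{0!}$ is perverse right $t$-exact, so ${}^pi_{0!} = {}^pH^0 \circ i_{0!}$ is left adjoint to the exact functor $i_0^*$ on perverse hearts) gives that ${}^p\overline{i}_{0!}$ is left adjoint to $\overline{i}_0^*$ as functors between the relevant categories of perverse sheaves. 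Third, the Serre quotient functor $\pi : \tilde{\mathcal{P}}_{\leq 0} \to \mathcal{Q}$ is, by Proposition \ref{prop:fwquotient}, well-defined together with the fact that $\overline{i}_0^*$ kills $\mathcal{T}$; this is what makes $\overline{i}_0^* : \mathcal{Q} \to \mathcal{P}$ well-defined, and it means the universal property of the quotient lets us transport the adjunction $({}^p\overline{i}_{0!}, \overline{i}_0^*)$ down to $\mathcal{Q}$: for $A \in \mathcal{P}$ and $B \in \mathcal{Q}$ with any lift $\tilde B \in \tilde{\mathcal{P}}_{\leq 0}$, we have $\mathrm{Hom}_{\mathcal{Q}}(\pi({}^p\overline{i}_{0!}A), B) = \mathrm{Hom}_{\tilde{\mathcal{P}}_{\leq 0}}({}^p\overline{i}_{0!}A, \tilde B)$ (by the defining property of $\pi$ applied to the source being in the image of $\pi$, together with the fact that ${}^p\overline{i}_{0!}A$ has no subquotients in $\mathcal{T}$, which I would need to check) $= \mathrm{Hom}_{\mathcal{P}}(A, \overline{i}_0^*\tilde B) = \mathrm{Hom}_{\mathcal{P}}(A, (\overline{i}_0^* \text{ on } \mathcal{Q})(B))$.

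Putting these together: for $A \in \mathcal{P}$ and $B \in \mathcal{Q}$,
\begin{align*}
\mathrm{Hom}_{\mathcal{Q}}\big(\pi \circ \mathsf{F}_{w^{-1}} \circ {}^p\overline{i}_{0!}(A),\ B\big) & \cong \mathrm{Hom}_{\mathcal{Q}}\big(\pi \circ {}^p\overline{i}_{0!}(A),\ \mathsf{F}_{w}(B)\big)\\
& \cong \mathrm{Hom}_{\mathcal{P}}\big(A,\ \overline{i}_0^* \circ \mathsf{F}_{w}(B)\big)\\
& = \mathrm{Hom}_{\mathcal{P}}\big(A,\ \mathsf{j}_w^*(B)\big),
\end{align*}
where the first isomorphism uses that $\mathsf{F}_w$ is the inverse equivalence of $\mathsf{F}_{w^{-1}}$ (note $\mathsf{F}_{w^{-1}}$ descends to $\mathcal{Q}$ by Proposition \ref{prop:fwquotient}, and commutes with $\pi$), and the second is the descended adjunction from the previous paragraph. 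All isomorphisms are natural in $A$ and $B$, so by uniqueness of left adjoints, $\pi \circ \mathsf{F}_{w^{-1}} \circ {}^p\overline{i}_{0!}$ is the left adjoint of $\mathsf{j}_w^*$.

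\textbf{Main obstacle.} The delicate point is the passage through the Serre quotient $\pi$. One must verify that ${}^p\overline{i}_{0!}(A)$ (for $A \in \mathcal{P}$) behaves well with respect to $\mathcal{T}$ — concretely, that $\mathrm{Hom}_{\tilde{\mathcal{P}}_{\leq 0}}({}^p\overline{i}_{0!}A, \tilde B)$ depends only on the image $\pi(\tilde B) = B$, which is automatic from the universal property of the Serre quotient \emph{provided} one knows the correct side: since $\pi$ is a \emph{localization}, $\mathrm{Hom}_{\mathcal{Q}}(\pi X, \pi Y)$ is a colimit over sub/quotients, and to get an honest equality $\mathrm{Hom}_{\mathcal{Q}}(\pi X, \pi Y) = \mathrm{Hom}_{\tilde{\mathcal{P}}_{\leq 0}}(X, Y)$ one typically wants $X$ to have no subobject in $\mathcal{T}$ and $Y$ no quotient in $\mathcal{T}$, or to use that $\pi$ has a right adjoint. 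I would handle this by checking directly that ${}^p\overline{i}_{0!}A$ — being generated by standard objects $\boldsymbol{\nabla}_w$ with $w \in W$, which have no $\mathcal{T}$-constituents by the support estimates of Section \ref{sec:semiinfprelim} — has no subobjects lying in $\mathcal{T}$, hence the relevant Hom computation is unaffected by $\pi$ on the source. The analogous statement on the target side is not needed because $B$ is arbitrary in $\mathcal{Q}$ and we are free to evaluate $\overline{i}_0^*$ directly on $\mathcal{Q}$. I also need to confirm that $\mathsf{F}_{w^{-1}}$ genuinely descends to $\mathcal{Q}$ and commutes with $\pi$ up to natural isomorphism, which is exactly the content of the second assertion of Proposition \ref{prop:fwquotient} together with exactness of $\mathsf{F}_{w^{-1}}$; and that ${}^p\overline{i}_{0!}$ lands in $\tilde{\mathcal{P}}_{\leq 0}$ rather than merely in $\mathcal{P}^{\frac{\infty}{2}}$, which follows from the support condition $\check\nu \le 0$ on the strata hitting the image of $\overline{i}_0$.
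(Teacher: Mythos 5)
Your overall strategy---compose the adjunction $({}^p\overline{i}_{0!},\overline{i}_0^*)$ with the inverse equivalences $(\mathsf{F}_{w^{-1}},\mathsf{F}_w)$ and then pass through the Serre quotient---is the same as the paper's, and the first two ingredients are unproblematic. The gap is in the step you yourself single out as delicate, and your treatment of it is wrong on both sides. First, you state the standard criterion backwards: with Gabriel's formula $\mathrm{Hom}_{\mathcal{Q}}(\pi X,\pi Y)=\varinjlim \mathrm{Hom}(X',Y/Y')$, where the colimit runs over $X'\subseteq X$ with $X/X'\in\mathcal{T}$ and $Y'\subseteq Y$ with $Y'\in\mathcal{T}$, what collapses the colimit is that the \emph{source} has no nonzero quotient in $\mathcal{T}$ and the \emph{target} no nonzero subobject in $\mathcal{T}$, not the other way around. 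Consequently the condition you propose to verify---that ${}^p\overline{i}_{0!}A$ has no subobjects (or subquotients) in $\mathcal{T}$---is not the relevant one. The relevant source condition, ``no nonzero quotient in $\mathcal{T}$,'' follows in one line from the adjunction together with $\overline{i}_0^*|_{\mathcal{T}}=0$: for $T\in\mathcal{T}$ one has $\mathrm{Hom}(\mathsf{F}_{w^{-1}}{}^p\overline{i}_{0!}\mathcal{F},T)\cong\mathrm{Hom}(\mathcal{F},\mathsf{F}_w\overline{i}_0^*T)=0$; this is exactly what the paper uses. Your proposed verification is moreover unsupported as stated: the support estimates of Section \ref{sec:semiinfprelim} only place the constituents of $\boldsymbol{\nabla}_w$ in the strata indexed by $W_{\leq}$, which contains $T$, so they do not by themselves exclude $\mathcal{T}$-constituents.

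Second, and more seriously, the target side cannot be dismissed by saying that ``$B$ is arbitrary in $\mathcal{Q}$.'' A lift $\tilde B$ of $B$ may well have subobjects $Y'\in\mathcal{T}$, and a morphism in $\mathcal{Q}$ out of $\pi X$ is then only represented by some $f\colon X\to\tilde B/Y'$; to identify $\mathrm{Hom}_{\mathcal{Q}}(\pi X,B)$ with $\mathrm{Hom}_{\tilde{\mathcal{P}}_{\leq}}(X,\tilde B)$ you must show that every such $f$ lifts to an honest morphism $X\to\tilde B$, i.e.\ that $\mathrm{Hom}(X,\tilde B)\to\mathrm{Hom}(X,\tilde B/Y')$ is surjective, which is an $\mathrm{Ext}^1$-type obstruction and is not automatic. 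This is precisely where the paper's proof does its work: since $\overline{i}_0^*$ is exact and kills $\mathcal{T}$, one has $\overline{i}_0^*(\tilde B/Y')\cong\overline{i}_0^*\tilde B$, so $f$ corresponds under the adjunction to a map $\mathcal{F}\to\mathsf{F}_w\overline{i}_0^*\tilde B$, and applying the adjunction once more produces a lift $X\to\tilde B$ of $f$. Your alternative of invoking a right adjoint to $\pi$ would be circular at this point, since Proposition \ref{prop:piadjoints} is proved later and relies on the present proposition through Theorem \ref{thm:recognition}. With the source condition corrected and this lifting argument supplied, your argument becomes the paper's; without it, the middle isomorphism in your chain is not established.
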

\begin{proof}
    First note that for any $\mathcal{F} \in \mathcal{P}$ and $Y \in \tilde{\mathcal{P}}_{\leq}$, we have a functorial isomorphism
    \begin{align}
        \mathrm{Hom}_{\mathcal{P}}(\mathcal{F}, (\overline{i}_0^*\circ \mathsf{F}_w)(Y)) & \cong \mathrm{Hom}_{\tilde{\mathcal{P}}_{\leq}}((\mathsf{F}_{w^{-1}} \circ {}^{p}\overline{i}_{0!})(\mathcal{F}), Y)
    \end{align}
    by adjointness of $(\overline{i}_{0!}, \overline{i}_0^*)$. It remains to show that
    \begin{align}\label{eqn:bijquotmor}
        \mathrm{Hom}_{\tilde{\mathcal{P}}_{\leq}}((\mathsf{F}_{w^{-1}} \circ {}^{p}\overline{i}_{0!})(\mathcal{F}), Y) \cong \mathrm{Hom}_{\mathcal{Q}}((\pi \circ \mathsf{F}_{w^{-1}} \circ {}^{p}\overline{i}_{0!})(\mathcal{F}), \pi(Y)).
    \end{align}
    Note that $(\mathsf{F}_{w^{-1}} \circ {}^{p}\overline{i}_{0!})(\mathcal{F})$ admits no nontrivial quotient which lies in $\mathcal{T}$, by the adjointness of $(\mathsf{F}_{w^{-1}} \circ {}^{p}\overline{i}_{0!}, \mathsf{F}_{w} \circ \overline{i}_0^*)$ and by the fact that $\overline{i}_{0}^*$ is trivial on $\mathcal{T}$. By the definition of morphisms in Serre quotient categories, this means any morphism $\overline{f} \in \mathrm{Hom}_{\mathcal{Q}}((\mathsf{F}_{w^{-1}} \circ {}^{p}\overline{i}_{0!})(\mathcal{F}), \pi(Y))$ lifts to a genuine morphism $f : (\mathsf{F}_{w^{-1}} \circ {}^{p}\overline{i}_{0!})(\mathcal{F}) \to Y/Y'$, where $Y'$ is some subobject of $Y$ lying in $\mathcal{T}$. But again by the adjointness of $(\mathsf{F}_{w^{-1}} \circ {}^{p}\overline{i}_{0!}, \mathsf{F}_{w} \circ \overline{i}_0^*)$ and by the fact that $\overline{i}_{0}^*$ is trivial on $\mathcal{T}$, any such morphism lifts to a morphism from $(\mathsf{F}_{w^{-1}} \circ {}^{p}\overline{i}_{0!})(\mathcal{F})$ to $Y$, and this gives the bijection in (\ref{eqn:bijquotmor}).
\end{proof}

We know that the functors $\mathsf{j}_w^*$ are exact (since the $\mathsf{F}_w$ are exact), that $\pi \circ \mathsf{j}_{w!}$ are their left adjoints, and that
\begin{align}
\mathsf{j}_w^*\mathsf{j}_{v!} & = \overline{i}_0^*\circ \mathsf{F}_{wv^{-1}} \circ {}^{p}\overline{i}_{0!}.
\end{align}
So to show that the functors $\{\mathsf{j}_w^*\}_{w \in W}$ as defined here satisfy the conditions in Theorem \ref{thm:recognition}, it remains to show Lemmas \ref{lem:agreeswithconv} and \ref{lem:zeroifzero} below.
\begin{lemma}\label{lem:agreeswithconv}
When considered as endofunctors of $\mathcal{P}$, $\overline{i}_0^*\circ \mathsf{F}_{w} \circ {}^p\overline{i}_{0!}$ and ${}^pF_{w,!}$ are naturally isomorphic.
\end{lemma}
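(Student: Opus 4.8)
The plan is to prove the natural isomorphism $\overline{i}_0^* \circ \mathsf{F}_w \circ \overline{i}_{0!} \cong F_{w,!}$ on $\mathcal{P}$ by reducing to the case of a simple reflection $s \in S$ and then matching both sides against Proposition \ref{prop:f!conv}. Since every $w \in W$ factors as a product of simple reflections and both $\mathsf{F}_w$ (by the $W$-action) and $F_{w,!}$ (as the braid-group action of \cite{P}) are compatible with such factorizations via the braid relations, it suffices to treat $w = s$; the hard part, addressed below, is that $\overline{i}_0^* \circ \mathsf{F}_s \circ \overline{i}_{0!}$ is not obviously a composition of the corresponding single-reflection functors, so the reduction needs care.

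First I would recall from Proposition \ref{prop:f!conv} that $F_{s,!}(\mathcal{G}) = {}^pH^0(\mathcal{G} * \boldsymbol{\nabla}_s(\tfrac{1}{2}))$, where the convolution and Tate twist are those on $\mathrm{Perv}_{B,\mathrm{m}}(X, \Ql) \supset \mathcal{P}$. On the semi-infinite side, the key input is the compatibility of the equivalence $\mathrm{Perv}_B(G/U) \cong {}'\mathrm{Perv}({}_0^{I^0}\overline{\mathrm{Bun}}_{N^-})$ with convolution by perverse sheaves on $G/B$ (which is part of the setup of \cite{ABBGM}, and which is exactly the $\mathcal{H}_q$-action referred to in the hypotheses of Theorem \ref{thm:introcategorification}), together with the fact that the functor $\mathsf{F}_s$ on $\mathcal{P}^{\frac{\infty}{2}}$ categorifies the operator $\theta_{s}$ on $M_{d,q}$ (Theorem \ref{thm:semiinfmd}, Proposition from Section 6.18). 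The computation of $\theta_s$ in formula (\ref{eqn:deftheta}), restricted to fundamental alcoves, was already carried out in Lemma \ref{lem:jebasecase}, which gives $J_e(\theta_s(A_w)) = [\Delta_w * \boldsymbol{\nabla}_s(\tfrac{1}{2})]$; transporting this through Theorem \ref{thm:semiinfmd} shows that on classes of standard objects, $[\overline{i}_0^*(\mathsf{F}_s(\overline{i}_{0!}(\Delta_w)))] = [\Delta_w * \boldsymbol{\nabla}_s(\tfrac{1}{2})] = [F_{s,!}(\Delta_w)]$ in $K_0(\mathcal{P})$.

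Next I would upgrade this Grothendieck-group identity to a functorial isomorphism. The clean way is to identify the source of $\mathsf{j}_{s!} = \mathsf{F}_{s^{-1}}\circ \overline{i}_{0!}$ (or rather its perverse-truncated variant composed with $\overline{i}_0^*$) directly: since $\overline{i}_0^*$ is exact on $\tilde{\mathcal{P}}_{\leq}$ (Corollary \ref{cor:itsexact}) and $\mathsf{F}_s$ is exact, the composite $\overline{i}_0^* \circ \mathsf{F}_s \circ \overline{i}_{0!}$ is right exact (as $\overline{i}_{0!}$ is right exact), just as $F_{s,!} = {}^pH^0(-\,*\boldsymbol{\nabla}_s(\tfrac12))$ is right exact. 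A right-exact functor on $\mathcal{P}$ is determined (up to natural isomorphism) by its restriction to projectives / by its values on the standard objects $\Delta_w$ together with the maps between them, using that $D^b(\mathcal{P})$ is generated by the $\Delta_w$ and their Tate twists (as in the proof of Lemma \ref{lem:goodrep}). Concretely: I would construct a natural transformation $\overline{i}_0^* \circ \mathsf{F}_s \circ \overline{i}_{0!} \Rightarrow F_{s,!}$ by exhibiting, on each $\Delta_w$, the isomorphism coming from the ${}_0^{I^0}\overline{\mathrm{Bun}}_{N^-}$-realization of convolution with $\boldsymbol{\nabla}_s$ (i.e. the single-reflection case of the convolution compatibility in \cite{ABBGM}), check it is compatible with morphisms $\Delta_w \to \Delta_{w'}$ using the explicit $\theta_s$-formula, and then conclude it is an isomorphism on all of $\mathcal{P}$.

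The main obstacle I expect is precisely this last point: making rigorous that $\mathsf{F}_s$ applied to $\overline{i}_{0!}$ of something supported on the $\check\nu=0$ stratum, then restricted back via $\overline{i}_0^*$, computes convolution with $\boldsymbol{\nabla}_s(\tfrac12)$ on the nose — this requires either a direct geometric identification inside $\overline{\mathrm{Bun}}_{N^-}$ of how $\mathsf{F}_s$ interacts with the open stratum ${}_0^{I^0}\overline{\mathrm{Bun}}_{N^-}$ and its closure ${}_{\leq 0}^{I^0}\overline{\mathrm{Bun}}_{N^-}$, or an appeal to the quantum-group side where $\theta_s$ acts by an explicit formula and the convolution with $\boldsymbol{\nabla}_s$ has a known avatar. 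I would handle this by first establishing the natural isomorphism after passing to $K_0$ (which is immediate from Lemma \ref{lem:jebasecase} and Theorem \ref{thm:semiinfmd} as above), then using adjunction: both functors have the same left adjoint up to natural isomorphism, since the left adjoint of $F_{s,!}$ is computed by convolution with $\Delta_s$ and the left adjoint of $\overline{i}_0^* \circ \mathsf{F}_s \circ \overline{i}_{0!}$ unwinds, via $(\overline{i}_{0!},\overline{i}_0^*)$-adjunction and $(\mathsf{F}_{s^{-1}},\mathsf{F}_s)$-adjunction, to $\overline{i}_0^* \circ \mathsf{F}_{s^{-1}} \circ {}^p\overline{i}_{0!}$, and matching these on the generators $\Delta_w$ (where everything is governed by the explicit $\theta_s$-computation) forces the adjoints, hence the functors themselves, to be naturally isomorphic.
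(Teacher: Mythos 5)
Your proposal has two genuine gaps, and it misses the structural idea that makes the paper's proof short. First, the reduction to a simple reflection is not established. Even granting $\overline{i}_0^*\circ \mathsf{F}_{s}\circ \overline{i}_{0!} \cong F_{s,!}$ for every $s \in S$, the general case does not follow formally: one would need $\overline{i}_0^*\circ \mathsf{F}_{s_1}\cdots\mathsf{F}_{s_k}\circ \overline{i}_{0!} \cong (\overline{i}_0^*\mathsf{F}_{s_1}\overline{i}_{0!})\circ\cdots\circ(\overline{i}_0^*\mathsf{F}_{s_k}\overline{i}_{0!})$, i.e.\ that inserting $\overline{i}_{0!}\circ\overline{i}_0^*$ in the middle changes nothing, and this fails to be obvious precisely because $\mathsf{F}_{s}\circ\overline{i}_{0!}(\mathcal{F})$ is supported off the zero stratum. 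At the level of $K_0$ this is exactly the content of the $\Xi^+$-analysis (Lemma \ref{lem:thetasxiplus}, Corollary \ref{cor:thetasr}) used in the proof of Proposition \ref{prop:etastds}; you flag the issue but never supply a categorical analogue, and the rest of your argument only concerns the case $w=s$. Second, the passage from the $K_0$-identity (which you correctly extract from Lemma \ref{lem:jebasecase} and Theorem \ref{thm:semiinfmd}) to a \emph{natural} isomorphism of functors is not actually carried out: specifying isomorphisms object-by-object on the $\Delta_w$ and "checking compatibility using the $\theta_s$-formula" cannot work, since $\theta_s$ is a statement about classes in $K_0$ and says nothing about commutativity of diagrams of actual morphisms; likewise, two functors agreeing on a generating set of objects does not force their adjoints, or the functors themselves, to be naturally isomorphic. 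The adjoint step is also internally flawed: $\overline{i}_{0!}$ is a left adjoint (of $\overline{i}_0^*$) and has no left adjoint in this setting, so the claimed unwinding of the left adjoint of $\overline{i}_0^*\circ\mathsf{F}_s\circ\overline{i}_{0!}$ to $\overline{i}_0^*\circ\mathsf{F}_{s^{-1}}\circ{}^p\overline{i}_{0!}$ is not correct.

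The ingredient you mention in passing --- compatibility of $\overline{i}_{0!}$, $\overline{i}_0^*$ and $\mathsf{F}_w$ with convolution (4.4.1 of \cite{ABBGM}) --- is exactly what the paper exploits, but structurally rather than object-by-object: since the composite $\overline{i}_0^*\circ\mathsf{F}_w\circ\overline{i}_{0!}$ commutes with convolution on the left, one writes $\mathcal{F} = \mathcal{F} * \Delta_e$ and obtains $\overline{i}_0^*\mathsf{F}_w\overline{i}_{0!}(\mathcal{F}) \cong \mathcal{F} * \bigl(\overline{i}_0^*\mathsf{F}_w\overline{i}_{0!}(\Delta_e)\bigr)$, so naturality comes for free and the whole lemma reduces, for all $w$ at once and with no induction on length, to identifying the single object $\overline{i}_0^*\mathsf{F}_w\overline{i}_{0!}(\Delta_e)$ with $\nabla_w(\tfrac{l(w)}{2})$. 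That identification is then done by a $K_0$-computation using $\theta_w$ together with a perversity and irreducibility argument: $[\Delta_w(-\tfrac{l(w)}{2}) * \overline{i}_0^*\mathsf{F}_w\overline{i}_{0!}(\Delta_e)] = [\Delta_e]$, the left-hand side is perverse because $\overline{i}_{0!}(\Delta_w) = \boldsymbol{\nabla}_w$ and $\mathsf{F}_w$ is $t$-exact, and $\Delta_e$ is irreducible, so the convolution is isomorphic to $\Delta_e$. If you want to salvage your approach, you would need to either prove the categorical support statement justifying the insertion of $\overline{i}_{0!}\overline{i}_0^*$ in the reduction, or (better) adopt the module-category reduction to $\Delta_e$, which makes both of your problematic steps unnecessary.
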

\begin{proof}
Recall that by Proposition \ref{prop:f!conv}, for any $\mathcal{F} \in \mathcal{P}$, we can write ${}^pF_{w,!}(\mathcal{F}) = {}^{p}H^0(\mathcal{F} * {\nabla}_w(\tfrac{\ell(w)}{2}))$. By 4.4.1 of \cite{ABBGM}, $!$-convolution on the left (which is the convention for convolution we use in the present paper) with objects of $\mathrm{Perv}_B(G/B)$ commutes with the functors $\overline{i}_{0!}$ and $\overline{i}_0^*$. By the definition of the $\mathsf{F}_w$ in loc.\ cit., it is also clear that they too commute with convolution on the left (on the level of $K_0$, this is simply the fact that the $W$ and $\mathcal{H}$-actions on the periodic Hecke module $M_d$ commute with one another). This means
\begin{align}
\overline{i}_0^*\circ \mathsf{F}_{w} \circ {}^p\overline{i}_{0!}(\mathcal{F}) & = \overline{i}_0^*\circ \mathsf{F}_{w} \circ {}^p\overline{i}_{0!}(\mathcal{F} * \Delta_{e})\\
& = {}^pH^0(\mathcal{F} * (\overline{i}_0^*\circ \mathsf{F}_{w} \circ {}^p\overline{i}_{0!}(\Delta_e))),
\end{align}
so it remains only to show that $\overline{i}_0^*\circ \mathsf{F}_{w} \circ \overline{i}_{0!}(\Delta_{e}) \cong \nabla_{w}(\tfrac{\ell(w)}{2})$. Note that since the action of $\mathsf{F}_w$ on the Grothendieck group agrees with Lusztig's $\theta_w$ operator on $M_d$,
\begin{align*}
[\Delta_w(-\tfrac{\ell(w)}{2}) * (\overline{i}_0^* \circ \mathsf{F}_{w} \circ \overline{i}_{0!}(\Delta_e))] = [\overline{i}_0^* \circ \mathsf{F}_{w} \circ \overline{i}_{0!}(\Delta_w(-\tfrac{\ell(w)}{2}))] = [\Delta_e].
\end{align*}
Since $\Delta_e$ is irreducible and the left-hand side is perverse (since $\overline{i}_{0!}(\Delta_w) = \boldsymbol{\nabla}_w$ and $\mathsf{F}_w$ is $t$-exact), this means $\Delta_w(-\tfrac{\ell(w)}{2}) * (\overline{i}_0^* \circ \mathsf{F}_{w} \circ \overline{i}_{0!}(\Delta_e)) \cong \Delta_e$, which gives a natural isomorphism $\overline{i}_0^* \circ \mathsf{F}_{w} \circ \overline{i}_{0!}(\Delta_e) \cong \nabla_w(\frac{\ell(w)}{2})$, as desired.
\end{proof}

\begin{remark}
We also note that the natural isomorphisms of functors in Lemma \ref{lem:agreeswithconv} give rise to natural transformations ${}^pF_{y,!} \circ {}^pF_{w,!} \to {}^pF_{yw,!}$ which agree with the ones described in the discussion following (\ref{eqn:naturals}). Using, as was used in the proof of Lemma \ref{lem:agreeswithconv}, the compatibility of those functors with convolution, it suffices to check this after applying the functors to the object $\Delta_{e}$. In other words, we must check that the diagram
\[
\begin{tikzcd}
(F_{y,!} \circ F_{w, !})(\Delta_{e}) \arrow[r] \arrow[d] & F_{yw,!}(\Delta_{e}) \arrow[d]\\
\nabla_{w^{-1}} * \nabla_{y^{-1}} \arrow[r] & \nabla_{w^{-1}y^{-1}}
\end{tikzcd}
\]
commutes. This computation is straightforward in the case when $\ell(yw) = \ell(y) + \ell(w)$. To handle all other cases, it suffices to check the case where $y = w = s \in S$, and then argue by induction on $\ell(y) + \ell(w)$. By adjunction, checking this case reduces to checking that the morphism
\begin{align}
    \nabla_{s} = (\overline{i}_{0}^*\circ \mathsf{F}_s \circ \overline{i}_{0!})(\Delta_{e}) \to (\overline{i}_{0}^*\circ \mathsf{F}_s \circ \overline{i}_{0*})(\Delta_{e}) = \Delta_{s}\label{eqn:nabdel}
\end{align}
obtained from the natural transformation $\overline{i}_{0!} \to \overline{i}_{0*}$ agrees with the morphism $\nabla_{s} \to \Delta_{e} \to \Delta_{s}$ as in (\ref{eqn:nabdeldel}). This is clear by the observation that the morphism (\ref{eqn:nabdel}) arises from applying $\overline{i}_{0}^*$ to the morphism
\begin{align}
    (\mathsf{F}_s \circ \overline{i}_{0!})(\Delta_e) = \boldsymbol{\nabla}_s \to \mathsf{IC}_e \to \boldsymbol{\Delta}_s = (\mathsf{F}_s \circ \overline{i}_{0*})(\Delta_e).
\end{align}
\end{remark}

\begin{lemma}\label{lem:zeroifzero}
If $A \in \mathcal{Q}$ satisfies $\mathsf{j}_w^* A = 0$ for all $w \in W$, then $A = 0$.
\end{lemma}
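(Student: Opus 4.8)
The plan is to reduce the statement to a purely combinatorial claim about $M_{d,q}$ which we can verify using the machinery already developed in Section 5. Recall that $A \in \mathcal{Q}$ satisfies $\mathsf{j}_w^* A = \overline{i}_0^*(\mathsf{F}_w(A)) = 0$ for all $w \in W$. The first step is to understand the image of $A$ in the Grothendieck group: write $[A] \in K_0(\mathcal{Q})\otimes \mathbb{C}$, and recall that $\mathcal{Q} = \tilde{\mathcal{P}}_{\leq}/\mathcal{T}$ together with the identification $K_0(\mathcal{P}^{\frac{\infty}{2}})\otimes \mathbb{C} \cong M_{d,q}$ from Theorem \ref{thm:semiinfmd}, under which $K_0(\mathcal{Q})\otimes \mathbb{C}$ becomes a quotient of the span of the $A_{\tilde w}^\sharp$ for $\tilde w \in W_{\leq}$ by the span of those with $\tilde w \in T$. (Note the sign twist $\tilde w = w\cdot\lambda \mapsto A_{w\cdot(-\lambda)}^\sharp$, so up to the involution $i$ this quotient matches $\overline{M}_{d,q}^0$ from Section 5 — indeed the indexing set $W'$ is exactly the orbit of $\{A_w\}_{w\in W}$ under the $*$-action, by definition.) Under this identification, $\overline{i}_0^*$ becomes, on Grothendieck groups, the map $J_e$ from the definition in Section 5 (it kills all $\mathsf{IC}_{\tilde w}$ with $\tilde w\notin\Xi_{\mathrm{fin}}$ and sends $\mathsf{IC}_w \mapsto [\mathrm{IC}_w]$ for $w \in W$, by Proposition \ref{prop:je} combined with the compatibility of the $W$-actions).

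The second step is the key reduction: since $\overline{i}_0^*$ is exact on $\mathcal{Q}$ (Corollary \ref{cor:itsexact} plus Proposition \ref{prop:fwquotient}) and each $\mathsf{F}_w$ is exact, the hypothesis $\mathsf{j}_w^* A = 0$ for all $w$ says in particular that $J_e(\theta_w([A])) = 0$ for all $w \in W$, where I write $\theta_w$ for the operator induced by $\mathsf{F}_w$ on $K_0(\mathcal{Q})\otimes\mathbb{C}$ (which agrees with Lusztig's $\theta_w$ on the periodic module). By Corollary \ref{cor:zero} — applied in the module $\overline{M}_{d,q}^0 \cong K_0(\mathcal{Q})\otimes\mathbb{C}$, which is legitimate since Corollary \ref{cor:zero} is purely a statement about $J_e$, the $\theta_w$, and the fact that $\eta$ is a $\mathbb{C}[W]$-isomorphism, all of which transport to the semi-infinite side via Theorem \ref{thm:semiinfmd} — this forces $[A] = 0$ in $K_0(\mathcal{Q})\otimes \mathbb{C}$.

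The third step upgrades $[A] = 0$ in $K_0$ to $A = 0$ in $\mathcal{Q}$. The composition length of $A$ in $\mathcal{Q}$ is detected by $[A]$: since the classes $[\mathsf{IC}_{\tilde w}]$ for $\tilde w\in W'$ (the simple objects of $\mathcal{Q}$, counted without Tate twist) are linearly independent in $K_0(\mathcal{Q})\otimes\mathbb{C}$ — this follows because their images $\overline{A}_{w\cdot(-\lambda)}^\sharp$ are linearly independent in $\overline{M}_{d,q}^0$, e.g. by the dimension count $\sum_{w\in W}|P(w)\backslash W|$ in Lemma \ref{lem:pi} matching the number of such objects — vanishing of $[A]$ in $K_0(\mathcal{Q})\otimes\mathbb{C}$ (hence already over $\mathbb{Z}[v,v^{-1}]$ after clearing the specialization, or directly since $\mathcal{Q}$ has finitely many simples) forces $A$ to have no composition factors, i.e. $A = 0$.

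I expect the main obstacle to be the second step — carefully justifying that $\overline{i}_0^*$ on $\mathcal{Q}$ really does induce $J_e$ (up to the involution $i$) on Grothendieck groups, and that Corollary \ref{cor:zero} genuinely transports across Theorem \ref{thm:semiinfmd}. The subtlety is bookkeeping: one must match the strata-labelling conventions (the $\leq$ versus $\geq$ order reversal and the sign on coweights flagged in Section \ref{sec:semiinfprelim}), check that $\overline{i}_0^*(\mathsf{IC}_{\tilde w})$ is $[\mathrm{IC}_w]$ when $\tilde w \in W' \cap \Xi_{\mathrm{fin}}$ and $0$ otherwise (the argument is the same as in the proof of Proposition \ref{prop:je}: an alcove not of the form $A_{w'}$ cannot appear in $A_{\tilde w}^\sharp$ with nonzero coefficient when $\tilde w$ is obtained from a fundamental alcove by a nontrivial $*$-translation), and confirm that the induced $W$-action on $K_0(\mathcal{Q})$ is Lusztig's $\theta$-action via the proposition in Section 6.18. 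Once these identifications are in place, the argument is formal.
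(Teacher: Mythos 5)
Your proposal is correct and takes essentially the same route as the paper: identify $K_0(\mathcal{Q})\otimes\mathbb{C}$ with $\overline{M}_{d,q}^0$ via Theorem \ref{thm:semiinfmd}, and invoke the fact from Section \ref{sec:thebijection} (Corollary \ref{cor:zero}, transported through $\pi$) that any nonzero element of this module is detected by $J_e\circ\theta_w$ for some $w \in W$. The paper simply compresses your bookkeeping in steps 1 and 3 --- matching $\overline{i}_0^*$ with $J_e$ on Grothendieck groups and upgrading $[A]=0$ to $A=0$ via linear independence of the finitely many simple classes --- into the phrase ``equivalent to the desired statement under this identification,'' so your write-up only makes explicit what the paper leaves implicit.
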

\begin{proof}
Note that $K_0(\mathcal{Q}) = \overline{M}_{d,q}^0$. Our discussion in Section \ref{sec:thebijection} shows that for any nonzero $B \in \overline{M}_{d,q}^0$, there exists at least one $w \in W$ for which $J_e(\theta_w(B)) \neq 0$ which
is equivalent to the desired statement under this identification.
\end{proof}

As a result, all of the conditions for Theorem \ref{thm:recognition} are satisfied, yielding the following as a corollary.

\begin{prop}\label{prop:equivtosubquot}
    There is an equivalence of categories between $\mathcal{A}_{B}$ and $\mathcal{Q}^\circ$ (resp. $\mathcal{A}_{\mathcal{P}}$ and $\mathcal{Q}$). This categorifies the morphism $\eta' : K_0(\mathcal{A}_{\mathcal{P}}) \otimes \mathbb{C} \to \overline{M}_{d,q}^0$.
\end{prop}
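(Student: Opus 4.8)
The plan is to verify that the category $\mathcal{Q}$ (resp. $\mathcal{Q}^\circ$) equipped with the functors $\{\mathsf{j}_w^*\}_{w \in W}$ and their left adjoints $\{\pi \circ \mathsf{j}_{w!}\}_{w \in W}$ satisfies all of the hypotheses of Polishchuk's recognition theorem (Theorem \ref{thm:recognition}), and then to identify the induced equivalence on Grothendieck groups. The hypotheses are: (i) $\mathcal{Q}$ is abelian; (ii) each $\mathsf{j}_w^*$ is exact; (iii) each $\mathsf{j}_w^*$ has a left adjoint; (iv) $\mathsf{j}_w^* \mathsf{j}_{v!} = F_{wv^{-1},!}$ as endofunctors of $\mathcal{P}$; and (v) an object $A$ with $\mathsf{j}_w^*A = 0$ for all $w$ is zero.

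First I would observe that $\mathcal{Q}$ is abelian since it is a Serre quotient of the abelian category $\tilde{\mathcal{P}}_{\leq}$; condition (i) is immediate. Condition (ii) is exactly the content of Corollary \ref{cor:itsexact} together with exactness of the $\mathsf{F}_w$ (which come from an equivalence with a category of quantum group modules and are in particular $t$-exact), bearing in mind that $\mathsf{j}_w^* = \overline{i}_0^* \circ \mathsf{F}_w$; one also needs Proposition \ref{prop:fwquotient} to descend $\overline{i}_0^*$ to the quotient $\mathcal{Q}$, which is already established. Condition (iii) is the content of the preceding Proposition, which exhibits $\pi \circ \mathsf{F}_{w^{-1}} \circ {}^p\overline{i}_{0!}$ as the left adjoint of $\mathsf{j}_w^*$. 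Condition (iv) follows by combining the computation $\mathsf{j}_w^*\mathsf{j}_{v!} = \overline{i}_0^* \circ \mathsf{F}_{wv^{-1}} \circ \overline{i}_{0!}$ with Lemma \ref{lem:agreeswithconv}, which identifies this composite with $F_{wv^{-1},!}$. Condition (v) is Lemma \ref{lem:zeroifzero}. Thus all hypotheses hold, and Theorem \ref{thm:recognition} produces an equivalence $\mathcal{Q} \cong \mathcal{A}_B$; the mixed statement $\mathcal{Q} \cong \mathcal{A}_{\mathcal{P}}$ follows by the same argument applied verbatim to the mixed categories, using the half-Tate twist on $\mathcal{P}^{\frac{\infty}{2}}$ noted in Section \ref{sec:semiinfprelim} and the fact that the functors $\overline{i}_0^*$, $\overline{i}_{0!}$, $\mathsf{F}_w$ all respect the mixed structure.

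It then remains to check that this equivalence categorifies $\eta'$. The equivalence from Theorem \ref{thm:recognition} is characterized by the property that it intertwines $j_w^*$ on $\mathcal{A}_{\mathcal{P}}$ with $\mathsf{j}_w^* = \overline{i}_0^* \circ \mathsf{F}_w$ on $\mathcal{Q}$; on Grothendieck groups, under the identification $K_0(\mathcal{Q}) \otimes \mathbb{C} = \overline{M}_{d,q}^0$ coming from Theorem \ref{thm:semiinfmd} (restricted and quotiented appropriately), the functor $\mathsf{F}_w$ acts by $\theta_w$ and $\overline{i}_0^*$ acts by $J_e$ (this last identification being exactly what is packaged in Proposition \ref{prop:je} via the commuting triangle \eqref{jediagram}). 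So the induced map $K_0(\mathcal{A}_{\mathcal{P}}) \otimes \mathbb{C} \to K_0(\mathcal{Q})\otimes \mathbb{C} = \overline{M}_{d,q}^0$ sends $[j_{y!*}(\mathrm{IC}_w)]$ to the unique class $C$ with $J_e(\theta_z(C)) = j_z^*[j_{y!*}(\mathrm{IC}_w)]$ for all $z$, which by Corollary \ref{cor:zero} and the defining formula of $\eta'$ is precisely $\theta_{y^{-1}}(\overline{A}_w^\sharp) = \eta'([j_{y!*}(\mathrm{IC}_w)])$.

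The main obstacle I anticipate is not any single step but the careful bookkeeping of which combinatorial orderings and sign conventions are in force — the excerpt flags repeatedly that $\leq$ and $\geq$ are swapped between the $\mathrm{Perv}(\mathcal{F}\ell^{\frac{\infty}{2}})$ side and the $M_d$ side, that the labelling of $\boldsymbol{\Delta}$ versus $\boldsymbol{\nabla}$ is nonstandard, and that the involution $i : w\cdot\lambda \mapsto w\cdot(-\lambda)$ intervenes in matching $\mathsf{F}_s$ with Lusztig's $*$-action. Getting the equivalence to genuinely categorify $\eta'$ rather than some twisted variant requires tracking that $\overline{i}_0^*$ sends the semi-infinite standard $\boldsymbol{\nabla}_w$ to $\Delta_w$ (as asserted in the cited proposition from \cite{ABBGM}) compatibly with $J_e(A_w) = [\Delta_w]$, and that the definition of $W'$ as the $\{\mathsf{F}_w\}$-orbit of $\{i(w)\}_{w \in W}$ matches the spanning set of $\overline{M}_{d,q}^0$ identified in Lemma \ref{lem:pi}; once these identifications are pinned down the argument is essentially formal.
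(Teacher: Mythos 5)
Your proposal follows essentially the same route as the paper: it verifies the hypotheses of Polishchuk's recognition theorem (Theorem \ref{thm:recognition}) for $\mathcal{Q}$ with the functors $\mathsf{j}_w^* = \overline{i}_0^*\circ\mathsf{F}_w$ and their left adjoints, citing exactly the ingredients the paper uses (Corollary \ref{cor:itsexact}, Proposition \ref{prop:fwquotient}, the adjoint construction, Lemma \ref{lem:agreeswithconv}, Lemma \ref{lem:zeroifzero}), and then identifies the induced map on Grothendieck groups with $\eta'$ via $J_e$, $\theta_w$, and Corollary \ref{cor:zero}. This matches the paper's argument, with your $K_0$-level verification of the categorification claim being a slightly more explicit spelling-out of what the paper leaves implicit.
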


Now we use the following result to upgrade this result and obtain a fully faithful functor from $\mathcal{A}_B$ to $\mathcal{P}^{\frac{\infty}{2}}$.

\begin{prop}\label{prop:piadjoints}
    The Serre quotient functor $\pi : \tilde{\mathcal{P}}_{\leq} \to \mathcal{Q}$ admits a right adjoint $\tilde{\sigma}_*$ and a left adjoint $\tilde{\sigma}_{!}$, each of which is fully faithful.
\end{prop}
\begin{proof}
    Using Proposition \ref{prop:equivtosubquot}, we can let $\phi : \mathcal{Q} \to \mathcal{A}_B$ be the equivalence of categories described therein. To complete the proof of this proposition, it is enough to show that $\phi\circ \pi$ admits a right adjoint $\sigma_*$ and a left adjoint $\sigma_{!}$.

    First, we claim that for any $w \in W$, $j_{w}^*\phi\pi \mathsf{F}_{w^{-1}} = \overline{i}_{0}^*$ as functors from $\tilde{\mathcal{P}}_{\leq}^\circ \to \mathcal{P}^\circ$. Indeed, by the definition of $\phi$ we have $j_w^*\phi = \overline{i}_0^*\circ \mathsf{F}_{w}$, and so the result follows from Proposition \ref{prop:fwquotient} which says that $\mathsf{F}_w$ descends to a map on $\mathcal{Q}$ and therefore commutes with $\pi$. 
    
    By the dual result to Lemma 0793 in \cite{stacks-project}, it is enough to show that there exists a subset $\mathcal{J} \subset \mathrm{Ob}(\mathcal{A}_{B})$ such that
    \begin{enumerate}
        \item \label{item:subobj} Every object of $\mathcal{A}_{B}$ is a subobject of an object in $\mathcal{J}$,
        \item \label{item:choice} For every $J \in \mathcal{J}$ there exists a choice of object $\sigma_*(J)$ in $\tilde{\mathcal{P}}_{\leq}$ such that $\mathrm{Hom}_{\tilde{\mathcal{P}}_{\leq}}(X, \sigma_*(J)) \cong \mathrm{Hom}_{\mathcal{A}_B}(\phi(X), J)$ functorially for every $X$ in $\tilde{\mathcal{P}}_{\leq}$.
    \end{enumerate}
    Let $\mathcal{J} \subset \mathrm{Ob}(\mathcal{A}_B)$ be the collection of objects of the form $j_{w*}(\mathcal{F})$ for $w \in W$, $\mathcal{F} \in \mathcal{P}^\circ$. First note that (\ref{item:subobj}) holds because for any $A \in \mathcal{A}_B$, the adjunctions $(j_{w}^*, j_{w*})$ give a morphism
    \begin{align}
        A \to \bigoplus_{w \in W} j_{w_*}j_w^*A
    \end{align}
    which is easily seen to be an inclusion.

    Now to meet condition (\ref{item:choice}), for any object of the form $j_{w*}(\mathcal{F})$, we let $\sigma_*(j_{w*}(\mathcal{F})) = \mathsf{F}_{w^{-1}} ({}^p\overline{i}_{0*}(\mathcal{F}))$. Then for any $X \in \tilde{\mathcal{P}}_{\leq}$, 
    \begin{align*}
        \mathrm{Hom}_{\tilde{\mathcal{P}}_{\leq}}(X, \sigma_*(j_{w*}(\mathcal{F}))) & = \mathrm{Hom}_{\tilde{\mathcal{P}}_{\leq}}(X, \mathsf{F}_{w^{-1}}({}^p\overline{i}_{0*}(\mathcal{F})))\\
        & = \mathrm{Hom}_{\tilde{\mathcal{P}}_{\leq}}(\mathsf{F}_{w}(X), {}^p\overline{i}_{0*}(\mathcal{F}))\\
        & = \mathrm{Hom}_{\mathcal{P}}(\overline{i}_0^*(\mathsf{F}_{w}(X)), \mathcal{F})\\
        & = \mathrm{Hom}_{\mathcal{P}}(j_w^*\phi \pi(X), \mathcal{F})\\
        & = \mathrm{Hom}_{\mathcal{A}_B}((\phi\circ \pi)(X), j_{w*}(\mathcal{F}))
    \end{align*}
    with the step from the third to the fourth line using the fact that $j_w^*\phi\pi\mathsf{F}_{w^{-1}} = \overline{i}_0^*$ as functors from $\mathcal{P}_{\leq 0}$ to $\mathcal{P}$, with each of the isomorphisms in the above being functorial.

    This means the assignment $\sigma_*$ above extends to a functor $\sigma_* : \mathcal{A}_B \to \tilde{\mathcal{P}}_{\leq}$ which is right-adjoint to the Serre quotient functor $\pi$. Since the quotient functor $\pi$ is a localization functor and $\sigma_*$ is its right adjoint, we then have that $\sigma$ is necessarily fully faithful.

    Finally, we construct a left adjoint $\sigma_{!}$ to $\phi \pi$. By the same result as used above, we can repeat the above argument with quotients rather than subobjects, using the fact that for any $A \in \mathcal{A}_B$, there is a surjective morphism
    \begin{align}
        \oplus_{w \in W} j_{w!}j_{w}^*A \to A.
    \end{align}
    We then define $\sigma_{!}(j_{w!}(\mathcal{F})) = \mathsf{F}_{w^{-1}}({}^p\overline{i}_{0!}(\mathcal{F}))$. Now to meet the dual version of condition (\ref{item:choice}) above we proceed with the same argument but dualized, and conclude that indeed $\sigma_{!}$ is left-adjoint to $\phi\pi$.
\end{proof}

\begin{defn}
    Let $\tilde{\mathcal{P}}$ be the image of the functor $\tilde{\sigma}_!$, which is a full subcategory of $\mathcal{P}^\frac{\infty}{2}_I$ by Proposition \ref{prop:piadjoints}.
\end{defn}

As a corollary to Proposition \ref{prop:piadjoints}, we get the following elaboration on Theorem \ref{thm:introcategorification}. Note that the fact that the equivalence of categories induced by the left derived functor to the fully faithful right-exact functor $\sigma_!$ described in Proposition \ref{prop:piadjoints} categorifies the isomorphism $\eta$ follows from the fact that $\sigma_!(j_{e!}(\Delta_w)) = \boldsymbol{\nabla}_w$. Its compatibility with convolution follows from the fact that ${}^p\overline{i}_{0!}$ commutes with $!$-convolution by 4.4.1 of \cite{ABBGM}.

\begin{theorem}
There exists an equivalence of categories between $\mathcal{A}_{B}$ and $\tilde{\mathcal{P}}^\circ$ (resp. $\kl$ and $\tilde{\mathcal{P}}$). This categorifies the isomorphism $\eta : K_0(\kl)\otimes \mathbb{C} \to M_d^0$, and respects convolution on the left by the standard and costandard objects $\Delta_w$ and $\nabla_w$ of $\mathrm{Perv}_{B,\mathrm{m}}(G/B)$ for all $w \in W$. The $W$-action on $\kl$ by functors $\mathcal{F}_w$ is mapped to the $W$-action on $\tilde{\mathcal{P}}$ by the functors $\mathsf{F}_w$.
\end{theorem}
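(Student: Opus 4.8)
The plan is to build the equivalence by composing the two equivalences already established, and then to check the three compatibility assertions (with $\eta$, with convolution, and with the $W$-action) one at a time; the bulk of the work has already been done in Propositions \ref{prop:equivtosubquot} and \ref{prop:piadjoints}, so what remains is largely organizational.

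\textbf{Assembling the equivalence.} Proposition \ref{prop:equivtosubquot} supplies an equivalence $\phi\colon \mathcal{Q}\xrightarrow{\sim}\mathcal{A}_{\mathcal{P}}$ (and its non-mixed analogue $\mathcal{Q}^\circ\xrightarrow{\sim}\mathcal{A}_B$), while Proposition \ref{prop:piadjoints} shows $\tilde\sigma\colon\mathcal{Q}\to\tilde{\mathcal{P}}_{\leq}$ is fully faithful and exact; since $\tilde{\mathcal{P}}$ is \emph{defined} as the image of $\tilde\sigma$, the latter corestricts to an equivalence $\mathcal{Q}\xrightarrow{\sim}\tilde{\mathcal{P}}$ (and $\mathcal{Q}^\circ\xrightarrow{\sim}\tilde{\mathcal{P}}^\circ$). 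Composing, I take the desired equivalence to be $\sigma:=\tilde\sigma\circ\phi^{-1}\colon\mathcal{A}_{\mathcal{P}}\xrightarrow{\sim}\tilde{\mathcal{P}}$ (and likewise for $\mathcal{A}_B$). This is precisely the functor $\sigma$ built in the proof of Proposition \ref{prop:piadjoints}, for which the identities $\sigma(j_{w*}(\mathcal{F}))=\mathsf{F}_{w^{-1}}(\overline{i}_{0*}(\mathcal{F}))$ and, in particular, $\sigma(j_{e!}(\Delta_w))=\boldsymbol{\nabla}_w$ were recorded.

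\textbf{Categorification of $\eta$.} On Grothendieck groups $\phi^{-1}$ induces $\eta'$ by Proposition \ref{prop:equivtosubquot}, Lemma \ref{lem:pi} identifies $\pi$ as an isomorphism $M_{d,q}^0\xrightarrow{\sim}\overline{M}_{d,q}^0$, and the inclusion $\tilde{\mathcal{P}}\hookrightarrow\mathcal{P}^{\frac{\infty}{2}}$ followed by the isomorphism of Theorem \ref{thm:semiinfmd} realizes $K_0(\tilde{\mathcal{P}})\otimes\mathbb{C}$ as a submodule of $M_{d,q}$. I claim the composite $K_0(\mathcal{A}_{\mathcal{P}})\otimes\mathbb{C}\to M_{d,q}$ coincides with $\eta$ (and hence lands in $M_{d,q}^0$): both are morphisms of $\mathcal{H}_q\otimes\mathbb{C}[W]$-modules (by the compatibilities established in the next paragraph), and they agree on the classes $[j_{e!}(\Delta_w)]$, since $\sigma(j_{e!}(\Delta_w))=\boldsymbol{\nabla}_w$, Theorem \ref{thm:semiinfmd} sends $[\boldsymbol{\nabla}_w]\mapsto A_w$, and Theorem \ref{thm:klophm} gives $\eta([j_{e!}(\Delta_w)])=A_w$. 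Since $[j_{z!}(\Delta_w)]=\mathcal{F}_{z^{-1}}\cdot[j_{e!}(\Delta_w)]$ on $K_0$ and these span $K_0(\mathcal{A}_{\mathcal{P}})\otimes\mathbb{C}$ by Lemma \ref{lem:goodrep}, the classes $\{[j_{e!}(\Delta_w)]\}_{w\in W}$ generate as a $\mathbb{C}[W]$-module, so the two maps coincide.

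\textbf{Compatibility with convolution and with the $W$-action.} For convolution: each of $\overline{i}_0^*$, $\overline{i}_{0!}$, $\overline{i}_{0*}$ commutes with left convolution by objects of $\mathrm{Perv}_{B,\mathrm{m}}(G/B)$ by 4.4.1 of \cite{ABBGM} (as already used in Lemma \ref{lem:agreeswithconv} and the proof of Proposition \ref{prop:piadjoints}), and the $\mathsf{F}_w$ do so by their construction in loc.\ cit.; as $\phi$, $\tilde\sigma$, hence $\sigma$, are assembled from these functors, $\sigma$ commutes with left convolution by every $\Delta_w$ and $\nabla_w$, recovering the $\mathcal{H}_q$-equivariance on $K_0$ (and in particular showing left convolution preserves $\tilde{\mathcal{P}}=\sigma(\mathcal{A}_{\mathcal{P}})$, so the $\mathcal{H}_q$-action on $\tilde{\mathcal{P}}$ is well defined). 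For the $W$-action: the equivalence $\phi$ of Theorem \ref{thm:recognition} is the one matching $j_v^*$ with $\mathsf{j}_v^*=\overline{i}_0^*\circ\mathsf{F}_v$, so $j_v^*(\phi\circ\mathsf{F}_w)\cong\overline{i}_0^*\circ\mathsf{F}_{vw}\cong j_{vw}^*\phi\cong j_v^*(\mathcal{F}_w\circ\phi)$ functorially for every $v\in W$; since an object of $\mathcal{A}_B$ (resp.\ $\mathcal{A}_{\mathcal{P}}$) compatible with the gluing data is determined by its system of images under $\{j_v^*\}_{v\in W}$, this yields $\phi\circ\mathsf{F}_w\cong\mathcal{F}_w\circ\phi$. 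On the other hand $\pi\circ\mathsf{F}_w\cong\mathsf{F}_w\circ\pi$ by Proposition \ref{prop:fwquotient}; passing to right adjoints and using that each $\mathsf{F}_w$ is an equivalence with quasi-inverse $\mathsf{F}_{w^{-1}}$ gives $\tilde\sigma\circ\mathsf{F}_w\cong\mathsf{F}_w\circ\tilde\sigma$. Combining, $\sigma\circ\mathcal{F}_w=\tilde\sigma\circ\phi^{-1}\circ\mathcal{F}_w\cong\tilde\sigma\circ\mathsf{F}_w\circ\phi^{-1}\cong\mathsf{F}_w\circ\tilde\sigma\circ\phi^{-1}=\mathsf{F}_w\circ\sigma$.

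\textbf{Main obstacle.} There is no substantive obstruction left: all the hard analysis lives in Propositions \ref{prop:equivtosubquot} and \ref{prop:piadjoints}. The only care required is to verify that the identifications hold as natural isomorphisms of functors rather than merely on objects or on $K_0$. The subtlest point is the $W$-equivariance, where one must make sure the uniqueness-of-adjoints argument is applied to the correct commuting composites and that a functor into $\mathcal{A}_B$ is indeed pinned down (up to natural isomorphism) by its composites with all of the $j_v^*$ together with the compatibility with the gluing data; a secondary point is to confirm that both $K_0$-maps appearing in the $\eta$-comparison are $\mathbb{C}[W]$-linear before invoking Lemma \ref{lem:goodrep} to reduce to the generators $[j_{e!}(\Delta_w)]$.
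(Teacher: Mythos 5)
Your proposal is correct and follows essentially the same route as the paper, which deduces the theorem directly from Propositions \ref{prop:equivtosubquot} and \ref{prop:piadjoints} by taking the equivalence to be $\tilde{\sigma}$ composed with (the inverse of) the equivalence onto $\mathcal{Q}$, with the categorification of $\eta$ coming from $\sigma(j_{e!}(\Delta_w)) = \boldsymbol{\nabla}_w$ together with Theorems \ref{thm:semiinfmd} and \ref{thm:klophm} and the generation statement of Lemma \ref{lem:goodrep}. You simply spell out in more detail the convolution and $W$-equivariance checks (via 4.4.1 of \cite{ABBGM}, Proposition \ref{prop:fwquotient}, and uniqueness of adjoints) that the paper leaves implicit in its one-line corollary argument.
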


\bibliographystyle{alpha}
\bibliography{bibl}

\clearpage

\appendix

\section{Illustrations for $\overline{M}_d^0$ and $K_0(\mathcal{A}_B)$ in rank 2}

Expanding on Figure \ref{fig:hexagons} but suppressing the labels, we now provide illustrations in rank $2$ for $\overline{M}_d^0$ and $K_0(\mathcal{A}_B)$. More precisely, in the figures below, the shaded alcoves are the alcoves $A \in \Xi$ for which $\overline{A}^\sharp \in \overline{M}_d^0$. By Theorem \ref{thm:klophm}, the shaded alcoves index a basis for $K_0(\mathcal{A}_B)$. The colors indicate the orbits of the $\overline{A}^\sharp$ in $\overline{M}_d^0$ under the operators $\{\theta_w\}_{w \in W}$, which correspond to the orbits of simple objects in $\mathcal{A}_B$ under the operators $\{\mathcal{F}_w\}_{w \in W}$. As in Figure \ref{fig:hexagons}, the alcove $A_e$ is always pictured in red, and the fundamental alcoves $\{A_w\}_{w \in W}$ are outlined with a bold stroke. (To draw these pictures, we used Proposition \ref{prop:actionsharps} which provides a description, in terms of alcoves and reflections along root hyperplanes, of the orbits of $\{A_w^\sharp\}_{w \in W}$ under the operators $\{\theta_w\}_{w \in W}$.)

By Theorem \ref{thm:klophm}, the number of shaded alcoves in each example is the number of simple objects in $\mathcal{A}_B$. We hope that these rank $2$ illustrations demonstrate the underlying alcove geometry behind the formula in Corollary \ref{prop:counting}.

\begin{figure}[ht]
    \centering
    \caption{Type $\mathbf{A}_2$}
    \begin{tikzpicture}[scale=1.3]
    \path (0, {5.2*sqrt(3)}) -- (5, {5.2*sqrt(3)});
    
    \draw [fill=color1] (2, {2*sqrt(3)}) -- (4, {2*sqrt(3)}) -- (3, {3*sqrt(3)});
    \draw [fill=color2] (1, {3*sqrt(3)}) -- (3, {3*sqrt(3)}) -- (2, {2*sqrt(3)});
    \draw [fill=color3] (0, {2*sqrt(3)}) -- (2, {2*sqrt(3)}) -- (1, {3*sqrt(3)});
    \draw [fill=color4] (0, {2*sqrt(3)}) -- (2, {2*sqrt(3)}) -- (1, {1*sqrt(3)});
    \draw [fill=color5] (1, {1*sqrt(3)}) -- (3, {1*sqrt(3)}) -- (2, {2*sqrt(3)});
    \draw [fill=color6] (2, {2*sqrt(3)}) -- (4, {2*sqrt(3)}) -- (3, {1*sqrt(3)});
    \draw [fill=color4] (0, {4*sqrt(3)}) -- (2, {4*sqrt(3)}) -- (1, {3*sqrt(3)});
    \draw [fill=color5] (1, {3*sqrt(3)}) -- (3, {3*sqrt(3)}) -- (2, {4*sqrt(3)});
    \draw [fill=color6] (2, {4*sqrt(3)}) -- (4, {4*sqrt(3)}) -- (3, {3*sqrt(3)});
    \draw [fill=color3] (3, {3*sqrt(3)}) -- (5, {3*sqrt(3)}) -- (4, {4*sqrt(3)});
    \draw [fill=color2] (4, {4*sqrt(3)}) -- (6, {4*sqrt(3)}) -- (5, {3*sqrt(3)});
    \draw [fill=color4] (6, {4*sqrt(3)}) -- (8, {4*sqrt(3)}) -- (7, {3*sqrt(3)});
    \draw [fill=color4] (6, {2*sqrt(3)}) -- (8, {2*sqrt(3)}) -- (7, {1*sqrt(3)});
    \draw [fill=color4] (3, {5*sqrt(3)}) -- (5, {5*sqrt(3)}) -- (4, {4*sqrt(3)});
    \draw [fill=color4] (3, {1*sqrt(3)}) -- (5, {1*sqrt(3)}) -- (4, {0*sqrt(3)});
    \draw [fill=color2] (4, {2*sqrt(3)}) -- (6, {2*sqrt(3)}) -- (5, {1*sqrt(3)});
    \draw [fill=color6] (5, {3*sqrt(3)}) -- (7, {3*sqrt(3)}) -- (6, {2*sqrt(3)});
    \draw [fill=color3] (3, {1*sqrt(3)}) -- (5, {1*sqrt(3)}) -- (4, {2*sqrt(3)});
    \draw [fill=color5] (4, {2*sqrt(3)}) -- (6, {2*sqrt(3)}) -- (5, {3*sqrt(3)});
    
    \draw (0,0) -- (8,0);
    \draw (0,{sqrt(3)}) -- (8,{sqrt(3)});
    \draw (0,{2*sqrt(3)}) -- (8,{2*sqrt(3)});
    \draw (0,{3*sqrt(3)}) -- (8,{3*sqrt(3)});
    \draw (0,{4*sqrt(3)}) -- (8,{4*sqrt(3)});
    \draw (0,{5*sqrt(3)}) -- (8,{5*sqrt(3)});
    \draw (0,{4*sqrt(3)}) -- (1, {5*sqrt(3)});
    \draw (0,{2*sqrt(3)}) -- (3, {5*sqrt(3)});
    \draw (0,0) -- (5, {5*sqrt(3)});
    \draw (2,0) -- (7, {5*sqrt(3)});
    \draw (4,0) -- (8, {4*sqrt(3)});
    \draw (6,0) -- (8, {2*sqrt(3)});
    \draw (0, {2*sqrt(3)}) -- (2, 0);
    \draw (0, {4*sqrt(3)}) -- (4, 0);
    \draw (1, {5*sqrt(3)}) -- (6, 0);
    \draw (3, {5*sqrt(3)}) -- (8, 0);
    \draw (5, {5*sqrt(3)}) -- (8, {2*sqrt(3)});
    \draw (7, {5*sqrt(3)}) -- (8, {4*sqrt(3)});
    \draw [line width=0.5mm] (1, {sqrt(3)}) -- (3, {sqrt(3)});
    \draw [line width=0.5mm] (1, {3*sqrt(3)}) -- (3, {3*sqrt(3)});
    \draw [line width=0.5mm] (3, {3*sqrt(3)}) -- (4, {2*sqrt(3)});
    \draw [line width=0.5mm] (0, {2*sqrt(3)}) -- (1, {3*sqrt(3)});
    \draw [line width=0.5mm] (3, {sqrt(3)}) -- (4, {2*sqrt(3)});
    \draw [line width=0.5mm] (0, {2*sqrt(3)}) -- (1, {sqrt(3)});
    \end{tikzpicture}
\end{figure}
\begin{figure}[ht]
\caption{Type $\mathbf{B}_2$}
    \centering
    
    \begin{tikzpicture}[scale=0.88]
    \path (0, {10.5}) -- (10, {10.5});
    \draw (0,0) -- (10, 0);
    \draw (0,2) -- (10, 2);
    \draw (0,4) -- (10, 4);
    \draw (0,6) -- (10, 6);
    \draw (0,8) -- (10, 8);
    \draw (0,10) -- (10, 10);
    \draw (0,0) -- (0,10);
    \draw (2,0) -- (2,10);
    \draw (4,0) -- (4,10);
    \draw (6,0) -- (6,10);
    \draw (8,0) -- (8,10);
    \draw (10,0) -- (10,10);
    \draw (0,8) -- (2,10);
    \draw (0,6) -- (4,10);
    \draw (0,4) -- (6,10);
    \draw (0,2) -- (8,10);
    \draw (0,0) -- (10,10);
    \draw (2,0) -- (10,8);
    \draw (4,0) -- (10,6);
    \draw (6,0) -- (10,4);
    \draw (8,0) -- (10,2);
    \draw (0,2) -- (2,0);
    \draw (0,4) -- (4,0);
    \draw (0,6) -- (6,0);
    \draw (0,8) -- (8,0);
    \draw (0,10) -- (10,0);
    \draw (2,10) -- (10,2);
    \draw (4,10) -- (10,4);
    \draw (6,10) -- (10,6);
    \draw (8,10) -- (10,8);
    \draw [fill=color1] (2, 4) -- (3, 5) -- (4,4) -- (2, 4);
    \draw [fill=color2] (2,4) -- (2,6) -- (3,5) -- (2,4);
    \draw [fill=color5] (1,5) -- (2,6) -- (2,4) -- (1, 5);
    \draw [fill=color4] (0,4) -- (1, 5) -- (2,4) -- (0,4);
    \draw [fill=color2] (4,2) -- (4,4) -- (5,3) -- (4,2);
    \draw [fill=color5] (3,3) -- (4,4) -- (4,2) -- (3, 3);
    \draw [fill=color4] (2,2) -- (3, 3) -- (4,2) -- (2,2);
    \draw [fill=color2] (4,6) -- (4,8) -- (5,7) -- (4,6);
    \draw [fill=color5] (3,7) -- (4,8) -- (4,6) -- (3, 7);
    \draw [fill=color4] (2,6) -- (3, 7) -- (4,6) -- (2,6);
    \draw [fill=color2] (6,4) -- (6,6) -- (7,5) -- (6,4);
    \draw [fill=color5] (5,5) -- (6,6) -- (6,4) -- (5, 5);
    \draw [fill=color4] (4,4) -- (5, 5) -- (6,4) -- (4,4);
    \draw [fill=color3] (1,3) -- (2,2) -- (2,4) -- (1,3);
    \draw [fill=color8] (2,2) -- (2,4) -- (3,3) -- (2,2);
    \draw [fill=color7] (2,4) -- (4,4) -- (3,3) -- (2,4);
    \draw [fill=color3] (1,7) -- (2,6) -- (2,8) -- (1,7);
    \draw [fill=color8] (2,6) -- (2,8) -- (3,7) -- (2,6);
    \draw [fill=color7] (2,8) -- (4,8) -- (3,7) -- (2,8);
    \draw [fill=color3] (5,3) -- (6,2) -- (6,4) -- (5,3);
    \draw [fill=color8] (6,2) -- (6,4) -- (7,3) -- (6,2);
    \draw [fill=color7] (6,4) -- (8,4) -- (7,3) -- (6,4);
    \draw [fill=color3] (5,7) -- (6,6) -- (6,8) -- (5,7);
    \draw [fill=color8] (6,6) -- (6,8) -- (7,7) -- (6,6);
    \draw [fill=color7] (6,8) -- (8,8) -- (7,7) -- (6,8);
    \draw [fill=color6] (0,4) -- (2,4) -- (1,3) -- (0,4);
    \draw [fill=color6] (0,8) -- (2,8) -- (1,7) -- (0,8);
    \draw [fill=color6] (2,10) -- (4,10) -- (3,9) -- (2,10);
    \draw [fill=color6] (6,10) -- (8,10) -- (7,9) -- (6,10);
    \draw [fill=color6] (2,2) -- (4,2) -- (3,1) -- (2,2);
    \draw [fill=color6] (6,2) -- (8,2) -- (7,1) -- (6,2);
    \draw [fill=color6] (8,4) -- (10,4) -- (9,3) -- (8,4);
    \draw [fill=color6] (8,8) -- (10,8) -- (9,7) -- (8,8);
    \draw [line width=0.5mm] (0,4) -- (2,2) -- (4, 4) -- (2, 6) -- (0,4);
    \end{tikzpicture}
\end{figure}

\begin{figure}[ht]
    \centering
    \caption{Type $\mathbf{G}_2$}
    \begin{tikzpicture}[scale=0.744]
    
    \path (0, {(6.2)*sqrt(3)}) -- (1, {(6.2)*sqrt(3)});
    \draw [fill=color1] (2,{2*sqrt(3)}) -- (3,{2*sqrt(3)}) -- (3, {(2 + 1/3)*sqrt(3)});
    \draw [fill=color12] (2,{2*sqrt(3)}) -- (3,{(2 + 1/3)*sqrt(3)}) -- (2.5, {(2.5)*sqrt(3)});
    \draw [fill=color2] (2,{2*sqrt(3)}) -- (2.5, {(2.5)*sqrt(3)}) -- (2,{(2 + 2/3)*sqrt(3)});
    \draw [fill=color5] (2,{2*sqrt(3)}) -- (1.5,{(2 + 1/2)*sqrt(3)}) -- (2, {(2 + 2/3)*sqrt(3)});
    \draw [fill=color6] (2,{2*sqrt(3)}) -- (1.5,{(2 + 1/2)*sqrt(3)}) -- (1, {(2 + 1/3)*sqrt(3)});
    \draw [fill=color4] (2,{2*sqrt(3)}) -- (1,{(2 + 0/2)*sqrt(3)}) -- (1, {(2 + 1/3)*sqrt(3)});
    
    \draw [fill=color10] (2,{2*sqrt(3)}) -- (1,{(2)*sqrt(3)}) -- (1, {(2 - 1/3)*sqrt(3)});
    
    \draw [fill=color11] (2,{2*sqrt(3)}) -- (1.5,{(1.5)*sqrt(3)}) -- (1, {(2 - 1/3)*sqrt(3)});
    \draw [fill=color7] (2,{2*sqrt(3)}) -- (1.5,{(1.5)*sqrt(3)}) -- (2, {(2 - 2/3)*sqrt(3)});
    \draw [fill=color8] (2,{2*sqrt(3)}) -- (2.5,{(1.5)*sqrt(3)}) -- (2, {(2 - 2/3)*sqrt(3)});
    \draw [fill=color3] (2,{2*sqrt(3)}) -- (2.5,{(1.5)*sqrt(3)}) -- (3, {(2 - 1/3)*sqrt(3)});
    \draw [fill=color9] (2,{2*sqrt(3)}) -- (3,{(2)*sqrt(3)}) -- (3, {(2 - 1/3)*sqrt(3)});

    \draw [fill=color12] (3,{3*sqrt(3)}) -- (4,{(3 + 1/3)*sqrt(3)}) -- (3.5, {(3.5)*sqrt(3)});
    \draw [fill=color2] (3,{3*sqrt(3)}) -- (3.5, {(3.5)*sqrt(3)}) -- (3,{(3 + 2/3)*sqrt(3)});
    \draw [fill=color5] (3,{3*sqrt(3)}) -- (2.5,{(3 + 1/2)*sqrt(3)}) -- (3, {(3 + 2/3)*sqrt(3)});
    \draw [fill=color6] (3,{3*sqrt(3)}) -- (2.5,{(3 + 1/2)*sqrt(3)}) -- (2, {(3 + 1/3)*sqrt(3)});
    \draw [fill=color4] (3,{3*sqrt(3)}) -- (2,{(3 + 0/2)*sqrt(3)}) -- (2, {(3 + 1/3)*sqrt(3)});

    \draw [fill=color12] (5,{3*sqrt(3)}) -- (6,{(3 + 1/3)*sqrt(3)}) -- (5.5, {(3.5)*sqrt(3)});
    \draw [fill=color2] (5,{3*sqrt(3)}) -- (5.5, {(3.5)*sqrt(3)}) -- (5,{(3 + 2/3)*sqrt(3)});
    \draw [fill=color5] (5,{3*sqrt(3)}) -- (4.5,{(3 + 1/2)*sqrt(3)}) -- (5, {(3 + 2/3)*sqrt(3)});
    \draw [fill=color6] (5,{3*sqrt(3)}) -- (4.5,{(3 + 1/2)*sqrt(3)}) -- (4, {(3 + 1/3)*sqrt(3)});
    \draw [fill=color4] (5,{3*sqrt(3)}) -- (4,{(3 + 0/2)*sqrt(3)}) -- (4, {(3 + 1/3)*sqrt(3)});

    \draw [fill=color12] (3,{1*sqrt(3)}) -- (4,{(1 + 1/3)*sqrt(3)}) -- (3.5, {(1.5)*sqrt(3)});
    \draw [fill=color2] (3,{1*sqrt(3)}) -- (3.5, {(1.5)*sqrt(3)}) -- (3,{(1 + 2/3)*sqrt(3)});
    \draw [fill=color5] (3,{1*sqrt(3)}) -- (2.5,{(1 + 1/2)*sqrt(3)}) -- (3, {(1 + 2/3)*sqrt(3)});
    \draw [fill=color6] (3,{1*sqrt(3)}) -- (2.5,{(1 + 1/2)*sqrt(3)}) -- (2, {(1 + 1/3)*sqrt(3)});
    \draw [fill=color4] (3,{1*sqrt(3)}) -- (2,{(1 + 0/2)*sqrt(3)}) -- (2, {(1 + 1/3)*sqrt(3)});

    \draw [fill=color12] (5,{1*sqrt(3)}) -- (6,{(1 + 1/3)*sqrt(3)}) -- (5.5, {(1.5)*sqrt(3)});
    \draw [fill=color2] (5,{1*sqrt(3)}) -- (5.5, {(1.5)*sqrt(3)}) -- (5,{(1 + 2/3)*sqrt(3)});
    \draw [fill=color5] (5,{1*sqrt(3)}) -- (4.5,{(1 + 1/2)*sqrt(3)}) -- (5, {(1 + 2/3)*sqrt(3)});
    \draw [fill=color6] (5,{1*sqrt(3)}) -- (4.5,{(1 + 1/2)*sqrt(3)}) -- (4, {(1 + 1/3)*sqrt(3)});
    \draw [fill=color4] (5,{1*sqrt(3)}) -- (4,{(1 + 0/2)*sqrt(3)}) -- (4, {(1 + 1/3)*sqrt(3)});

    \draw [fill=color12] (6,{2*sqrt(3)}) -- (7,{(2 + 1/3)*sqrt(3)}) -- (6.5, {(2.5)*sqrt(3)});
    \draw [fill=color2] (6,{2*sqrt(3)}) -- (6.5, {(2.5)*sqrt(3)}) -- (6,{(2 + 2/3)*sqrt(3)});
    \draw [fill=color5] (6,{2*sqrt(3)}) -- (5.5,{(2 + 1/2)*sqrt(3)}) -- (6, {(2 + 2/3)*sqrt(3)});
    \draw [fill=color6] (6,{2*sqrt(3)}) -- (5.5,{(2 + 1/2)*sqrt(3)}) -- (5, {(2 + 1/3)*sqrt(3)});
    \draw [fill=color4] (6,{2*sqrt(3)}) -- (5,{(2 + 0/2)*sqrt(3)}) -- (5, {(2 + 1/3)*sqrt(3)});

    \draw [fill=color11] (2,{4*sqrt(3)}) -- (1.5,{(3.5)*sqrt(3)}) -- (1, {(4 - 1/3)*sqrt(3)});
    \draw [fill=color7] (2,{4*sqrt(3)}) -- (1.5,{(3.5)*sqrt(3)}) -- (2, {(4 - 2/3)*sqrt(3)});
    \draw [fill=color8] (2,{4*sqrt(3)}) -- (2.5,{(3.5)*sqrt(3)}) -- (2, {(4 - 2/3)*sqrt(3)});
    \draw [fill=color3] (2,{4*sqrt(3)}) -- (2.5,{(3.5)*sqrt(3)}) -- (3, {(4 - 1/3)*sqrt(3)});
    \draw [fill=color9] (2,{4*sqrt(3)}) -- (3,{(4)*sqrt(3)}) -- (3, {(4 - 1/3)*sqrt(3)});

    \draw [fill=color11] (8,{4*sqrt(3)}) -- (7.5,{(3.5)*sqrt(3)}) -- (7, {(4 - 1/3)*sqrt(3)});
    \draw [fill=color7] (8,{4*sqrt(3)}) -- (7.5,{(3.5)*sqrt(3)}) -- (8, {(4 - 2/3)*sqrt(3)});
    \draw [fill=color8] (8,{4*sqrt(3)}) -- (8.5,{(3.5)*sqrt(3)}) -- (8, {(4 - 2/3)*sqrt(3)});
    \draw [fill=color3] (8,{4*sqrt(3)}) -- (8.5,{(3.5)*sqrt(3)}) -- (9, {(4 - 1/3)*sqrt(3)});
    \draw [fill=color9] (8,{4*sqrt(3)}) -- (9,{(4)*sqrt(3)}) -- (9, {(4 - 1/3)*sqrt(3)});
    
    \draw [fill=color11] (8,{2*sqrt(3)}) -- (7.5,{(1.5)*sqrt(3)}) -- (7, {(2 - 1/3)*sqrt(3)});
    \draw [fill=color7] (8,{2*sqrt(3)}) -- (7.5,{(1.5)*sqrt(3)}) -- (8, {(2 - 2/3)*sqrt(3)});
    \draw [fill=color8] (8,{2*sqrt(3)}) -- (8.5,{(1.5)*sqrt(3)}) -- (8, {(2 - 2/3)*sqrt(3)});
    \draw [fill=color3] (8,{2*sqrt(3)}) -- (8.5,{(1.5)*sqrt(3)}) -- (9, {(2 - 1/3)*sqrt(3)});
    \draw [fill=color9] (8,{2*sqrt(3)}) -- (9,{(2)*sqrt(3)}) -- (9, {(2 - 1/3)*sqrt(3)});

    \draw [fill=color11] (5,{5*sqrt(3)}) -- (4.5,{(4.5)*sqrt(3)}) -- (4, {(5 - 1/3)*sqrt(3)});
    \draw [fill=color7] (5,{5*sqrt(3)}) -- (4.5,{(4.5)*sqrt(3)}) -- (5, {(5 - 2/3)*sqrt(3)});
    \draw [fill=color8] (5,{5*sqrt(3)}) -- (5.5,{(4.5)*sqrt(3)}) -- (5, {(5 - 2/3)*sqrt(3)});
    \draw [fill=color3] (5,{5*sqrt(3)}) -- (5.5,{(4.5)*sqrt(3)}) -- (6, {(5 - 1/3)*sqrt(3)});
    \draw [fill=color9] (5,{5*sqrt(3)}) -- (6,{(5)*sqrt(3)}) -- (6, {(5 - 1/3)*sqrt(3)});

    \draw [fill=color11] (5,{1*sqrt(3)}) -- (4.5,{(0.5)*sqrt(3)}) -- (4, {(1 - 1/3)*sqrt(3)});
    \draw [fill=color7] (5,{1*sqrt(3)}) -- (4.5,{(0.5)*sqrt(3)}) -- (5, {(1 - 2/3)*sqrt(3)});
    \draw [fill=color8] (5,{1*sqrt(3)}) -- (5.5,{(0.5)*sqrt(3)}) -- (5, {(1 - 2/3)*sqrt(3)});
    \draw [fill=color3] (5,{1*sqrt(3)}) -- (5.5,{(0.5)*sqrt(3)}) -- (6, {(1 - 1/3)*sqrt(3)});
    \draw [fill=color9] (5,{1*sqrt(3)}) -- (6,{(1)*sqrt(3)}) -- (6, {(1 - 1/3)*sqrt(3)});

    \draw [fill=color10] (2,{4*sqrt(3)}) -- (1,{(4)*sqrt(3)}) -- (1, {(4 - 1/3)*sqrt(3)});
    \draw [fill=color10] (2,{2*sqrt(3)}) -- (1,{(2)*sqrt(3)}) -- (1, {(2 - 1/3)*sqrt(3)});
    \draw [fill=color10] (3,{5*sqrt(3)}) -- (2,{(5)*sqrt(3)}) -- (2, {(5 - 1/3)*sqrt(3)});
    \draw [fill=color10] (6,{6*sqrt(3)}) -- (5,{(6)*sqrt(3)}) -- (5, {(6 - 1/3)*sqrt(3)});
    \draw [fill=color10] (8,{6*sqrt(3)}) -- (7,{(6)*sqrt(3)}) -- (7, {(6 - 1/3)*sqrt(3)});
    \draw [fill=color10] (6,{0*sqrt(3)}) -- (5,{(0)*sqrt(3)}) -- (5, {(0 - 1/3)*sqrt(3)});
    \draw [fill=color10] (8,{0*sqrt(3)}) -- (7,{(0)*sqrt(3)}) -- (7, {(0 - 1/3)*sqrt(3)});
    \draw [fill=color10] (12,{4*sqrt(3)}) -- (11,{(4)*sqrt(3)}) -- (11, {(4 - 1/3)*sqrt(3)});
    \draw [fill=color10] (12,{2*sqrt(3)}) -- (11,{(2)*sqrt(3)}) -- (11, {(2 - 1/3)*sqrt(3)});
    \draw [fill=color10] (3,{1*sqrt(3)}) -- (2,{(1)*sqrt(3)}) -- (2, {(1 - 1/3)*sqrt(3)});
    \draw [fill=color10] (3,{5*sqrt(3)}) -- (2,{(5)*sqrt(3)}) -- (2, {(5 - 1/3)*sqrt(3)});
    \draw [fill=color10] (11,{1*sqrt(3)}) -- (10,{(1)*sqrt(3)}) -- (10, {(1 - 1/3)*sqrt(3)});
    \draw [fill=color10] (11,{5*sqrt(3)}) -- (10,{(5)*sqrt(3)}) -- (10, {(5 - 1/3)*sqrt(3)});

    
    \draw (0,0) -- (12,0);
    \draw (0,{sqrt(3)}) -- (12,{sqrt(3)});
    \draw (0,{2*sqrt(3)}) -- (12,{2*sqrt(3)});
    \draw (0,{3*sqrt(3)}) -- (12,{3*sqrt(3)});
    \draw (0,{4*sqrt(3)}) -- (12,{4*sqrt(3)});
    \draw (0,{5*sqrt(3)}) -- (12,{5*sqrt(3)});
    \draw (0,{6*sqrt(3)}) -- (12,{6*sqrt(3)});
    \draw (0,{-sqrt(3)}) -- (12,{-sqrt(3)});
    \draw (0,{4*sqrt(3)}) -- (2, {6*sqrt(3)});
    \draw (0,{2*sqrt(3)}) -- (4, {6*sqrt(3)});
    \draw (0,0) -- (6, {6*sqrt(3)});
    \draw (1,{-sqrt(3)}) -- (8, {6*sqrt(3)});
    \draw (3,{-sqrt(3)}) -- (10, {6*sqrt(3)});
    \draw (5,{-sqrt(3)}) -- (12, {6*sqrt(3)});
    \draw (7,{-sqrt(3)}) -- (12, {4*sqrt(3)});
    \draw (9, {-sqrt(3)}) -- (12, {2*sqrt(3)});
    \draw (11, {-sqrt(3)}) -- (12, {0*sqrt(3)});
    \draw (0, {2*sqrt(3)}) -- (3, {-sqrt(3)});
    \draw (0, {4*sqrt(3)}) -- (5, {-sqrt(3)});
    \draw (0, {6*sqrt(3)}) -- (7, {-sqrt(3)});
    \draw (2, {6*sqrt(3)}) -- (9, {-sqrt(3)});
    \draw (4, {6*sqrt(3)}) -- (11, {-sqrt(3)});
    \draw (6, {6*sqrt(3)}) -- (12, {0*sqrt(3)});
    \draw (8, {6*sqrt(3)}) -- (12, {2*sqrt(3)});
    \draw (10, {6*sqrt(3)}) -- (12, {4*sqrt(3)});
    \draw (0, 0) -- (1, {-sqrt(3)});
    \draw (0,0) -- (12,{(4*sqrt(3) + 0*sqrt(3)/3});
    \draw (0,{2*sqrt(3)}) -- (12,{(5*sqrt(3) + 3*sqrt(3)/3});
    \draw (0,{4*sqrt(3)}) -- (6,{6*sqrt(3)});
    \draw (3,{-sqrt(3)}) -- (12,{(1 + 3/3)*sqrt(3)});
    \draw (5,{-sqrt(3)}) -- (12,{(0 + 4/3)*sqrt(3)});
    \draw (7,{-sqrt(3)}) -- (12,{(0 + 2/3)*sqrt(3)});
    \draw (9,{-sqrt(3)}) -- (12,{(-1 + 3/3)*sqrt(3)});
    \draw (11,{-sqrt(3)}) -- (12,{(-1 + 1/3)*sqrt(3)});
    \draw (0,{(-1 + 1/3)*sqrt(3)}) -- (12,{(3 + 1/3)*sqrt(3)});
    \draw (0,{(0 + 2/3)*sqrt(3)}) -- (12,{(4 + 2/3)*sqrt(3)});
    \draw (0,{(1 + 1/3)*sqrt(3)}) -- (12,{(4 + 4/3)*sqrt(3)});
    \draw (0,{(2 + 2/3)*sqrt(3)}) -- (10,{(6)*sqrt(3)});
    \draw (0,{(3 + 1/3)*sqrt(3)}) -- (8,{(6)*sqrt(3)});
    \draw (0,{(4 + 2/3)*sqrt(3)}) -- (4,{(6)*sqrt(3)});
    \draw (0,{(5 + 1/3)*sqrt(3)}) -- (2,{(6)*sqrt(3)});
    \draw (1,{-sqrt(3)}) -- (12,{(2 + 2/3)*sqrt(3)});
    \draw (0,{(-1 + 1/3)*sqrt(3)}) -- (1,{(-1)*sqrt(3)});
    \draw (0,{0}) -- (3,{(-1)*sqrt(3)});
    \draw (0,{(2/3)*sqrt(3)}) -- (5,{(-1)*sqrt(3)});
    \draw (0,{(1 + 1/3)*sqrt(3)}) -- (7,{(-1)*sqrt(3)});
    \draw (0,{(1 + 3/3)*sqrt(3)}) -- (9,{(-1)*sqrt(3)});
    \draw (0,{(2 + 2/3)*sqrt(3)}) -- (11,{(-1)*sqrt(3)});
    \draw (0,{(3 + 1/3)*sqrt(3)}) -- (12,{(-1 + 1/3)*sqrt(3)});
    \draw (0,{(4)*sqrt(3)}) -- (12,{(0 + 0/3)*sqrt(3)});
    \draw (0,{(4 + 2/3)*sqrt(3)}) -- (12,{(1 - 1/3)*sqrt(3)});
    \draw (0,{(5 + 1/3)*sqrt(3)}) -- (12,{(1 + 1/3)*sqrt(3)});
    \draw (0,{(6 + 0/3)*sqrt(3)}) -- (12,{(2 + 0/3)*sqrt(3)});
    \draw (2,{(6 + 0/3)*sqrt(3)}) -- (12,{(3 - 1/3)*sqrt(3)});
    \draw (4,{(6 + 0/3)*sqrt(3)}) -- (12,{(3 + 1/3)*sqrt(3)});
    \draw (6,{(6 + 0/3)*sqrt(3)}) -- (12,{(4 + 0/3)*sqrt(3)});
    \draw (8,{(6 + 0/3)*sqrt(3)}) -- (12,{(5 - 1/3)*sqrt(3)});
    \draw (10,{(6 + 0/3)*sqrt(3)}) -- (12,{(5 + 1/3)*sqrt(3)});
    \draw (0,{6*sqrt(3)}) -- (0,{-sqrt(3)});
    \draw (1,{6*sqrt(3)}) -- (1,{-sqrt(3)});
    \draw (2,{6*sqrt(3)}) -- (2,{-sqrt(3)});
    \draw (3,{6*sqrt(3)}) -- (3,{-sqrt(3)});
    \draw (4,{6*sqrt(3)}) -- (4,{-sqrt(3)});
    \draw (5,{6*sqrt(3)}) -- (5,{-sqrt(3)});
    \draw (6,{6*sqrt(3)}) -- (6,{-sqrt(3)});
    \draw (7,{6*sqrt(3)}) -- (7,{-sqrt(3)});
    \draw (8,{6*sqrt(3)}) -- (8,{-sqrt(3)});
    \draw (9,{6*sqrt(3)}) -- (9,{-sqrt(3)});
    \draw (10,{6*sqrt(3)}) -- (10,{-sqrt(3)});
    \draw (11,{6*sqrt(3)}) -- (11,{-sqrt(3)});
    \draw (12,{6*sqrt(3)}) -- (12,{-sqrt(3)});
    
    \draw [line width=0.5mm] (1,{(1 + 2/3)*sqrt(3)}) -- (2,{(1 + 1/3)*sqrt(3)}) -- (3, {(1 + 2/3)*sqrt(3)}) -- (3, {(2 + 1/3)*sqrt(3)}) -- (2, {(2 + 2/3)*sqrt(3)}) -- (1, {(2 + 1/3)*sqrt(3)}) -- (1,{(1 + 2/3)*sqrt(3)});
    \end{tikzpicture}
\end{figure}

\end{document}